\DeclareFontFamily{U}{tipa}{}
\DeclareFontShape{U}{tipa}{m}{n}{<->tipa10}{}
\newcommand{\arc@char}{{\usefont{U}{tipa}{m}{n}\symbol{62}}}%
\newcommand{\arc}[1]{\mathpalette\arc@arc{#1}}
\newcommand{\arc@arc}[2]{%
  \sbox0{$\m@th#1#2$}%
  \vbox{
    \hbox{\resizebox{\wd0}{\height}{\arc@char}}
    \nointerlineskip
    \box0
  }%
}
\theoremstyle{definition}
\newtheorem{theorem}{Theorem}[section]
\newtheorem{thm}[theorem]{Theorem}
\newtheorem{prop}[theorem]{Proposition}
\newtheorem{defn}[theorem]{Definition}
\newtheorem{lemma}[theorem]{Lemma}
\newtheorem{cor}[theorem]{Corollary}
\newtheorem{prop-def}{Proposition-Definition}[section]
\newtheorem{rema}[theorem]{Remark}
\newtheorem{nota}[theorem]{Notation}
\newcommand{\N}{{\mathbb N}}
\newcommand{\C}{{\mathbb C}}
\newcommand{\Z}{{\mathbb Z}}
\newcommand{\g}{{\mathfrak g}}
\newcommand{\End}{\textrm{End}}
\newcommand{\one}{\mathbf{1}}
\renewcommand{\d}{\mathbf{d}}
\newcommand{\wt}{\mbox{\rm wt}\ }
\newcommand{\Res}{\text{Res}}
\newcommand{\h}{\mathfrak{h}}
\begin{document}

\setlength{\oddsidemargin}{0cm} \setlength{\evensidemargin}{0cm}
\baselineskip=18pt

\title{First-order deformations of freely generated vertex algebras}
\author{Vladimir Kovalchuk, Fei Qi}

\begin{abstract}
    We solve the problem of how to classify the first-order vertex-algebraic deformations for any grading-restricted vertex algebra $V$ that is freely generated by homogeneous elements of positive weights. We approach by computing the second cohomology $H^2_{1/2}(V, V)$ constructed by Yi-Zhi Huang. We start with the cocycle on two generators and show that its cohomology class is completely determined by its singular part. To extend the cocycle to any pair of elements in $V$, we take a generating function approach, formulate the cocycle equation, and show that all the complementary solutions are coboundaries. Then, we use a very general procedure to construct a particular solution. The procedure applies to vertex algebras that are not freely generated. As a by-product, we show that $H^2_{1/2}(V, V) = H^2_\infty(V, V)$. Using these results, we explicitly determine the first-order deformations of the universal Virasoro VOA $Vir_c$, universal affine VOA $V^l(\g)$, Heisenberg VOA $V^l(\h)$, and the universal Zamolodchikov VOA $W_3^c$. 
\end{abstract}

\maketitle

\section{Introduction}

Deformation theory of algebraic structures is important in both mathematics and physics. In mathematics, deformation theory provides insights into how an algebraic structure can change continuously within a family of similar structures. By studying small perturbations or deformations, one can better understand the rigidity and flexibility of such a structure. It also aids in the classification of algebraic structures by illustrating how seemingly different structures can be interconnected through deformations. In physics, deformation quantization is a classical process in quantum physics where classical mechanics is deformed into quantum mechanics. It provides a bridge between classical and quantum descriptions.

One of the primary tools for the deformation theory of a commutative associative algebra is the second Harrison cohomology $H^2(A, A)$, which describes the equivalence classes of the first-order deformations of $A$ that are commutative associative algebras. Although not  all first-order deformations can be integrated into a convergent deformation of $A$, every candidate for a deformation originates from a first-order deformation. Thus, calculating $H^2(A, A)$ is the initial step in understanding the deformation theory of $A$. 

Vertex algebras are algebraic structures formed by vertex operators satisfying certain commutativity and associativity, thus can be viewed as analogues of commutative associative algebras. Based on this analogy, using the same ideas of Harrison and Hochschild cohomologies, Yi-Zhi Huang constructed the cohomology theory of a grading-restricted vertex algebra $V$ and proved that the first-order vertex-algebraic deformations are classified by the second cohomology $H^2_{1/2}(V, V)$ (see \cite{H-Coh} and \cite{H-1st-2nd-Coh}). As in the case of commutative associative algebras, understanding $H^2_{1/2}(V, V)$ is the initial step to understanding the deformation theory of vertex algebras. 

Computing $H^2_{1/2}(V, V)$ has been a long-standing challenge, primarily due to convergence issues. In Huang's framework, the cochain complexes are defined by linear maps $\Phi: V\otimes V \to \widetilde{V}_{z_1,z_2}$ satisfying technical convergence conditions, known as the composable conditions. For the cohomology group $H^2_{1/2}(V, V)$, an elements $\Phi$ in the cochain complex is 1/2-composable with vertex operators, if for $v'\in V', v_1, v_2, v_3\in V$, the series 
$$\langle v', Y(v_1, z_1) \Phi(v_2\otimes v_3; z_2, z_3)\rangle + \langle v', \Phi(Y(v_1, z_1-\zeta)Y(v_2, z_2-\zeta)\one \otimes v_3; \zeta, z_3)\rangle$$
and 
$$\langle v', \Phi(v_1 \otimes , Y(v_2, z_2-\zeta)Y(v_3, z_3-\zeta)\one; z_1, \zeta)\rangle + \langle v', Y(v_3, z_3) \Phi(v_1\otimes v_2; z_1, z_2)\rangle$$
converges. Note that each summand in these expressions does not necessarily converge individually. Working with this 1/2-composable condition is the primary obstacle in computing $H^2_{1/2}(V, V)$ and has been a persistent issue. 

This paper presents a comprehensive solution to this computational problem for a grading-restricted vertex algebra $V$ that is freely generated by a finite set $S$. We introduce a systematic algorithm that reduces the computation of a cocycle to its image on the generators. Specifically, let $\Phi$ be a cocycle representing a cohomology class in $H^2_{1/2}(V, V)$. For every $u, v\in S$, we set 
$$Y_1(u, z)v  = \Phi(u\otimes v; z, 0).$$
Since $Y_1(u, z)v$ defines a $\overline{V}$-valued rational function with pole at $z=0$, we may write 
$$Y_1(u, z)v= \sum_{m\in \Z} u_m^{def} v z^{-n-1}.$$
We show that if the singular part of $Y_1$ satisfies the following commutator condition, namely, for $u, v, w\in S$, 
$$[u_m^{def}, v_n] w + [u_m, v^{def}_n] w = \sum_{\alpha=0}^\infty \binom{m}{\alpha} \left((u^{def}_\alpha v)_{m+n-\alpha} w + (u_\alpha v)^{def}_{m+n-\alpha} w\right),$$
where $v_n = \Res_{z=0}z^{n}Y(v,z)$ is the component of the vertex operator $Y$, and $(u_\alpha v)^{def}_{m}w$ is defined via an analytic continuation process in case $u_\alpha v\notin S$ (see Section \ref{Sec-GenFunc}, Theorem \ref{Y1-ext-uniqueness}), then $Y_1$ may be uniquely extended to a map $V\otimes V \to V((x))$, and for every $u, v\in V$, $$\Phi(u\otimes v; z_1, z_2) = Y_1(u, z_1) Y(v, z_2)\one$$ is a cocycle representing a cohomology class in $H^2_{1/2}(V, V)$. 

The above extension process is very general and does not really need $V$ to be freely generated. We need the assumption that $V$ is freely generated for two supporting theorems. The first one (Theorem \ref{Cobdry-1-Var-Thm}) states that with the modification by a coboundary, we may assume that for $u,v \in S$, $Y_1(u, x)v$ is completely determined by its singular parts. The second one (Theorem \ref{complem-cobdry-thm}) shows that other than the cocycles extended from the singular parts of the $Y_1(u, x)v$ for $u, v\in S$, there do not exist any other nontrivial cocycles. 

One of the key philosophies of proving the second theorem is to utilize the techniques of generating functions. Instead of focusing on the one-variable series $Y_1(u, x)v$ for $u, v\in V$, we focus on the $\overline{V}$-valued rational function
$$E\bigg(Y_1(s^{(1)}, z_1) Y(s^{(2)}, z_2) \cdots Y(s^{(n)}, z_n)s^{(n+1)}\bigg)$$
for $s^{(1)}, ..., s^{(n+1)}\in S$, which is the analytic continuation of the sum of the series in the parenthesis to $\{(z_1, ..., z_n) \in \C^n: z_i \neq 0, z_i \neq z_j, i\neq j\}$. We show that the $\overline{V}$-valued rational function uniquely determines $Y_1(u, x)v$ for every $u, v\in V$. Then, we formulate the $n$-variable cocycle equation in terms of the matrix coefficients of the $\overline{V}$-valued rational function. The cocycle equation is indeed a linear nonhomogeneous system of equations. Under the assumption that $V$ is freely generated, we show that the complementary solutions of the cocycle equations are all coboundaries. Thus, it suffices to find a particular solution. The extension process with the commutator condition also follows the same philosophy. We construct the $\overline{V}$-valued rational function explicitly and show that it is a solution of the cocycle equation. 

Theorem \ref{main-thm-paper} summarizes the main conclusions of the paper. We apply it to compute $H_{1/2}(V, V)$ for the following examples. If $V$ is the universal Virasoro VOA $Vir_c$ or the universal affine VOA $V^l(\g)$ associated with a finite-dimensional simple Lie algebra, we show that $H^2_{1/2}(V, V)$ is one-dimensional, corresponding to the deformation of central charge $c$, or the level $l$, respectively. If $V$ is the Heisenberg VOA $V^l(\h)$ of rank $r$, we show that $\dim H^2_{1/2}(V, V) = \binom{r}{3}$ when $l\neq 0$, and $\dim H^2_{1/2}(V, V) = 3\binom{r+1}{3}$ when $l= 0$. If $V$ is the universal Zamolodchikov VOA $W_3^c$, we show that $H^2_{1/2}(V, V)$ is one-dimensional for every $c\in \C$. 

The proofs of Theorem \ref{Cobdry-1-Var-Thm} and Theorem \ref{complem-cobdry-thm} fail when relations exist among the basis elements. We may still use the same procedure to construct cocycles. But there might exist a nontrivial cocycle contributed by a regular $Y_1(u, x)v$ for $u, v\in S$. There might also exist a nontrivial cocycle contributed by a complementary solution of the cocycle equation. Nevertheless, the failure of the proofs precisely points out the direction of working out $H^2_{1/2}(V, V)$ when $V$ is not freely generated. We shall discuss it in the future.  

As a by-product, we see that $H_{1/2}^2(V, V) = H_{\infty}^2(V, V)$ when $V$ is freely generated. In other words, the weaker 1/2-composable condition is equivalent to the much stronger $\infty$-composable condition. So, the composable conditions assumed in the construction of cohomology theory might be naturally strengthened. In particular, it can be strengthened naturally to ensure the convergence of the operations involved in the definition of Gerstenhaber brackets of cochains, which is important for determining the integrability of first-order deformations. We shall also discuss Gerstenhaber brackets in the future. 

The paper is organized as follows: Section 2 reviews the prerequisites of second cohomology and first-order deformations. Section 3 formulates the cocycles on generators, the generating functions of the cocycles, and the cocycle equation it should satisfy. We prove that the complementary solutions of the cocycle equation are all coboundaries. Sections 4 and 5 extend $Y_1$ from the generators to general elements in $V$ under the commutator condition. Section 6 computes the examples. We show that $H^2_{1/2}(V, V)$

\noindent\textbf{Acknowledgements. }
Both authors would like to thank Thomas Creutzig, Shashank Kanade, Andrew Linshaw and Flor Orosz Hunziker for many helpful discussions. The second author would like to thank Yi-Zhi Huang for his long-term support and the related discussions. The second author would also thank Tommy Wuxing Cai for the discussion on combinatorial identities, and Zongzhu Lin for the discussion of further possible development of the work.

\section{Preliminaries}

\subsection{Grading-restricted vertex algebras}

\begin{defn}\label{DefMOSVA}
{\rm A {\it grading-restricted vertex algebra} (vertex algebra hereafter) is a $\Z$-graded vector space 
$V=\coprod_{n\in\Z} V_{(n)}$ (graded by {\it weights}) equipped with a {\it vertex operator map}
\begin{eqnarray*}
   Y:  V\otimes V &\to & V((x))\\
	u\otimes v &\mapsto& Y(u,x)v = \sum_{n\in \Z} u_n v x^{-n-1}
  \end{eqnarray*}
and a {\it vacuum} $\one\in V$, satisfying the following axioms:
\begin{enumerate}[leftmargin=*]
\item Axioms for the grading:
\begin{enumerate}[leftmargin=*]
\item {\it Lower bound condition}: When $n$ is sufficiently negative,
$V_{(n)}=0$.
\item {\it $\d$-commutator formula}: Let $\d_{V}: V\to V$
be defined by $\d_{V}v=nv$ for $v\in V_{(n)}$. Then for every $v\in V$
$$[\d_{V}, Y_{V}(v, x)]=x\frac{d}{dx}Y_{V}(v, x)+Y_{V}(\d_{V}v, x).$$
\end{enumerate}

\item Axioms for the vacuum: 
\begin{enumerate}[leftmargin=*]
\item {\it Identity property}: Let $1_{V}$ be the identity operator on $V$. Then
$Y_{V}(\mathbf{1}, x)=1_{V}$. 
\item {\it Creation property}: For $u\in V$, $Y_{V}(u, x)\mathbf{1}\in V[[x]]$ and 
$\lim_{x\to 0}Y_{V}(u, x)\mathbf{1}=u$.
\end{enumerate}

\item {\it $D$-derivative property and $D$-commutator formula}:
Let $D_V: V\to V$ be the operator
given by
$$D_{V}v=\lim_{x\to 0}\frac{d}{dx}Y_{V}(v, x)\one$$
for $v\in V$. Then for $v\in V$,
$$\frac{d}{dx}Y_{V}(v, x)=Y_{V}(D_{V}v, x)=[D_{V}, Y_{V}(v, x)].$$

\item {\it Rationality}: Let $V'=\coprod_{n\in \Z}V_{(n)}^*$ be the graded dual of $V$. 
For $u_1, u_2, v\in V, v'\in V'$, the series
$$\begin{array}{c}
\langle v', Y(u_1, z_1)Y(u_2, z_2)v\rangle,\\
\langle v', Y(u_2, z_2)Y(u_1, z_1)v\rangle,\\
\langle v', Y(Y(u_1, z_1-z_2)u_2, z_2)v\rangle,
\end{array}$$
converge absolutely in the regions 
$$\begin{array}{c}
|z_1|>|z_2|>0,\\
|z_2|>|z_1|>0,\\
|z_2|>|z_1-z_2|>0,
\end{array}$$ respectively to a common rational function in $z_1, z_2$ with the only possible poles at $z_1=0, z_2=0$ and $z_1=z_2$. 
\end{enumerate}  }
\end{defn}

\subsection{Freely generated vertex algebras and standard filtration}

\begin{defn}\label{strong-gen-defn}
Let $S = \{a^{(1)}, ..., a^{(r)}\}$ be a subset of homogeneous elements of $V$ of positive weights. 
\begin{enumerate}[leftmargin=*]
    \item We say that $S$ \textit{strongly generates} $V$ if $V$ is spanned by the elements
    $$s^{(1)}_{-m_1} \cdots s^{(p)}_{-m_p}\one$$
    where $p\in \N, s^{(1)}, ..., s^{(p)}\in S, m_1, ..., m_p\in \Z_+$. 
    \item We say that $S$ \textit{freely generates} $V$ if $V$ admits a PBW-type basis
    \begin{align}
        a^{(1)}_{-m^{(1)}_1}\cdots a^{(1)}_{-m^{(1)}_{k_1}} \cdots a^{(r)}_{-m_1^{(r)}} \cdots a^{(r)}_{-m_{k_r}^{(r)}}\one, \label{V-basis}
    \end{align} 
    where $k_1, ..., k_r \in \N, m_1^{(1)}, ..., m_1^{(k_1)}, ..., m_{1}^{(r)}, ..., m_{k_r}^{(r)}\in \Z_+$ such that
    $$m_1^{(1)} \geq \cdots \geq m_{k_1}^{(1)}\geq 1, ..., m_1^{(r)} \geq \cdots \geq m_{k_r}^{(r)} \geq 1$$
\end{enumerate}
\end{defn}

\begin{rema}\label{Filtration}
    We note that from the results of \cite{L}, if $V$ is strongly generated by $S$, then $V$ admits a standard filtration $V= \bigcup_{n\in \N} E_N^S$, with 
    \begin{align}
        E_N^S = \text{span}\left\{s^{(1)}_{-m_1} \cdots s^{(p)}_{-m_p} \one : \begin{aligned}
            & p\in \N, s^{(1)}, ..., s^{(p)}\in S, m_1, ..., m_p\in \Z_+, \\
            & \wt s^{(1)} + \cdots + \wt s^{(p)} \leq N
        \end{aligned}\right\}, \label{E_N^S}
    \end{align}
    in the sense that for every $u\in E_m^S$, $v\in E_n^S$, 
    \begin{align*}
        & u_jv \in E_{m+n}^S & \text{ for }j<0, \\
        & u_jv \in E_{m+n-1}^S (\subset E_{m+n}^S) & \text{ for }j\geq 0.
    \end{align*}
\end{rema}

In this paper, we will exclusively focus on a grading-restricted vertex algebra that is freely generated by elements of positive weights. 

\subsection{$E$- and $R$-notations} A lot of discussions in vertex algebras are focused on series 
\begin{align}
    f(z_1, ..., z_n) = \sum_{m_1, ..., m_n\in \Z} v_{m_1\cdots m_n}z_1^{-m_1-1}\cdots z_n^{-m_n-1}\label{V-series}
\end{align}
in $V[[z_1, z_1^{-1}, ..., z_n, z_n^{-1}]]$, such that for every $v'\in V'$, the complex series
\begin{align}
    \langle v', f(z_1, ..., z_n)\rangle = \sum_{m_1, ..., m_n\in \Z} \langle v', v_{m_1\cdots m_n}\rangle z_1^{-m_1-1}\cdots z_n^{-m_n-1}\label{V-series-pair-v'}
\end{align}
converges absolutely in a certain region to a rational function with the only possible poles at $z_i = 0$ $(i = 1, ..., n)$ and $z_i = z_j$ $(1\leq i < j \leq n)$. 

\begin{nota}
    We will use the notation 
    \begin{align}
        R\bigg(\langle v', f(z_1, ..., z_n)\rangle\bigg) \label{V-series-pair-v'-sum}
    \end{align}
    for the sum (or limit) of the complex series (\ref{V-series-pair-v'}). We will use the notation 
    $$E\bigg(f(z_1, ..., z_n)\bigg)$$
    to denote the corresponding $\overline{V}$-valued rational function (see \cite{H-Coh} and \cite{Q-Coh} for precise definitions), which may be viewed as the analytic continuation of the sum of the series $f(z_1, ..., z_n)$ with coefficients in $V$. 
\end{nota}
\begin{rema}
    It should be noted that (\ref{V-series-pair-v'-sum}) makes sense in the region 
    $$\{(z_1, ..., z_n)\in \C^n: z_i \neq 0, 1\leq i \leq n, z_i \neq z_j, 1\leq i < j \leq n\}$$
    while (\ref{V-series-pair-v'}) may converge in a much smaller region. For example, let $V$ be a vertex algebra and $v'\in V', u_1, u_2, v\in V$. Then $$\langle v', Y(u_1, z_1)Y(u_2, z_2)v\rangle $$
    converges absolutely in the region $|z_1|>|z_2|>0$; the series 
    $$\langle v', Y(u_2, z_2)Y(u_1, z_1)v\rangle $$
    converges absolutely in the region $|z_2|>|z_1|>0$. Since the regions of convergence of these two series do not intersect, we cannot equate them. However, we may say that 
    $$R\bigg(\langle v', Y(u_1, z_1)Y(u_2, z_2)v\rangle\bigg) = R\bigg(\langle v', Y(u_2, z_2)Y(u_1, z_1)v\rangle \bigg)$$
    because these two series converge to a common rational function. We may also say that 
    $$E\bigg(Y(u_1, z_1)Y(u_2, z_2)v\bigg) = E\bigg(Y(u_2, z_2)Y(u_1, z_1)v\bigg)$$
    when $z_1 \neq 0, z_2 \neq 0, z_1 \neq z_2$, since these two series converge to the same $\overline{V}$-valued rational functions. 
\end{rema} 

\begin{rema}
    Many calculations on the $\overline{V}$-valued rational functions rely on the convergence of one single series. Once we know the convergence of one series, then we may use an argument of analytic continuation to show the convergence of many other series. For example, once we show that products of vertex operators converge, then by an analytic continuation argument using \cite{Q-Mod} Lemma 4.5, we may prove that the iterates of vertex operators converge (see \cite{Q-Mod} Theorem 4.12). The details of these analytic continuations are too lengthy to include in this manuscript. Interested readers should consult \cite{Q-Mod}, \cite{Q-Coh}, \cite{HQ-Red} and \cite{Q-Ext-1} that included many such details. 
\end{rema}

\subsection{First-order deformation}\label{first-order-deform-subsec}
Recall that a first-order deformation of a grading-restricted vertex algebra $(V, Y, \one)$ is defined by an operator 
$$Y_1: V\otimes V \to V((x)), $$
such that the vector space $V^t = \C[t]/(t^2)\otimes_\C V$, with the vertex operator
$$Y^t(u, x) v = Y(u, x)v + t Y_1(u, x)v$$
and the vacuum element $\one$, i.e., $(V^t, Y^t, \one)$, forms a grading-restricted vertex algebra over the base ring $\C[t]/(t^2)$. This requires that $Y_1$ satisfies the following conditions. 
\begin{enumerate}[leftmargin=*]
\item The grading of $V^t$ is automatically bounded below. The $\d$-commutator formula of $Y^t$ implies that the $\d$-commutator formula of $Y_1$. In particular, if we write
$$Y_1(u, x) v = \sum_{n\in \Z} u^{def}_n v x^{-n-1}, $$
then
$$\wt u^{def}_n v = \wt u - n - 1 + \wt v. $$
Since generators of $V$ are of positive weights, it is also clear that for every $n\geq 0$, $u^{def}_n v \in E_{\text{wt }u + \text{wt v} - 1}^S$ as in Remark \ref{Filtration}. 
\item From the requirement that for every $v\in V$
$$v = Y^t(\one, x)v = Y(\one, x)v + t Y_1(\one, x)v$$
we see that for every $v\in V$, 
$$Y_1(\one, x) v = 0. $$
\item From the requirement that for every $u\in V$, 
$$e^{xD}u = Y^t(u, x)\one = Y(u, x)\one + t Y_1(u, x) \one$$
we see that for every $u\in V$, 
$$Y_1(u, x) \one = 0. $$
In particular, this says that $D$ is defined purely in terms of $Y$, regardless of $Y_1$. 
\item From the $D$-derivative-commutator formula that for every $u, v\in V$, 
$$[D, Y^t(u, x)] v = Y^t(Du, x) v = \frac{d}{dx}Y^t(u, x) v, $$
we see that 
$$[D, Y_1(u, x)] v = Y_1(Du, x) v = \frac{d}{dx} Y_1(u, x) v. $$
\item From the requirement of associativity, i.e., for every $u_1, u_2, v\in V$, there exists $p\in \N$, such that 
$$(x_0+x_2)^p Y^t(u_1, x_0+x_2)Y^t(u_2, x_2)v = (x_0+x_2)^p Y^t(Y^t(u_1, x_0)u_2, x_2)v$$
we see that 
\begin{align*}
    & (x_0+x_2)^p \left(Y(u_1, x_0+x_2)Y_1(u_2, x_2) v + Y_1(u_1, x_0+x_2)Y(u_2, x_2)v \right)\\
    =\ & (x_0+x_2)^p \left(Y(Y_1(u_1, x_0)u_2, x_2)v + Y_1(Y(u_1, x_0)u_2, x_2)v\right)
\end{align*}
Equivalently, for every $v'\in V'$, $u_1, u_2, v\in V$, the series
$$\langle v', Y_1(u_1, z_1)Y(u_2, z_2)v\rangle + \langle v', Y(u_1, z_1)Y_1(u_2, z_2)v\rangle $$
and 
$$\langle v', Y_1(Y(u_1, z_1-z_2)u_2, z_2)v\rangle + \langle v', Y(Y_1(u_1, z_1-z_2)u_2, z_2)v\rangle $$
both converge absolutely respectively in the region $|z_1|>|z_2|>0$ and $|z_2|>|z_1-z_2|>0$ to a common rational function with the only possible poles at $z_1=0, z_2 = 0$ and $z_1=z_2$. Note that the condition requires convergence of only the sum of the two series, not the convergence of each individual series. 

\item From the requirement of skew symmetry, i.e., for every $u, v\in V$
$$Y^t(u, x)v = e^{xD}Y^t(v, -x)u,$$
we see that 
$$Y_1(u, x)v = e^{xD}Y_1(v, -x)u.$$
\end{enumerate}
Conversely, if $Y_1$ satisfies (1) -- (6), then with $Y^t = Y + tY_1$, $(V^t, Y^t, \one)$ forms a first-order deformation of $(V, Y, \one)$. 
\begin{rema}
    Using the $R$-notation, Condition (5) of the first-order deformation can be reformulated as 
    \begin{align*}
        & R\bigg(\langle v', Y_1(u_1, z_1)Y(u_2, z_2)v\rangle + \langle v', Y(u_1, z_1)Y_1(u_2, z_2)v\rangle\bigg) \\
        = \ & R\bigg(\langle v', Y_1(Y(u_1, z_1-z_2)u_2, z_2)v\rangle + \langle v', Y(Y_1(u_1, z_1-z_2)u_2, z_2)v\rangle \bigg)        
\end{align*}
    Using the $E$-notation, Condition (5) of the first-order deformation can be reformulated as 
    \begin{align}
        & E\bigg(Y_1(u_1, z_1)Y(u_2, z_2)v +  Y(u_1, z_1)Y_1(u_2, z_2)v\bigg) \nonumber\\
        = \ & E\bigg(Y_1(Y(u_1, z_1-z_2)u_2, z_2)v+ Y(Y_1(u_1, z_1-z_2)u_2, z_2)v\bigg) \label{Cocycle-Eqn}
    \end{align}
    We call (\ref{Cocycle-Eqn}) the cocycle equation. the computation will be mainly based on finding solutions of (\ref{Cocycle-Eqn}). 
\end{rema}

Two first-order deformations $(V^t, Y^{t,(1)})$ and $(V^t, Y^{t,(2)})$ are equivalent, if there exists a $\C[t]/(t^2)$-linear vertex algebraic isomorphism $f^t: V \oplus tV = V^t \to V_t =V\oplus tV $ whose restriction on $V$ is of the form
$$f^t|_V = 1_V + t f_1, $$
where $f_1:V\to V$ is a $\C$-linear grading-preserving map. In other words, for $u,v\in V$, 
$$f^t(u+tv) = u + t(v+f_1(u)). $$
To emphasize, equivalent first-order deformations are isomorphic, but the converse does not necessarily hold.

\subsection{2-cocycles and 2-coboundaries} In the paper \cite{H-Coh} and \cite{H-1st-2nd-Coh}, Huang showed that the equivalence classes of first-order deformations are described by the second cohomology $H_{1/2}^2(V, V)$ of vertex algebras. Here we shall not give a detailed review of the definitions of 2-cocycles and 2-coboundaries, but only mention what is necessary for the current paper. For brevity, we will omit the 2-prefix and simply use cocycles and coboundaries. 

Given a $Y_1: V \otimes V \to V[[x, x^{-1}]]$ satisfying (1) -- (6) in Section \ref{first-order-deform-subsec}, we may obtain a cocycle in the cohomology $H^2_{1/2}(V, V)$ defined by 
$$\Psi(v_1 \otimes v_2; z_1, z_2) = E\bigg( Y_1(v_1, z_1)Y(v_2, z_2)\one\bigg)$$
Conversely, every element in $H_{1/2}^2(V, V)$ may be represented by a cocycle satisfying 
$$\Psi(v\otimes \one; z_1, z_2) = \Psi(\one \otimes v; z_1, z_2) = 0$$
(see \cite{H-1st-2nd-Coh} and its addendum \cite{H-1st-2nd-Coh-Add}). We may define
$$Y_1(v_1, x) v_2 = \Psi(v_1\otimes v_2;x, 0), $$
and see that it satisfies (1) -- (6). It should be noted that the convergence requirement in (5) precisely corresponds to the 1/2-composable condition defined in \cite{H-Coh}. It should also be noted that the cocycle $\Psi$ satisfies the following Harrison relation
\begin{align}
    \Psi(v_1\otimes v_2; z_1, z_2) = \Psi(v_2\otimes v_1; z_2, z_1).\label{Harrison-Reln}
\end{align}
and the $D$-derivative properties
\begin{align}
    & \Psi(D v_1\otimes v_2; z_1, z_2) = \frac{\partial}{\partial z_1}\Psi(v_1\otimes v_2; z_1, z_2), \label{D-derivative-1}\\
    & \Psi(v_1\otimes D v_2; z_1, z_2) = \frac{\partial}{\partial z_2}\Psi(v_1\otimes v_2; z_1, z_2), \label{D-derivative-2}\\
    & D \Psi(v_1\otimes v_2; z_1, z_2) = \left(\frac{\partial}{\partial z_1}+\frac{\partial}{\partial z_2}\right)\Psi(v_1\otimes v_2; z_1, z_2) \label{D-derivative-3}
\end{align}

For every homogeneous map $\phi: V \to V$ that commutes with $D$, the map
$$\Phi(v;z) = e^{zD}\phi(v)$$
forms a 1-cochain that is composable with one vertex operator. Conversely, every 1-cochain composable with one vertex operator admits such an expression. Correspondingly, 
$$(\delta\Phi)(v_1\otimes v_2; z_1, z_2) = E\bigg(Y(v_1, z_1) \Phi(v_2;z_2)\bigg) - E\bigg(\Phi(Y(v_1, z_1-z_2)v_2; z_2)\bigg) + E\bigg(Y(v_2, z_2)\Phi(v_1, z_1)\bigg)$$
is a coboundary that has zero image in $H_{1/2}^2(V, V)$. Equivalently, $\delta\Phi$ can also be expressed as 
$$(\delta\Phi)(v_1\otimes v_2; z_1, z_2) = E\bigg(Y(v_1, z_1)Y(\phi(v_2), z_2)\one \bigg)- E\bigg(\phi(Y(v_1, z_1)Y(v_2, z_2)\one)\bigg) + E\bigg(Y(\phi(v_1), z_1)Y(v_2, z_2)\one\bigg).$$
This will be a form we frequently use in expressing a coboundary in terms of the homogenous map $\phi$. Evaluating $z_2=0$, we see that 
\begin{align}
    (\delta\Phi)(v_1\otimes v_2; z_1, 0) = Y(v_1, z_1) \phi(v_2) - \phi(Y(v_1, z_1)v_2) + Y(\phi(v_1), z_1)v_2. \label{Cobdry-Def}
\end{align}
Understanding the coboundary is the key in the computation of the cohomology. 

\begin{nota}
    We use the notation $Y_\phi(v_1, z_1)v_2$ for (\ref{Cobdry-Def}). The first-order deformation given by $Y_1=Y_\phi$ is equivalent to that given by $Y_1=0$. 
\end{nota}


\begin{rema}
    Since we started with the assumption that $Y_1(v, x)\one = Y_1(\one, x)v = 0$, when we determine if $Y_1$ is given by a coboundary $Y_\phi$, it is necessary to restrict that $\phi(\one) = 0$ to make sure that $Y_\phi$ gives a first-order (trivial) deformation. 
\end{rema}

\section{The cocycle equation and its complementary solution} 

In this section, $V$ is a grading-restricted vertex algebra that is freely generated by the set $S$. The linear map $Y_1: V\otimes V \to V((x))$ satisfies the conditions (1) -- (6) in Section \ref{first-order-deform-subsec}.

\subsection{Regular part of the cocycle on the generators} \label{1-var-cocycle-generator} We start from analyzing the regular part $Y_1^+$ of a cocycle $Y_1$. We show that it is possible to pick a representative, such that the regular part of $Y_1$ is determined by its singular part $Y_1^-$. 

\begin{prop}\label{Cobdry-1-Var-Prop-1}
    Fix $a, b\in S$. Then there exists a homogeneous linear map $\phi: V\to V$ (depending on $a, b$) such that $\phi$ commutes with $D$, and 
    \begin{align}
        Y_1(a,x)b = Y_1^-(a,x)b + \frac 1 2 \left(e^{xD}Y_1^-(b,-x)a - Y_1^-(a,x)b\right) + Y_\phi(a,x)b. \label{Cobdry-1-Var-Prop-1-1}
    \end{align}
\end{prop}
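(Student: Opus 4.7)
The plan is to reformulate the target identity using skew-symmetry and then construct $\phi$ explicitly. First, I would apply the skew-symmetry $Y_1(a,x)b = e^{xD}Y_1(b,-x)a$ and compare singular and regular parts in $x$. Since $e^{xD}Y_1^+(b,-x)a$ is regular while $e^{xD}Y_1^-(b,-x)a$ contributes to both parts, this yields
\begin{align*}
Y_1^-(a,x)b &= \bigl[e^{xD}Y_1^-(b,-x)a\bigr]_-,\\
Y_1^+(a,x)b &= e^{xD}Y_1^+(b,-x)a + \bigl[e^{xD}Y_1^-(b,-x)a\bigr]_+.
\end{align*}
Substituting these into (\ref{Cobdry-1-Var-Prop-1-1}) shows that the singular part of the desired equation is automatically satisfied, and the remaining content is the purely regular condition
$$Y_\phi(a,x)b = \tfrac{1}{2}\bigl(Y_1^+(a,x)b + e^{xD}Y_1^+(b,-x)a\bigr).$$

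Next, to realize this condition, I would take $\phi(a) = \phi(b) = 0$, which reduces $Y_\phi(a,x)b$ to $-\phi(Y(a,x)b) = -\sum_{n\in\Z}\phi(a_n b)\,x^{-n-1}$. Matching coefficients then forces $\phi(a_n b) = 0$ for $n \geq 0$ and, for each $k \geq 0$,
$$\phi(a_{-k-1}b) = -\tfrac{1}{2}\Bigl(a^{def}_{-k-1}b + \sum_{l=0}^{k}\tfrac{(-1)^l}{(k-l)!}D^{k-l}b^{def}_{-l-1}a\Bigr).$$
Because the elements $a_{-k-1}b$ ($k \geq 0$) and $a_n b$ ($n \geq 0$) live in distinct weight spaces of $V$ and the prescription preserves weight, $\phi$ is well-defined and homogeneous on the span of these elements. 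I would then extend $\phi$ to commute with $D$ via the recursion $\phi(a_{-k-1}D^m b) = D\phi(a_{-k-1}D^{m-1}b) - (k+1)\phi(a_{-k-2}D^{m-1}b)$, obtained from expanding $D(a_{-k-1}D^{m-1}b)$, and finally extend by zero on a $D$-stable complement in $V$. Checking that the resulting $Y_\phi$ satisfies the reduced regular identity, and hence (\ref{Cobdry-1-Var-Prop-1-1}), is then a direct computation using the coboundary formula (\ref{Cobdry-Def}).

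The main obstacle is the $D$-equivariant extension. When $a \neq b$ and the two generators are already in PBW order, the elements $\{a_{-k-1}D^m b : k,m \geq 0\}$ form distinct PBW basis vectors and the recursion is manifestly consistent. When $a = b$ or when the generators are in reverse PBW order, the expression $a_{-k-1}b$ must first be rewritten in the PBW basis, introducing lower-filtration commutator corrections whose compatibility with $D$-equivariance must be verified case by case. The freely generated hypothesis is essential here: it ensures $D$ is injective on the positive-weight part of $V$, which makes the recursive definition of $\phi$ unambiguous and rules out hidden linear relations that would obstruct consistency.
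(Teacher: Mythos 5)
Your reduction is correct and matches the paper's: after applying skew-symmetry, the content of (\ref{Cobdry-1-Var-Prop-1-1}) is exactly that $\widetilde{Y_1}(a,x)b := \tfrac12\bigl(Y_1^+(a,x)b + e^{xD}Y_1^+(b,-x)a\bigr)$ must equal $Y_\phi(a,x)b = -\phi(Y(a,x)b)$, and your base-case formula for $\phi(a_{-k-1}b_{-1}\one)$ agrees with the coefficient extraction from this series. The paper defines $\phi$ on \emph{all} monomials at once by $\phi(a_{-m_1}b_{-m_2}\one) = -\Res_{x_1,x_2=0}x_1^{-m_1}x_2^{-m_2}\widetilde{Y_1}(a,x_1)Y(b,x_2)\one$, whereas you define it at $m_2=1$ and propagate by a $D$-recursion; since the recursion is exactly the residue form of the $D$-derivative property of the generating function, the two prescriptions coincide where both make sense.

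The genuine gap is the step you defer as ``verified case by case.'' When $a=b$ (which is needed for every diagonal pair, e.g.\ $Y_1(\omega,x)\omega$ in every example of Section 6), the recursion assigns values to $\phi$ on $a_{-k-1}a_{-m-1}\one$ for \emph{all} $(k,m)$, but $a_{-k-1}a_{-m-1}\one$ and $a_{-m-1}a_{-k-1}\one$ are the same element of $V$ up to a lower-filtration correction annihilated by $\phi$, so the assignment is well defined only if the resulting array is symmetric in $(k,m)$. This is not automatic from the recursion and is precisely the content of the paper's identity (\ref{tilde-Y1-Harrison}), $\widetilde{Y_1}(a,x_1)Y(b,x_2)\one = \widetilde{Y_1}(b,x_2)Y(a,x_1)\one$, which the paper obtains by showing (i) $\widetilde{Y_1}$ inherits skew-symmetry and the $D$-derivative/commutator properties from $Y_1$, and (ii) $E\bigl(\widetilde{Y_1}(a,z_1)Y(b,z_2)\one\bigr)$ has no poles, so the Harrison relation, a priori only an identity of $\overline{V}$-valued rational functions, upgrades to an identity of power series in $V[[x_1,x_2]]$. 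Without some form of this symmetry your construction is not known to be consistent, and this is the actual crux of the proposition rather than a routine verification. (A smaller point: free generation enters because prescribing $\phi$ on the monomials $a_{-m_1}b_{-m_2}\one$ and zero on the remaining PBW monomials requires that there be no linear relations among basis vectors; it is not about injectivity of $D$ on positive weights, which holds in any grading-restricted vertex algebra.)
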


\begin{proof}
    We first notice from the skew-symmetry of $Y_1$ that 
    $$e^{xD} Y_1^-(b, -x) a - Y_1^-(a,x)b = Y_1^+(a,x)b - e^{xD}Y_1^+(b,-x)a$$
    contains no negative powers of $x$. In particular, the singular part of $e^{xD} Y_1^-(a, x) b$ coincides with $Y_1^-(a,x)b$. To construct the homogeneous linear map $\phi$, we consider the series 
    \begin{align*}
        \widetilde{Y_1}(a,x)b = \ & Y_1(a,x)b - Y_1^-(a,x)b - \frac 1 2 \left(e^{xD}Y_1^-(b,-x)a - Y_1^-(a,x)b\right) \\
        = \ & Y_1(a,x)b - \frac 1 2 e^{xD}Y_1^-(b,-x)a - \frac 1 2 Y_1^-(a,x)b \\
        = \ & Y_1^+(a,x)b - \frac 1 2 e^{xD}Y_1^-(b,-x)a + \frac 1 2 Y_1^-(a,x)b. 
    \end{align*}
    Clearly, $\widetilde{Y_1}$ contains no negative powers of $x$. Moreover, 
    \begin{align*}
        e^{xD}\widetilde{Y_1}(b, -x)a = \ & e^{xD} \left(Y_1(b,-x)a - \frac 1 2 e^{-x D}Y_1^-(a,x)b - \frac 1 2 Y_1^-(b,-x)a\right)\\
        = \ & Y_1(a, x)b - \frac 1 2 Y_1^-(a,x)b - \frac 1 2 e^{xD} Y_1^-(b, -x)a = \widetilde{Y_1}(a,x)b.
    \end{align*}
    Moreover, since $Y_1$ satisfies $D$-derivative property and $D$-commutator formula, so does $Y_1^+$ and $Y_1^-$. Therefore, 
    \begin{align*}
        [D, \widetilde{Y_1}(a,x)]b = \ & [D, Y_1(a,x)]b - \frac 1 2 e^{xD} DY_1^-(b,-x)a + \frac 1 2 e^{xD} Y_1^-(Db,-x)a - \frac 1 2 [D, Y_1^-(a,x)]b\\
        = \ & \frac{d}{dx} Y_1(a, x) - \frac 1 2 \frac d{dx}\left(e^{xD} Y_1^-(b, -x)a\right) - \frac 1 2 \frac{d}{dx} Y_1^-(a, x) = \frac d{dx} \widetilde{Y_1}(a, x)b. 
    \end{align*}
    Using 
    $$DY_1^-(b, -x)a = Y_1^-(Db,-x)a + Y_1^-(b, -x)Da, $$
    we may also conclude that 
    \begin{align*}
        [D, \widetilde{Y_1}(a, x)b] = \widetilde{Y_1}(Da, x)b. 
    \end{align*}
    So $\widetilde{Y_1}$ also satisfies $D$-derivative property and $D$-commutator formula. Using these properties, we see that the series 
    $$\widetilde{Y_1}(a, z_1)Y(b, z_2)\one = \widetilde{Y_1}(a,z_1)e^{z_2} b = e^{z_2D}\widetilde{Y_1}(a,z_1-z_2)b$$ converges to a $\overline{V}$-valued rational function that is indeed holomorphic everywhere. With skew-symmetry of $\widetilde{Y_1}$, 
    \begin{align*}
        E\bigg(\widetilde{Y_1}(a, z_1)Y(b, z_2)\one\bigg) = \ &  E\bigg(e^{z_2D}\widetilde{Y_1}(a, z_1-z_2)b\bigg) = E\bigg(e^{z_1D}\widetilde{Y_1}(b, -z_1+z_2)a\bigg) \\
        =\ & E\bigg(\widetilde{Y_1}(b, z_2)e^{z_1D}a\bigg) = E\bigg(\widetilde{Y_1}(b, z_2)Y(a, z_1)\one\bigg) 
    \end{align*}
    Since the $\overline{V}$-valued rational function has no singularities, we see that as power series in $V[[x_1, x_2]]$, 
    \begin{align}\widetilde{Y_1}(a, x_1)Y(b,x_2)\one = \widetilde{Y_1}(b, x_2)Y(a, x_1)\one. \label{tilde-Y1-Harrison}
    \end{align}
    We define $\phi: V \to V$ by specifying that
    \begin{align*}
        \phi(a_{-m_1} b_{-m_2}\one) = -\Res_{x_1,x_2=0}x_1^{-m_1}x_2^{-m_2} \widetilde{Y_1}(a, x_1)Y(b,x_2)\one.
    \end{align*}
    and for every other basis element $v$ of the form (\ref{V-basis}), 
    $$\phi(v) = 0.$$
    Note that $\phi$ constructed in the proof annihilates $E_{\text{wt}(a) + \text{wt}(b) -1}^S$. Note also that (\ref{tilde-Y1-Harrison}) guarantees that 
    $$\phi(a_{-m_1}b_{-m_2}\one) = \phi(b_{-m_2}a_{-m_1}\one).$$
    So the choice of $\phi$ does not depend on the ordering of $a,b$ in $S$. 
    
    To check that $\phi$ commutes with $D$, we use the $D$-derivative property and $D$-commutator formula,
    \begin{align*}
        D \widetilde{Y_1}(a, x_1)Y(b, x_2)\one = \ & [D, \widetilde{Y_1}(a, x_1)]Y(b, x_2)\one + \widetilde{Y_1}(a, x_1)[D,Y(b, x_2)]\one   \\
        = \ & \left(\frac{\partial}{\partial x_1} + \frac{\partial}{\partial x_2}\right) Y_1(a, x_1)Y(b, x_2)\one
    \end{align*} 
    Taking $\Res_{x_1,x_2=0}x_1^{-m_1}x_2^{-m_2}$, we see that 
    \begin{align*}
        D\phi\bigg(a_{-m_1}b_{-m_2}\one\bigg) &= m_1\phi\bigg(a_{-m_1-1}b_{-m_2}\one\bigg)+m_2\phi\bigg(a_{-m_1}b_{-m_2-1}\one\bigg)\\
        & = \phi\bigg(Da_{-m_1}b_{-m_2}\one\bigg).
    \end{align*}
    Thus $\phi$ commutes with $D$. To conclude the proof, we calculate that 
    \begin{align*}
        Y_\phi(a, x) b = \ & Y(\phi(a), x)b - \phi(Y(a,x)b) + Y(a, x)\phi(b)\\
        = \ & -\sum_{m\geq 1} \phi(a_{-m}b_{-1}\one) x^{m-1} = \widetilde{Y_1}(a, x)b.
    \end{align*}
    Therefore, with the so-defined $\phi$, (\ref{Cobdry-1-Var-Prop-1-1}) holds. 
\end{proof}

\begin{rema}
    Clearly, $Y_\phi$ also satisfies the skew-symmetry. So for fixed $a, b\in S$, if (\ref{Cobdry-1-Var-Prop-1-1}) holds, then 
    $$Y(b,x)a = Y_1^-(b,x)a + \frac 1 2 \left(e^{xD}Y_1^-(a,-x)b - Y_1^-(b,x)a\right)+Y_\phi(b,x)a$$
    also holds. 
\end{rema}

\begin{prop}\label{Cobdry-1-Var-Prop-2}
    Let $S = \{a^{(1)}, ..., a^{(r)}\}$. Then there exists an operator $Y_1^{(r,r)}: V\otimes V \to V((x))$, such that 
    \begin{enumerate}[leftmargin=*]
        \item $Y_1^{(r,r)}$ satisfies (1) -- (6) in Section \ref{first-order-deform-subsec}. 
        \item There exists a homogeneous linear map $\phi: V \to V$ such that 
        $$Y_1(u, x)v = Y_1^{(r,r)}(u, x) v + Y_\phi(u,x)v$$
        for every $u,v\in V$. 
        \item For every $1\leq i, j \leq r$, 
        \begin{align*}
            Y_1^{(r,r)}(a^{(i)}, z)a^{(j)} = Y_1^{(r,r),-}(a^{(i)}, z)a^{(j)} + \frac 1 2\left(e^{xD}Y_1^{(r,r),-}(a^{(j)}, -x)a^{(i)} - Y_1^{(r,r),-}(a^{(i)}, x)a^{(j)}\right)
        \end{align*}
    \end{enumerate}
    Therefore, there exists a coboundary $\delta\Phi$ such that for every $1\leq i \leq j \leq r$, 
    $$Y_1(a^{(i)}, z)a^{(j)} = Y_1^-(a^{(i)}, z)a^{(j)} + \frac 1 2\left(e^{xD}Y_1^-(a^{(j)}, -x)a^{(i)} - Y_1^-(a^{(i)}, x)a^{(j)}\right) + (\delta\Phi)(a^{(i)}\otimes a^{(j)};z, 0).$$
    where $Y_1^-(a^{(i)}, x)a^{(j)} $ is the singular part of $Y_1$, i.e., the part consisting only negative powers. In other words, the cohomology class of $Y_1$ can be represented by its singular parts. 
\end{prop}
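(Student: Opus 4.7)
The strategy is to iterate Proposition \ref{Cobdry-1-Var-Prop-1} across all pairs of generators, accumulating the coboundary corrections. Enumerate the pairs $(i,j)$ with $1\leq i\leq j\leq r$ as $p_1,\ldots,p_N$, $N=\binom{r+1}{2}$, in an order such that $\wt a^{(i_k)}+\wt a^{(j_k)}$ is weakly increasing in $k$. Starting from $Y_1^{(0)}=Y_1$, at each step $k$ I apply Proposition \ref{Cobdry-1-Var-Prop-1} to $Y_1^{(k-1)}$ at the pair $p_k$ to produce a $D$-commuting homogeneous linear map $\phi_k$, and set $Y_1^{(k)}=Y_1^{(k-1)}-Y_{\phi_k}$. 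Take $Y_1^{(r,r)}:=Y_1^{(N)}$ and $\phi:=\phi_1+\cdots+\phi_N$. Each $Y_1^{(k)}$ remains a cocycle satisfying (1)--(6) of Section \ref{first-order-deform-subsec} because $Y_{\phi_k}$ is a trivial first-order deformation, proving item (1); item (2) telescopes to $Y_1=Y_1^{(r,r)}+Y_\phi$.

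At step $k$, Proposition \ref{Cobdry-1-Var-Prop-1} directly yields item (3) on the pair $p_k$, and its proof shows that $Y_{\phi_k}(a^{(i_k)},x)a^{(j_k)}=\widetilde{Y_1}(a^{(i_k)},x)a^{(j_k)}$ is regular in $x$, so the singular part on $p_k$ carries over verbatim from $Y_1^{(k-1)}$ to $Y_1^{(k)}$. The central task, and the main technical obstacle, is to verify that subsequent coboundaries $Y_{\phi_{k''}}$ for $k''>k$ do not destroy the form already achieved on $p_k$. Since $\phi_{k''}$ annihilates all generators --- its support is contained in the span of $\{a^{(i_{k''})}_{-m_1}a^{(j_{k''})}_{-m_2}\one : m_1,m_2\geq 1\}$ --- one has $Y_{\phi_{k''}}(a^{(i_k)},x)a^{(j_k)}=-\phi_{k''}(Y(a^{(i_k)},x)a^{(j_k)})$, and preservation reduces to showing that $\phi_{k''}$ annihilates every mode $a^{(i_k)}_n a^{(j_k)}$.

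The mode analysis splits into two cases. For $n\geq 0$, Remark \ref{Filtration} places $a^{(i_k)}_n a^{(j_k)}$ in $E^S_{\wt a^{(i_k)}+\wt a^{(j_k)}-1}$, while every support basis vector of $\phi_{k''}$ has total generator weight $\wt a^{(i_{k''})}+\wt a^{(j_{k''})}\geq \wt a^{(i_k)}+\wt a^{(j_k)}$ by the weight ordering, so $\phi_{k''}$ vanishes on the relevant filtration piece. For $n=-m\leq -1$, the element $a^{(i_k)}_{-m}a^{(j_k)}_{-1}\one$ is already in PBW form thanks to $i_k\leq j_k$; its equality with a support vector $a^{(i_{k''})}_{-m_1}a^{(j_{k''})}_{-m_2}\one$ of $\phi_{k''}$ would force the unordered multisets $\{i_k,j_k\}=\{i_{k''},j_{k''}\}$, hence $p_k=p_{k''}$, contradicting $k\neq k''$; this is the only place where the assumption that $V$ is freely generated is invoked, since it ensures that these PBW monomials are genuine basis vectors so that no collision can be hidden by relations. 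This establishes item (3) for every pair $(i,j)$ with $i\leq j$, and the case $i>j$ follows from skew-symmetry of $Y_1^{(r,r)}$. Finally, setting $\Phi(v;z)=e^{zD}\phi(v)$ and invoking (\ref{Cobdry-Def}) to identify $(\delta\Phi)(a^{(i)}\otimes a^{(j)};z,0)=Y_\phi(a^{(i)},z)a^{(j)}$ yields the claimed coboundary expression.
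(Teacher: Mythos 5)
Your proof is correct and follows essentially the same strategy as the paper's: iterate Proposition \ref{Cobdry-1-Var-Prop-1} over pairs of generators, applying it at each step to the current (already corrected) operator so that coboundary-induced singular terms are absorbed, and then verify that the later correction maps do not disturb the pairs already treated. The only real difference is that you order the pairs by weakly increasing total weight rather than lexicographically as the paper does; this is the cleaner choice, since it is precisely what makes your non-interference argument for the non-negative modes work (namely that $\phi_{k''}$ annihilates $E^S_{\wt a^{(i_k)}+\wt a^{(j_k)}-1}$ because its supporting basis vectors have total generator weight at least $\wt a^{(i_k)}+\wt a^{(j_k)}$).
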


\begin{proof}
    Without loss of generality, we assume that $\wt a^{(i)} \leq \wt a^{(j)}$ for every $1\leq i < j \leq r$. Arrange a linear order on 2-tuple of numbers in $\{1, ..., r\}$ by $(1,1) < \cdots < (1, r) < (2, 1) < \cdots < (2, r) < \cdots < (r,1)< \cdots < (r, r).$ We also identify $(i, r+1)$ with $(i+1, 1)$ for every $i=1, ..., r-1$. 
    We use Proposition \ref{Cobdry-1-Var-Prop-1} to find a homogeneous linear map $\phi_{1,1}: V \to V$ such that 
    $$Y_1(a^{(1)}, x)a^{(1)} - Y_{\phi_{1,1}}(a^{(1)}, x)a^{(1)} = Y_1^-(a^{(1)}, x)a^{(1)} + \frac 1 2 \left(e^{x D}Y_1^-(a^{(1)}, -x)a^{(1)} - Y_1^-(a^{(1)}, x)a^{(1)}\right) $$
    We set $Y_1^{(1,1)}(u, x)v= Y_1(u, x)v - Y_{\phi_{1,1}}(u, x)v$. Then clearly, $Y_1^{(1,1)}$ is still an operator satisfying (1) -- (6) in Section \ref{first-order-deform-subsec}. Also, the singular part of $Y_1^{(1,1)}(a^{(1)}, x)a^{(1)}$ coincides with $Y_1^-(a^{(1)}, x)a^{(1)}$. Thus, from the defining formula, we have 
    $$Y_1^{(1,1)}(a^{(1)}, x)a^{(1)} = Y_1^{(1,1),-}(a^{(1)}, x) + \frac 1 2 \left(e^{x D}Y_1^{(1,1),-}(a^{(1)}, -x)a^{(1)} - Y_1^{(1,1),-}(a^{(1)}, a^{(1)})\right)$$

    We remark that for every $(i,j)>(1,1)$, $Y_{\phi_{(11)}}(a^{(i)}, x)a^{(j)}$ constructed in the base step can bring forth additional singular terms. Indeed, if for some $m \geq 0$, $a^{(1)}_m a^{(2)}$ is a linear combination of $a^{(1)}_{-n_1}a^{(1)}_{-n_2}\one$, then $$Y_{\phi_{1,1}}(a^{(1)}, x)a^{(2)}= -\phi(Y(a^{(1)}, x)a^{(2)})$$
    would include a linear combination of 
    $$-\phi(a^{(1)}_{-n_1}a^{(1)}_{-n_2}\one) x^{-m-1}.$$
    Therefore, to continue with construction, we have to start with $Y_1^{(1,1)}$ instead of $Y_1$, to incorporate these potential extra singular terms. 

    Generally, having constructed $Y_1^{(i,j)}$ and $Y_1^{(i',j')}$ for every $(i', j')< (i,j)$ satisfying the conditions, we use Proposition \ref{Cobdry-1-Var-Prop-1} to find a homogeneous map $\phi_{i,j+1}: V\to V$ such that 
    \begin{align*}
        & Y_1^{(i,j)}(a^{(i)}, x)a^{(j+1)} - Y_{\phi_{i,j+1}}(a^{(i)}, x)a^{(j+1)} \\
        = \ &  Y_1^{(i,j),-}(a^{(i)}, x)a^{(j+1)} + \frac 1 2 \left(e^{x D}Y_1^{(i,j),-}(a^{(j+1)}, -x)a^{(i)} - Y_1^{(i,j),-}(a^{(i)}, a^{(j+1)})\right)
    \end{align*}
    Set $Y_1^{(i,j+1)}(u, x)v = Y_1^{(i,j)}(u, x)v - Y_{\phi_{i,j+1}}(u, x)v$. Then $Y_1^{(i,j+1)}$ is still an operator satisfying (1) -- (6) in Section \ref{first-order-deform-subsec}, and for every $u, v\in V$, 
    $$Y_1^{(i,j+1)}(u,x)v = Y_1(u, x)v - \sum_{(i', j')\leq (i,j+1)}Y_{\phi_{i',j'}}(u,x)v.$$
    Also, the singular part of $Y^{(i,j+1)}(a^{(i)}, x)a^{(j+1)}$ coincides with the singular part of $Y^{(i,j)}(a^{(i)}, x)a^{(j+1)}$. From the defining formula, $$Y^{(i,j+1)}(a^{(i)}, x)a^{(j+1)}=Y_1^{(i,j+1),-}(a^{(i)}, x)a^{(j+1)} + \frac 1 2 \left(e^{x D}Y_1^{(i,j+1),-}(a^{(j+1)}, -x)a^{(i)} - Y_1^{(i,j+1),-}(a^{(i)}, a^{(j+1)})\right)$$ is determined by its singular part. From skew-symmetry, it is clear that the same identity holds if $a^{(i)}$ and $a^{(j+1)}$ are swapped. 
    Moreover, for every $(i',j')<(i,j+1)$, 
    \begin{align*}
        Y_1^{(i,j+1)}(a^{(i')}, x)a^{(j')} = \ & Y_1^{(i',j')}(a^{(i')}, x)a^{(j')} - \sum_{(i',j'+1)\leq (k, l) \leq (i, j+1)} Y_{\phi_{k,l}}(a^{(i')}, x)a^{(j')}\\
        = \ & Y_1^{(i',j')}(a^{(i')}, x)a^{(j')} + \sum_{(i',j'+1)\leq (k, l) \leq (i, j+1)} \phi_{k,l}(Y(a^{(i')}, x)a^{(j')})\\
        = \ & Y_1^{(i',j')}(a^{(i')}, x)a^{(j')}
    \end{align*}
    since $\phi_{k,l}$ annihilates all the basis element other than $a^{(k)}_{-m_1}a^{(l)}_{-m_2}\one$ (in particular, $E_{\text{wt}(a^{(k)}) + \text{wt}(a^{(l)}) - 1}$). Thus, 
    $$Y_1^{(i',j')}(a^{(i')}, z)a^{(j')} = Y_1^{(i',j'),-}(a^{(i')}, z)a^{(j')} + \frac 1 2\left(e^{xD}Y_1^{(i',j'),-}(a^{(j')}, -x)a^{(i')} - Y_1^{(i',j'),-}(a^{(i')}, x)a^{(j')}\right)$$
    leads to 
    $$Y_1^{(i,j+1)}(a^{(i')}, z)a^{(j')} = Y_1^{(i,j+1),-}(a^{(i')}, z)a^{(j')} + \frac 1 2\left(e^{xD}Y_1^{(i,j+1),-}(a^{(j')}, -x)a^{(i')} - Y_1^{(i,j+1),-}(a^{(i')}, x)a^{(j')}\right). $$
    From skew-symmetry, it is clear that the same identity holds if $a^{(i')}$ and $a^{(j')}$ are swapped. 
    Thus the inductive step of the construction is proved. Repeating the process, we find the operator $Y_1^{(r,r)}$ satisfying the required conditions. 
\end{proof}

\begin{thm}\label{Cobdry-1-Var-Thm}
    Let $V$ be a vertex algebra that is freely generated by $S = \{a^{(1)}, ..., a^{(r)}\}$. Then, every cohomology class in in $H_{1/2}^2(V, V)$ is represented by a map $Y_1: V\otimes V \to V((x))$, such that 
    \begin{enumerate}[leftmargin=*]
        \item $Y_1$ satisfies (1) -- (6) in Section \ref{first-order-deform-subsec}. 
        \item For every $1\leq i, j\leq r$, 
        \begin{align}
            Y_1(a^{(i)}, x)a^{(j)} = Y_1^-(a^{(i)}, x)a^{(j)} + \frac 1 2\left(e^{xD}Y_1^-(a^{(j)}, -x)a^{(i)} - Y_1^-(a^{(i)}, x)a^{(j)}\right) \label{Y_1-ai-aj}
        \end{align}
        \item \label{Cobdry-1-Var-Thm-Part-3} For every $p\in \N, s^{(1)}, ...,  s^{(p)}, u, v\in S$, 
        $Y(s^{(1)}, z_1)\cdots Y(s^{(p)}, z_p)Y_1(u, z_{p+1})v$ converges absolutely to a $\overline{V}$-valued rational function. 
        \item For every $s, u, v\in S$, $Y_1(s, z_1)Y(u, z_2)v$ convnerges absolutely to a $\overline{V}$-valued rational function. 
    \end{enumerate}
\end{thm}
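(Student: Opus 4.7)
The plan is to obtain conditions (1) and (2) directly from Proposition \ref{Cobdry-1-Var-Prop-2}, and then to establish the convergence statements (3) and (4) by combining the cocycle equation with the explicit form of $Y_1$ on pairs of generators given by (2), together with the skew-symmetry and $D$-derivative properties of $Y_1$. The only new input beyond what Proposition \ref{Cobdry-1-Var-Prop-2} already supplies will be the rationality axiom of $Y$, applied to $Y_1$-values on generators that, by (2), are explicit rational functions.

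First I would invoke Proposition \ref{Cobdry-1-Var-Prop-2}: for any cocycle $Y_1$ representing the specified cohomology class, the proposition produces a coboundary $\delta\Phi$ such that $Y_1-\delta\Phi$ lies in the same class and satisfies (1) together with the identity (\ref{Y_1-ai-aj}) on pairs of generators. Replacing the representative by $Y_1-\delta\Phi$ establishes (1) and (2). Next, for (4), I would fix $s,u,v\in S$ and appeal to the cocycle equation (\ref{Cocycle-Eqn}):
$$E\bigl(Y_1(s,z_1)Y(u,z_2)v+Y(s,z_1)Y_1(u,z_2)v\bigr)=E\bigl(Y_1(Y(s,z_1-z_2)u,z_2)v+Y(Y_1(s,z_1-z_2)u,z_2)v\bigr).$$
Two of the four summands are controlled immediately by (2): $Y_1(u,z_2)v\in V((z_2))$ has the explicit form (\ref{Y_1-ai-aj}), so $Y(s,z_1)Y_1(u,z_2)v$ converges by rationality of $Y$; and $Y_1(s,z_1-z_2)u$ has the same form, so $Y(Y_1(s,z_1-z_2)u,z_2)v$ likewise converges to a $\overline{V}$-valued rational function. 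The remaining right-hand term $Y_1(Y(s,z_1-z_2)u,z_2)v$ is harder, because $Y(s,z_1-z_2)u$ produces elements of $V$ outside $S$ on which $Y_1$ is not given directly by (2); I would handle it by combining the skew-symmetry $Y_1(w,x)v=e^{xD}Y_1(v,-x)w$ with the cocycle equation applied to a reordered triple, using the Harrison relation to express the term using the three already-controlled types. Once three of the four summands are shown to converge individually, convergence of $Y_1(s,z_1)Y(u,z_2)v$ follows, proving (4). For (3), I would induct on $p$: the base case $p=0$ is trivial, the case $p=1$ follows from the convergence of $Y(s,z_1)Y_1(u,z_2)v$ established above, and for $p\geq 2$ the associativity of $Y$ reduces the $(p+1)$-fold product to one of length $p$ covered by the inductive hypothesis.

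The main obstacle is the decoupling step in (4), specifically the handling of $Y_1(Y(s,z_1-z_2)u,z_2)v$: the 1/2-composability packaged into the cocycle condition only guarantees convergence of sums of pairs of series on each side of the cocycle equation, so isolating the convergence of the single series $Y_1(s,z_1)Y(u,z_2)v$ requires independently controlling all remaining summands. The explicit Laurent form (\ref{Y_1-ai-aj}) supplied by Proposition \ref{Cobdry-1-Var-Prop-2} is the essential leverage, and the skew-symmetry and Harrison relations are the mechanisms transferring that leverage to the final summand. Once (4) is in hand, (3) follows by a routine induction that uses only associativity of $Y$, with no further appeal to free generation.
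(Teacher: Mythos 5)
Your handling of conditions (1) and (2) via Proposition \ref{Cobdry-1-Var-Prop-2} matches the paper, and you have correctly identified the key mechanism for (4): the $1/2$-composability built into the cocycle equation gives convergence of the \emph{sum} $Y_1(s,z_1)Y(u,z_2)v + Y(s,z_1)Y_1(u,z_2)v$, and convergence of the second summand then forces convergence of the first. But you then add a step that is both unnecessary and, as sketched, a genuine gap: you try to establish individual convergence of $Y_1(Y(s,z_1-z_2)u,z_2)v$ so as to control ``three of the four summands.'' You do not need the right-hand side of the cocycle equation at all --- once the left-hand sum converges and $Y(s,z_1)Y_1(u,z_2)v$ converges, you are done. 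Worse, your proposed fix for that term (skew-symmetry plus the Harrison relation applied to a reordered triple) would not go through at this stage: $Y(s,z_1-z_2)u=\sum_n (s_nu)(z_1-z_2)^{-n-1}$ involves infinitely many vectors $s_nu\notin S$ on which $Y_1$ is not yet controlled, and skew-symmetry only trades $Y_1(s_nu,z_2)v$ for $e^{z_2D}Y_1(v,-z_2)(s_nu)$, which is again ``$Y_1$ of a generator applied to an arbitrary vector'' --- precisely the object whose convergence the generating-function machinery of Sections 4 and 5 is built to establish. The repair is simply to delete that step.

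Two smaller points. First, your induction on $p$ for condition (3) does not close as stated: associativity converts $Y(s^{(1)},z_1)Y(s^{(2)},z_2)\cdots$ into $Y(Y(s^{(1)},z_1-z_2)s^{(2)},z_2)\cdots$, whose leading argument is no longer a generator, so the inductive hypothesis (stated only for $s^{(i)}\in S$) does not apply. No induction is needed: by (2), $Y_1(u,z_{p+1})v$ is a \emph{finite} sum of fixed vectors times powers of $z_{p+1}$, together with $e^{z_{p+1}D}$ applied to another finite sum, so $Y(s^{(1)},z_1)\cdots Y(s^{(p)},z_p)Y_1(u,z_{p+1})v$ reduces directly to finitely many products of ordinary vertex operators (convergent by \cite{FHL}), with the $e^{z_{p+1}D}$ piece handled by $D$-conjugation and analytic continuation; this is the paper's argument. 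Second, note the logical order: (3) must be proved \emph{before} (4) and without the cocycle equation (exactly as above), since the $p=1$ case of (3) is what licenses the decoupling in (4); your write-up derives that $p=1$ convergence inside the proof of (4) ``by rationality of $Y$,'' which is essentially correct but should be stated as the direct argument just described rather than as a byproduct of the cocycle equation.
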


\begin{proof}
    Let $\Psi$ be a cocycle. Let $\bar{Y_1}: V\otimes V \to V((x))$ be the map corresponding to $\Psi$, i.e., 
    $$\bar{Y_1}(u, z)v = \Psi(u\otimes v; z, 0). $$
    Let $Y_1: V\otimes V \to V((x))$ be the operator $\bar{Y_1}^{(r,r)}$ obtained in Proposition \ref{Cobdry-1-Var-Prop-2}. Then clearly, $Y_1$ satisfies Conditions (1) and (2). 
    
    We proceed to check Condition (3). Fix $1\leq i \leq j \leq r$, then for $u=a^{(i)}, v = a^{(j)}$, since $Y_1^-(a^{(i)}, x)a^{(j)}$ contains finitely many terms, 
    \begin{align*}
        Y(s^{(1)}, z_1)\cdots Y(s^{(p)}, z_p) Y^-(a^{(i)}, z_{p+1})a^{(j)}
    \end{align*}
    is a finite sum of 
    $$Y(s^{(1)}, z_1)\cdots Y(s^{(p)}, z_p) (a^{(i)})^{def}_m a^{(j)} z_{p+1}^{-m-1}.$$
    Recall from \cite{FHL} Section 3.5 (also see \cite{Q-Mod}, Theorem 3.4) that the product of any numbers of vertex operators converge to a $\overline{V}$-valued rational function. Thus, the series converges absolutely to a finite sum of $\overline{V}$-valued rational function. Similarly, since $Y_1^-(a^{(i)}, x)a^{(j)}$ contains only finitely many terms, 
    $$e^{z_{p+1} D} Y(s^{(1)}, z_1-z_{p+1})\cdots Y(s^{(p)}, z_p-z_{p+1}) Y^-(a^{(j)}, -z_{p+1})a^{(j)}$$
    converges to a $\overline{V}$-valued rational function. By an argument using $D$-conjugation property and \textcolor{black}{analytic continuation}, we see that 
    $$Y(s^{(1)}, z_1)\cdots Y(s^{(p)}, z_p) e^{z_{p+1}D} Y^-(a^{(j)}, -z_{p+1})a^{(j)}$$
    converges absolutely to a $\overline{V}$-valued rational function. Thus Condition (3) holds for $u=a^{(i)}, v=a^{(j)}$ with $i\leq j$. For the case $u = a^{(j)}, v=a^{(i)}$ with $i < j$, by skew-symmetry, 
    \begin{align*}
        & Y(s^{(1)}, z_1)\cdots Y(s^{(p)}, z_p) Y(u, z_{p+1}) v\\
        = \ & Y(s^{(1)}, z_1)\cdots Y(s^{(p)}, z_p) e^{z_{p+1}D}Y(a^{(i)}, -z_{p+1}) a^{(j)}
    \end{align*}
    Its convergence follows from the convergence of 
    $$e^{z_{p+1}D}Y(s^{(1)}, z_1-z_{p+1})\cdots Y(s^{(p)}, z_p-z_{p+1}) Y(a^{(i)}, -z_{p+1}) a^{(j)}$$
    and an argument of \textcolor{black}{analytic continuation}. Thus Condition (3) holds for $u=a^{(j)}, v=a^{(i)}$ with $i<j$. 

    To check Condition (4), we evaluate $u_1 = s, u_2 = u, v = v$ in the cocycle equation (\ref{Cocycle-Eqn}) satisfied by $Y_1$, we have
    \begin{align*}
        & E\bigg( Y_1(s, z_1) Y(u, z_2) v  + Y(s, z_1)Y_1(u, z_2)v\bigg)\\
        = \ & E\bigg( Y_1(Y(s, z_1-z_2) u, z_2) v  + Y(Y_1(s, z_1-z_2)u, z_2)v \bigg)
    \end{align*}
    In particular, the series
    $$Y_1(s, z_1) Y(u, z_2) v  + Y(s, z_1)Y_1(u, z_2)v$$
    converges to a $\overline{V}$-valued rational function. By Condition (3), the second summand $Y(s, z_1)Y_1(u, z_2)v$ converges to a $\overline{V}$-valued rational function. Thus so is the first summand.  
\end{proof}

\begin{rema}\label{Cobdry-1-Var-Rema-1}
    \begin{enumerate}[leftmargin=*]
        \item Theorem \ref{Cobdry-1-Var-Thm} reduces the computation of $Y_1(a^{(i)}, x)a^{(j)}$ into the computation of its singular part
        $$Y_1^-(a^{(i)}, x)a^{(j)} = \sum_{m\geq 0} (a^{(i)})^{def}_m a^{(j)} x^{-m-1},$$
        that consists of finitely many terms. For any fixed $m$, $(a^{(i)})^{def}_m a^{(j)}$ is in the homogeneous subspace of weight $\wt a^{(i)} - m - 1 + \wt a^{(j)}$. Therefore, the number of free variables appearing $Y_1(a^{(i)}, x)a^{(j)}$ is at most $\sum_{m\geq 1}\dim V_{(\text{wt}(a^{(i)}) - m - 1 + \text{wt}(a^{(j)}))}$. 
        \item When $u, v$ are generators in $S$, the skew-symmetry is the only condition $Y_1(u, x)v$ should satisfy. In case $i > j$, $Y_1(a^{(i)}, x)a^{(j)}$ should be identical to $e^{xD}Y_1(a^{(j)}, -x)a^{(i)}$ and thus is determined. In case $i=j$, the skew-symmetry further reduces the number of free variables. 
        \item The proof of Proposition \ref{Cobdry-1-Var-Prop-1} fails when there exists relations among the basis elements. For example, when $V$ is the simple affine VOA associated with $\mathfrak{sl}_2$ of level 1, then $e(-1)^2\one$ is a singular vector. The construction of $\phi$ in Proposition \ref{Cobdry-1-Var-Prop-1} requires that $\phi$ sends $e(-1)^2 \one$ to the constant term of $Y_1(e(-1)\one, x)e(-1)\one$. If the constant term is nonzero, then $\phi$ is not well-defined. In other words, the regular part may represent a nontrivial cocycle in the cohomology group. 
    \end{enumerate}
\end{rema}

\begin{prop}\label{ai--n-aj-Prop}
    For $1\leq i \leq j \leq r$, the components of the regular parts of $Y_1(a^{(i)},x)a^{(j)}$ and $Y_1(a^{(j)},x)a^{(i)}$ admit the following formula: for each $n \geq 1$
        \begin{align}
            (a^{(i)})^{def}_{-n} a^{(j)} = \ & \frac 1 2 \sum_{\alpha \geq 0} \frac{D^{n+\alpha}}{(n+\alpha)!}(-1)^\alpha \binom{n+\alpha-1}{\alpha} (a^{(i)})^{def}_\alpha a^{(j)} \label{ai--n-aj}\\
            (a^{(j)})^{def}_{-n} a^{(i)} = \ & \frac 1 2 \sum_{\alpha \geq 0} \frac{D^{n+\alpha}}{(n+\alpha)!}(-1)^{\alpha+1} (a^{(i)})^{def}_\alpha a^{(j)}.\label{aj--n-ai}
        \end{align}
\end{prop}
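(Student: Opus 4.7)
The plan is to compute the components $(a^{(i)})^{def}_{-n}a^{(j)}$ and $(a^{(j)})^{def}_{-n}a^{(i)}$ by extracting Laurent coefficients from the identity (\ref{Y_1-ai-aj}) established in Theorem \ref{Cobdry-1-Var-Thm}. Formula (\ref{aj--n-ai}) will drop out from a single coefficient extraction; formula (\ref{ai--n-aj}) requires a second application of (\ref{Y_1-ai-aj}) to convert between the two natural coefficient families, followed by a standard alternating binomial identity.

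First, I combine like terms in (\ref{Y_1-ai-aj}) to rewrite it as
$$Y_1(a^{(i)}, x) a^{(j)} = \tfrac{1}{2}Y_1^-(a^{(i)}, x) a^{(j)} + \tfrac{1}{2}e^{xD} Y_1^-(a^{(j)}, -x) a^{(i)}.$$
Since $Y_1^-(a^{(i)}, x) a^{(j)}$ has only strictly negative powers of $x$, the coefficient of $x^{n-1}$ on the left, which is $(a^{(i)})^{def}_{-n} a^{(j)}$ for $n\geq 1$, receives contribution only from the exponential term. Expanding $e^{xD}=\sum_k x^k D^k/k!$ and $Y_1^-(a^{(j)}, -x) a^{(i)} = \sum_{\alpha\geq 0}(-1)^{\alpha+1}(a^{(j)})^{def}_\alpha a^{(i)} x^{-\alpha-1}$, then collecting $x^{n-1}$, produces
$$(a^{(i)})^{def}_{-n} a^{(j)} = \frac{1}{2}\sum_{\alpha\geq 0}\frac{(-1)^{\alpha+1}}{(n+\alpha)!}D^{n+\alpha}(a^{(j)})^{def}_\alpha a^{(i)};$$
swapping $i\leftrightarrow j$ here is exactly (\ref{aj--n-ai}).

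For (\ref{ai--n-aj}) I must re-express each $(a^{(j)})^{def}_\alpha a^{(i)}$ (for $\alpha\geq 0$) in terms of the $(a^{(i)})^{def}_\beta a^{(j)}$. Applying (\ref{Y_1-ai-aj}) with the roles of $i$ and $j$ swapped and extracting the coefficient of $x^{-\alpha-1}$ (the singular part) on both sides yields the conversion identity
$$(a^{(j)})^{def}_\alpha a^{(i)} = \sum_{k\geq 0}\frac{(-1)^{\alpha+k+1}}{k!}D^k(a^{(i)})^{def}_{\alpha+k}a^{(j)}.$$
Substituting this into the displayed formula for $(a^{(i)})^{def}_{-n}a^{(j)}$, reindexing by $\beta=\alpha+k$, and factoring out $1/(n+\beta)!$ reduces the inner sum over $\alpha$ to $\sum_{j=0}^{\beta}(-1)^j\binom{n+\beta}{j}$, which by the classical partial-sum identity equals $(-1)^{\beta}\binom{n+\beta-1}{\beta}$; this produces (\ref{ai--n-aj}) on the nose. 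The main obstacle is not conceptual but combinatorial: carefully tracking the signs through $(-x)^{-\alpha-1}=(-1)^{\alpha+1}x^{-\alpha-1}$ and two nested $D$-exponential expansions, and recognizing the partial alternating binomial sum in precisely the form needed to close the calculation.
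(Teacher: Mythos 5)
Your proposal is correct and follows essentially the same route as the paper: extract the regular-part coefficients of (\ref{Y_1-ai-aj}) to get (\ref{aj--n-ai}), then convert $(a^{(j)})^{def}_\alpha a^{(i)}$ back to $(a^{(i)})^{def}_{\alpha+k}a^{(j)}$ via the skew-symmetry relation (the paper derives this conversion from $Y_1(a^{(j)},x)a^{(i)}=e^{xD}Y_1(a^{(i)},-x)a^{(j)}$ directly, while you extract it from the singular part of the swapped (\ref{Y_1-ai-aj}); these are equivalent), and close with the alternating partial binomial sum $\sum_{j=0}^{\beta}(-1)^j\binom{n+\beta}{j}=(-1)^{\beta}\binom{n+\beta-1}{\beta}$, which is exactly the identity the paper invokes.
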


\begin{proof}
    We first note that since $Y_1$ satisfies skew symmetry, it is necessary that 
    \begin{align*}
        Y_1(a^{(j)}, x) a^{(i)} = e^{xD} Y_1(a^{(i)}, -x) a^{(j)}, 
    \end{align*}
    the right-hand-side of which is computed as 
    \begin{align*}
        & \sum_{\alpha=0}^\infty \frac{x^\alpha D^\alpha}{\alpha !} \sum_{n\in \Z} (a^{(i)})^{def}_n a^{(j)} (-x)^{-n-1} = \sum_{\alpha=0}^\infty\sum_{n\in \Z} \frac{D^\alpha}{\alpha !} (a^{(i)})^{def}_n a^{(j)} (-1)^{n+1}x^{-n+\alpha-1}\\
        = \ & \sum_{n\in \Z} \left(\sum_{\alpha=0}^\infty \frac{D^\alpha}{\alpha !} (-1)^{n+\alpha+1}(a^{(i)})^{def}_{n+\alpha} a^{(j)}\right) x^{-n-1}
    \end{align*}
    So for every $n \geq 0$, 
    \begin{align}
        (a^{(j)})^{def}_n a^{(i)} = \sum_{\alpha=0}^\infty \frac{D^\alpha}{\alpha !} (-1)^{n+\alpha+1}(a^{(i)})^{def}_{n+\alpha} a^{(j)}\label{aj-n-ai}
    \end{align}
    We now compute regular part of (\ref{Y_1-ai-aj}), which is contributed only by $\frac 1 2 e^{xD}Y_1^-(a^{(j)}, -x)a^{(i)}$. We compute $\frac 1 2 e^{xD}Y_1^-(a^{(j)}, -x)a^{(i)}$ as follows.
    \begin{align*}
        & \frac 1 2 \sum_{\alpha=0}^\infty \frac{x^\alpha D^{\alpha}}{\alpha!} \sum_{n\geq 0} (a^{(j)})^{def}_n a^{(i)} (-x)^{-n-1} = \frac 1 2 \sum_{\alpha=0}^\infty \frac{ D^{\alpha}}{\alpha!} \sum_{n+\alpha\geq 0}(-1)^{n+\alpha+1} (a^{(j)})^{def}_{n+\alpha} a^{(i)} x^{-n-1} \\ 
        = \ & \frac 1 2 \sum_{n \geq 0} \left(\sum_{\alpha \geq 0} \frac{ D^{\alpha}}{\alpha!}(-1)^{n+\alpha+1} (a^{(j)})^{def}_{n+\alpha} a^{(i)} \right)x^{-n-1}+ \frac 1 2 \sum_{n \geq 1}\left(\sum_{\alpha \geq n} \frac{ D^{\alpha}}{\alpha!}(-1)^{\alpha-n+1} (a^{(j)})^{def}_{\alpha-n} a^{(i)} \right)x^{n-1} 
    \end{align*}
    Thus, 
    \begin{align*}
        (a^{(i)})^{def}_{-n}a^{(j)} = \ & \frac 1 2 \sum_{\alpha \geq n} \frac{ D^{\alpha}}{\alpha!}(-1)^{\alpha-n+1} (a^{(j)})^{def}_{\alpha-n} a^{(i)} 
    \end{align*}
    This essentially implies (\ref{aj--n-ai}) (simply swap $i$ and $j$). To obtain (\ref{ai--n-aj}), we reorganize the sum over $\alpha \geq n$ to the sum over $\alpha\geq 0$, then use (\ref{aj-n-ai}),
    \begin{align*}
        (a^{(i)})^{def}_{-n}a^{(j)} = \ & = \frac 1 2 \sum_{\alpha \geq 0} \frac{D^{n+\alpha}}{(n+\alpha)!}(-1)^{\alpha+1}(a^{(j)})^{def}_\alpha a^{(i)}\\
        = \ & \frac 1 2 \sum_{\alpha \geq 0} \frac{D^{n+\alpha}}{(n+\alpha)!}(-1)^{\alpha+1}\sum_{\beta \geq 0} \frac{D^{\beta}}{\beta!}(-1)^{\alpha+\beta+1} (a^{(i)})^{def}_{\alpha+\beta} a^{(j)}\\
        = \ & \frac 1 2 \sum_{\alpha\geq 0} D^{n+\alpha} (a^{(i)})^{def}_{\alpha} a^{(j)} \sum_{\beta=0}^\alpha \frac{(-1)^{\beta}}{(n+\alpha-\beta)!\beta!}
    \end{align*}
    Using the identity
    $$\sum_{\beta=0}^\alpha \frac{(-1)^{\beta}}{(n+\alpha-\beta)!\beta!} = \frac{(-1)^\alpha}{(n+\alpha)!}\binom{\alpha+n-1}{\alpha},$$
    we obtain Formula (\ref{ai--n-aj}).
\end{proof}

\subsection{Generating function approach}\label{Sec-GenFunc} Instead of determining the $Y_1(u, z) v$ for $u, v$ ranging in the basis of $V$ given by (\ref{V-basis}), we will determine the $\overline{V}$-valued rational function
\begin{align}
    E\bigg(Y_1(s^{(1)}, z_1)Y(s^{(2)}, z_2)\cdots Y(s^{(n)},z_n)s^{(n+1)}\bigg). \label{multivar}
\end{align}
for each $s^{(1)}, ..., s^{(n)}\in S$. 
We now show that (\ref{multivar}) determines $Y_1$ uniquely. 

\begin{thm}\label{Y1-ext-uniqueness}
    For every fixed $1\leq i_1 \leq \cdots \leq i_p\leq r, 1\leq j_1 \leq \cdots \leq j_q \leq r$, $m_1, ..., m_p, n_1, ..., n_q \in \Z_+$ with $m_{k}\geq \cdots \geq m_{k+l}$ whenever $i_{k-1}<i_k = \cdots = i_{k+l} < i_{k+l+1}$ and $n_{k} \geq \cdots \geq n_{k+l}$ whenever $j_{k-1}<j_k = \cdots = j_{k+l} < j_{k+l+1}$ , the series 
    $$Y_1(a^{(i_1)}_{-m_1}\cdots a^{(i_p)}_{-m_p}\one, x)a^{(j_1)}_{-n_1} \cdots a^{(j_q)}_{-n_q}\one$$ 
    is uniquely determined by (\ref{multivar}). 
\end{thm}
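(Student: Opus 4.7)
The plan is to reduce $E(Y_1(\mathrm{PBW}_1,x)\mathrm{PBW}_2)$ to the generating function (\ref{multivar}) in two stages: first flatten the right-hand argument via iterated residues, then flatten the left-hand argument via repeated use of the cocycle equation (\ref{Cocycle-Eqn}). The overall induction is on the PBW length $p$ of the first argument, with a secondary induction on the total weight $N=\wt u+\wt v$.

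For the right-hand reduction, since each $a^{(j_k)}_{-n_k}$ is a Fourier mode $\Res_{w_k}w_k^{-n_k}Y(a^{(j_k)},w_k)$, by iterating the creation property and the convergence of products of vertex operators on $\one$, one expresses
$$a^{(j_1)}_{-n_1}\cdots a^{(j_q)}_{-n_q}\one\;=\;\frac{1}{(n_q-1)!}\Res_{w_1,\dots,w_{q-1}}w_1^{-n_1}\cdots w_{q-1}^{-n_{q-1}}\,E\bigl(Y(a^{(j_1)},w_1)\cdots Y(a^{(j_{q-1})},w_{q-1})D^{n_q-1}a^{(j_q)}\bigr),$$
where the contours satisfy $|w_1|>\cdots>|w_{q-1}|>0$. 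Using the $D$-derivative properties to convert the inner $D^{n_q-1}$ into a combination of partial derivatives in the source variables, together with the linearity of $Y_1$ in its second slot and Condition (3) of Theorem \ref{Cobdry-1-Var-Thm}, one then obtains $E(Y_1(u,z_1)\mathrm{PBW}_2)$ directly from (\ref{multivar}) whenever $u=a^{(i)}\in S$.

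For the left-hand reduction, proceed by induction on $p$. The base $p=1$ uses $a^{(i_1)}_{-m_1}\one=\tfrac{1}{(m_1-1)!}D^{m_1-1}a^{(i_1)}$ together with the $D$-derivative property of $Y_1$, reducing to the generator case above. For the inductive step, write $\mathrm{PBW}_1=a^{(i_1)}_{-m_1}u'$ with $u'=a^{(i_2)}_{-m_2}\cdots a^{(i_p)}_{-m_p}\one$, so that
$$E\bigl(Y_1(\mathrm{PBW}_1,x)v\bigr)=\Res_{w}w^{-m_1}E\bigl(Y_1(Y(a^{(i_1)},w)u',x)v\bigr).$$
Applying the cocycle equation (\ref{Cocycle-Eqn}) with $(u_1,u_2)=(a^{(i_1)},u')$ and $(z_1,z_2)=(w+x,x)$ rewrites the integrand as
\begin{align*}
E\bigl(Y_1(Y(a^{(i_1)},w)u',x)v\bigr)=\ & E\bigl(Y_1(a^{(i_1)},w+x)Y(u',x)v\bigr)+E\bigl(Y(a^{(i_1)},w+x)Y_1(u',x)v\bigr)\\
& -E\bigl(Y(Y_1(a^{(i_1)},w)u',x)v\bigr).
\end{align*}
The first term has $Y_1$ at the generator $a^{(i_1)}$, handled by the first stage; the second has $Y_1$ at $u'$, a PBW of length $p-1$, handled by induction; the third applies a vertex operator to $Y_1(a^{(i_1)},w)u'$, which is again a first-stage expression. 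Taking $\Res_w w^{-m_1}$ recovers $E(Y_1(a^{(i_1)}_{-m_1}u',x)v)$ modulo corrections of the form $Y_1(a^{(i_1)}_k u',x)v$ for $k\ge 0$ arising from residues of the other summands; by Remark \ref{Filtration}, these corrections involve elements in strictly smaller standard filtration and so are absorbed by the secondary induction on $N$.

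The main obstacle will be the careful bookkeeping of expansion conventions when taking $\Res_w$ of terms involving $(w+x)^{-n-1}$, and verifying that the residues commute with $Y_1$-application under the $E$-notation. The convergence guarantees in Conditions (3) and (4) of Theorem \ref{Cobdry-1-Var-Thm} certify that every $\overline V$-valued rational function in the chain is legitimate and justify the contour manipulations.
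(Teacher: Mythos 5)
Your proposal is correct and follows essentially the same route as the paper: induct on the length of the left PBW argument, peel off one generator, and solve the cocycle equation (\ref{Cocycle-Eqn}) for the iterate term $E\bigl(Y_1(Y(a^{(i_1)},w)u',x)v\bigr)$ in terms of the three remaining summands, which are determined respectively by the generating function (\ref{multivar}), by the induction hypothesis on the shorter word $u'$, and again by (\ref{multivar}). The only difference is presentational — you take the residue in $w$ at each step while the paper carries the full multivariable generating function and extracts residues at the end — and your parenthetical about "corrections" from nonnegative modes is unnecessary when the indices are already in PBW order, though harmless since such terms are of lower filtration anyway.
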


\begin{proof}
    The conclusion clearly holds when $p=1$. Assume the statement holds for every $a^{(i_1)}_{-m_1}\cdots a^{(i_p)}_{-m_p}\one$ with lower filtration, we use the cocycle condition to form an inductive argument. Recall that the Condition (5) states that for every $v\in V', v_1, v_2, v_3\in V$, 
    \begin{align}
        & E\bigg(Y_1(v_1, z_1)Y(v_2, z_2)v_3 + Y(v_1, z_1)Y_1(v_2, z_2)v_3\bigg) \label{Cocycle-LHS}\\ 
        =\,& E\bigg(Y_1(Y(v_1, z_1-z_2) v_2, z_2)v_3 + Y(Y_1(v_1, z_1-z_2) v_2, z_2)v_3 \bigg)\label{Cocycle-RHS}
    \end{align}
    Substitute 
    \begin{align*}
        & z_2 \mapsto \eta, v_1 \mapsto a^{(i_1)},\\
        & v_2 \mapsto Y(a^{(i_2)}, z_2-\eta)\cdots Y(a^{(i_p)}, z_{p}-\eta)\one, \\
        & v_3\mapsto Y(a^{(j_1)}, z_{p+1})\cdots Y(a^{(j_q)}, z_{p+q})\one
    \end{align*} 
    \begin{itemize}[leftmargin=*]
        \item The first term of (\ref{Cocycle-LHS}) is 
        \begin{align*}
           & E\bigg(Y_1(a^{(i_1)}, z_1) Y(Y(a^{(i_2)}, z_2-\eta)\cdots Y(a^{(i_p)}, z_{p}-\eta)\one, \eta) \\
            & \quad \quad \cdot Y(a^{(j_1)}, z_{p+1})\cdots Y(a^{(j_q)}, z_{p+q})\one\bigg)
        \end{align*}
        From an argument with analytic continuation and associativity, it is equal to
        \begin{align*}
            E\bigg(Y_1(a^{(i_1)}, z_1) Y(a^{(i_2)}, z_2)\cdots Y(a^{(i_p)}, z_p) \cdot Y(a^{(j_1)}, z_{p+1})\cdots Y(a^{(j_q)}, z_{p+q})\one\bigg)
        \end{align*}
        This is precisely of the same form (\ref{multivar}) (with an $e^{z_{p+q}D}$-correction). Thus the first term of (\ref{Cocycle-LHS}) is determined by (\ref{multivar}). 
    \item The second term of (\ref{Cocycle-LHS}) is
        \begin{align*}
            & E\bigg(Y(a^{(i_1)}, z_1)  Y_1(Y(a^{(i_2)}, z_2-\eta)\cdots Y(a^{(i_p)}, z_p-\eta)\one, \eta)\\
            & \qquad \cdot Y(a^{(j_1)}, z_{p+1})\cdots Y(a^{(j_q)}, z_{p+q})\one\bigg)
        \end{align*}
        which contains the action of $Y(a^{(i_1)}, z_1)$ on the generating function of 
        $$Y_1(a^{(i_2)}_{-m_2}\cdots a^{(i_p)}_{-m_p}\one, \eta)a^{(j_1)}_{-n_1}\cdots a^{(j_q)}_{-n_q}\one,$$
        together with the action of $Y(a^{(i_1)}, z_1)$ on the generating function of elements with lower filtration. 
        From the induction hypothesis, this term is also determined by (\ref{multivar}). 
    \item The first term of (\ref{Cocycle-RHS}) is 
    \begin{align*}
        & E\bigg(Y_1(Y(a^{(i_1)}, z_1-\eta)Y(a^{(i_2)}, z_2-\eta)\cdots Y(a^{(i_p)}, z_p-\eta)\one, \eta)\\
        & \qquad \cdot Y(a^{(j_1)}, z_{p+1})\cdots Y(a^{(j_q)}, z_{p+q})\one\bigg)
    \end{align*}
    which contains the generating function of 
    $$Y_1(a^{(i_1)}_{-m_1}a^{(i_2)}_{-m_2}\cdots a^{(i_p)}_{-m_p}\one, x)a^{(j_1)}_{-n_1} \cdots a^{(j_q)}_{-n_q}\one$$ 
    that is to be determined. The series also contains generating functions of elements with lower filtration that is already determined by the induction hypothesis. 
    \item The second term of (\ref{Cocycle-RHS}) becomes
    \begin{align*}
        & E \bigg(Y(Y_1(a^{(i_1)}, z_1-\eta)Y(a^{(i_2)}, z_2-\eta)\cdots Y(a^{(i_p)}, z_p-\eta)\one, \eta)\\
        & \qquad \cdot Y(a^{(j_1)}, z_{p+1})\cdots Y(a^{(j_q)}, z_{p+q})\one\bigg)
    \end{align*}
    which is determined by 
    $$Y_1(a^{(i_1)}, z_1-\eta)Y(a^{(i_2)}, z_2-\eta)\cdots Y(a^{(i_p)}, z_p-\eta)\one$$
    that is precisely of the same form as (\ref{multivar}), thus is determined. 
\end{itemize}
Thus, the generating function of 
$$Y_1(a^{(i_1)}_{-m_1}a^{(i_2)}_{-m_2}\cdots a^{(i_p)}_{-m_p}\one, x)a^{(j_1)}_{-n_1} \cdots a^{(j_q)}_{-n_q}\one$$ 
is uniquely determined by (\ref{multivar}) via the cocycle equation (\ref{Cocycle-LHS}) = (\ref{Cocycle-RHS}). So are the individual terms. 
\end{proof}

\begin{rema}
    The generating function approach implicitly requires that (\ref{multivar}) is well-defined, i.e., the corresponding series converges to a $\overline{V}$-valued rational function. This convergence is automatic when $Y_1$ is a cocycle representing a class in $H_\infty^{2}(V, V)$. We shall see in the subsequent sections that when $V$ is freely-generated, the convergence also holds automatically. In other words, $H_{1/2}^2(V, V) = H_\infty^2(V, V)$. 
\end{rema}

\subsection{Multivariable cocycle equation} 
We determine the $\overline{V}$-valued rational function (\ref{multivar}) by induction on $\wt s^{(1)} + \wt s^{(2)} + \cdots + \wt s^{(n+1)}$. 

\subsubsection{The base case} In this case, $n=2$ and $\wt s^{(1)} + \wt s^{(2)} + \wt s^{(3)}$ is minimal, where (\ref{multivar}) reduces to 
\begin{align}
    E\bigg(Y_1(s^{(1)}, z_1)Y(s^{(2)}, z_2)s^{(3)}\bigg) \label{2-var-cocycle-eqn-LHS1}.
\end{align}
Theorem \ref{Cobdry-1-Var-Thm} Part (3) guarantees that the $\overline{V}$-valued rational function is well-defined.
The series inside the parenthesis splits into 
\begin{align*}
    Y_1(s^{(1)}, z_1)Y^-(s^{(2)}, z_2)s^{(3)}+Y_1(s^{(1)}, z_1)Y^+(s^{(2)}, z_2)s^{(3)}
\end{align*}
The minimal requirement implies that $Y^-(s^{(2)}, z_2)s^{(3)}$ can only basis elements of the form $s_{-m_1}\one$ for some $s\in S$. Then, $Y_1(s^{(1)}, z_1) Y^-(s^{(2)}, z_2) s^{(3)}$ is determined by $Y_1(s^{(2)}, x)s$ ($s\in S$) and the $D$-derivative property (\ref{D-derivative-2}). From Theorem \ref{Cobdry-1-Var-Thm}, it is clear that $Y_1(s^{(1)}, z_1)Y^-(s^{(2)}, z_2)s^{(3)}$ converges absolutely to a $\overline{V}$-valued rational function. Therefore, so is $Y_1(s^{(1)}, z_1) Y^+(s^{(2)}, z_2) s^{(3)}$. 

With these convergence results, the cocycle equation (\ref{Cocycle-Eqn}) with $u_1 = s^{(1)}, u_2 = s^{(2)}, v= s^{(3)}$ can now be expressed as 
\begin{align}
    & E\bigg(Y_1(s^{(1)}, z_1) Y^+(s^{(2)}, z_2)s^{(3)}\bigg) + E\bigg(Y_1(s^{(1)}, z_1) Y^-(s^{(2)}, z_2)s^{(3)}\bigg) + E\bigg(Y(s^{(1)}, z_1) Y_1(s^{(2)}, z_2)s^{(3)}\bigg)\nonumber\\
    = \ & E\bigg(e^{z_2D} Y_1(s^{(3)}, -z_2)Y^+(s^{(1)}, z_1-z_2)s^{(2)}\bigg) + E\bigg(e^{z_3D} Y_1(s^{(3)}, -z_2)Y^-(s^{(1)}, z_1-z_2)s^{(2)}\bigg) \nonumber\\
    &+ E\bigg(e^{z_2 D}Y(s^{(3)}, -z_3) Y_1(s^{(1)}, z_1-z_2)s^{(2)}\bigg)\label{2-var-cocycle-eqn}
\end{align}
where the convergence of the right-hand-side follows from the same argument. 
The problem of determining (\ref{multivar}) reduces to determining 
\begin{align}
    E\bigg(Y_1(s^{(1)}, z_1) Y^+(s^{(2)}, z_2) s^{(3)}\bigg). \label{2-var-cocycle-LHS1-1}
\end{align}
For each $s^{(1)}, s^{(2)}, s^{(3)}\in S$, we would set (\ref{2-var-cocycle-LHS1-1}) as 
\begin{align}
    & \frac{p_{s^{(1)}s^{(2)}s^{(3)}}^{(0)}(z_1, z_2)}{z_1^{w_1+w_3}(z_1-z_2)^{w_1+w_2}}  \one + \sum_{\substack{1\leq j \leq r\\ m\geq 1}} \frac{p_{s^{(1)}s^{(2)}s^{(3)}}^{i(m)}(z_1, z_2)}{z_1^{w_1+w_3}(z_1-z_2)^{w_1+w_2}} a^{(i)}_{-m}\one \nonumber \\
    & + \sum_{\substack{p\geq 2, 1\leq j_1\leq \cdots \leq j_p\leq r, m_1, ..., m_p \geq 1\\m_k \geq \cdots \geq m_{k+k'} \text{ if } j_{k-1}<j_k = \cdots = j_{k+k'} < j_{k+k'+1}}}\frac{p_{s^{(1)}s^{(2)}s^{(3)}}^{j_1(m_1)\cdots j_p(m_p)}(z_1, z_2)}{z_1^{w_1+w_3}(z_1-z_2)^{w_1+w_2}}  a^{(j_1)}_{-m_1}\cdots (a_{j_p})_{-m_p}\one. \label{2-var-ansatz}
\end{align}
These polynomials should satisfy equation (\ref{2-var-cocycle-eqn}). 

\begin{rema}
    We introduced these infinitely many polynomials to understand the cocycle equation (\ref{2-var-cocycle-eqn}). By no means shall we attempt to solve them directly. 
\end{rema}

\subsubsection{Checking the convergence}
Once we understand (\ref{2-var-cocycle-LHS1-1}) (and therefore (\ref{2-var-cocycle-eqn-LHS1})) for each $s^{(1)}, s^{(2)}, s^{(3)}\in S$ with minimal total weight, we should check that the series
\begin{align}
    Y(u_1, z_1)\cdots Y(u_m, z_m)Y_1(s^{(1)}, z_{m+1})Y(s^{(2)}, z_{m+2}) s^{(3)} \label{2-var-cocycle-conv-check}
\end{align}
converges in $|z_1|>\cdots >|z_{m+2}|>0$ to a $\overline{V}$-valued rational function for each $u_1, ..., u_m\in S$. The convergence is needed to guarantee the existence of certain $\overline{V}$-valued rational function. We shall explain a simple case, that the $\overline{V}$-valued rational function 
\begin{align}
    E\bigg(Y_1(u, z_1)Y^+(s^{(1)}, z_2)Y^+(s^{(2)}, z_3)s^{(3)})\bigg) \label{3-var-function}
\end{align}
is well-defined. The argument can be easily generalized to arbitrarily many variables. To see this, recall from \cite{FHL} Section 3.5 (also see \cite{Q-Mod}, Theorem 3.4) that the product of any numbers of vertex operators converge to a $\overline{V}$-valued rational function. Therefore, the coefficient of each power of $t$ of the complex series
$$\left(Y(u, z_1) + t Y_1(u, z_1)\right)\left(Y(s^{(1)}, z_2) + t Y_1(s^{(1)}, z_2)\right)\left(Y(s^{(2)}, z_3) + t Y_1(s^{(2)}, z_3)\right)s^{(3)}$$
converges to a $\overline{V}$-valued rational function. Take out the coefficient of $t$, we see that 
$$E\bigg(Y_1(u_1, z_1) Y(s^{(1)}, z_2) Y(s^{(2)}, z_3) s^{(3)} + Y(u_1, z_1)Y_1(s^{(1)}, z_2)Y(s^{(2)}, z_3) s^{(3)} + Y(u_1, z_1)Y(s^{(1)}, z_2)Y_1(s^{(2)}, z_3) s^{(3)}\bigg)$$
is well-defined. From Theorem \ref{Cobdry-1-Var-Thm} Part (\ref{Cobdry-1-Var-Thm-Part-3}), the rear part $E\left(Y(u_1, z_1)Y(s^{(1)}, z_2)Y_1(s^{(2)}, z_3) s^{(3)}\right)$ is well-defined. From our assumption, the middle part $E\left(Y(u_1, z_1)Y_1(s^{(1)}, z_2)Y(s^{(2)}, z_3) s^{(3)}\right)$ is well-defined. Therefore, the initial part $E\left(Y_1(u_1, z_1) Y(s^{(1)}, z_2) Y(s^{(2)}, z_3) s^{(3)} \right)$ is well-defined. Since the singular parts reduces the filtration, we conclude that (\ref{3-var-function}) is well-defined as a $\overline{V}$-valued rational function.

\subsubsection{The $n$-variable cocycle equation} \label{cocycle-eqn-subsubsec} Fix $N\in \Z_+$. Assume that we have computed the $\overline{V}$-valued rational function 
$$E\bigg(Y_1(t^{(1)}, z_1)Y^+(t^{(2)}, z_2) \cdots Y^+(t^{(m)}, z_m)t^{(m+1)}\bigg)$$
for every $m\in \Z_+, t^{(1)}, ..., t^{(m+1)}\in S$ with $\wt t^{(1)} + \cdots + \wt t^{(m+1)} < N$. Assume also that we have shown the convergence of the series
$$Y_1(s^{(1)}, z_1)Y^+(s^{(2)}, z_2) \cdots Y^+(s^{(n)}, z_n)s^{(n+1)}$$
for every $n\in \Z_+, s^{(1)}, ..., s^{(n+1)}\in S$ such that $\wt s^{(1)} + \cdots + \wt s^{(n+1)} = N$. (the base case is proved in the previous section). We analyze its limiting $\overline{V}$-valued rational functions by studying a variant of the cocycle equation (\ref{Cocycle-Eqn}). 

Substitute 
$$z_2 = z_n, u_1 = s^{(1)}, u_2 = Y(s^{(2)}, z_2-z_n)\cdots Y(s^{(n-1)}, z_{n-1}-z_n)s^{(n)}, v = s^{(n+1)}$$
in the cocycle equation (\ref{Cocycle-Eqn}), using our assumption on convergence, we obtain
\begin{align*}
    & E\bigg(Y_1(s^{(1)}, z_1)Y(Y(s^{(2)}, z_2-z_n)\cdots Y(s^{(n-1)}, z_{n-1}-z_n)s^{(n)}, z_n)s^{(n+1)} \\
    & + Y(s^{(1)}, z_1)Y_1(Y(s^{(2)}, z_2-z_n)\cdots Y(s^{(n-1)}, z_{n-1}-z_n)s^{(n)}, z_n)s^{(n+1)} \bigg) \\
    = \ & E\bigg( Y_1(Y(s^{(1)}, z_1-z_n)Y(s^{(2)}, z_2-z_n)\cdots Y(s^{(n-1)}, z_{n-1}-z_n)s^{(n)}, z_n)s^{(n+1)} \\
    & + Y(Y_1(s^{(1)}, z_1-z_n)Y(s^{(2)}, z_2-z_n)\cdots Y(s^{(n-1)}, z_{n-1}-z_n)s^{(n)}, z_n)s^{(n+1)} \bigg)
\end{align*}
The convergence assumption allows us to distribute $E$ to the two summands on the left-hand-side. We may further use associativity, skew-symmetry, and an argument of analytic continuation to disribute $E$ to the two summands on the right-hand-side, and reorganize the iterates on the both sides as products. The cocycle equation is therefore reorganized as
\begin{align}
    & E\bigg(Y_1(s^{(1)}, z_1)Y(s^{(2)}, z_2)\cdots Y(s^{(n-1)}, z_{n-1})Y(s^{(n)}, z_n)s^{(n+1)}\bigg) \label{n-var-cocycle-LHS1}\\
    & + E\bigg(Y(s^{(1)}, z_1)e^{z_n D}Y_1(s^{(n+1)}, -z_n)Y(s^{(2)}, z_2-z_n)\cdots Y(s^{(n-1)}, z_{n-1}-z_n)s^{(n)} \bigg) \label{n-var-cocycle-LHS2}\\
    = \ & E\bigg( e^{z_n D}Y_1(s^{(n+1)}, -z_n)Y(s^{(1)}, z_1-z_n)Y(s^{(2)}, z_2-z_n)\cdots Y(s^{(n-1)}, z_{n-1}-z_n)s^{(n)}\bigg) \label{n-var-cocycle-RHS1}\\
    & + E\bigg(e^{z_n D}Y(s^{(n+1)}, -z_n)Y_1(s^{(1)}, z_1-z_n)Y(s^{(2)}, z_2-z_n)\cdots Y(s^{(n-1)}, z_{n-1}-z_n)s^{(n)} \bigg)\label{n-var-cocycle-RHS2}
\end{align}
To analyze the equation, we observe the following:
\begin{itemize}[leftmargin=*]
    \item (\ref{n-var-cocycle-LHS1}) may be separated as the sum of 
    \begin{align}
        & E\bigg(Y_1(s^{(1)}, z_1)Y^+(s^{(2)}, z_2)\cdots Y^+(s^{(n-1)}, z_{n-1})Y^+(s^{(n)}, z_n)s^{(n+1)}\bigg) \label{n-var-cocycle-LHS1-1}\\
        & + \sum_{k=2}^n E\bigg(Y_1(s^{(1)}, z_1)Y^+(s^{(2)}, z_2)\cdots Y^+(s^{(k-1)}, z_{k-1})Y^-(s^{(k)}, z_k) \cdots Y(s^{(n)}, z_n)s^{(n+1)}\bigg)   \label{n-var-cocycle-LHS1-2}
    \end{align}
    Since $V$ is freely generated by $S$, from Remark \ref{Filtration}, for every $k = 2, ..., n$, the coefficients of the series
    $$Y^+(s^{(2)}, z_2)\cdots Y^+(s^{(k-1)}, z_{k-1})Y^-(s^{(k)}, z_k) \cdots Y(s^{(n)}, z_n)s^{(n+1)}$$
    are in $E_{N-1}^S$. So the sum of the $\overline{V}$-valued rational function (\ref{n-var-cocycle-LHS1-2}) is determined by previous computation. 
    \item Similarly, (\ref{n-var-cocycle-RHS1}) may be separated as the sum of 
    \begin{align}
        & E\bigg( e^{z_n D}Y_1(s^{(n+1)}, -z_n)Y^+(s^{(1)}, z_1-z_n)Y^+(s^{(2)}, z_2-z_n)\cdots Y^+(s^{(n-1)}, z_{n-1}-z_n)s^{(n)}\bigg)  \label{n-var-cocycle-RHS1-1}\\
        & + \sum_{k=2}^n E\bigg( e^{z_n D}Y_1(s^{(n+1)}, -z_n)Y^+(s^{(1)}, z_1-z_n)\cdots Y^+(s^{(k-1)}, z_{k-1}-z_n)Y^-(s^{(k)}, z_k-z_n)\nonumber\\
        &\qquad \qquad \qquad \cdots Y(s^{(n-1)}, z_{n-1}-z_n)s^{(n)}\bigg)    \label{n-var-cocycle-RHS1-2}
    \end{align}
    Similarly as above, (\ref{n-var-cocycle-RHS1-2}) is also determined by previous computation. 
\end{itemize}

In summary, the cocycle equation $(\ref{n-var-cocycle-LHS1})+(\ref{n-var-cocycle-LHS2})=(\ref{n-var-cocycle-RHS1})+(\ref{n-var-cocycle-RHS2})$ is now rewritten as $(\ref{n-var-cocycle-LHS1-1})+(\ref{n-var-cocycle-LHS1-2})+(\ref{n-var-cocycle-LHS2})=(\ref{n-var-cocycle-RHS1-1})+(\ref{n-var-cocycle-RHS1-2})+(\ref{n-var-cocycle-RHS2})$, where (\ref{n-var-cocycle-LHS1-2}), (\ref{n-var-cocycle-LHS2}), (\ref{n-var-cocycle-RHS1-2}), and (\ref{n-var-cocycle-RHS2}) are all determined by previous computation. 

\subsubsection{The ansatz for the solution the $n$-variable cocycle equation} To study the structure of the solutions of the cocycle equation $(\ref{n-var-cocycle-LHS1-1})+(\ref{n-var-cocycle-LHS1-2})+(\ref{n-var-cocycle-LHS2})=(\ref{n-var-cocycle-RHS1-1})+(\ref{n-var-cocycle-RHS1-2})+(\ref{n-var-cocycle-RHS2})$, for every choice of $s^{(1)}, s^{(2)}, ..., s^{(n+1)}\in S$, we set (\ref{n-var-cocycle-LHS1-1}) as
\begin{align}
    & \frac{p_{s^{(1)}\cdots s^{(n+1)}}^{(0)}(z_1, ..., z_n)}{z_1^{w_1+w_{n+1}} \prod_{k=2}^{n}(z_1 - z_{k})^{w_1+w_k}} \one + \sum_{\substack{1\leq j \leq r\\ m\geq 1}} \frac{p_{s^{(1)}\cdots s^{(n+1)}}^{j(m)}(z_1, ..., z_n)}{z_1^{w_1+w_{n+1}} \prod_{k=2}^{n}(z_1 - z_{k})^{w_1+w_k}} a^{(j)}_{-m}\one \nonumber\\
    + \ & \sum_{\substack{l \geq 2, 1\leq j_1 \leq \cdots \leq j_l \leq l, m_1, ..., m_r\geq 1,\\ m_k\geq \cdots \geq m_{k+k'} \text{ if } j_{k-1}< j_k = \cdots = j_{k+k'} < j_{k+k'+1}}} \frac{p_{s^{(1)}\cdots s^{(n+1)}}^{j_1(m_1)\cdots j_l(m_l)}(z_1, ..., z_n)}{z_1^{w_1+w_{n+1}} \prod_{k=2}^{n}(z_1 - z_{k})^{w_1+w_k}} a^{(j_1)}_{-m_1}\cdots a^{(j_l)}_{-m_l}\one \label{Cocycle-Ansatz}
\end{align}
where $p_{s^{(1)}\cdots s^{(n+1)}}^{j_1(m_1)\cdots j_l(m_l)}(z_1, ..., z_n)$ are homogeneous polynomial functions, $w_k = \wt s^{(k)}$ for $k=1, ..., n+1$. Note that (\ref{n-var-cocycle-RHS1-1}) is uniquely determined by (\ref{n-var-cocycle-LHS1-1}), we see that the equation $(\ref{n-var-cocycle-LHS1-1})+(\ref{n-var-cocycle-LHS1-2})+(\ref{n-var-cocycle-LHS2})=(\ref{n-var-cocycle-RHS1-1})+(\ref{n-var-cocycle-RHS1-2})+(\ref{n-var-cocycle-RHS2})$ defines a linear nonhomogeneous system of equations concerning the coefficients of the polynomials $p_{s^{(1)}\cdots s^{(n+1)}}^{j_1(m_1)...j_l(m_l)}(z_1, ..., z_n)$ for $1\leq j_1 \leq \cdots \leq j_l \leq l, m_1, ..., m_l \geq 1$. 

We determine the degree of $p_{s^{(1)}\cdots s^{(n+1)}}^{j_1(m_1)\cdots j_l(m_l)}(z_1, ..., z_n)$ explicitly. Note that the total degree of the rational function attached to $a^{(j_1)}_{-m_1}\cdots a^{(j_l)}_{-m_l}\one$ is
$$\deg p_{s^{(1)}\cdots s^{(n+1)}}^{j_1(m_1)\cdots j_l(m_l)}(z_1, ..., z_n) - n w_1 - w_2 - \cdots - w_{n+1}. $$
Now, for any $\alpha_1, ..., \alpha_n\in \Z$, the coefficient of $z_1^{-\alpha_1-1}\cdots z_n^{-\alpha_n-1}$ in the series defining (\ref{n-var-cocycle-LHS1-1}) is of weight 
$$w_1-\alpha_1-1 + \cdots + w_n-\alpha_n - 1 + w_{n+1}.$$
So for fixed $1\leq j_1 \leq \cdots \leq j_l\leq r$ and $m_1, ..., m_l\geq 1$, the total degree of the rational function attached to $a^{(j_1)}_{-m_1}\cdots a^{(j_l)}_{-m_l}\one$ should be 
$-\alpha_1-1- \cdots  -\alpha_n-1$ satisfying 
$$w_1-\alpha_1-1 + \cdots + w_n-\alpha_n - 1 + w_{n+1} = \wt a^{(j_1)}_{-m_1}\cdots a^{(j_l)}_{-m_l}\one.$$
Therefore, we see that 
\begin{align}\label{deg-poly}
    \deg p_{s^{(1)}\cdots s^{(n+1)}}^{j_1(m_1)\cdots j_l(m_l)}(z_1, ..., z_n) = (n-1) w_1 + \wt a^{(j_1)}_{-m_1}\cdots a^{(j_l)}_{-m_l}\one. 
\end{align}
In particular, $\deg p_{s^{(1)}\cdots s^{(n+1)}}^{(0)}(z_1, ..., z_n) = (n-1)w_1$.

\subsection{Complementary solutions are coboundaries}

It is know from elementary linear algebra that the general solution of a nonhomogeneous linear system of equations is the sum of a particular solution and the complementary solution, namely, the general solution of the corresponding homogeneous linear system. In our case, the homogeneous system is given by $(\ref{n-var-cocycle-LHS1-1}) = (\ref{n-var-cocycle-RHS1-1})$. A solution of the homogeneous system can be viewed a cocycle $Y_1: V\otimes V \to V((x))$, such that \begin{enumerate}[leftmargin=*]
    \item (\ref{n-var-cocycle-LHS1-1}) makes sense for every $n\in \Z_+$, $s^{(1)}, ..., s^{(n+1)}\in S$ with $\wt s^{(1)} + \cdots + \wt s^{(n+1)} = N$. 
    \item For every $m\in \Z_+$, $t^{(1)}, ..., t^{(m+1)}\in S$ with $\wt t^{(1)} + \cdots + \wt t^{(m+1)} < N$,  
    $$Y_1(t^{(1)}, z_1)Y(t^{(2)}, z_2) \cdots Y(t^{(m)}, z_m)t^{(m+1)} = 0. $$
\end{enumerate}

In this section we shall focus on the homogeneous system and prove the following theorem:

\begin{thm}\label{complem-cobdry-thm}
    Let $Y_1: V\otimes V \to V((x))$ be a linear map such that 
    \begin{enumerate}[leftmargin=*]
        \item $Y_1$ satisfies (1) -- (6) in Section \ref{first-order-deform-subsec}. 
        \item For every $s^{(1)}, ..., s^{(n+1)}\in S$, such that $\wt s^{(1)} + \cdots + \wt s^{(n+1)} = N$, $$E\bigg(Y_1(s^{(1)}, z_1) Y^+(s^{(2)}, z_2)\cdots Y^+(s^{(n)}, z_n) s^{(n+1)}\bigg)$$ is a well-defined $\overline{V}$-valued rational function. 
        \item For every $t^{(1)}, ..., t^{(m)}, t^{(m+1)}\in S$ with $\wt t^{(1)} + \cdots + \wt t^{(m+1)} < N$, $$Y_1(t^{(1)}, z_1) Y^+(t^{(2)}, z_2)\cdots Y^+(t^{(m)}, z_m) t^{(m+1)} = 0. $$     \end{enumerate}
    Then there exists a coboundary $\delta \Phi$ such that for every $s^{(1)}, ..., s^{(n+1)}\in S$, 
    \begin{align*}
        & E\bigg(Y_1(s^{(1)}, z_1) Y^+(s^{(2)}, z_2) \cdots Y^+(s^{(n)}, z_n)s^{(n+1)}\bigg)
        = (\delta\Phi)(s^{(1)} \otimes Y^+(s^{(2)}, z_2)\cdots Y^+(s^{(n)}, z_n)s^{(n+1)}; z_1, 0). 
    \end{align*}
    In other words, the complementary solutions of the nonhomogeneous system of linear equations $(\ref{n-var-cocycle-LHS1-1})+(\ref{n-var-cocycle-LHS1-2})+(\ref{n-var-cocycle-LHS2})=(\ref{n-var-cocycle-RHS1-1})+(\ref{n-var-cocycle-RHS1-2})+(\ref{n-var-cocycle-RHS2})$ are all coboundaries. 
\end{thm}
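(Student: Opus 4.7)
The plan is to explicitly construct a homogeneous linear map $\phi: V \to V$ that commutes with $D$, annihilates $\one$, and is concentrated on the filtration-$N$ stratum of $V$ (i.e.\ $\phi(v) = 0$ for any PBW basis element $v$ of filtration level $\neq N$), and to then verify $Y_1 = Y_\phi$ on the generating functions of interest. On the filtration-$N$ basis, the construction generalizes the one in the proof of Proposition \ref{Cobdry-1-Var-Prop-1}: for a PBW basis element $v = a^{(j_1)}_{-m_1}\cdots a^{(j_l)}_{-m_l}\one$ with $\wt a^{(j_1)} + \cdots + \wt a^{(j_l)} = N$, I would define $\phi(v)$ by an iterated residue extraction from the $\overline{V}$-valued rational function
$$E\bigl(Y_1(a^{(j_1)}, z_1)\, Y^+(a^{(j_2)}, z_2) \cdots Y^+(a^{(j_l)}, z_l)\one\bigr),$$
which is well-defined by hypothesis (2). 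A symmetrization analogous to the $\widetilde{Y_1}$ of Proposition \ref{Cobdry-1-Var-Prop-1} would be incorporated so that skew-symmetry between the first and remaining arguments is respected.

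Two well-definedness checks are required. First, independence of PBW ordering: swapping two adjacent generators $a^{(j_k)}, a^{(j_{k+1})}$ in the basis element must not change $\phi(v)$. This is a multi-variable Harrison-type symmetry generalizing (\ref{tilde-Y1-Harrison}), and I would extract it from the homogeneous cocycle equation $(\ref{n-var-cocycle-LHS1-1}) = (\ref{n-var-cocycle-RHS1-1})$ specialized with the rightmost argument equal to $\one$, combined with the skew-symmetry of $Y_1$ and the identity property $Y(\one, z) = 1_V$. Second, $[D, \phi] = 0$ would follow by differentiating under the residue, using the $D$-derivative properties (\ref{D-derivative-1})--(\ref{D-derivative-3}) of $Y_1$ exactly as in the single-variable argument.

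With $\phi$ well-defined, the remaining task is the coboundary identity
$$E\bigl(Y_1(s^{(1)}, z_1)\, Y^+(s^{(2)}, z_2) \cdots Y^+(s^{(n)}, z_n)\, s^{(n+1)}\bigr) = Y(\phi(s^{(1)}), z_1) v - \phi\bigl(Y(s^{(1)}, z_1) v\bigr) + Y(s^{(1)}, z_1)\phi(v),$$
where $v = Y^+(s^{(2)}, z_2) \cdots Y^+(s^{(n)}, z_n)\, s^{(n+1)}$. Since $s^{(1)}$ alone has filtration $\wt s^{(1)} < N$, the first term on the right vanishes. For the other two terms, I would expand $v$ and $Y(s^{(1)}, z_1) v$ in the PBW basis, discard all contributions of filtration strictly below $N$ using hypothesis (3), and then use the very definition of $\phi$ to recognize the remaining terms as the appropriate residue extractions from the left-hand side. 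The matching will then follow by associativity, skew-symmetry, and an analytic-continuation argument applied to the composite $Y(s^{(1)}, z_1) Y(a^{(j_1)}, z_2) \cdots Y(a^{(j_l)}, z_{l+1})\one$, reducing the claim to the PBW-ordering invariance proved in the previous paragraph.

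The main obstacle is establishing the Harrison-type multi-variable symmetry required for well-definedness of $\phi$, because it must be deduced from the cocycle equation in the presence of both positive- and negative-power vertex operators, and one must show that the singular contributions live entirely at filtration below $N$ (where hypothesis (3) annihilates them). A secondary difficulty is the residue bookkeeping needed to match the coboundary expansion with the given generating function; this is essentially routine once well-definedness is in place, but has to be tracked carefully across the various PBW reorderings and through the analytic continuations that convert iterated products into the forms appearing in (\ref{n-var-cocycle-LHS1-1}) and (\ref{n-var-cocycle-RHS1-1}).
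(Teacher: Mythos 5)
Your proposal is correct and follows essentially the same route as the paper: define $\phi$ on the filtration-$N$ PBW stratum by iterated residue extraction from $E\bigl(Y_1(s^{(1)},z_1)Y(s^{(2)},z_2)\cdots Y(s^{(n+1)},z_{n+1})\one\bigr)$, establish well-definedness via the multi-variable permutation symmetry deduced from the homogeneous cocycle equation together with commutativity and skew-symmetry (this is the paper's Proposition \ref{complem-symm}), check $[D,\phi]=0$ from the $D$-derivative properties, and match $\delta\Phi$ with the generating function using that $\phi$ annihilates lower filtration. The only cosmetic difference is that the paper does not symmetrize the construction but instead proves the symmetry outright, and it records explicitly (via Formula (3.1.9) of \cite{LL}) that reordering modes changes the basis element only by lower-filtration terms killed by $\phi$.
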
 

We shall prove this theorem by proving the following series of intermediate results. 

\begin{prop}
    Let $Y_1$ be as in Theorem \ref{complem-cobdry-thm}. Then for every $s^{(1)}, ..., s^{(n+1)}\in S$, (\ref{n-var-cocycle-LHS1-1}) is a $\overline{V}$-valued polynomial function in $z_1, ..., z_n$, i.e, there exists no poles. 
\end{prop}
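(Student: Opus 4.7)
The plan is to produce a second expression for
\[
F(z_1,\ldots,z_n):=E\Bigl(Y_1(s^{(1)}, z_1) Y^+(s^{(2)}, z_2) \cdots Y^+(s^{(n)}, z_n)s^{(n+1)}\Bigr)
\]
whose potential pole locus intersects the original one only in codimension two, thereby forcing the pole divisor of $F$ to be empty. The second expression will come from the multivariable cocycle equation set up in Section \ref{cocycle-eqn-subsubsec}; the crucial point is that under hypothesis (3) of Theorem \ref{complem-cobdry-thm}, all four of the mixed terms in that equation vanish identically.

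Before invoking the cocycle equation, I would record two preliminary observations. First, a weight-counting argument shows that each matrix coefficient $\langle v', Y^+(s^{(2)},z_2)\cdots Y^+(s^{(n)},z_n) s^{(n+1)}\rangle$ is a polynomial in $z_2,\ldots,z_n$, so the only negative powers in the formal expansion of $F$ come from $Y_1(s^{(1)},z_1)$; consequently the pole divisor of $F$ is contained in $\{z_1=0\}\cup\bigcup_{j=2}^{n}\{z_1=z_j\}$. Second, hypothesis (3), together with the identity $a^{(j)}_{-m}\one=\tfrac{D^{m-1}}{(m-1)!}a^{(j)}$ and the consequence $Y_1(u,x)Dv=(D-\tfrac{d}{dx})Y_1(u,x)v$ of the $D$-derivative property, forces $Y_1(s^{(l)},x)$ to annihilate the entire filtration piece $E_{N-w_l-1}^S$ for every $l$: hypothesis (3) directly gives the vanishing on basis elements whose last creation operator is $a^{(j)}_{-1}$, and the above $D$-propagation extends it to all basis elements.

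With these in hand, I would apply the decomposition $(\ref{n-var-cocycle-LHS1-1})+(\ref{n-var-cocycle-LHS1-2})+(\ref{n-var-cocycle-LHS2})=(\ref{n-var-cocycle-RHS1-1})+(\ref{n-var-cocycle-RHS1-2})+(\ref{n-var-cocycle-RHS2})$ and annihilate the four mixed terms. Each summand of (\ref{n-var-cocycle-LHS1-2}) contains a factor $Y^-(s^{(k)},z_k)$, which by Remark \ref{Filtration} drops the filtration of the argument of $Y_1(s^{(1)},z_1)$ into $E_{N-w_1-1}^S$; the same mechanism applied to $Y_1(s^{(n+1)},-z_n)$ kills (\ref{n-var-cocycle-RHS1-2}). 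For (\ref{n-var-cocycle-LHS2}) and (\ref{n-var-cocycle-RHS2}) no explicit $Y^-$ appears, but the arguments $Y(s^{(2)},z_2-z_n)\cdots Y(s^{(n-1)},z_{n-1}-z_n)s^{(n)}$ of $Y_1(s^{(n+1)},-z_n)$ and of $Y_1(s^{(1)},z_1-z_n)$ lie a priori in $E_{N-w_1-w_{n+1}}^S$, and the inequalities $w_1\ge 1$ and $w_{n+1}\ge 1$ provide the required one-step drop. The equation collapses to $(\ref{n-var-cocycle-LHS1-1})=(\ref{n-var-cocycle-RHS1-1})$, i.e.,
\[
F(z_1,\ldots,z_n)=E\Bigl(e^{z_n D}Y_1(s^{(n+1)},-z_n)Y^+(s^{(1)},z_1-z_n)\cdots Y^+(s^{(n-1)},z_{n-1}-z_n)s^{(n)}\Bigr).
\]

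The same polynomial-matrix-coefficient argument applied to this new expression, after the change of variables $\zeta_j=z_j-z_n$, shows that its pole divisor is contained in $\{z_n=0\}$: the inside of $E$ is a polynomial in the $\zeta_j$'s whose coefficients are Laurent in $z_n$, and $e^{z_nD}$ contributes only non-negative powers of $z_n$. Hence the pole divisor of $F$ lies inside the intersection $\bigl(\{z_1=0\}\cup\bigcup_{j=2}^n\{z_1=z_j\}\bigr)\cap\{z_n=0\}$, a finite union of codimension-two subvarieties. Since the pole locus of a nonzero rational function is a union of hypersurfaces, $F$ must itself be polynomial. The main technical obstacle is the bookkeeping in the second observation above --- upgrading hypothesis (3), which a priori only gives vanishing on basis elements whose last creation operator is $a^{(j)}_{-1}$, to a statement about the entire filtration piece $E_{N-w_l-1}^S$; the remaining codimension count is essentially formal.
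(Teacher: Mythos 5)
Your reduction to the homogeneous equation $(\ref{n-var-cocycle-LHS1-1})=(\ref{n-var-cocycle-RHS1-1})$ is fine (hypothesis (3), suitably upgraded via the $D$-derivative property as you indicate, does kill the four mixed terms, and this is exactly the homogeneous system the paper isolates), and your first observation correctly locates the poles of $(\ref{n-var-cocycle-LHS1-1})$ inside $\{z_1=0\}\cup\bigcup_{j=2}^{n}\{z_1=z_j\}$. The gap is in your analysis of the second expression. The inside of $E$ in $(\ref{n-var-cocycle-RHS1-1})$ is \emph{not} a polynomial in the $\zeta_j=z_j-z_n$ with Laurent coefficients in $z_n$: pairing with a fixed $v'$ does not truncate the $\zeta$-degree, because higher powers of $\zeta_j$ raise the weight of the argument of $Y_1(s^{(n+1)},-z_n)$, which is then compensated by more negative powers of $-z_n$. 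The resulting series has unboundedly negative powers of $w=-z_n$ correlated with the $\zeta$-degrees, so its analytic continuation can have poles along $w=\zeta_j$, i.e.\ along $z_j=0$ for $j=1,\dots,n-1$, in addition to $z_n=0$. (This is visible in the paper's own ansatz for $(\ref{n-var-cocycle-RHS1-1})$, whose denominator is $(-z_n)^{w_{n+1}+w_n}\prod_{j}(-z_j)^{w_{n+1}+w_j}$.)

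Consequently your two pole loci share the irreducible component $\{z_1=0\}$, and the intersection argument only eliminates the hyperplanes $\{z_1=z_j\}$; it leaves a possible pole along $z_1=0$, so $F$ is not yet shown to be polynomial. This is precisely why the paper brings in a \emph{third} relation: the cyclically rotated generating function $E\bigl(Y_1(s^{(2)},z_1)Y^+(s^{(3)},z_2)\cdots Y^+(s^{(n+1)},z_n)s^{(1)}\bigr)$ and its own homogeneous equation, which after a change of variables yields the divisibility $z_1^{w_1+w_{n+1}}\mid p^{\,\cdots}_{s^{(1)}\cdots s^{(n+1)}}$ (the paper's (\ref{div-2})), i.e.\ kills the $z_1=0$ pole. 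Your approach can be repaired by adding this rotated constraint, at which point it becomes essentially the paper's argument phrased in terms of divisors rather than explicit polynomial numerators; note also that the paper's bookkeeping with $e^{z_nD}$ and the weight filtration (showing the low-weight numerators vanish outright before treating higher weights) is what makes the divisibility statement clean coefficient by coefficient, though for the bare "no poles" claim your divisor-level framing would suffice once the missing constraint is supplied.
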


\begin{proof}
    Let $w_i = \wt s^{(k)}, k= 1, ..., n$. 
    We substitute (\ref{n-var-cocycle-LHS1-1}) by (\ref{Cocycle-Ansatz}). Then (\ref{n-var-cocycle-LHS1-1}) = (\ref{n-var-cocycle-RHS1-1}) is rewritten as 
    \begin{align*}
        & \frac{p_{s^{(1)} ... s^{(n+1)}}^{(0)}(z_1, ..., z_n)}{z_1^{w_1 + w_{n+1}}\prod_{k=2}^n (z_1-z_k)^{w_1 + w_k}}\one + \frac{p_{s^{(1)} ... s^{(n+1)}}^{i(m)}(z_1, ..., z_n)}{z_1^{w_1 + w_{n+1}}\prod_{k=2}^n (z_1-z_k)^{w_1 + w_k}}a^{(i)}_{-m}\one\\
        & + \sum_{\substack{l \geq 2, 1\leq i_1 \leq \cdots \leq i_l \leq l,\\ m_1, ..., m_r\geq 1,\\ m_{j}\geq \cdots \geq m_{j+k} \\
        \text{ if } i_{j-1}< i_j = \cdots = i_{j+k} < i_{j+k+1}}} \frac{p_{s^{(1)} ... s^{(n+1)}}^{i_1(m_1)...i_l(m_l)}(z_1, ..., z_n)}{z_1^{w_1 + w_{n+1}}\prod_{k=2}^n (z_1-z_k)^{w_1 + w_k}}a^{(i_1)}_{-m_1}\cdots a^{(i_l)}_{-m_l}\one\\
        = \ & \frac{p_{s^{(n+1)} s^{(1)}... s^{(n)}}^{(0)}(-z_n, z_1-z_n, ..., z_{n-1}-z_n)}{(-z_n)^{w_{n+1}+w_n} \prod_{k=2}^{n}(-z_k)^{w_{n+1}+w_{k+1}}}\one + \frac{p_{s^{(n+1)} s^{(1)}... s^{(n)}}^{i(m)}(-z_n, z_1-z_n, ..., z_{n-1}-z_n)}{(-z_n)^{w_{n+1}+w_n} \prod_{k=2}^{n}(-z_k)^{w_{n+1}+w_{k+1}}}e^{z_n D}a^{(i)}_{-m}\one\\
        & + \sum_{\substack{l \geq 2, 1\leq i_1 \leq \cdots \leq i_l \leq l,\\ m_1, ..., m_r\geq 1,\\ m_{j}\geq \cdots \geq m_{j+k} \\
        \text{ if } i_{j-1}< i_j = \cdots = i_{j+k} < i_{j+k+1}}} \frac{p_{s^{(n+1)}s^{(1)} ... s^{(n)}}^{i_1(m_1)...i_l(m_l)}(-z_n, z_1-z_n, ..., z_{n-1}-z_n)}{(-z_n)^{w_{n+1}+w_n} \prod_{k=2}^{n}(-z_k)^{w_{n+1}+w_{k+1}}}e^{z_n D}a^{(i_1)}_{-m_1}\cdots a^{(i_l)}_{-m_l}\one. 
    \end{align*}
    We prove the conclusion in the following steps:
    \begin{itemize}[leftmargin=*]
        \item We first show that $p_{s^{(1)}\cdots s^{(n+1)}}^{(0)}(z_1, ..., z_n) = 0$. From the coefficient of $\one$ of the equation, we see that 
    $$\frac{p_{s^{(1)} ... s^{(n+1)}}^{(0)}(z_1, ..., z_n)}{z_1^{w_1 + w_{n+1}}\prod_{k=2}^n (z_1-z_k)^{w_1 + w_k}} = \frac{p_{s^{(n+1)} s^{(1)}... s^{(n)}}^{(0)}(-z_n, z_1-z_n, ..., z_{n-1}-z_n)}{(-z_n)^{w_{n+1}+w_n} \prod_{k=2}^{n}(-z_k)^{w_{n+1}+w_{k+1}}}. $$
    Multiplying both sides out, we see that
    \begin{align*}
        & {p_{s^{(1)} ... s^{(n+1)}}^{(0)}(z_1, ..., z_n)} {(-z_n)^{w_{n+1}+w_n} \prod_{k=2}^{n}(-z_k)^{w_{n+1}+w_{k+1}}} \\
        = \ & {p_{s^{(n+1)} s^{(1)}... s^{(n)}}^{(0)}(-z_n, z_1-z_n, ..., z_{n-1}-z_n)} {z_1^{w_1 + w_{n+1}}\prod_{k=2}^n (z_1-z_k)^{w_1 + w_k}}
    \end{align*}
    Therefore, it is necessary that 
    $$\prod_{k=2}^n (z_1-z_k)^{w_1+w_k} \text{ divides }{p_{s^{(1)} ... s^{(n+1)}}^{(0)}(z_1, ..., z_n)}. $$
    However, from (\ref{deg-poly}), $\deg p_{s^{(1)} ... s^{(n+1)}}^{(0)} = (n-1)w_1$ that is strictly smaller than $\deg \prod_{k=2}^n (z_1-z_k)^{w_1+w_k} = (n-1)w_1 + w_2 + \cdots + w_k$. So the only possibility is that $p_{s^{(1)} ... s^{(n+1)}}^{(0)}(z_1, ..., z_n) = 0$. 
    \item We then show that $$p_{s^{(1)}...s^{(n+1)}}^{i_1(m_1)...i_l(m_l)}(z_1, ..., z_n)=0$$ whenever $$\wt a^{(i_1)}_{-m_1}\cdots a^{(i_l)}_{-m_l}\one < w_2 + \cdots + w_n. $$
    Indeed, since the coefficients of elements of lower weights are all zero, $e^{z_n}D$ contributes only its constant term, namely the identity. Then the same arguments applies. 
    \item Now we show that $$p_{s^{(1)}...s^{(n+1)}}^{i_1(m_1)...i_l(m_l)}(z_1, ..., z_n)=0$$ whenever $$\wt a^{(i_1)}_{-m_1}\cdots a^{(i_l)}_{-m_l}\one < w_1 + w_2 + \cdots + w_n + w_{n+1}. $$
    Again, $e^{z_nD}$ on the right-hand-side does not have any contributions to the coefficient of $\wt a^{(i_1)}_{-m_1}\cdots a^{(i_l)}_{-m_l}\one$. So from the coefficient, we similarly see that
    \begin{align}\label{div-id-1}
        & {p_{s^{(1)} ... s^{(n+1)}}^{i_1(m_1)...i_l(m_l)}(z_1, ..., z_n)} {(-z_n)^{w_{n+1}+w_n} \prod_{k=2}^{n}(-z_k)^{w_{n+1}+w_{k+1}}} \\
        = \ & {p_{s^{(n+1)} s^{(1)}... s^{(n)}}^{i_1(m_1)...i_l(m_l)}(-z_n, z_1-z_n, ..., z_{n-1}-z_n)} {z_1^{w_1 + w_{n+1}}\prod_{k=2}^n (z_1-z_k)^{w_1 + w_k}}
    \end{align}
    Similarly, it is necessary that
    \begin{align}\label{div-1}
        \prod_{k=2}^n (z_1-z_k)^{w_1+w_k} \text{ divides }{p_{s^{(1)} ... s^{(n+1)}}^{i_1(m_1)...i_l(m_l)}(z_1, ..., z_n)}.
    \end{align}
    To find out other restrictions, we rotate the choices of $s^{(1)}, ..., s^{(n+1)}$ and consider the $\overline{V}$-valued rational function 
    $$E\bigg(Y_1(s^{(2)}, z_1) Y^+(s^{(3)}, z_2) \cdots Y^+(s^{(n+1)}, z_n) s^{(1)}\bigg).$$
    By the same procedure (though with slightly different inputs), we obtain the 
    \begin{align}\label{div-id-2}
        & {p_{s^{(2)} ... s^{(n+1)}s^{(1)}}^{i_1(m_1)...i_l(m_l)}(z_1, ..., z_n)} {(-z_n)^{w_{1}+w_{n+1}} \prod_{k=2}^{n}(-z_k)^{w_{1}+w_{k+2}}} \\
        = \ & {p_{s^{(1)}... s^{(n+1)}}^{i_1(m_1)...i_l(m_l)}(-z_n, z_1-z_n, ..., z_{n-1}-z_n)} {z_1^{w_2 + w_{1}}\prod_{k=2}^n (z_1-z_k)^{w_2 + w_{k+1}}}
    \end{align}
    Therefore, 
    $$(-z_n)^{w_{1}+w_{n+1}} \text{ divides }p_{s^{(1)}... s^{(n+1)}}^{i_1(m_1)...i_l(m_l)}(-z_n, z_1-z_n, ..., z_{n-1}-z_n)$$
    Perform the change of variables $-z_n\mapsto z_1, z_1-z_n\mapsto z_2, ..., z_{n-1}-z_n\mapsto z_n$, we see that
    \begin{align}\label{div-2}
        z_1^{w_{1}+w_{n+1}} \text{ divides }p_{s^{(1)}... s^{(n+1)}}^{i_1(m_1)...i_l(m_l)}(z_1, ..., z_n).
    \end{align}
    Combining (\ref{div-1}) and (\ref{div-2}), we see that 
    \begin{align}\label{div-3}
        z_1^{w_1+w_n}  \prod_{k=2}^n (z_1-z_k)^{w_1+w_k} \text{ divides }{p_{s^{(1)} ... s^{(n+1)}}^{i_1(m_1)...i_l(m_l)}(z_1, ..., z_n)}
    \end{align}
    However, from (\ref{deg-poly}), 
    \begin{align*}
        \deg p_{s^{(1)} ... s^{(n+1)}}^{i_1(m_1)...i_l(m_l)} & = (n-1)w_1 + \wt a^{(i_1)}_{-m_1}\cdots a^{(i_l)}_{-m_l}\one\\
        & < nw_1 + w_2 + \cdots + w_l = \deg z_1^{w_1+w_n}  \prod_{k=2}^n (z_1-z_k)^{w_1+w_k}.
    \end{align*}
    So again, the only possibility is that $p_{s^{(1)} ... s^{(n+1)}}^{i_1(m_1)...i_l(m_l)}(z_1, ..., z_n) = 0$. 
    \item Now we consider $p_{s^{(1)}...s^{(n+1)}}^{i_1(m_1)...i_l(m_l)}(z_1,..., z_n)$ with
    $$\wt a^{(i_1)}_{-m_1}\cdots a^{(i_l)}_{-m_l}\one = w_1 + w_2 + \cdots + w_n + w_{n+1}. $$
    From an identical argument, we see that (\ref{div-3}) still holds. So the coefficient $$\frac{p_{s^{(1)} ... s^{(n+1)}}^{i_1(m_1)...i_l(m_l)}(z_1, ..., z_n)}{z_1^{w_1+w_n}  \prod_{k=2}^n (z_1-z_k)^{w_1+w_k}}. $$ 
    of $s^{(2)})_{-m_1}\cdots a^{(i_l)}_{-m_l}\one$ is a polynomial function. 
    \item For the coefficient of elements with higher weight, it suffices to take care of the contribution by $e^{z_nD}$. However, since the coefficients of lower weights elements are all polynomials, the identity (\ref{div-id-1}) is now modified as
    \begin{align*}
        & {p_{s^{(1)} ... s^{(n+1)}}^{i_1(m_1)...i_l(m_l)}(z_1, ..., z_n)} {(-z_n)^{w_{n+1}+w_n} \prod_{k=2}^{n}(-z_k)^{w_{n+1}+w_{k+1}}} \\
        = \ & {p_{s^{(n+1)} s^{(1)}... s^{(n)}}^{i_1(m_1)...i_l(m_l)}(-z_n, z_1-z_n, ..., z_{n-1}-z_n)} {z_1^{w_1 + w_{n+1}}\prod_{k=2}^n (z_1-z_k)^{w_1 + w_k}} \\
        & + {(-z_n)^{w_{n+1}+w_n} \prod_{k=2}^{n}(-z_k)^{w_{n+1}+w_{k+1}}}\cdot {z_1^{w_1 + w_{n+1}}\prod_{k=2}^n (z_1-z_k)^{w_1 + w_k}} \cdot \text{(some polynomial)};
    \end{align*}
    the identity (\ref{div-id-2}) is now modified as 
    \begin{align*}
        & {p_{s^{(2)} ... s^{(n+1)}s^{(1)}}^{i_1(m_1)...i_l(m_l)}(z_1, ..., z_n)} {(-z_n)^{w_{1}+w_{n+1}} \prod_{k=2}^{n}(-z_k)^{w_{1}+w_{k+2}}} \\
        = \ & {p_{s^{(1)}... s^{(n+1)}}^{i_1(m_1)...i_l(m_l)}(-z_n, z_1-z_n, ..., z_{n-1}-z_n)} {z_1^{w_2 + w_{1}}\prod_{k=2}^n (z_1-z_k)^{w_2 + w_{k+1}}}\\
        & + {(-z_n)^{w_{1}+w_{n+1}} \prod_{k=2}^{n}(-z_k)^{w_{1}+w_{k+2}}}\cdot {z_1^{w_2 + w_{1}}\prod_{k=2}^n (z_1-z_k)^{w_2 + w_{k+1}}} \cdot \text{(some polynomial)}. 
    \end{align*}
    Therefore, the divisible relation (\ref{div-3}) still holds.     
    \end{itemize}
    So we showed that the coefficients of all elements in the (\ref{Cocycle-Ansatz}) are polynomial functions. Thus, (\ref{n-var-cocycle-LHS1-1}) is a $\overline{V}$-valued polynomial function.    
\end{proof}

\begin{prop}\label{complem-symm}
    Let $Y_1$ be as in Theorem \ref{complem-cobdry-thm}. Then for every $r\in \Z_+, v'\in V$, $s^{(1)}, ..., s^{(n)}, s^{(n+1)}\in S$ and every permutation $\sigma$ of $\{1, ..., n+1\}$, 
    \begin{align*}
        & E\bigg(Y_1(s^{(1)}, z_1) Y^+(s^{(2)}, z_2) \cdots Y^+(s^{(n)}, z_n) Y(s^{(n+1)}, z_{n+1})\one\bigg) \\
        = \ & E\bigg(Y_1(s^{(\sigma(1))}, z_{\sigma(1)}) Y^+(s^{(\sigma(2))}, z_{\sigma(2)}) \cdots Y^+(s^{(\sigma(r))}, z_{\sigma(r)}) Y(s^{(\sigma(n+1))}, z_{n+1})\one \bigg)
    \end{align*}
\end{prop}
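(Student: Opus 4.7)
The plan is to deduce the full $S_{n+1}$-invariance of
\begin{align*}
F(z_1,\ldots,z_{n+1}) := E\bigl(Y_1(s^{(1)}, z_1) Y^+(s^{(2)}, z_2) \cdots Y^+(s^{(n)}, z_n) Y(s^{(n+1)}, z_{n+1})\one\bigr)
\end{align*}
from two ingredients: (i) commutativity of vertex operators among positions $2,\ldots,n+1$; and (ii) a single cyclic rotation identifying position $1$ with position $n+1$. Since an $(n+1)$-cycle $c$ together with the transposition $(2,3)$ already generates $S_{n+1}$ (via $c^{k}(2,3)c^{-k}=(k+2,k+3)$), these two ingredients suffice.

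First I would replace $F$ by the ``full'' generating function $\widetilde F := E(Y_1(s^{(1)}, z_1) Y(s^{(2)}, z_2) \cdots Y(s^{(n+1)}, z_{n+1})\one)$, arguing as in the previous proposition: the discrepancy is a sum of terms containing at least one intermediate $Y^-(s^{(k)}, z_k)$, and expanding such a $Y^-$ into modes $s^{(k)}_{m}$ with $m\ge 0$ strictly lowers the $S$-filtration by Remark \ref{Filtration}, producing generating functions of $Y_1$ on $S$-generators of total weight strictly less than $N$, which vanish by hypothesis (3). In the form $\widetilde F$, invariance under permutations of positions $\{2,\ldots,n+1\}$ is the standard commutativity of products of vertex operators acting on a vector, understood as an identity of $\overline V$-valued rational functions.

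To establish the cyclic rotation, I would use associativity to rewrite the tail as $Y(s^{(2)}, z_2)\cdots Y(s^{(n+1)}, z_{n+1})\one = e^{z_{n+1}D}\, w$ with $w = Y(s^{(2)}, z_2-z_{n+1})\cdots Y(s^{(n)}, z_n - z_{n+1})s^{(n+1)}$, and then apply the skew-symmetry of $Y_1$ combined with $D$-conjugation to obtain $\widetilde F = E(Y_1(w, z_{n+1})Y(s^{(1)}, z_1)\one)$. Writing $w = Y(s^{(2)}, z_2-z_{n+1})\,w^{(3)}$ with $w^{(3)} = Y(s^{(3)}, z_3-z_{n+1})\cdots Y(s^{(n)}, z_n-z_{n+1})s^{(n+1)}$ and applying the cocycle equation to the outer $Y(s^{(2)},\cdot)$-factor splits $\widetilde F$ into three terms: (a) $E(Y_1(s^{(2)}, z_2)\,Y(w^{(3)}, z_{n+1})\,Y(s^{(1)}, z_1)\one)$, which after undoing the inner iterate equals $E(Y_1(s^{(2)}, z_2)Y(s^{(3)}, z_3)\cdots Y(s^{(n+1)}, z_{n+1})Y(s^{(1)}, z_1)\one)$, precisely the cyclic rotation of $\widetilde F$; (b) $E(Y(s^{(2)}, z_2)\,Y_1(w^{(3)}, z_{n+1})\,Y(s^{(1)}, z_1)\one)$, which vanishes because applying skew-symmetry of $Y_1$ to the inner factor $Y_1(w^{(3)}, z_{n+1})Y(s^{(1)}, z_1)\one$ exhibits it as a $Y_1$-generating function on $S$-generators of total weight $N-\wt s^{(2)} < N$, killed by hypothesis (3); and (c) $-E(Y(Y_1(s^{(2)}, z_2-z_{n+1})\,w^{(3)}, z_{n+1})\,Y(s^{(1)}, z_1)\one)$, which vanishes because $Y_1(s^{(2)}, z_2-z_{n+1})\,w^{(3)}$ is a $Y_1$-generating function on $S$-generators of total weight $N-\wt s^{(1)} < N$, also zero by hypothesis (3).

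The main obstacle will be rigorously justifying the application of skew-symmetry of $Y_1$ --- originally stated for fixed vectors in $V$ --- to the generating-function quantities $w$ and $w^{(3)}$, and similarly the application of the cocycle equation with the ``outer vector'' itself being a generating function. This is handled by invoking the convergence of the relevant products/iterates to $\overline V$-valued rational functions (Theorem \ref{Cobdry-1-Var-Thm}(3) and the analytic-continuation arguments referenced throughout Section 2.3), which upgrades formal-series identities valid in small local coordinates to rational-function identities on the full complement of the diagonals; once both sides are known to be $\overline V$-valued rational functions, equality on a nonempty open subset forces global equality, and the three-term decomposition above then yields the cyclic invariance.
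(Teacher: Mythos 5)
Your proposal is correct and follows essentially the same route as the paper: commutativity of vertex operators (plus hypothesis (3) to discard lower-filtration terms) gives invariance under $\mathrm{Sym}\{2,\ldots,n+1\}$, and one application of the cocycle equation --- with the inhomogeneous terms killed by hypothesis (3) --- supplies the single additional generator needed to get all of $S_{n+1}$. The only cosmetic difference is that you extract the full cyclic rotation directly from the raw cocycle equation (peeling off $s^{(2)}$ after a skew-symmetry flip), whereas the paper uses the already-derived homogeneous identity $(\ref{n-var-cocycle-LHS1-1})=(\ref{n-var-cocycle-RHS1-1})$ and commutativity inside it to obtain the transposition $(12)$; these are interchangeable.
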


\begin{proof}
    From the creation property and an argument of analytic continuation, we know that the series 
    $$Y_1(s^{(1)}, z_1) Y^+(s^{(2)}, z_2) \cdots Y^+(s^{(n)}, z_n) Y(s^{(n+1)}, z_{n+1})\one$$
    converges to a $\overline{V}$-valued rational function. Moreover, 
    From the commutativity of the vertex operators, using an argument of analytic continuation, we see that for every permutation $\sigma$ on $\{2, ..., n, n+1\}$
    \begin{align*}
        & E\bigg(Y_1(s^{(1)}, z_1) Y(s^{(2)}, z_2) \cdots Y(s^{(n)}, z_n) s^{(n+1)}\bigg) \\
        = \ & E\bigg(Y_1(s^{(1)}, z_{1}) Y(s^{(\sigma(2))}, z_{\sigma(2)}) \cdots Y(s^{(\sigma(r))}, z_{\sigma(r)}) s^{(n+1)} \bigg).
    \end{align*}
    Using the assumption that $Y_1(s^{(i)}, z)$ annihilates any element in $E_{N-1}^S$, we may replace all the $Y$'s by $Y^+$, to obtain the conclusion for any permutaiton $\sigma$ of $\{2, ..., n, n+1\}$. 

    It remains to show the equality for $\sigma = (12)$. Again from the commutativity of vertex operator, using an argument of analytic continuation, we see that
    \begin{align*}
        & E\bigg(e^{z_n D}Y_1(s^{(n+1)}, -z_{n}) Y(s^{(1)}, z_1-z_n) Y(s^{(2)}, z_2-z_n)\cdots Y(s^{(n-1)}, z_{n-1}-z_n) s^{(n)} \bigg)\\
        = \ & E\bigg(e^{z_n D}Y_1(s^{(n+1)}, -z_{n}) Y(s^{(2)}, z_2-z_n) Y(s^{(1)}, z_1-z_n)\cdots Y^+(s^{(n-1)}, z_{n-1}-z_n) s^{(n)} \bigg). 
    \end{align*}
    Again we may replace $Y$ by $Y^+$. The conclusion then follows from the equation $(\ref{n-var-cocycle-LHS1-1}) = (\ref{n-var-cocycle-RHS1-1})$. 
\end{proof}

\newenvironment{proofofcomplemcobdrythm}{\paragraph{\textit{Proof of Theorem \ref{complem-cobdry-thm}}.}}{\hfill$\square$}
 
\begin{proofofcomplemcobdrythm}
    Construct a 1-cochain by defining a linear homogeneous map $\phi: V\to V$, such that
    \begin{align*}
        \phi\bigg(s^{(1)}_{-m_1}\cdots s^{(n+1)}_{-m_{n+1}}\one\bigg) = -\Res_{z_1 = 0} z_1^{-m_1}\cdots \Res_{z_{n+1}=0} z_{n+1}^{-m_{n+1}} E\bigg(Y_1(s^{(1)}, z_1)\cdots Y(s^{(n+1)},z_{n+1})\one\bigg)
    \end{align*}
    for every $m_1, ..., m_{n+1}\in \Z_+, s^{(1)}, ..., s^{(n+1)}\in S$, and 
    $$\phi(v) = 0$$
    for any other basis element $v$. In particular, we note that $\phi$ annihilates $E_N^S$ for every $N<\wt s^{(1)} + \cdots + \wt s^{(n+1)}$. 
    
    We check that $\phi$ is well-defined. From Formula (3.1.9) in \cite{LL}, for every permutation $\sigma\in \text{Sym}\{1, ..., n+1\}$, the difference of $s^{(1)}_{-m_1}\cdots s^{(n+1)}_{-m_{n+1}}\one$ and $s^{(\sigma(1))}_{-m_{\sigma(1)}}\cdots s^{(\sigma(n+1))}_{-m_{\sigma(n+1)}}\one$ differs by an element in $E_{N}^S$ for $N \leq \wt s^{(1)} + \cdots + \wt s^{(n+1)} - 1$ that is annihilated by $\phi$. Therefore, $\phi$ is well-defined if and only if
    \begin{align*}
        \phi\bigg(s^{(1)}_{-m_1}\cdots s^{(n+1)}_{-m_{n+1}}\one\bigg) = \phi\bigg(s^{(\sigma(1))}_{-m_{\sigma(1)}}\cdots s^{(\sigma(n+1))}_{-m_{\sigma(n+1)}}\one\bigg). 
    \end{align*}
    But this is guaranteed by Proposition \ref{complem-symm}. 

    We check that $\phi$ commutes with $D$. From the $D$-derivative property (\ref{D-derivative-3}) of $Y_1$ and that of $Y$, we have 
    $$D E\bigg(Y_1(s^{(1)}, z_1)\cdots Y(s^{(n+1)}, z_{n+1})\one\bigg) = \left(\frac{\partial}{\partial z_1} + \cdots + \frac{\partial}{\partial z_{n+1}}\right) E\bigg(Y_1(s^{(1)}, z_1)\cdots Y(s^{(n+1)}, z_{n+1})\one\bigg) $$
    Taking $\Res_{z_1=0}z_1^{-m_1}\cdots \Res_{z_{n+1}=0}z_{n+1}^{-m_{n+1}}$ on both sides, we see that 
    \begin{align*}
        & D\phi\bigg(s^{(1)}_{-m_1}\cdots s^{(n+1)}_{-m_{n+1}}\one\bigg) \\
        = \ & \sum_{i=1}^{n+1} m_i\phi\bigg(s^{(1)}_{-m_1}\cdots s^{(i)}_{-m_{i}-1} \cdots s^{(n+1)}_{-m_{n+1}-1}\one\bigg)\\
        = \ & \phi\bigg(Ds^{(1)}_{-m_1}\cdots s^{(n+1)}_{-m_{n+1}}\one\bigg).
    \end{align*}
    Thus $\phi$ commutes with $D$. 
    Now we consider the coboundary defined by $\phi$. We study the coboundary $\delta\Phi$ corresponding to $\phi$ using (\ref{Cobdry-Def}). Since $\phi$ annihilates $E_N^S$ for every $N< \wt s^{(1)}+\cdots + \wt s^{(n+1)}$, $Y_1(\phi(s^{(1)}), x)s^{(2)}_{m_2}\cdots s^{(n)}_{m_n}s^{(n+1)}$ and $Y_1(s^{(1)}, x)\phi(s^{(2)}_{m_2}\cdots s^{(n)}_{m_n}s^{(n+1)})$ are both zero for every $m_2, ..., m_{n}\in \Z$. So,  
    \begin{align*}
        & (\delta\Phi)(s^{(1)}\otimes Y(s^{(2)}, z_2-z_{n+1}) \cdots Y(s^{(n)}, z_n-z_{n+1})s^{(n+1)};z_1, z_{n+1})\\
        = \ & E\bigg(-\phi(Y(s^{(1)}, z_1)Y( Y(s^{(2)}, z_2-z_{n+1}) \cdots Y(s^{(n)}, z_n-z_{n+1})s^{(n+1)}, z_{n+1})\one)\bigg)\\
        = \ & E\bigg(-\phi(Y(s^{(1)}, z_1)Y(s^{(2)}, z_2) \cdots Y(s^{(n)}, z_n)Y(s^{(n+1)}, z_{n+1})\one)\bigg)\\
        = \ & -E\bigg(\sum_{m_1, ...,  m_{n+1}\in \Z} \phi(s^{(1)}_{m_1}\cdots s^{(n+1)}_{m_{n+1}} \one)z_1^{-m_1-1}\cdots z_{n+1}^{-m_{n+1}-1}\bigg), 
    \end{align*}
    Since $V$ is freely generated by $S$, and $\phi(s^{(1)}_{m_1}\cdots s^{(n+1)}_{m_{n+1}}\one)\neq 0$ only when $m_1,..., m_{n+1} <0$. Therefore, 
    \begin{align*}
        & (\delta\Phi)(s^{(1)}\otimes Y(s^{(2)}, z_2-z_{n+1}) \cdots Y(s^{(n)}, z_n-z_{n+1})s^{(n+1)};z_1, z_{n+1}) \\
        = \ & -E\bigg(\sum_{m_1, ...,  m_{n+1}\geq 1} \phi(s^{(1)}_{-m_1} \cdots s^{(n+1)}_{-m_{n+1}} \one)z_1^{m_1-1}\cdots z_{n+1}^{m_{n+1}-1}\bigg)\\
        &= E\bigg(Y_1(s^{(1)}, z_1)Y^+(s^{(2)}, z_2)\cdots Y^+(s^{(n+1)}, z_{n+1})\one\bigg). 
    \end{align*}
    The conclusion then follows by evaluating $z_{n+1}=0$. 
\end{proofofcomplemcobdrythm}



\begin{rema}
    \begin{enumerate}[leftmargin=*]
        \item Theorem \ref{complem-cobdry-thm} basically states that the complementary solutions of the cocycle equation are all coboundaries. Therefore, to solve the cocycle equation, it suffices to find one particular solution. We shall achieve this in the next section. 
        \item The proof of Theorem \ref{complem-cobdry-thm} fails when there exists relations among the basis vectors. For example, in the simple affine VOA associated with $sl_2$ with level $n$, we have $e(-1)^{n+1} \one = 0$. In this case, $\phi$ is not well-defined if the constant term $$Y_1(e(-1)\one, z_1) Y(e(-1)\one, z_2)\cdots Y(e(-1)\one, z_n)e(-1)\one$$ is nonzero. So in this case, the complementary solution might not be a coboundary and thus might represent a nontrivial cohomology class. 
    \end{enumerate}
\end{rema}

\section{Particular solution: the base case}

In this section, we give a recursive construction $Y_1: V\otimes V\to V((x))$ based on the one-variable cocycle on the generators discussed in Section \ref{1-var-cocycle-generator}. We prove that $Y_1$ is a particular solution and satisfy the properties when the generators are of minimal weight. The inductive step is left for the next section.  

\subsection{Modes for the $Y_1$-operator} \label{Y_1-def}
Recall that $S=\{a^{(1)}, ..., a^{(r)}\}$. Assume that for every $1\leq i\leq j\leq r$, we have
$$Y_1(a^{(i)}, x)a^{(j)} = B(a^{(i)}, a^{(j)}) \one x^{- \text{wt}(a^{(i)}) - \text{wt}(a^{(j)})} + \sum_{m \in \Z} M_m(a^{(i)}, a^{(j)})x^{-m-1} , $$ 
where for each $1\leq i\leq j\leq r, n\in \N$, 
$$B(a^{(i)}, a^{(j)}) \in \C, M_m(a^{(i)}, a^{(j)}) \in V.$$
Define 
\begin{align*}
    Y_1:  S& \to (\End V) [[x,x^{-1}]]\\
    a^{(i)} & \mapsto Y_1(a^{(i)},x) = \sum_{n\in \Z}(a^{(i)})^{def}_m x^{-n-1}
\end{align*}
For each $i = 1, ...., r$, we define the $(a^{(i)})^{def}_m$ by specifying its image on the basis elements of $V$.  
\begin{enumerate}[leftmargin=*]
    \item $(a^{(i)})^{def}_m\one = 0$ for every $m\in \Z$. 
    \item If $i\leq j$, then 
    \begin{align*}
        (a^{(i)})^{def}_m a^{(j)} = \left\{ \begin{aligned}
            & 0 & \text{ if } m \geq \text{wt}(a^{(i)}) + \text{wt}(a^{(j)}) \\
            & B(a^{(i)}, a^{(j)})\one & \text{ if } m=\text{wt}(a^{(i)}) + \text{wt}(a^{(j)}) - 1\\
            & M_m(a^{(i)}, a^{(j)}) & \text{ if }  m < \text{wt}(a^{(i)}) + \text{wt}(a^{(j)}) - 1,
        \end{aligned}\right.
    \end{align*}
    From Proposition \ref{ai--n-aj-Prop}, for $m \geq 1$
    \begin{align*}
        M_{-m}(a^{(i)}, a^{(j)}) = \ & \frac 1 2 \sum_{\alpha \geq 0} \frac{D^{m+\alpha}}{(m+\alpha)!} (-1)^{\alpha} \binom{m+\alpha-1}{\alpha} M_\alpha(a^{(i)}, a^{(j)})\\
        = \ & \frac 1 2 \sum_{\alpha \geq 0} \frac{D^{m+\alpha}}{(m+\alpha)!} \binom{-m}{\alpha} M_\alpha(a^{(i)}, a^{(j)})
    \end{align*}
    \item If $i>j$, then 
    $$(a^{(i)})_m^{def}a^{(j)} = \Res_{x}x^m e^{xD}Y_1(a^{(j)}, -x)a^{(i)}.$$
    This allows us to extend our maps $B$ and $M_m$ respectively to $B: S\times S \to \C$ and $M_m: S\times S \to V$, satisfying the relations
    $$B(a^{(i)}, a^{(j)}) = (-1)^{\text{wt}(a^{(i)}) + \text{wt}(a^{(j)})}B(a^{(j)},a^{(i)})$$
    $$M_m(a^{(i)}, a^{(j)}) = \sum_{\alpha\geq 0}(-1)^{m+\alpha+1}\frac{D^{\alpha}}{\alpha!}M_{m+\alpha}(a^{(j)}, a^{(i)}), m \geq 0. $$
    $$M_{-m}(a^{(i)}, a^{(j)}) = \frac 1 2 \sum_{\alpha \geq 0} (-1)^{\alpha+1} \frac{D^{m+\alpha}}{(m+\alpha)!}  M_\alpha(a^{(j)}, a^{(i)}), m \geq 1. $$
\end{enumerate}
Assuming that for every $i$, $(a^{(i)})^{def}_m v$ is defined for every $v\in E_N^S$ for every $N<\wt a^{(j_1)}+ \cdots + \wt a^{(j_p)}$, we recursively define $(a^{(i)})^{def}_m$ as follows:
\begin{enumerate}[leftmargin=*]
    \setcounter{enumi}{3}
    \item If $m<0$, $1\leq i \leq r$, and $1 \leq j_1 \leq \cdots \leq j_p \leq r$, we set 
    \begin{align*}
        & (a^{(i)})^{def}_m a^{(j_1)}_{-n_1} \cdots a^{(j_p)}_{-n_p}\one \\
        = \ & \frac 1 2 \sum_{k=1}^p  a^{(j_1)}_{-n_1}\cdots a^{(j_{k-1})}_{-n_{k-1}}\sum_{\alpha\geq 0} \binom{m}{\alpha}\left( ((a^{(i)})^{def}_\alpha a^{(j_k)})_{m-n_k-\alpha}+(a^{(i)}_\alpha a^{(j_k)})^{def}_{m-n_k-\alpha}\right)a^{(j_{k-1})}_{-n_{k-1}}\cdots a^{(j_p)}_{-n_p}\one .
    \end{align*}
    Here $(a^{(i)}_\alpha a^{(j_k)})^{def}_{m-n_k-\alpha}$ is defined via the procedure described in Section \ref{Sec-GenFunc}, Theorem \ref{Y1-ext-uniqueness}. Since  $a^{(i)}_\alpha a^{(j)}\in E_{\text{wt }a^{(i)}+\text{wt }a^{(j)} -1 }$, following the procedure described in Theorem \ref{Y1-ext-uniqueness}, the definition will eventually reduce to $(a^{(i)})^{def}$ on elements with lower filtrations.  
    Note that this formula is consistent with the case when $p=1$. 
    \item If $m \geq 0$ and $1\leq j_1 \leq \cdots \leq j_p \leq r$, we define
    \begin{align*}
        & (a^{(i)})^{def}_m a^{(j_1)}_{-n_1} \cdots a^{(j_p)}_{-n_p}\one \\ 
        = \ & a^{(j_1)}_{-n_1} (a^{(i)})^{def}_m 
        a^{(j_2)}_{-n_2}\cdots a^{(j_p)}_{-n_p}\one\\
        & + (a^{(j_1)})^{def}_{-n_1}a^{(i)}_{m} a^{(j_2)}_{-n_2}\cdots a^{(j_p)}_{-n_p}\one - a^{(i)}_{m} (a^{(j_1)})^{def}_{-n_1} (a^{(j_2)})_{-n_2}\cdots a^{(j_p)}_{-n_p}\one\\ 
        & + \sum_{\alpha=0}^\infty \binom{m}{\alpha}\left( ((a^{(i)})^{def}_\alpha a^{(j_1)})_{m-n_1-\alpha} + (a^{(i)}_\alpha a^{(j_1)})^{def}_{m-n_1-\alpha}\right)a^{(j_2)}_{-n_2}\cdots  a^{(j_p)}_{-n_p}\one.  
    \end{align*}
\end{enumerate}


\begin{lemma}\label{symm-lemma}
    For every $m, n\in \Z$, $1\leq i, j \leq r$, 
    \begin{align*}
        & \sum_{\alpha=0}^\infty \binom{m}{\alpha} \left(\left( (a^{(i)})^{def}_\alpha a^{(j)}\right)_{m+n-\alpha} a^{(k)} + \left( a^{(i)}_\alpha a^{(j)}\right)^{def}_{m+n-\alpha} a^{(k)}\right) \\
        = \ & -\sum_{\alpha=0}^\infty \binom{n}{\alpha} \left(\left( (a^{(j)})^{def}_\alpha a^{(i)}\right)_{m+n-\alpha} + \left( a^{(j)}_\alpha a^{(i)}\right)^{def}_{m+n-\alpha} a^{(k)}\right)
    \end{align*}
    as element in $\End V$. 
\end{lemma}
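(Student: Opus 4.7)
The plan is to deduce this symmetry directly from the commutator condition that the singular part of $Y_1$ on generators is assumed to satisfy (as stated in the introduction), combined with the antisymmetry $[A, B] = -[B, A]$ of the commutator bracket in $\End V$.

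First, I would apply the commutator condition to the triple $(u, v, w) = (a^{(i)}, a^{(j)}, a^{(k)})$ with parameters $(m, n)$: the right-hand side of that condition is identical to the left-hand side of our lemma, so the commutator condition rewrites the left-hand side of the lemma as
$$[(a^{(i)})^{def}_m, a^{(j)}_n] a^{(k)} + [a^{(i)}_m, (a^{(j)})^{def}_n] a^{(k)}.$$
Second, I would apply the same commutator condition with $(u, v, w) = (a^{(j)}, a^{(i)}, a^{(k)})$ and with the roles of $m$ and $n$ swapped, which rewrites the sum appearing on the right-hand side of our lemma (ignoring its overall minus sign) as
$$[(a^{(j)})^{def}_n, a^{(i)}_m] a^{(k)} + [a^{(j)}_n, (a^{(i)})^{def}_m] a^{(k)}.$$
Third, I would invoke antisymmetry of the bracket in $\End V$, which gives $[(a^{(i)})^{def}_m, a^{(j)}_n] = -[a^{(j)}_n, (a^{(i)})^{def}_m]$ and $[a^{(i)}_m, (a^{(j)})^{def}_n] = -[(a^{(j)})^{def}_n, a^{(i)}_m]$. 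Term by term, this shows that the first bracket expression equals the negative of the second. Chaining the two applications of the commutator condition with this negation yields exactly the identity claimed in the lemma.

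The main conceptual subtlety is ensuring that the commutator condition is legitimately available when $a^{(i)}_\alpha a^{(j)}$ is not itself in $S$: in that case the symbol $(a^{(i)}_\alpha a^{(j)})^{def}_{m+n-\alpha}$ is defined through the extension procedure of Theorem \ref{Y1-ext-uniqueness} via analytic continuation. Granted that this extension has been carried out consistently with the commutator condition, the proof of the lemma reduces to two invocations of that condition followed by bracket antisymmetry, and is therefore purely algebraic; the real content is encoded in the commutator condition itself, with the symmetry asserted by the lemma being its immediate formal consequence.
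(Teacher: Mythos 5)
There is a genuine gap here: the argument is circular. The commutator condition (\ref{Commutator-Assumption}) is only \emph{assumed} for $m,n\in\N$ on generators, whereas Lemma \ref{symm-lemma} is asserted (and needed) for all $m,n\in\Z$. The extension of the commutator condition to negative modes is precisely what Proposition \ref{Commutation-Assumption-neg-neg-prop} and Proposition \ref{commutator-formula-general} establish, and both of those proofs invoke Lemma \ref{symm-lemma} — indeed with negative indices, e.g.\ to rewrite $(a^{(j)})^{def}_{-n}a^{(i)}_{-m}\one$ in Proposition \ref{Commutator-Assumption-Vacuum}. So deriving the lemma from the commutator condition for arbitrary integer modes presupposes exactly the statements that the lemma is used to prove. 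The placement of the lemma is also telling: it sits in Section \ref{Y_1-def}, before the commutator condition is even introduced as a hypothesis in Section \ref{Commutator-Assumption-Section}, and it feeds into Proposition \ref{Commutator-Assumption-Vacuum}, which likewise precedes that assumption. Your closing caveat (``granted that this extension has been carried out consistently with the commutator condition'') is where the circularity is hiding: that consistency is not granted, it is the thing being built, and the lemma is one of the tools used to build it.

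The paper's proof is instead a direct, unconditional computation that uses only data baked into the definition of the modes: the skew-symmetry relation $(a^{(j)})^{def}_\alpha a^{(i)} = \sum_{\beta\geq\alpha}(-1)^{\beta+1}\tfrac{D^{\beta-\alpha}}{(\beta-\alpha)!}(a^{(i)})^{def}_\beta a^{(j)}$ (and its analogue for $a^{(j)}_\alpha a^{(i)}$ coming from skew-symmetry of $Y$), the rule for how modes of $D^k v$ shift indices, and the Vandermonde-type identity
\begin{align*}
(-1)^{\beta+1}\sum_{0\leq\alpha\leq\beta}\binom{m}{\alpha}\binom{-m-n+\beta-1}{\beta-\alpha}=-\binom{n}{\beta}.
\end{align*}
If you want to salvage your approach, you would have to first prove the commutator identity for all $m,n\in\Z$ and both orderings of $(i,j)$ without using the lemma — but that is not how the paper's induction is structured, and it is not clear it can be done. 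The safe route is the paper's: treat the lemma as a purely formal consequence of skew-symmetry and the $D$-derivative property, independent of the commutator condition.
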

\begin{proof}
    From the skew-symmetry, 
    $$(a^{(j)})^{def}_\alpha a^{(i)} = \sum_{\beta \geq \alpha} (-1)^{\beta+1}\frac{D^{\beta-\alpha}}{(\beta-\alpha)!} (a^{(i)})_\alpha^{def} a^{(j)}.$$
    So 
    \begin{align*}
        & \sum_{\alpha=0}^\infty \binom{m}{\alpha}\left((a^{(j)})^{def}_\alpha a^{(i)}\right)_{m+n-\alpha} \\
        = \ & \sum_{\alpha=0}^\infty \binom{m}{\alpha}\sum_{\beta\geq \alpha}\frac{(-1)^{\beta+1}}{(\beta-\alpha)!}\left(D^{
        \beta-\alpha}(a^{(i)})^{def}_\alpha a^{(j)}\right)_{m+n-\alpha}\\
        = \ & \sum_{\alpha=0}^\infty \binom{m}{\alpha}\sum_{\beta\geq \alpha}\frac{(-1)^{\beta+1}\cdot (-1)^{\beta-\alpha}}{(\beta-\alpha)!}(m+n-\alpha) \cdots (m+n-\beta+1)\left((a^{(i)})^{def}_\alpha a^{(j)}\right)_{m+n-\beta}\\
        = \ & \sum_{\beta=0}^\infty (-1)^{\beta+1}\sum_{0 \leq \alpha \leq \beta}\binom{m}{\alpha}\binom{-m-n+\beta-1}{\beta-\alpha}\left((a^{(i)})^{def}_\alpha a^{(j)}\right)_{m+n-\beta}
    \end{align*}
    Note that 
    \begin{align*}
        (-1)^{\beta+1}\sum_{0 \leq \alpha \leq \beta}\binom{m}{\alpha}\binom{-m-n+\beta-1}{\beta-\alpha}
        = -\binom{n}{\beta}. 
    \end{align*}
    Therefore, 
    \begin{align*}
        \sum_{\alpha=0}^\infty \binom{m}{\alpha}\left((a^{(j)})^{def}_\alpha a^{(i)}\right)_{m+n-\alpha}        = -\sum_{\beta=0}^\infty \binom{n}{\beta}\left((a^{(i)})^{def}_\alpha a^{(j)}\right)_{m+n-\beta}.
    \end{align*}
    So the first half of the identity is proved. A similar argument proves the second half. 
\end{proof}

\begin{prop}\label{Commutator-Assumption-Vacuum}
    For every $m,n \in \Z$, $1\leq i, j \leq r$, 
\begin{align*}
    & [(a^{(i)})_m^{def}, a^{(j)}_n] \one + [a^{(i)}_m, (a^{(j)})^{def}_n] \one \nonumber\\
    = \ & \sum_{\alpha=0}^\infty \binom{m}{\alpha} \left(\left( (a^{(i)})^{def}_\alpha a^{(j)}\right)_{m+n-\alpha} \one + \left( a^{(i)}_\alpha a^{(j)}\right)^{def}_{m+n-\alpha} \one\right)
\end{align*}
\end{prop}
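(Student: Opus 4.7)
The plan is to reduce the identity to a case analysis on the signs of $m$ and $n$, using the two basic simplifications available at the vacuum: by condition (3) of Section \ref{first-order-deform-subsec} we have $(a^{(i)})^{def}_m \one = 0$ for every $m \in \Z$, and by the creation property $a^{(i)}_m \one = 0$ for every $m \geq 0$. Together these collapse the left-hand side to
\[
(a^{(i)})^{def}_m a^{(j)}_n \one \;-\; (a^{(j)})^{def}_n a^{(i)}_m \one .
\]
In the mixed-sign cases exactly one of these two terms survives, while in the doubly-negative case both survive.

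For $m \geq 0, n \geq 0$ the LHS is $0$, and on the RHS the summand $(a^{(i)}_\alpha a^{(j)})^{def}_{m+n-\alpha}\one$ vanishes by condition (3), while $((a^{(i)})^{def}_\alpha a^{(j)})_{m+n-\alpha}\one$ is nonzero only for $\alpha > m+n$; since $\binom{m}{\alpha} = 0$ for $\alpha > m$ and $m+n \geq m$, the sum is empty. For $m \geq 0, n<0$, only the first term on the LHS survives; applying the recursive formula in item (5) of Section \ref{Y_1-def} with $p = 1$, $j_1 = j$, and $n_1 = -n$, the three terms $a^{(j)}_{-n_1}(a^{(i)})^{def}_m\one$, $(a^{(j)})^{def}_{-n_1} a^{(i)}_m\one$, and $a^{(i)}_m(a^{(j)})^{def}_{-n_1}\one$ all vanish by the above reductions, and what remains is exactly the RHS. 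The case $m<0, n\geq 0$ is symmetric: applying the same rule to $(a^{(j)})^{def}_n a^{(i)}_m \one$ yields the analogous sum with $i,j$ and $m,n$ swapped, and Lemma \ref{symm-lemma} converts it to the negative of the RHS, which combines with the outer minus sign to give the RHS.

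The main case is $m<0, n<0$. Here both terms on the LHS survive, and item (4) of Section \ref{Y_1-def} (applied to each, again with $p=1$) yields
\[
(a^{(i)})^{def}_m a^{(j)}_n \one = \tfrac{1}{2}\sum_{\alpha \geq 0}\binom{m}{\alpha}\Big[((a^{(i)})^{def}_\alpha a^{(j)})_{m+n-\alpha}\one + (a^{(i)}_\alpha a^{(j)})^{def}_{m+n-\alpha}\one\Big]
\]
and the analogous expression for $(a^{(j)})^{def}_n a^{(i)}_m\one$ with $i \leftrightarrow j$ and $m \leftrightarrow n$. Lemma \ref{symm-lemma} identifies the second of these as $-\tfrac{1}{2}$ times the desired RHS, and the two halves add to give the full RHS.

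The work is essentially bookkeeping: tracking which terms of items (4) and (5) drop out because they land on $\one$ via either $Y_1$ or a non-negative mode of $Y$, and then matching signs between the $\tfrac{1}{2}$-normalization of item (4), the absence of that factor in item (5), and the sign flip coming from Lemma \ref{symm-lemma}. The principal obstacle, if any, is simply verifying that these coefficients line up correctly across the four cases, and confirming in case one that the finiteness coming from $\binom{m}{\alpha}$ for $m \in \N$ really does kill every potentially nonzero summand.
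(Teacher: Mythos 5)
Your proof is correct and takes essentially the same route as the paper's: reduce to a sign case analysis using $(a^{(i)})^{def}_m\one=0$ and $a^{(i)}_m\one=0$ for $m\geq 0$, read off the mixed-sign and doubly-nonnegative cases from the defining formulas, and in the doubly-negative case combine the two $\tfrac12$-normalized contributions from item (4) via Lemma \ref{symm-lemma}. You supply somewhat more detail than the paper does for the cases it declares automatic, but the argument is the same.
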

\begin{proof}
    For $m, n\in \N$, the formula automatically holds. For $m\in \N$, $n\in \Z_-$ and $m\in \Z_-, n \in \N$, the formula follows directly from the definition. We check the case when $m, n\in \Z_-$ in detail. For convenience, we substitute $m\mapsto -m, n\mapsto -n$ and set $m,n \geq 1$. By definition,
    \begin{align*}
        & (a^{(i)})^{def}_{-m} a^{(j)}_{-n} \one = \frac 1 2 \sum_{\alpha \geq 0} \binom{m}{\alpha} ((a^{(i)})^{def}_\alpha a^{(j)})_{-m-n-\alpha}\one \\
        & a^{(j)}_{-n} (a^{(i)})^{def}_{-m} \one = 0, \\
        & a^{(i)}_{-m}(a^{(j)})^{def}_{-n} \one = 0, \\
        & (a^{(j)})^{def}_{-n}a^{(i)}_{-m}\one = \frac 1 2 \sum_{\alpha\geq 0} \binom{-n}{\alpha} ((a^{(j)})^{def}_\alpha a^{(i)})_{-m-n-\alpha}\one = -\frac 1 2 \sum_{\alpha\geq 0} \binom{-m}{\alpha} ((a^{(i)})^{def}_\alpha a^{(j)})_{-m-n-\alpha}\one
    \end{align*}
    where the last equality follows from Lemma \ref{symm-lemma}. Then 
    \begin{align*}
        & [(a^{(i)})_{-m}^{def}, a^{(j)}_{-n}] \one + [a^{(i)}_{-m}, (a^{(j)})^{def}_{-n}] \one \nonumber\\
        = \ & \frac 1 2 \sum_{\alpha \geq 0} \binom{m}{\alpha} ((a^{(i)})^{def}_\alpha a^{(j)})_{-m-n-\alpha}\one + \frac 1 2 \sum_{\alpha \geq 0} \binom{m}{\alpha} ((a^{(i)})^{def}_\alpha a^{(j)})_{-m-n-\alpha}\one\\
        = \ & \sum_{\alpha \geq 0} \binom{m}{\alpha} ((a^{(i)})^{def}_\alpha a^{(j)})_{-m-n-\alpha}\one 
    \end{align*}
    The conclusion then follows from $v^{def}_n\one = 0$ for each $v=a^{(i)}_\alpha a^{(j)}$ and $n\in \Z$. 
\end{proof}

\subsection{Commutator condition} \label{Commutator-Assumption-Section}

Throughout this section, we assume that for each $m, n \in \N$, $1\leq i, j, k \leq r$, 
\begin{align}
    & [(a^{(i)})_m^{def}, a^{(j)}_n] a^{(k)} + [a^{(i)}_m, (a^{(j)})^{def}_n] a^{(k)} \nonumber\\
    = \ & \sum_{\alpha=0}^\infty \binom{m}{\alpha} \left(\left( (a^{(i)})^{def}_\alpha a^{(j)}\right)_{m+n-\alpha} a^{(k)} + \left( a^{(i)}_\alpha a^{(j)}\right)^{def}_{m+n-\alpha} a^{(k)}\right)\label{Commutator-Assumption}
\end{align}
We refer this formula as the commutator condition. 

The following lemma plays a key role in the cocycle construction. 
\begin{lemma}\label{cyclic-identity}
    Fix $w\in V$, $i, j, k \in \{1, ..., r\}$. If the commutator formula
    \begin{align*}
        [u_m^{def}, v_n] w + [u_m, v^{def}_n] w = \ & \sum_{\alpha=0}^\infty \binom{m}{\alpha} \left(\left( u^{def}_\alpha v\right)_{m+n-\alpha} w + \left( u_\alpha v\right)^{def}_{m+n-\alpha} w\right)
    \end{align*}
    holds for $(u,v)=(a^{(i)}, a^{(j)}_\alpha a^{(k)}), (a^{(j)}, a^{(k)}_\alpha a^{(i)})$, and $(a^{(k)}, a^{(i)}_\alpha a^{(j)})$ and for arbitrary $m,n,p\in \Z, \alpha \in \N$, then the commutator condition implies that
    \begin{align}
        & \left[(a^{(i)})^{def}_m, [a^{(j)}_{n}, a^{(k)}_{p}]\right]w + \sum_{\alpha=0}^\infty \binom{n}{\alpha} \left[ a^{(i)}_m, \left( (a^{(j)})^{def}_{\alpha} a^{(k)}\right)_{n+p-\alpha} + \left( (a^{(j)})_{\alpha} a^{(k)}\right)^{def}_{n+p-\alpha}\right]w\label{cyclic-identity-l1}\\
        & + \left[(a^{(j)})^{def}_{n} [a^{(k)}_{p}, a^{(i)}_m]\right]w + \sum_{\alpha=0}^\infty \binom{p}{\alpha} \left[a^{(j)}_{n}, \left( (a^{(k)})^{def}_{\alpha} a^{(i)}\right)_{m+p-\alpha} + \left( (a^{(k)})_{\alpha} a^{(i)}\right)^{def}_{m+p-\alpha}\right] w \label{cyclic-identity-l2}\\
        & + \left[(a^{(k)})^{def}_{p}[a^{(i)}_m, a^{(j)}_{n}]\right]w + \sum_{\alpha=0}^\infty \binom{m}{\alpha} \left[ a_{p}^{(k)}, \left( (a^{(i)})^{def}_{\alpha} a^{(j)}\right)_{m+n-\alpha} + \left( (a^{(i)})_{\alpha} a^{(j)}\right)^{def}_{m+n-\alpha}\right]w\label{cyclic-identity-l3}
    \end{align}
    is zero. 
\end{lemma}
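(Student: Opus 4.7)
The identity in the lemma is the first-order-in-$t$ analogue of the Jacobi identity for the modes of the deformed vertex operator $Y^t = Y + tY_1$: it should vanish precisely because Jacobi holds for the undeformed modes of $V$, and the commutator-condition input ensures that the deformation is consistent at first order. My proof plan is to reduce (\ref{cyclic-identity-l1})--(\ref{cyclic-identity-l3}) to the classical Jacobi identity in three steps, writing $A = a^{(i)}, B = a^{(j)}, C = a^{(k)}$ for brevity.

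The first step is to expand each inner bracket $[B_n, C_p]$ (and its cyclic rotations) using the standard vertex algebra commutator formula $[u_n, v_p] = \sum_{\alpha \geq 0}\binom{n}{\alpha}(u_\alpha v)_{n+p-\alpha}$, rewriting the first summand of (\ref{cyclic-identity-l1}) as $\sum_{\alpha}\binom{n}{\alpha}[A^{def}_m, (B_\alpha C)_{n+p-\alpha}]w$. The second step is to pair this, for each fixed $\alpha$, with the contribution $\binom{n}{\alpha}[A_m, (B_\alpha C)^{def}_{n+p-\alpha}]w$ appearing in the second summand of (\ref{cyclic-identity-l1}), and invoke the hypothesis commutator formula at $(u,v) = (A, B_\alpha C)$ to collapse this pair into $\sum_{\beta}\binom{m}{\beta}\{(A^{def}_\beta(B_\alpha C))_{m+n+p-\alpha-\beta} + (A_\beta(B_\alpha C))^{def}_{m+n+p-\alpha-\beta}\}w$. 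Applying the analogous manipulation to (\ref{cyclic-identity-l2}) and (\ref{cyclic-identity-l3}) using the two remaining hypothesis pairs $(B, C_\alpha A)$ and $(C, A_\alpha B)$, the residual piece in each line is of the form $\sum_{\alpha}\binom{n}{\alpha}[A_m, (B^{def}_\alpha C)_{n+p-\alpha}]w$, which I expand once more by the standard commutator formula to $\sum_{\alpha,\beta}\binom{n}{\alpha}\binom{m}{\beta}(A_\beta(B^{def}_\alpha C))_{m+n+p-\alpha-\beta}w$.

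The third step is the cyclic summation. After the transformations above, the cyclic sum decomposes into three classes: (i) terms $(A^{def}_\beta(B_\alpha C))_{\cdot}w$ with the ``def'' on the outer vector, which pair across cyclic rotations with the residual $(A_\beta(B^{def}_\alpha C))_{\cdot}w$ from Step 2 and cancel via the classical Jacobi identity applied to modes in $V$; (ii) the pure triple-product terms arising from the expansion, which vanish directly by Jacobi; and (iii) the ``def-on-outer-mode'' terms $(A_\beta(B_\alpha C))^{def}_{\cdot}w$ summed cyclically, which vanish by the skew-symmetry relations of Proposition \ref{ai--n-aj-Prop} combined with the recursive definition of $(u_\alpha v)^{def}_m$ from Section \ref{Y_1-def}, the latter ensuring that the cyclic sum mirrors a ``def'' of a vanishing Jacobi expression.

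The main obstacle I anticipate is the bookkeeping in the third step: matching up indices in the cyclic sum so that the three cancellations are manifest. The key combinatorial input is the identity $[A_\beta, (B_\alpha C)_{\cdot}] - [A_\beta(B_\alpha C)]_{\cdot} = -(B_\alpha C)_{\cdot} A_\beta + \ldots$ that follows from the ordinary commutator formula, together with the Vandermonde-like reindexing needed to convert iterated sums $\sum_{\alpha, \beta}\binom{n}{\alpha}\binom{m}{\beta}$ into a single sum compatible with cyclic rotation. Once the combinatorics are aligned, the classical Jacobi identity for $V$ produces the desired vanishing term-by-term.
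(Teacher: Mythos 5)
Your Steps 1 and 2 coincide with the paper's opening moves: expand the inner bracket $[a^{(j)}_n,a^{(k)}_p]$ by the ordinary commutator formula, apply the hypothesis at $(a^{(i)},a^{(j)}_\alpha a^{(k)})$ (and its rotations) to the two brackets that pair a $def$-mode against a plain mode of $B_\alpha C$, and expand the leftover bracket $[a^{(i)}_m,((a^{(j)})^{def}_\alpha a^{(k)})_{n+p-\alpha}]$ by the ordinary commutator formula. The gap is in your Step 3. The terms you place in classes (i) and (ii) do not cancel "via the classical Jacobi identity applied to modes in $V$": the modes $(a^{(i)})^{def}_m$ are not modes of vertex operators of $V$ and satisfy no a priori Jacobi identity — deriving a Jacobi-type consequence for them is precisely the content of the lemma, so invoking Jacobi here is circular for every term carrying a $def$. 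What actually happens is that the surviving terms from lines (\ref{cyclic-identity-l1}) and (\ref{cyclic-identity-l2}) reassemble, after Lemma \ref{symm-lemma} is used to convert the $\binom{p}{\beta}$-sums over $(a^{(k)})^{def}_\beta a^{(i)}$ and $a^{(k)}_\beta a^{(i)}$ into $-\binom{m}{\beta}$-sums over the corresponding $a^{(i)}$-first expressions, into the combination
\begin{align*}
\sum_{\alpha,\beta}\binom{n}{\alpha}\binom{m}{\beta}\Big([(a^{(i)})^{def}_{\beta}, a^{(j)}_{\alpha}] a^{(k)} + [a^{(i)}_{\beta}, (a^{(j)})^{def}_{\alpha}] a^{(k)}\Big)_{m+n+p-\alpha-\beta}w ,
\end{align*}
to which the commutator condition (\ref{Commutator-Assumption}) must be applied a \emph{second} time (now on generators acting on a generator); the result is then matched against the contribution of line (\ref{cyclic-identity-l3}) — itself rewritten with two applications of Lemma \ref{symm-lemma}, producing the $\binom{m+n-\alpha}{\beta}$ coefficients — via the Vandermonde convolutions $\binom{\beta}{\gamma}\binom{m}{\beta}=\binom{m}{\gamma}\binom{m-\gamma}{\beta-\gamma}$ and $\sum_{0\le\alpha\le\beta}\binom{n}{\alpha}\binom{m-\gamma}{\beta-\alpha}=\binom{m+n-\gamma}{\beta}$. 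Your plan never mentions this second use of the commutator condition, which is the essential input (the lemma's conclusion is explicitly conditional on it), and it cites Proposition \ref{ai--n-aj-Prop} where the relevant skew-symmetry statement is Lemma \ref{symm-lemma}.

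For your class (iii), the correct mechanism is closer to what you describe but still not Jacobi: the cyclic sum of the $(\cdot)^{def}$-on-outer-mode terms reduces to showing
\begin{align*}
\sum_{\alpha,\beta}\binom{n}{\alpha}\binom{m}{\beta}\big([a^{(i)}_{\beta}, a^{(j)}_{\alpha}] a^{(k)}\big)^{def}_{m+n+p-\alpha-\beta}w
= \sum_{\alpha,\beta}\binom{m}{\alpha}\binom{m+n-\alpha}{\beta}\big((a^{(i)}_{\alpha} a^{(j)})_{\beta} a^{(k)}\big)^{def}_{m+n+p-\alpha-\beta}w ,
\end{align*}
which follows from the ordinary commutator formula applied \emph{inside} the linear operation $v\mapsto v^{def}_{\cdot}w$ together with the same Vandermonde reindexing — not from the vanishing of a cyclic Jacobi expression. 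Until the reassembly into generator commutators and the second application of (\ref{Commutator-Assumption}) are made explicit, the proof is not complete.
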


\begin{proof}
    We start from (\ref{cyclic-identity-l1}). Using the commutator formula for $[a^{(j)}_{n}, a^{(k)}_{p}]$, we may rewrite (\ref{cyclic-identity-l1}) as
    \begin{align}
        & \sum_{\alpha=0}^\infty \binom{n}{\alpha} \left(\left[(a^{(i)})^{def}_{m}, \left(  a^{(j)}_{\alpha} a^{(k)}\right)_{n+p-\alpha} \right] + \left[a^{(i)}_{m}, \left( a^{(j)}_{\alpha} a^{(k)}\right)^{def}_{n+p-\alpha}\right] + \left[a^{(i)}_{m}, \left( (a^{(j)})^{def}_{\alpha} a^{(k)}\right)_{n+p-\alpha}\right] \right)w.\nonumber
    \end{align}
    We apply the assumption on the first two brackets, and apply the commutator formula for the third bracket, so (\ref{cyclic-identity-l1}) equals
    \begin{align*}
         & \sum_{\alpha=0}^\infty \binom{n}{\alpha} \sum_{\beta=0}^\infty \binom{m}{\beta}\left(\left((a^{(i)})^{def}_{\beta} a^{(j)}_{\alpha} a^{(k)}\right)_{m+n+p-\alpha-\beta} + \left(a^{(i)}_{\beta} a^{(j)}_{\alpha} a^{(k)}\right)^{def}_{m+n+p-\alpha-\beta}\right)w\\
        & + \sum_{\alpha=0}^\infty  \binom{n}{\alpha} \sum_{\beta=0}^\infty \binom{m}{\beta}\left(a^{(i)}_{\beta} (a^{(j)})^{def}_{\alpha} a^{(k)}\right)_{m+n+p-\alpha-\beta} w.
    \end{align*}    
    We now handle (\ref{cyclic-identity-l2}). Likewise, using the commutator formula for $[a^{(k)}_{p}, a^{(i)}_m]$, we may rewrite (\ref{cyclic-identity-l2}) as 
    \begin{align*}
         \sum_{\beta=0}^\infty \binom{p}{\beta} \left(\left[(a^{(j)})_{n}^{def}, \left( a^{(k)}_{\beta} a^{(i)}\right)_{m+p-\beta} \right] + \left[a^{(j)}_{n}, \left( a^{(k)}_{\beta} a^{(i)}\right)^{def}_{m+p-\beta}\right] + \left[a^{(j)}_{n}, \left( (a^{(k)})^{def}_{\beta} a^{(i)}\right)_{m+p-\beta}\right]\right)w.
    \end{align*}
    Using Lemma \ref{symm-lemma}, we may rewrite (\ref{cyclic-identity-l2}) as 
    \begin{align*}
         -\sum_{\beta=0}^\infty \binom{m}{\beta} \left(\left[(a^{(j)})_{n}^{def}, \left( a^{(i)}_{\beta} a^{(k)}\right)_{m+p-\beta} \right] + \left[a^{(j)}_{n}, \left( a^{(i)}_{\beta} a^{(k)}\right)^{def}_{m+p-\beta}\right] + \left[a^{(j)}_{n}, \left( (a^{(i)})^{def}_{\beta} a^{(k)}\right)_{m+p-\beta}\right]\right)w.
    \end{align*}    
    We apply the assumption on the first two brackets, and apply the commutator formula for the third bracket, then swap the order of summation, to rewrite (\ref{cyclic-identity-l2}) as
    \begin{align*}
        & - \sum_{\alpha=0}^\infty \binom{n}{\alpha} \sum_{\beta=0}^\infty \binom{m}{\beta} \left(\left((a^{(j)})^{def}_{\alpha} a^{(i)}_{\beta} a^{(k)}\right)_{m+n+p-\beta-\alpha} + \left(a^{(j)}_{\alpha} a^{(i)}_{\beta} a^{(k)}\right)^{def}_{m+n+p-\beta-\alpha}\right)w\\
        & - \sum_{\alpha=0}^\infty \binom{m}{\alpha} \sum_{\beta=0}^\infty  \binom{n}{\beta} \left(a^{(j)}_{\alpha} (a^{(i)})^{def}_{\beta} a^{(k)}\right)_{m+n+p-\beta-\alpha} w.
    \end{align*}
    Finally, we handle (\ref{cyclic-identity-l3}).  Likewise, using the commutator formula for $[a^{(i)}_{m}, a^{(j)}_n]$, we may rewrite (\ref{cyclic-identity-l3}) as 
    \begin{align*}
        \sum_{\alpha=0}^\infty \binom{m}{\alpha} \left(\left[(a^{(k)})^{def}_{p}, \left( a^{(i)}_{\alpha} a^{(j)}\right)_{m+n-\alpha} \right] + \left[a^{(k)}_{p}, \left( (a^{(i)})^{def}_{\alpha} a^{(j)}\right)_{m+n-\alpha}\right] + \left[a^{(k)}_{p},\left( a^{(i)}_{\alpha} a^{(j)}\right)^{def}_{m+n-\alpha}\right]\right)w
    \end{align*}
    We apply the assumption on the first two brackets, and apply the commutator formula for the third bracket, then swap the order of summation, to rewrite (\ref{cyclic-identity-l3}) as
    \begin{align*}
        & \sum_{\alpha=0}^\infty \binom{m}{\alpha} \sum_{\beta=0}^\infty \binom{p}{\beta}\left(\left((a^{(k)})^{def}_{\beta} a^{(i)}_{\alpha} a^{(j)}\right)_{m+n+p-\alpha-\beta} + \left(a^{(k)}_{\beta} a^{(i)}_{\alpha} a^{(j)}\right)^{def}_{m+n+p-\alpha-\beta}\right)w\\
        & \sum_{\alpha=0}^\infty  \binom{m}{\alpha} \sum_{\beta=0}^\infty \binom{p}{\beta}\left(a^{(k)}_{\beta} (a^{(i)})^{def}_{\alpha} a^{(j)}\right)_{m+n+p-\alpha-\beta} w    
    \end{align*}
    Apply Lemma \ref{symm-lemma} to swap $a^{(k)}$ and $a^{(i)}_\alpha a^{(j)}$ in the first line, swap $a^{(k)}$ and $(a^{(i)})^{def}_\alpha a^{(j)}$ in the second line, we conclude that (\ref{cyclic-identity-l3}) is equal to
    \begin{align*}
        & -\sum_{\alpha=0}^\infty \binom{m}{\alpha} \sum_{\beta=0}^\infty \binom{m+n-\alpha}{\beta}\left(\left(( a^{(i)}_{\alpha} a^{(j)})^{def}_{\beta}a^{(k)}\right)_{m+n+p-\alpha-\beta} + \left((a^{(i)}_{\alpha} a^{(j)})_{\beta} a^{(k)}\right)^{def}_{m+n+p-\alpha-\beta}\right)w\\
        & -\sum_{\alpha=0}^\infty  \binom{m}{\alpha} \sum_{\beta=0}^\infty \binom{m+n-\alpha}{\beta}\left(((a^{(i)})^{def}_{\alpha} a^{(j)})_{\beta} a^{(k)}\right)_{m+n+p-\alpha-\beta} w    
    \end{align*}
    Therefore, (\ref{cyclic-identity-l1}) + (\ref{cyclic-identity-l2}) + (\ref{cyclic-identity-l3}) is equal to
    \begin{align*}
         & \sum_{\alpha=0}^\infty \binom{n}{\alpha} \sum_{\beta=0}^\infty \binom{m}{\beta}\left(\left((a^{(i)})^{def}_{\beta} a^{(j)}_{\alpha} a^{(k)}\right)_{m+n+p-\alpha-\beta} + \left(a^{(i)}_{\beta} a^{(j)}_{\alpha} a^{(k)}\right)^{def}_{m+n+p-\alpha-\beta}\right)w\\
        & + \sum_{\alpha=0}^\infty  \binom{n}{\alpha} \sum_{\beta=0}^\infty \binom{m}{\beta}\left(a^{(i)}_{\beta} (a^{(j)})^{def}_{\alpha} a^{(k)}\right)_{m+n+p-\alpha-\beta} w\\
        & - \sum_{\alpha=0}^\infty \binom{n}{\alpha} \sum_{\beta=0}^\infty \binom{m}{\beta} \left(\left((a^{(j)})^{def}_{\alpha} a^{(i)}_{\beta} a^{(k)}\right)_{m+n+p-\beta-\alpha} + \left(a^{(j)}_{\alpha} a^{(i)}_{\beta} a^{(k)}\right)^{def}_{m+n+p-\beta-\alpha}\right)w\\
        & - \sum_{\alpha=0}^\infty \binom{m}{\alpha} \sum_{\beta=0}^\infty  \binom{n}{\beta} \left(a^{(j)}_{\alpha} (a^{(i)})^{def}_{\beta} a^{(k)}\right)_{m+n+p-\beta-\alpha} w\\
        & -\sum_{\alpha=0}^\infty \binom{m}{\alpha} \sum_{\beta=0}^\infty \binom{m+n-\alpha}{\beta}\left(\left(( a^{(i)}_{\alpha} a^{(j)})^{def}_{\beta}a^{(k)}\right)_{m+n+p-\alpha-\beta} + \left((a^{(i)}_{\alpha} a^{(j)})_{\beta} a^{(k)}\right)^{def}_{m+n+p-\alpha-\beta}\right)w\\
        & -\sum_{\alpha=0}^\infty  \binom{m}{\alpha} \sum_{\beta=0}^\infty \binom{m+n-\alpha}{\beta}\left(((a^{(i)})^{def}_{\alpha} a^{(j)})_{\beta} a^{(k)}\right)_{m+n+p-\alpha-\beta} w.    
    \end{align*}
    Equivalently, $(\ref{cyclic-identity-l1}) + (\ref{cyclic-identity-l2}) + (\ref{cyclic-identity-l3}) = 0$ means
    \begin{align}
         & \sum_{\alpha=0}^\infty \binom{n}{\alpha} \sum_{\beta=0}^\infty \binom{m}{\beta}\left(\left([(a^{(i)})^{def}_{\beta}, a^{(j)}_{\alpha}] a^{(k)} + [a^{(i)}_{\beta}, (a^{(j)})^{def}_{\alpha}] a^{(k)}\right)_{m+n+p-\alpha-\beta} \right)w \label{cyclic-identity-LHS1}\\
         & + \sum_{\alpha=0}^\infty \binom{n}{\alpha} \sum_{\beta=0}^\infty \binom{m}{\beta}\left(\left([a^{(i)}_{\beta}, a^{(j)}_{\alpha}] a^{(k)}\right)^{def}_{m+n+p-\alpha-\beta}\right)w \label{cyclic-identity-LHS2}\\
        = \ & \sum_{\alpha=0}^\infty \binom{m}{\alpha} \sum_{\beta=0}^\infty \binom{m+n-\alpha}{\beta}\left(\left(( a^{(i)}_{\alpha} a^{(j)})^{def}_{\beta}a^{(k)}\right)_{m+n+p-\alpha-\beta} + ((a^{(i)})^{def}_{\alpha} a^{(j)})_{\beta} a^{(k)}\right)_{m+n+p-\alpha-\beta}w \label{cyclic-identity-RHS1}\\
        & +\sum_{\alpha=0}^\infty  \binom{m}{\alpha} \sum_{\beta=0}^\infty \binom{m+n-\alpha}{\beta}\left((a^{(i)}_{\alpha} a^{(j)})_{\beta} a^{(k)}\right)^{def}_{m+n+p-\alpha-\beta} w. \label{cyclic-identity-RHS2}    
    \end{align}
    Now for (\ref{cyclic-identity-LHS1}), we apply the commutator condition to rewrite it as 
    \begin{align*}
        \sum_{\alpha=0}^\infty \sum_{\beta=0}^\infty \binom{n}{\alpha} \binom{m}{\beta}\sum_{\gamma=0}^\infty \binom{\beta}{\gamma} \left(\left((a^{(i)})^{def}_{\gamma}a^{(j)}\right)_{\beta+\alpha-\gamma}a^{(k)} + \left(a^{(i)}_{\gamma}a^{(j)}\right)^{def}_{\beta+\alpha-\gamma}a^{(k)}\right)_{m+n+p-\alpha-\beta}w
    \end{align*}
    Note that $\gamma\leq \beta$, and we have  
    \begin{align*}
        \binom{\beta}{\gamma} \binom{m}{\beta} = \ & \binom{m}{\gamma}\binom{m-\gamma}{\beta-\gamma}, 
    \end{align*}
    so we may rewrite (\ref{cyclic-identity-LHS1}) as
    \begin{align*}
        \sum_{\alpha=0}^\infty \sum_{\beta=0}^\infty \sum_{\gamma=0}^\infty \binom{n}{\alpha} \binom{m}{\gamma} \binom{m-\gamma}{\beta-\gamma} \left(\left((a^{(i)})^{def}_{\gamma}a^{(j)}\right)_{\beta+\alpha-\gamma}a^{(k)} + \left(a^{(i)}_{\gamma}a^{(j)}\right)^{def}_{\beta+\alpha-\gamma}a^{(k)}\right)_{m+n+p-\alpha-\beta}w
    \end{align*}
    Substitute $\beta\mapsto \beta-\alpha+\gamma$ and rearrange the order of summation, we rewrite (\ref{cyclic-identity-LHS1}) as
    \begin{align*}
        \sum_{\gamma=0}^\infty\sum_{\beta=0}^\infty\sum_{\alpha=0}^\infty   \binom{n}{\alpha} \binom{m}{\gamma} \binom{m-\gamma}{\beta-\alpha} \left(\left((a^{(i)})^{def}_{\gamma}a^{(j)}\right)_{\beta}a^{(k)} + \left(a^{(i)}_{\gamma}a^{(j)}\right)^{def}_{\beta}a^{(k)}\right)_{m+n+p-\beta-\gamma}w
    \end{align*}
    Note that $\binom{m-\gamma}{\beta-\alpha}\neq 0$ only when $\alpha\leq \beta$. So (\ref{cyclic-identity-LHS1}) is equal to 
    \begin{align*}
        \sum_{\gamma=0}^\infty\sum_{\beta=0}^\infty\binom{m}{\gamma} \left(\sum_{0\leq \alpha\leq\beta}   \binom{n}{\alpha}  \binom{m-\gamma}{\beta-\alpha}\right) \left(\left((a^{(i)})^{def}_{\gamma}a^{(j)}\right)_{\beta}a^{(k)} + \left(a^{(i)}_{\gamma}a^{(j)}\right)^{def}_{\beta}a^{(k)}\right)_{m+n+p-\beta-\gamma}w
    \end{align*}
    Note that 
    $$\sum_{0\leq \alpha\leq\beta}   \binom{n}{\alpha}  \binom{m-\gamma}{\beta-\alpha} = \binom{m+n-\gamma}{\beta}.$$
    So, (\ref{cyclic-identity-LHS1}) is 
    \begin{align*}
        \sum_{\gamma=0}^\infty\sum_{\beta=0}^\infty\binom{m}{\gamma} \binom{m+n-\gamma}{\beta} \left(\left((a^{(i)})^{def}_{\gamma}a^{(j)}\right)_{\beta}a^{(k)} + \left(a^{(i)}_{\gamma}a^{(j)}\right)^{def}_{\beta}a^{(k)}\right)_{m+n+p-\beta-\gamma}w
    \end{align*}
    which is precisely (\ref{cyclic-identity-RHS1}). 

    For (\ref{cyclic-identity-LHS2}), we apply the commutator formula of the vertex operator, and follow a similar procedure as above, to reorganize it as 
    \begin{align*}
        & \sum_{\alpha=0}^\infty \binom{n}{\alpha} \sum_{\beta=0}^\infty \binom{m}{\beta}\sum_{\gamma=0}^\infty \binom{\beta}{\gamma}\left((a^{(i)}_{\gamma}a^{(j)})_{\alpha+\beta-\gamma} a^{(k)}\right)^{def}_{m+n+p-\alpha-\beta}w \\
        = \ & \sum_{\gamma=0}^\infty \sum_{\beta=0}^\infty \sum_{\alpha=0}^\infty \binom{n}{\alpha} \binom{m}{\gamma}\binom{m-\gamma}{\beta-\gamma}\left((a^{(i)}_{\gamma}a^{(j)})_{\alpha+\beta-\gamma} a^{(k)}\right)^{def}_{m+n+p-\alpha-\beta}w\\
        = \ & \sum_{\gamma=0}^\infty \binom{m}{\gamma} \sum_{\beta=0}^\infty \sum_{0\leq \alpha\leq\beta} \binom{n}{\alpha} \binom{m-\gamma}{\beta-\alpha}\left((a^{(i)}_{\gamma}a^{(j)})_{\beta} a^{(k)}\right)^{def}_{m+n+p-\beta-\gamma}w\\
        = \ & \sum_{\gamma=0}^\infty \binom{m}{\gamma} \sum_{\beta=0}^\infty \binom{m+n-\gamma}{\beta} \left((a^{(i)}_{\gamma}a^{(j)})_{\beta} a^{(k)}\right)^{def}_{m+n+p-\beta-\gamma}w
    \end{align*}
    which is precisely (\ref{cyclic-identity-RHS2}). Therefore, we proved that (\ref{cyclic-identity-LHS1}) + (\ref{cyclic-identity-LHS2}) = (\ref{cyclic-identity-RHS1}) + (\ref{cyclic-identity-RHS2}). 
\end{proof}

\subsection{Commutator formula: minimal weight case}

\begin{prop}\label{Commutation-Assumption-neg-neg-prop}
    Let $a^{(i)}, a^{(j)}, a^{(k)}\in S$ such that they are of minimal weight. Then the commutator formula (\ref{Commutator-Assumption}) holds for every $m, n\in \Z$. 
\end{prop}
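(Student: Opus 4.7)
\medskip

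My plan is to prove the commutator formula
$$[u^{def}_m, v_n]w + [u_m, v^{def}_n]w = \sum_{\alpha \geq 0} \binom{m}{\alpha}\bigl((u^{def}_\alpha v)_{m+n-\alpha}w + (u_\alpha v)^{def}_{m+n-\alpha}w\bigr)$$
for $(u,v,w) = (a^{(i)}, a^{(j)}, a^{(k)})$ by case analysis on the signs of $m$ and $n$. The case $m, n \geq 0$ is precisely the commutator condition (\ref{Commutator-Assumption}) of Section \ref{Commutator-Assumption-Section}. The three remaining cases are verified by unwinding the recursive definitions of Section \ref{Y_1-def}.

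For $m \geq 0$ and $n = -n' < 0$, I would write $a^{(k)} = a^{(k)}_{-1}\one$ and apply item~(5) of Section \ref{Y_1-def} to compute $(a^{(i)})^{def}_m a^{(j)}_{-n'}a^{(k)}_{-1}\one$ (reordering via the vertex-algebra commutator first if $j > k$ so that the basis ordering is respected). The terms of item~(5) are exactly $a^{(j)}_{-n'}(a^{(i)})^{def}_m a^{(k)} + (a^{(j)})^{def}_{-n'}a^{(i)}_m a^{(k)} - a^{(i)}_m (a^{(j)})^{def}_{-n'}a^{(k)}$ plus the required sum, so rearranging immediately yields the commutator formula. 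For $m = -m' < 0$ and $n \geq 0$, I would reduce to the previous case by applying Lemma \ref{symm-lemma}: the identity $\sum_\alpha \binom{m}{\alpha}(\cdots)_{m+n-\alpha} = -\sum_\alpha \binom{n}{\alpha}(\cdots)^{(j\leftrightarrow i)}_{m+n-\alpha}$ converts the commutator formula for $(a^{(j)}, a^{(i)}, a^{(k)})$ with $(n, m)$ in the active positions into the desired one for $(a^{(i)}, a^{(j)}, a^{(k)})$.

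For $m = -m'$ and $n = -n'$ both negative, the core computation consists of evaluating the four ingredients
\begin{align*}
(a^{(i)})^{def}_{-m'}a^{(j)}_{-n'}a^{(k)}, \quad a^{(j)}_{-n'}(a^{(i)})^{def}_{-m'}a^{(k)}, \quad a^{(i)}_{-m'}(a^{(j)})^{def}_{-n'}a^{(k)}, \quad (a^{(j)})^{def}_{-n'}a^{(i)}_{-m'}a^{(k)}
\end{align*}
directly from item~(4) (for the first and fourth, applied to the $p=2$ products) and item~(2) followed by action of $a^{(j)}_{-n'}$ or $a^{(i)}_{-m'}$ (for the middle two). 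Let $A$ denote the right-hand side of the desired formula, let $A'$ be the analogous expression obtained by interchanging $i$ and $j$, and let $B, B'$ denote the ``rightmost-factor'' contributions produced by the $k=2$ summand in item~(4). A direct bookkeeping gives
$(a^{(i)})^{def}_{-m'}a^{(j)}_{-n'}a^{(k)} = \tfrac{1}{2}A + \tfrac{1}{2}B$, $a^{(j)}_{-n'}(a^{(i)})^{def}_{-m'}a^{(k)} = \tfrac{1}{2}B$, $(a^{(j)})^{def}_{-n'}a^{(i)}_{-m'}a^{(k)} = \tfrac{1}{2}A' + \tfrac{1}{2}B'$, and $a^{(i)}_{-m'}(a^{(j)})^{def}_{-n'}a^{(k)} = \tfrac{1}{2}B'$. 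Lemma \ref{symm-lemma} supplies the crucial identity $A' = -A$, so the two commutators each equal $\tfrac{1}{2}A$, and their sum is $A$, matching the desired RHS.

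The minimal weight hypothesis is used only to guarantee that $a^{(i)}_\alpha a^{(j)} \in E_{2w_{\min}-1}^S$ for $\alpha \geq 0$, so that $(a^{(i)}_\alpha a^{(j)})^{def}$ appearing inside the recursive formulas (4) and (5) is already fully determined by the one-variable data of items~(1)--(3) and by Proposition \ref{Commutator-Assumption-Vacuum}, with no need to invoke a higher-weight instance of the commutator formula. I expect the main obstacle to be the combinatorial bookkeeping in Case~3: verifying that the $\tfrac{1}{2}$-factors built into item~(4) combine with the antisymmetry of Lemma \ref{symm-lemma} to give exactly one copy of $A$ in the total commutator. A secondary, mostly technical, point is handling the subcases $j > k$ (so that $a^{(j)}_{-n'}a^{(k)}_{-1}\one$ is not in basis form), where one must reorder via the vertex-algebra commutator and check that the resulting additional terms, involving $a^{(j)}_\alpha a^{(k)}$ for $\alpha \geq 0$, cancel consistently on both sides of the identity.
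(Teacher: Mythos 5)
Your proposal is correct and follows essentially the same route as the paper: the mixed-sign cases are read off from the recursive definitions in Section \ref{Y_1-def}, and in the doubly-negative case the $\tfrac12$-weighted two-term expansion of item (4) makes each commutator collapse to half of the target right-hand side, with Lemma \ref{symm-lemma} supplying the antisymmetry $A'=-A$ that identifies the two halves. Your bookkeeping with $A, A', B, B'$ is exactly the computation in the paper's proof, and your identification of where the minimal-weight hypothesis enters matches the remark following the proposition.
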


\begin{proof}
    When $m\in \N, n\in \Z_-$ or $m\in \Z_-,n\in \N$, (\ref{Commutator-Assumption}) trivially follows from the definition. 
    We check (\ref{Commutator-Assumption}) for $m, n \in \Z_-$. For convenience, we substitute $m\mapsto -m$ and $n\mapsto -n$ in (\ref{Commutator-Assumption}) and set $m,n \geq 1$. So we are aiming to prove
    \begin{align}
        & [(a^{(i)})_{-m}^{def}, a^{(j)}_{-n}] a^{(k)} + [a^{(i)}_{-m}, (a^{(j)})^{def}_{-n}] a^{(k)} \nonumber\\
        = \ & \sum_{\alpha=0}^\infty \binom{-m}{\alpha} \left(\left( (a^{(i)})^{def}_\alpha a^{(j)}\right)_{-m-n-\alpha} a^{(k)} + \left( a^{(i)}_\alpha a^{(j)}\right)^{def}_{-m-n-\alpha} a^{(k)}\right).\label{Commutator-Assumption-neg-neg}
    \end{align}
    By definition, 
    \begin{align*}
        [(a^{(i)})_{-m}^{def},a^{(j)}_{-n}] a^{(k)}_{-1}\one = \ & \frac 1 2 \sum_{\alpha\geq 0}\binom{-m}{\alpha} \left(((a^{(i)})^{def}_\alpha a^{(j)})_{-m-n-\alpha} + (a^{(i)}_\alpha a^{(j)})^{def}_{-m-n-\alpha}\right)a^{(k)}_{-1}\one \\
        [a^{(i)}_{-m},(a^{(j)})^{def}_{-n}] a^{(k)}_{-1}\one= \ & -[(a^{(j)})^{def}_{-n}, a^{(i)}_{-m}] a^{(k)}_{-1}\one\\
        = \ & - \frac 1 2 \sum_{\alpha \geq 0} \binom{-n}{\alpha} \left(((a^{(j)})^{def}_\alpha a^{(i)})_{-n-m-\alpha} + (a^{(j)}_\alpha a^{(i)})^{def}_{-n-m-\alpha}\right)a^{(k)}_{-1}\one \\
        = \ & \frac 1 2 \sum_{\alpha\geq 0}\binom{-m}{\alpha} \left(((a^{(i)})^{def}_\alpha a^{(j)})_{-m-n-\alpha} + (a^{(i)}_\alpha a^{(j)})^{def}_{-m-n-\alpha}\right)a^{(k)}_{-1}\one
    \end{align*}
    where the last equality follows from Lemma \ref{symm-lemma}. Adding up these two equalities, we obtain (\ref{Commutator-Assumption-neg-neg}). 

\end{proof}

\begin{rema}
    It should be remarked that the minimal weight assumption is not needed if the operator product expansion of the vertex operator is linear, so that $a^{(j)}_\alpha a^{(k)}, a^{(k)}_\alpha a^{(i)}$ and $a^{(i)}_\alpha a^{(j)}$ are linear combinations of $\one$ and $a^{(p)}_{-q}\one$. In general, $a^{(j)}_\alpha a^{(k)}, a^{(k)}_\alpha a^{(i)}$ and $a^{(i)}_\alpha a^{(j)}$ might be linear combinations of $a^{(p_1)}_{-q_1}\cdots a^{(p_s)}_{-q_s}\one$. We will handle the situation with induction and analytic continuation. The minimal weight case serves as the starting point of the induction. 
\end{rema}
    \begin{prop}\label{Commutator-Y_1-Y-Minimal}
        For $i,j,k = 1, ..., r$ such that $a^{(i)}, a^{(j)}$ and $a^{(k)}$ are of minimal weight, 
        \begin{align}
            & [Y_1(a^{(i)}, x_1), Y(a^{(j)}, x_2)] a^{(k)} +  [Y(a^{(i)}, x_1),  Y_1(a^{(j)}, x_2)] a^{(k)} \nonumber\\
            = \ & \sum_{\alpha=0}^\infty \frac{(-1)^\alpha}{\alpha !}  \left(Y \left( (a^{(i)})^{def}_\alpha a^{(j)}, x_2\right) a^{(k)} + Y_1\left( a^{(i)}_\alpha a^{(j)}, x_2\right) a^{(k)}\right) \left(\frac{\partial}{\partial x_1}\right)^\alpha x_1^{-1} \delta\left(\frac{x_2}{x_1}\right) \label{Commutator-Formula-minimal}
        \end{align}
    \end{prop}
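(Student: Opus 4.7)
The plan is to obtain the generating-function identity (\ref{Commutator-Formula-minimal}) from the component-wise commutator formula already established in Proposition \ref{Commutation-Assumption-neg-neg-prop}, simply by multiplying by $x_1^{-m-1}x_2^{-n-1}$ and summing. Concretely, Proposition \ref{Commutation-Assumption-neg-neg-prop} gives
$$[(a^{(i)})^{def}_m, a^{(j)}_n]a^{(k)} + [a^{(i)}_m, (a^{(j)})^{def}_n]a^{(k)} = \sum_{\alpha=0}^\infty \binom{m}{\alpha}\Bigl(((a^{(i)})^{def}_\alpha a^{(j)})_{m+n-\alpha}a^{(k)} + (a^{(i)}_\alpha a^{(j)})^{def}_{m+n-\alpha}a^{(k)}\Bigr)$$
for every $m,n\in\Z$. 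Summing this over $m,n$ against $x_1^{-m-1}x_2^{-n-1}$, the left-hand side is exactly $[Y_1(a^{(i)},x_1),Y(a^{(j)},x_2)]a^{(k)} + [Y(a^{(i)},x_1),Y_1(a^{(j)},x_2)]a^{(k)}$.

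For the right-hand side, I would use the standard formal delta-function identity
$$\sum_{m\in\Z}\binom{m}{\alpha}x_1^{-m-1}x_2^{m-\alpha} = \frac{(-1)^\alpha}{\alpha!}\left(\frac{\partial}{\partial x_1}\right)^\alpha x_1^{-1}\delta\!\left(\frac{x_2}{x_1}\right),$$
which follows by expanding $x_1^{-1}\delta(x_2/x_1)=\sum_m x_1^{-m-1}x_2^m$ and differentiating $\alpha$ times in $x_1$. Substituting $p=m+n-\alpha$ in the $n$-sum lets me factor the right-hand side as
$$\sum_{\alpha\geq 0}\sum_{p\in\Z}\Bigl(((a^{(i)})^{def}_\alpha a^{(j)})_p a^{(k)} + (a^{(i)}_\alpha a^{(j)})^{def}_p a^{(k)}\Bigr) x_2^{-p-1}\cdot\sum_{m\in\Z}\binom{m}{\alpha} x_1^{-m-1} x_2^{m-\alpha},$$
which, upon applying the delta-function identity, is precisely the right-hand side of (\ref{Commutator-Formula-minimal}).

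The only thing that needs checking for well-definedness is that the $\alpha$-sum on the right is finite in each fixed coefficient of $x_1^{-m-1}x_2^{-n-1}$. This uses the minimal-weight hypothesis: because $\text{wt}(a^{(i)})$ and $\text{wt}(a^{(j)})$ are minimal, $a^{(i)}_\alpha a^{(j)}=0$ for $\alpha\geq \text{wt}(a^{(i)})+\text{wt}(a^{(j)})$, and the same bound applies to $(a^{(i)})^{def}_\alpha a^{(j)}$ by the definition in Section \ref{Y_1-def}, so the $\alpha$-sum truncates. Given the component identity from Proposition \ref{Commutation-Assumption-neg-neg-prop}, the passage to generating functions is purely formal, so the only substantive obstacle is the bookkeeping around the interchange of summations and the application of the delta-function identity; the minimal-weight assumption is what guarantees the finiteness needed to justify these manipulations.
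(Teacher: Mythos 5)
Your proposal is correct and is essentially the paper's own argument: multiply the all-integer component identity from Proposition \ref{Commutation-Assumption-neg-neg-prop} by $x_1^{-m-1}x_2^{-n-1}$, sum over $m,n\in\Z$, reindex, and apply the formal delta-function identity $\sum_{m}\binom{m}{\alpha}x_1^{-m-1}x_2^{m-\alpha}=\frac{(-1)^\alpha}{\alpha!}(\partial/\partial x_1)^\alpha x_1^{-1}\delta(x_2/x_1)$. One small bookkeeping remark: the truncation of the $\alpha$-sum is automatic from lower truncation of $Y$ and the definition of $(a^{(i)})^{def}_\alpha a^{(j)}$ on generators, while the real role of the minimal-weight hypothesis is to make Proposition \ref{Commutation-Assumption-neg-neg-prop} available for \emph{all} $m,n\in\Z$ in the first place.
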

    \begin{proof}
        With the minimal weight assumption, we already showed that (\ref{Commutator-Assumption}) holds for all integers. We now multiply (\ref{Commutator-Assumption}) by $x_1^{-m-1}x_2^{-n-1}$ and sum over all $m,n\in \Z$, to see that the left-hand-side of (\ref{Commutator-Formula-minimal}) is  
        \begin{align*}
            & \sum_{m,n \in \Z} \sum_{\alpha=0}^\infty \binom{m}{\alpha} \left(\left( (a^{(i)})^{def}_\alpha a^{(j)}\right)_{m+n-\alpha} a^{(k)} + \left( a^{(i)}_\alpha a^{(j)}\right)^{def}_{m+n-\alpha} a^{(k)}\right) x_1^{-m-1}x_2^{-n-1}\\
            = \ & \sum_{m \in \Z} \sum_{\alpha=0}^\infty  \left(\left( (a^{(i)})^{def}_\alpha a^{(j)}\right)_{m} a^{(k)} + \left( a^{(i)}_\alpha a^{(j)}\right)^{def}_{m} a^{(k)}\right) \sum_{n\in \Z} \binom{m-n+\alpha}{\alpha} x_1^{-m+n-\alpha-1}x_2^{-n-1}
        \end{align*}
        Note that 
        \begin{align*}
            & \sum_{n\in \Z} \binom{m-n+\alpha}{\alpha} x_1^{-m+n-\alpha-1}x_2^{-n-1} = \sum_{n\in \Z} (-1)^{\alpha} \binom{-m+n-1}{\alpha} x_1^{-m+n-\alpha-1}x_2^{-n-1}\\
            = \ & \sum_{n\in \Z} \frac{(-1)^\alpha}{\alpha !} \left(\frac{\partial}{\partial x_1}\right)^\alpha x_1^{-m+n-1}x_2^{-n-1} =  x_2^{-m-1}\sum_{n\in \Z} \frac{(-1)^\alpha}{\alpha !} \left(\frac{\partial}{\partial x_1}\right)^\alpha x_1^{-m+n-1}x_2^{m-n}\\
            = \ & x_2^{-m-1} \frac{(-1)^\alpha}{\alpha !} \left(\frac{\partial}{\partial x_1}\right)^\alpha x_1^{-1} \delta\left(\frac{x_2}{x_1}\right)
        \end{align*}
        Thus the left-hand-side of (\ref{Commutator-Formula-minimal}) is expressed as 
        \begin{align*}
            & \sum_{m \in \Z} \sum_{\alpha=0}^\infty  \left(\left( (a^{(i)})^{def}_\alpha a^{(j)}\right)_{m} a^{(k)} + \left( a^{(i)}_\alpha a^{(j)}\right)^{def}_{m} a^{(k)}\right) x_2^{-m-1} \frac{(-1)^\alpha}{\alpha !} \left(\frac{\partial}{\partial x_1}\right)^\alpha x_1^{-1} \delta\left(\frac{x_2}{x_1}\right)\\
            = \ & \sum_{\alpha=0}^\infty \frac{(-1)^\alpha}{\alpha !}  \left(Y \left( (a^{(i)})^{def}_\alpha a^{(j)}, x_2\right) a^{(k)} + Y_1\left( a^{(i)}_\alpha a^{(j)}, x_2\right) a^{(k)}\right) \left(\frac{\partial}{\partial x_1}\right)^\alpha x_1^{-1} \delta\left(\frac{x_2}{x_1}\right),
        \end{align*}
        which is precisely the right-hand-side of (\ref{Commutator-Formula-minimal}). 
    \end{proof}
    \begin{cor}\label{comm-asso-minimal}
        For $i,j,k = 1, ..., r$ such that $a^{(i)}, a^{(j)}$ and $a^{(k)}$ are of minimal weight, we have  
        \begin{align*}
            & E\bigg(Y_1(a^{(i)}, z_1) Y(a^{(j)}, z_2) Y(a^{(k)}, z_3) \one \bigg)+ E\bigg(Y(a^{(i)}, z_1) Y_1(a^{(j)}, z_2) Y(a^{(k)}, z_3) \one \bigg) \\
            = \ & E\bigg(Y_1(a^{(j)}, z_2) Y(a^{(i)}, z_1) Y(a^{(k)}, z_3) \one \bigg)+ E\bigg(Y(a^{(j)}, z_2) Y_1(a^{(i)}, z_1) Y(a^{(k)}, z_3) \one \bigg)\\
            = \ & E\bigg(Y_1(Y(a^{(i)}, z_1-z_2) a^{(j)}, z_2)  Y(a^{(k)}, z_3) \one \bigg)+ E\bigg(Y(Y_1(a^{(i)}, z_1-z_2) a^{(j)}, z_2) Y(a^{(k)}, z_3) \one \bigg).
        \end{align*}
        In particular, the cocycle equation with $u=a^{(i)}, v=a^{(j)}, w=a^{(k)}$ is satisfied. 
    \end{cor}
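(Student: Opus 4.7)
The plan is to derive both equalities in the corollary from the commutator formula (\ref{Commutator-Formula-minimal}) in Proposition \ref{Commutator-Y_1-Y-Minimal}, together with the convergence results in Theorem \ref{Cobdry-1-Var-Thm}, by the standard formal-calculus passage between commutator formulas and the Jacobi identity. First I reduce to the case $z_3 = 0$: since $Y(a^{(k)}, z_3)\one = e^{z_3 D}a^{(k)}$ and both $Y$ and $Y_1$ satisfy the $D$-derivative and $D$-commutator properties, each equality in the corollary will follow from its counterpart with $Y(a^{(k)}, z_3)\one$ replaced by $a^{(k)}$, by applying $e^{z_3 D}$ and transporting it across the operators using $D$-conjugation.

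For the commutativity equality $A + B = C + D$ (at $z_3 = 0$), each of the four products converges in its region of absolute convergence to a $\overline{V}$-valued rational function, by Theorem \ref{Cobdry-1-Var-Thm} parts (3)--(4) (the cocycle condition also guarantees convergence of $Y(a^{(i)}, z_1)Y_1(a^{(j)}, z_2)a^{(k)}$). The left-hand side of (\ref{Commutator-Formula-minimal}) is precisely the formal-series difference of $(A+B)$ (expanded in $|z_1| > |z_2|$) and $(C+D)$ (expanded in $|z_2| > |z_1|$); its right-hand side is a sum of $\alpha$-th derivatives of $x_1^{-1}\delta(x_2/x_1)$, each of which represents the difference of the two region-expansions of a rational function with a pole of order $\alpha+1$ at $x_1 = x_2$. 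Under the $E$-notation (which corresponds to analytic continuation) these difference-contributions vanish, yielding $A + B = C + D$.

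For the associativity equality $A + B = E + F$ (at $z_3 = 0$), I will multiply (\ref{Commutator-Formula-minimal}) by $x_0^{-1}\delta\bigl((x_1-x_2)/x_0\bigr)$ and take $\Res_{x_1}$. A standard formal-calculus identity converts the right-hand side into the expansion of $Y_1\bigl(Y(a^{(i)}, x_0)a^{(j)}, x_2\bigr)a^{(k)} + Y\bigl(Y_1(a^{(i)}, x_0)a^{(j)}, x_2\bigr)a^{(k)}$, which is the iterate $E + F$. The same residue operation applied to the left-hand side extracts the product $A + B$ with $z_1$ replaced by $x_0 + x_2$, via the $\delta$-function identity relating $x_0^{-1}\delta((x_1-x_2)/x_0)$ to $x_1^{-1}\delta((x_0+x_2)/x_1)$. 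Passing to $E$-notation and identifying both sides as rational functions produces $A + B = E + F$.

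The main obstacle will be the last step of the associativity argument: carefully verifying that the formal-calculus manipulation of (\ref{Commutator-Formula-minimal}) against $x_0^{-1}\delta((x_1-x_2)/x_0)$ reproduces exactly the iterate form, with the coefficients $Y((a^{(i)})^{def}_\alpha a^{(j)}, x_2)$ paired with $Y$-modes and $Y_1(a^{(i)}_\alpha a^{(j)}, x_2)$ paired with $Y_1$-modes in the correct combinatorial pattern. The minimal-weight hypothesis on $a^{(i)}, a^{(j)}, a^{(k)}$ ensures that the sums over $\alpha$ on the RHS of (\ref{Commutator-Formula-minimal}) are finite and that the iterate admits a convergent expansion, so all these manipulations occur entirely at the level of formal series with no convergence subtleties beyond those already handled by Theorem \ref{Cobdry-1-Var-Thm}.
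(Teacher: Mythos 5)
Your reduction to $z_3=0$ via $e^{z_3D}$ and your derivation of the first (commutativity) equality are exactly the paper's argument: the left-hand side of (\ref{Commutator-Formula-minimal}) is the formal difference of the two orderings, the right-hand side is a finite sum of derivatives of $x_1^{-1}\delta(x_2/x_1)$, each of which is an expansion of zero, so the two orderings have the same analytic continuation. That part is fine.

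The associativity step, however, has a genuine gap. You cannot obtain the iterate identity by multiplying (\ref{Commutator-Formula-minimal}) by $x_0^{-1}\delta\bigl((x_1-x_2)/x_0\bigr)$ and taking $\Res_{x_1}$. First, the product of that single delta function with the reversed-order term $Y(a^{(j)},x_2)Y_1(a^{(i)},x_1)a^{(k)}$ (and with $Y_1Y$ reversed) is not a well-defined formal series: the coefficient of a fixed monomial is an infinite sum such as $\sum_{k\geq 0}\binom{n}{k}(-1)^k\,b_{-b'+k-1}a_{-a'+n-k-1}a^{(k)}$, whose terms all land in the same weight space and need not vanish (this is precisely why the Jacobi identity carries \emph{two} differently expanded delta functions, one for each ordering). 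Second, and more structurally, the commutator formula only constrains $[u_m,v_n]$ and cannot by itself produce the modes $(u_{-\alpha}v)_m$ of the iterate; some additional input is required. The paper supplies that input with skew-symmetry: it applies the already-established commutativity identity to the permuted triple ending in $a^{(j)}$, i.e., to
\begin{align*}
E\bigg(e^{z_2D}Y_1(a^{(k)},-z_2)Y(a^{(i)},z_1-z_2)a^{(j)}\bigg)+E\bigg(e^{z_2D}Y(a^{(k)},-z_2)Y_1(a^{(i)},z_1-z_2)a^{(j)}\bigg),
\end{align*}
then passes $e^{z_2D}$ through the first operator and uses skew-symmetry of $Y$ and of $Y_1$ (condition (6)) to convert one side into $Y_1(a^{(i)},z_1)Y(a^{(j)},z_2)a^{(k)}+Y(a^{(i)},z_1)Y_1(a^{(j)},z_2)a^{(k)}$ and the other into the iterate $Y_1(Y(a^{(i)},z_1-z_2)a^{(j)},z_2)a^{(k)}+Y(Y_1(a^{(i)},z_1-z_2)a^{(j)},z_2)a^{(k)}$. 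You should replace your delta-function manipulation with this skew-symmetry argument (or an equivalent one); note that the Jacobi-type identity you want is only derived in the paper (Lemma \ref{comm-asso-Jacobi}) \emph{after} the equality of all three $E$-expressions is known, so invoking it here would be circular.
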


    \begin{proof}
        From the Proposition \ref{Commutator-Y_1-Y-Minimal}, and the fact that $\delta$-function is an expansion of zero, we conclude that 
        \begin{align}
            & E\bigg(Y_1(a^{(i)}, z_1) Y(a^{(j)}, z_2) a^{(k)} \bigg)+ E\bigg(Y(a^{(i)}, z_1) Y_1(a^{(j)}, z_2) a^{(k)} \bigg) \nonumber \\
            = \ & E\bigg(Y_1(a^{(j)}, z_1) Y(a^{(i)}, z_2) a^{(k)} \bigg)+ E\bigg(Y(a^{(j)}, z_1) Y_1(a^{(i)}, z_2) a^{(k)} \bigg) \label{Commutativity-Minimal}
        \end{align}
        The first equality then follows from the creation property, the $D$-conjugation formula and a change of variables. The second equality follows from the followign trivial variant of (\ref{Commutativity-Minimal}), namely, 
        \begin{align*}
            & E\bigg( e^{z_2 D}Y_1(a^{(i)}, z_1-z_2)Y(a^{(k)}, -z_2) a^{(j)}\bigg) + E\bigg( e^{z_2 D} Y(a^{(i)}, z_1-z_2)Y_1(a^{(k)}, -z_2) a^{(j)}\bigg)\\
            = \ & E\bigg( e^{z_2 D} Y_1(a^{(k)}, -z_2)Y(a^{(i)}, z_1-z_2) a^{(j)}\bigg) + E\bigg( e^{z_2 D}Y(a^{(k)}, -z_2)Y_1(a^{(i)}, z_1-z_2) a^{(j)}\bigg)
        \end{align*}
        In greater detail, if we pass $e^{z_2 D}$ in the left-hand-side through the first operator, then apply skew-symmetry to the second operator, we recover the left-hand-side of (\ref{Commutativity-Minimal}). We then apply the skew-symmmetry to the first operator on the right-hand-side. This process yields the equality
        \begin{align*}
            & E\bigg(Y_1(a^{(i)}, z_1) Y(a^{(j)}, z_2) a^{(k)} \bigg)+ E\bigg(Y(a^{(i)}, z_1) Y_1(a^{(j)}, z_2) a^{(k)} \bigg)\\
            = \ & E\bigg( Y_1(Y(a^{(i)}, z_1-z_2) a^{(j)}, z_2)a^{(k)}\bigg) + E\bigg( Y(Y_1(a^{(i)}, z_1-z_2) a^{(j)}, z_2)a^{(k)}\bigg). 
        \end{align*}
        So the cocycle equation (\ref{Cocycle-Eqn}) with $u_1=a^{(i)}, u_2=a^{(j)}$ and $v=a^{(k)}$ holds. 
        Then we use creation property, $D$-conjugation formula and a change of variables to recover the second identity.
    \end{proof}

    \subsection{Checking the convergence: minimal weight case} To prepare for the next level of calculation, we need the following proposition. 
    
    \begin{prop}\label{Convergence-Minimal-Prop}
         Let $a^{(i)}, a^{(j)}, a^{(k)}\in S$ such that they are of minimal weight. Then for every $p\in \N, s^{(1)}, ..., s^{(p)}\in S$, the $\overline{V}$-valued rational function
         \begin{align}
             E\bigg(Y(s^{(1)}, z_1) \cdots Y(s^{(p)}, z_p) Y_1(a^{(i)}, z_{p+1}) Y(a^{(j)}, z_{p+2})a^{(k)}\bigg) \label{Convergence-Target}
         \end{align}
         is well-defined. 
    \end{prop}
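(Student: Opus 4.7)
The strategy is to proceed by induction on $p$, using the cocycle equation from Corollary \ref{comm-asso-minimal} (valid for minimal-weight generators) to move the outermost $Y_1$ into an inner position, after which convergence will follow from FHL-type convergence of products of ordinary vertex operators acting on $V$-valued Laurent series. The base case $p=0$ is essentially contained in Corollary \ref{comm-asso-minimal} itself.

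For the inductive step, I plan to start from the two-variable specialization of Corollary \ref{comm-asso-minimal} (obtained by setting $z_3=0$ and using $D$-conjugation to eliminate the $Y(a^{(k)}, z_3)\one$ factor), which gives the identity of $\overline{V}$-valued rational functions
\begin{align*}
& E\bigg(Y_1(a^{(i)}, z_{p+1}) Y(a^{(j)}, z_{p+2}) a^{(k)}\bigg) + E\bigg(Y(a^{(i)}, z_{p+1}) Y_1(a^{(j)}, z_{p+2}) a^{(k)}\bigg) \\
= \ & E\bigg(Y_1(Y(a^{(i)}, z_{p+1}-z_{p+2}) a^{(j)}, z_{p+2}) a^{(k)}\bigg) + E\bigg(Y(Y_1(a^{(i)}, z_{p+1}-z_{p+2}) a^{(j)}, z_{p+2}) a^{(k)}\bigg).
\end{align*}
After formally composing with $Y(s^{(1)}, z_1)\cdots Y(s^{(p)}, z_p)$ on the left and working in the region $|z_1|>\cdots>|z_{p+2}|>0$, three of the four resulting terms are tractable by standard methods. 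In $Y(s^{(1)}, z_1)\cdots Y(s^{(p)}, z_p)Y(a^{(i)}, z_{p+1})Y_1(a^{(j)}, z_{p+2})a^{(k)}$ the innermost piece $Y_1(a^{(j)}, z_{p+2})a^{(k)}$ lies in $V((z_{p+2}))$, so the full expression is a $(p+1)$-fold product of ordinary vertex operators on a $V$-valued Laurent series, which converges by \cite{FHL} Section 3.5 and \cite{Q-Mod} Theorem 3.4; for $Y(Y_1(a^{(i)}, z_{p+1}-z_{p+2})a^{(j)}, z_{p+2})a^{(k)}$, expanding the Laurent series $Y_1(a^{(i)}, z_{p+1}-z_{p+2})a^{(j)}\in V((z_{p+1}-z_{p+2}))$ and distributing similarly reduces to products of ordinary vertex operators.

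The main obstacle is the remaining iterate-form term $Y(s^{(1)}, z_1)\cdots Y(s^{(p)}, z_p)Y_1(Y(a^{(i)}, z_{p+1}-z_{p+2})a^{(j)}, z_{p+2})a^{(k)}$, in which $Y(a^{(i)}, z_{p+1}-z_{p+2})a^{(j)}$ expands into modes $(a^{(i)})_m a^{(j)}$ that may lie in strictly higher filtration than any element of $S$. The recursive definition of $Y_1$ on such non-generator arguments from Section \ref{Y_1-def} ultimately reduces, via the commutator condition, to expressions involving $Y_1$ on generators; the technical challenge is tracking these reductions while ensuring the uniform convergence needed to preserve the $\overline{V}$-valued rational function structure. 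The plan is to combine the induction hypothesis with the analytic-continuation techniques of \cite{Q-Mod} Lemma 4.5 to justify summing over modes of $Y(a^{(i)}, z_{p+1}-z_{p+2})a^{(j)}$, thereby deducing convergence of this last term and completing the proof.
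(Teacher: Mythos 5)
Your route differs from the paper's and, more importantly, it stalls exactly where the real work is. The paper's proof does not pass through the cocycle equation at all: it splits $Y_1=Y_1^-+Y_1^+$ and $Y=Y^-+Y^+$, plugs in the explicit recursive definitions of the modes $(a^{(i)})^{def}_m$ on $a^{(j)}_{-n}a^{(k)}_{-1}\one$ from Section \ref{Y_1-def} (items (4) and (5)), and then sums the resulting correction terms in closed form — producing explicit factors such as $(z_{p+1}-z_{p+2})^{-1}$ — so that every remaining piece is either a product of ordinary vertex operators (FHL) or an instance of Theorem \ref{Cobdry-1-Var-Thm} (\ref{Cobdry-1-Var-Thm-Part-3}). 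Your proposal instead tries to trade the target series for the three other terms of the two-variable cocycle equation, which is where the trouble starts.

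Two concrete problems. First, the term you yourself flag as the "main obstacle," $Y(s^{(1)},z_1)\cdots Y(s^{(p)},z_p)\,Y_1(Y(a^{(i)},z_{p+1}-z_{p+2})a^{(j)},z_{p+2})a^{(k)}$, involves $Y_1$ evaluated on the modes $a^{(i)}_{-m}a^{(j)}$ with $m\geq 1$, which lie in filtration $\wt a^{(i)}+\wt a^{(j)}$ — strictly above anything covered by an induction at the minimal-weight base case. The convergence of products of vertex operators composed with $Y_1$ on such higher-filtration arguments is precisely what Lemma \ref{conv-lemma} establishes later, and that lemma's induction is seeded by the very proposition you are proving; so invoking it here is circular, and "combine the induction hypothesis with analytic continuation" is a statement of intent, not an argument. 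Second, even for the tractable-looking terms, the cocycle identity of Corollary \ref{comm-asso-minimal} is an equality of $\overline{V}$-valued rational functions, i.e.\ of analytic continuations; the underlying series have genuinely different $V$-valued coefficients and converge in disjoint regions, so convergence of $Y(s^{(1)},z_1)\cdots Y(s^{(p)},z_p)$ applied to three of the four series does not transfer to the fourth without an analytic-continuation argument that itself presupposes convergence of some series representation of the target in a suitable region. (Also, your claim that $Y(s^{(1)},z_1)\cdots Y(a^{(i)},z_{p+1})Y_1(a^{(j)},z_{p+2})a^{(k)}$ converges "by FHL" because $Y_1(a^{(j)},z_{p+2})a^{(k)}\in V((z_{p+2}))$ glosses over the infinitely many positive powers in $Y_1^+$; this is why Theorem \ref{Cobdry-1-Var-Thm} (\ref{Cobdry-1-Var-Thm-Part-3}) needs the skew-symmetry/$D$-conjugation detour rather than a bare FHL citation.) To repair the proof you would either have to supply the missing convergence argument for the iterate term without circularity, or abandon the cocycle-equation route and argue directly from the mode definitions as the paper does.
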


    \begin{proof}
        We first work out the case $j\leq k$. We already know the convergence of 
        \begin{align}
             E\bigg(Y(s^{(1)}, z_1) \cdots Y(s^{(p)}, z_p) Y_1(u, z_{p+1}) v\bigg)\label{Convergence-0}
         \end{align}
         for every $u, v\in S$ from Theorem \ref{Cobdry-1-Var-Thm} (\ref{Cobdry-1-Var-Thm-Part-3}). With the minimal weight assumption, it is clear that the $\overline{V}$-valued rational function
         \begin{align*}
             E\bigg(Y(s^{(1)}, z_1) \cdots Y(s^{(p)}, z_p) Y_1(a^{(i)}, z_{p+1}) Y^-(a^{(j)}, z_{p+2})a^{(k)}\bigg) 
         \end{align*}
         is well-defined. It suffices to study the series
         \begin{align}
            & Y(s^{(1)}, z_1) \cdots Y(s^{(p)}, z_p) Y_1(a^{(i)}, z_{p+1}) Y^+(a^{(j)}, z_{p+2})a^{(k)}\nonumber\\
            = \ & Y(s^{(1)}, z_1) \cdots Y(s^{(p)}, z_p) Y_1^-(a^{(i)}, z_{p+1}) Y^+(a^{(j)}, z_{p+2})a^{(k)}\label{Convergence-1-2}\\
            & + Y(s^{(1)}, z_1) \cdots Y(s^{(p)}, z_p) Y_1^+(a^{(i)}, z_{p+1}) Y^+(a^{(j)}, z_{p+2})a^{(k)}\label{Convergence-1-1}
         \end{align}
        \noindent$\blacktriangleright$ We first study (\ref{Convergence-1-2}). By definition, 
        \begin{align}
            & Y_1^-(a^{(i)}, z_{p+1}) a^{(j)}_{-n}a^{(k)}_{-1}\one z_{p+2}^{n-1}= \sum_{m\geq 0} (a^{(i)})^{def}_m a^{(j)}_{-n}a^{(k)}_{-1}\one z_{p+1}^{-m-1} z_{p+2}^{n-1}\nonumber\\
            = \ & \sum_{m\geq 0} \left(  a^{(j)}_{-n} (a^{(i)})^{def}_m a^{(k)}_{-1}\one + (a^{(j)})^{def}_{-n} a^{(i)}_m a^{(k)}_{-1}\one -  a^{(i)}_m (a^{(j)})^{def}_{-n} a^{(k)}_{-1}\one \right)z_{p+1}^{-m-1} z_{p+2}^{n-1} \label{Convergence-1-2-1}\\
            & + \sum_{m\geq 0}\sum_{\alpha = 0}^\infty \binom{m}{\alpha} \left(((a^{(i)})^{def}_\alpha a^{(j)})_{m-n-\alpha} a^{(k)} + (a^{(i)}_\alpha a^{(j)})^{def}_{m-n-\alpha} a^{(k)}\right)z_{p+1}^{-m-1} z_{p+2}^{n-1} \label{Convergence-1-2-2}
        \end{align}
        Note that for each fixed $n\in \Z_+$, the first two summands in (\ref{Convergence-1-2-1}) contains only finitely elements. If we sum up (\ref{Convergence-1-2-1}) with $n\geq 1$ and act $Y(s^{(1)}, z_1) \cdots Y(s^{(p)}, z_p)$, what we get is simply 
        \begin{align}
            & Y(s^{(1)}, z_1) \cdots Y(s^{(p)}, z_p)Y^+(a^{(j)}, z_{p+2}) Y_1^-(a^{(i)}, z_{p+1})a^{(k)}\nonumber\\
            & + Y(s^{(1)}, z_1) \cdots Y(s^{(p)}, z_p)Y_1^+(a^{(j)}, z_{p+2}) Y^-(a^{(i)}, z_{p+1})a^{(k)} \nonumber\\
            & - Y(s^{(1)}, z_1)\cdots Y(s^{(p)}, z_p) Y^-(a^{(i)}, z_{p+1})Y_1^+(a^{(j)}, z_{p+2})a^{(k)}\label{Convergence-1-2-1-1}
        \end{align}
        The series differs from the series
        \begin{align}
            & Y(s^{(1)}, z_1) \cdots Y(s^{(p)}, z_p)Y(a^{(j)}, z_{p+2}) Y_1^-(a^{(i)}, z_{p+1})a^{(k)}\nonumber \\
            & + Y(s^{(1)}, z_1) \cdots Y(s^{(p)}, z_p)Y_1(a^{(j)}, z_{p+2}) Y^-(a^{(i)}, z_{p+1})a^{(k)} \nonumber\\
            & - Y(s^{(1)}, z_1)\cdots Y(s^{(p)}, z_p) Y^-(a^{(i)}, z_{p+1})Y_1^+(a^{(j)}, z_{p+2})a^{(k)}
            \label{Convergence-1-2-1-2}
        \end{align}
        by 
        \begin{align}
            & Y(s^{(1)}, z_1) \cdots Y(s^{(p)}, z_p)Y^-(a^{(j)}, z_{p+2}) Y_1^-(a^{(i)}, z_{p+1})a^{(k)}\nonumber \\
            & + Y(s^{(1)}, z_1) \cdots Y(s^{(p)}, z_p)Y_1^-(a^{(j)}, z_{p+2}) Y^-(a^{(i)}, z_{p+1})a^{(k)} 
            \label{Convergence-1-2-1-3}
        \end{align}
        The convergence of (\ref{Convergence-1-2-1-2}) is guaranteed by the convergence of (\ref{Convergence-0}) (where the third line follows a similar argument in the proof of Theorem \ref{Cobdry-1-Var-Thm} (\ref{Cobdry-1-Var-Thm-Part-3}) using $D$-conjugation property). 
        The convergence of (\ref{Convergence-1-2-1-3}) follows from the convergence of vertex operator since both $Y^-(a^{(j)}, z_{p+2}) Y_1^-(a^{(i)}, z_{p+1})a^{(k)}$ and $Y_1^-(a^{(j)}, z_{p+2}) Y^-(a^{(i)}, z_{p+1})a^{(k)}$ contains finitely many terms. Therefore, (\ref{Convergence-1-2-1-1}) converges to a $\overline{V}$-valued rational function. So the part in (\ref{Convergence-1-2}) contributed by (\ref{Convergence-1-2-1}) converges. 

        To discuss the contribution of (\ref{Convergence-1-2-2}) in (\ref{Convergence-1-2}), we first compute the sum of (\ref{Convergence-1-2-2}) over $n\geq 1$ as follows
        \begin{align}
            & \sum_{n\geq 1}\sum_{m\geq 0}\sum_{\alpha = 0}^\infty \binom{m}{\alpha} \left(((a^{(i)})^{def}_\alpha a^{(j)})_{m-n-\alpha} a^{(k)} + (a^{(i)}_\alpha a^{(j)})^{def}_{m-n-\alpha} a^{(k)}\right)z_{p+1}^{-m-1} z_{p+2}^{n-1}\nonumber \\
            = \ & \sum_{\alpha = 0}^\infty\sum_{n\geq 1}\sum_{m+n+\alpha\geq 0} \binom{m+n+\alpha}{\alpha} \left(((a^{(i)})^{def}_\alpha a^{(j)})_{m} a^{(k)} + (a^{(i)}_\alpha a^{(j)})^{def}_{m} a^{(k)}\right)z_{p+1}^{-m-n-\alpha-1} z_{p+2}^{n-1}\nonumber \\
            = \ & \sum_{\alpha = 0}^\infty\sum_{m\geq -\alpha} \left(((a^{(i)})^{def}_\alpha a^{(j)})_{m} a^{(k)} + (a^{(i)}_\alpha a^{(j)})^{def}_{m} a^{(k)}\right)\sum_{n\geq 1} (-1)^\alpha \left(\frac{\partial}{\partial z_{p+1}}\right)^\alpha z_{p+1}^{-m-n-1} z_{p+2}^{n-1}\nonumber \\
            & + \sum_{\alpha = 0}^\infty\sum_{m\leq -\alpha-1} \left(((a^{(i)})^{def}_\alpha a^{(j)})_{m} a^{(k)} + (a^{(i)}_\alpha a^{(j)})^{def}_{m} a^{(k)}\right)\sum_{n\geq -m-\alpha} (-1)^\alpha \left(\frac{\partial}{\partial z_{p+1}}\right)^\alpha z_{p+1}^{-m-n-1} z_{p+2}^{n-1}\nonumber \\
            = \ & \sum_{\alpha = 0}^\infty(-1)^\alpha \left(\frac{\partial}{\partial z_{p+1}}\right)^\alpha \sum_{m\geq -\alpha} \left(((a^{(i)})^{def}_\alpha a^{(j)})_{m} a^{(k)} + (a^{(i)}_\alpha a^{(j)})^{def}_{m} a^{(k)}\right)  z_{p+1}^{-m-1}(z_{p+1}-z_{p+2})^{-1} \label{Convergence-1-2-2-1}\\
            & + \sum_{\alpha = 0}^\infty (-1)^\alpha \left(\frac{\partial}{\partial z_{p+1}}\right)^\alpha \sum_{m\leq -\alpha-1} \left(((a^{(i)})^{def}_\alpha a^{(j)})_{m} a^{(k)} + (a^{(i)}_\alpha a^{(j)})^{def}_{m} a^{(k)}\right)z_{p+1}^\alpha z_{p+2}^{-m-\alpha-1}(z_{p+1}-z_{p+2})^{-1} \label{Convergence-1-2-2-2}
        \end{align}
        So the contribution of (\ref{Convergence-1-2-2}) in (\ref{Convergence-1-2}) is given by the contribution of (\ref{Convergence-1-2-2-1}) and (\ref{Convergence-1-2-2-2}). 
        Clearly, (\ref{Convergence-1-2-2-1}) contains only finitely many terms. So the convergence of its contribution in (\ref{Convergence-1-2}) follows from the convergence of products of vertex operators. For (\ref{Convergence-1-2-2-2}), note that it differs to 
        \begin{align}
            & \sum_{\alpha = 0}^\infty (-1)^\alpha \left(\frac{\partial}{\partial z_{p+1}}\right)^\alpha \sum_{m\in \Z} \left(((a^{(i)})^{def}_\alpha a^{(j)})_{m} a^{(k)} + (a^{(i)}_\alpha a^{(j)})^{def}_{m} a^{(k)}\right)z_{p+1}^\alpha z_{p+2}^{-m-\alpha-1}(z_{p+1}-z_{p+2})^{-1}\nonumber \\
            = \ & \sum_{\alpha = 0}^\infty (-1)^\alpha \left(\frac{\partial}{\partial z_{p+1}}\right)^\alpha \left(Y((a^{(i)})^{def}_\alpha a^{(j)}, z_{p+2}) + Y_1(a^{(i)}_\alpha a^{(j)}, z_{p+2})\right) a^{(k)} z_{p+1}^\alpha z_{p+2}^{-\alpha}(z_{p+1}-z_{p+2})^{-1} \label{Convergence-1-2-2-3}
        \end{align}
        by finitely many terms. We then use the the existence of (\ref{Convergence-0}) to conclude that the action of $Y(s^{(1)}, z_1)\cdots Y(s^{(p)}, z_p))$ on (\ref{Convergence-1-2-2-3}) converges. So the contribution of (\ref{Convergence-1-2-2-2}) in (\ref{Convergence-1-2}) also converges to a $\overline{V}$-valued rational function.
         
        \noindent $\blacktriangleright$ We now study (\ref{Convergence-1-1}). From the definition,  
        \begin{align}
            & Y_1^+(a^{(i)}, z_{p+1})Y^+(a^{(j)}, z_{p+2})a^{(k)} =  \sum_{m, n\geq 1} (a^{(i)})^{def}_{-m} a^{(j)}_{-n}a^{(k)}_{-1}\one z_{p+1}^{m-1}z_{p+2}^{n-1}\nonumber\\
            = \ & \sum_{m, n \geq 1} \frac 1 2 \sum_{\alpha \geq 0}\binom{-m}{\alpha} \left(\left((a^{(i)})^{def}_\alpha a^{(j)}\right)_{-m-n-\alpha} + (a^{(i)}_\alpha a^{(j)})^{def}_{-m-n-\alpha}\right) a^{(k)}_{-1}\one z_{p+1}^{m-1}z_{p+2}^{n-1}\nonumber\\
            & + \sum_{m, n \geq 1} a^{(j)}_{-n} \cdot \frac 1 2 \sum_{\alpha \geq 0}\binom{-m}{\alpha} \left(\left((a^{(i)})^{def}_\alpha a^{(k)}\right)_{-m-1-\alpha} + (a^{(i)}_\alpha a^{(k)})^{def}_{-m-1-\alpha}\right) \one z_{p+1}^{m-1}z_{p+2}^{n-1}
            \nonumber\\
            = \ & \sum_{\alpha \geq 0} \sum_{n\geq 1}\sum_{m-n-\alpha\geq 1}\frac 1 2 \binom{-m+n+\alpha}{\alpha} \left(\left((a^{(i)})^{def}_\alpha a^{(j)}\right)_{-m} + (a^{(i)}_\alpha a^{(j)})^{def}_{-m}\right)a^{(k)}_{-1}\one z_{p+1}^{m-n-\alpha-1}z_{p+2}^{n-1}\nonumber\\
            & +  Y^+(a^{(j)}, z_{p+2}) Y_1^+(a^{(i)}, z_{p+1})a^{(k)}\nonumber\\ 
            = \ & \sum_{\alpha \geq 0} \sum_{m\geq \alpha+2} \frac 1 2 \left(\left((a^{(i)})^{def}_\alpha a^{(j)}\right)_{-m} + (a^{(i)}_\alpha a^{(j)})^{def}_{-m}\right) a^{(k)}_{-1}\one\cdot \sum_{n=1}^{m-\alpha-1}\frac{(-1)^\alpha}{\alpha !}\left(\frac{\partial}{\partial z_{p+1}}\right)^{\alpha} z_{p+1}^{m-n-1}z_{p+2}^{n-1}\nonumber\\
            & +  \frac 1 2 Y^+(a^{(j)}, z_{p+2}) (e^{z_{p+1}D} Y_1^-(a^{(k)}, -z_{p+1})a^{(i)} - Y_1^-(a^{(i)}, z_{p+1})a^{(k)})\nonumber\\
            = \ & \sum_{\alpha \geq 0} \sum_{m\geq \alpha+2} \frac 1 2 \left(\left((a^{(i)})^{def}_\alpha a^{(j)}\right)_{-m} + (a^{(i)}_\alpha a^{(j)})^{def}_{-m}\right) a^{(k)}_{-1}\one\cdot \frac{(-1)^\alpha}{\alpha !}\left(\frac{\partial}{\partial z_{p+1}}\right)^{\alpha} \frac{z_{p+1}^{m-1} - z_{p+1}^\alpha z_{p+2}^{m-\alpha-1}}{z_{p+1}-z_{p+2}}\nonumber\\
            & + \frac 1 2 Y^+(a^{(j)}, z_{p+2}) e^{z_{p+1}D} Y_1^-(a^{(k)}, -z_{p+1})a^{(i)} - \frac 1 2 Y^+(a^{(j)}, z_{p+2}) Y_1^-(a^{(i)}, z_{p+1})a^{(k)})\nonumber\\
            = \ & \frac 1 2  \sum_{\alpha \geq 0} \frac{(-1)^\alpha}{\alpha !}\left(\frac{\partial}{\partial z_{p+1}}\right)^{\alpha} \left(\sum_{m\geq \alpha+2} \left((a^{(i)})^{def}_\alpha a^{(j)}\right)_{-m}  a^{(k)}_{-1}\one z_{p+1}^{m-1} \right) (z_{p+1}-z_{p+2})^{-1}\label{Convergence-1-1-1}\\
            & + \frac 1 2  \sum_{\alpha \geq 0} \frac{(-1)^\alpha}{\alpha !}\left(\frac{\partial}{\partial z_{p+1}}\right)^{\alpha} z_{p+1}^\alpha z_{p+2}^{-\alpha}\left(\sum_{m\geq \alpha+2} \left((a^{(i)})^{def}_\alpha a^{(j)}\right)_{-m}  a^{(k)}_{-1}\one z_{p+2}^{m-1} \right)\cdot  (z_{p+1}-z_{p+2})^{-1}\label{Convergence-1-1-2}\\
            & + \frac 1 2  \sum_{\alpha \geq 0} \frac{(-1)^\alpha}{\alpha !}\left(\frac{\partial}{\partial z_{p+1}}\right)^{\alpha} \left(\sum_{m\geq \alpha+2} \left(a^{(i)}_\alpha a^{(j)}\right)^{def}_{-m}  a^{(k)}_{-1}\one z_{p+1}^{m-1} \right) (z_{p+1}-z_{p+2})^{-1}\label{Convergence-1-1-3}\\
            & + \frac 1 2  \sum_{\alpha \geq 0} \frac{(-1)^\alpha}{\alpha !}\left(\frac{\partial}{\partial z_{p+1}}\right)^{\alpha} z_{p+1}^\alpha z_{p+2}^{-\alpha}\left(\sum_{m\geq \alpha+2} \left(a^{(i)}_\alpha a^{(j)}\right)^{def}_{-m}  a^{(k)}_{-1}\one z_{p+2}^{m-1} \right)\cdot  (z_{p+1}-z_{p+2})^{-1}\label{Convergence-1-1-4}\\
            & +  \frac 1 2 Y^+(a^{(j)}, z_{p+2}) e^{z_{p+1}D} Y_1^-(a^{(k)}, -z_{p+1})a^{(i)} - \frac 1 2 Y^+(a^{(j)}, z_{p+2}) Y_1^-(a^{(i)}, z_{p+1})a^{(k)})\label{Convergence-1-1-5}
        \end{align}
        We first analyze the part in (\ref{Convergence-1-1}) contributed by (\ref{Convergence-1-1-1}). First note that the summation of $\alpha$ in (\ref{Convergence-1-1-1}) is finite. For each $\alpha \geq 0$, by the convergence of products of $Y$-operator, 
        \begin{align*}
            & Y(s^{(1)}, z_1)\cdots Y(s^{(p)}, z_p) Y((a^{(i)})^{def}_\alpha a^{(j)}, z_{p+1})a^{(k)}_{-1}\one (z_{p+1}-z_{p+2})^{-1}\\
            = \ & \sum_{m\in \Z}Y(s^{(1)}, z_1)\cdots Y(s^{(p)}, z_p) (a^{(i)})^{def}_\alpha a^{(j)})_{-m} a^{(k)}_{-1}\one z_{p+1}^{m-1}(z_{p+1}-z_{p+2})^{-1}
        \end{align*}
        converges to a $\overline{V}$-valued rational function. The same holds if we remove the finitely many parts with $m < \alpha+2$. Apply the partial derivatives with the coefficients, and sum up those finitely many $\alpha$'s, we conclude the part in (\ref{Convergence-1-1}) contributed by (\ref{Convergence-1-1-1}) converges. Similarly, we prove that the part in (\ref{Convergence-1-1}) contributed by (\ref{Convergence-1-1-2}) also converges. For the part in (\ref{Convergence-1-1}) contributed by (\ref{Convergence-1-1-3}), the only difference is that we start with the series  
        \begin{align*}
            & Y(s^{(1)}, z_1)\cdots Y(s^{(p)}, z_p) Y_1(a^{(i)}_\alpha a^{(j)}, z_{p+1})a^{(k)}_{-1}\one (z_{p+1}-z_{p+2})^{-1}\\
            = \ & \sum_{m\in \Z}Y(s^{(1)}, z_1)\cdots Y(s^{(p)}, z_p) (a^{(i)}_\alpha a^{(j)})^{def}_{-m} a^{(k)}_{-1}\one z_{p+1}^{m-1}(z_{p+1}-z_{p+2})^{-1}, 
        \end{align*}
        the convergence of which follows from the minimal weight assumption and the convergence of (\ref{Convergence-0}). Similarly, we prove that the part in (\ref{Convergence-1-1}) contributed by (\ref{Convergence-1-1-4}) also converges. Finally, for the part in (\ref{Convergence-1-1}) contributed by (\ref{Convergence-1-1-5}), we use Theorem \ref{Cobdry-1-Var-Thm} (\ref{Cobdry-1-Var-Thm-Part-3}) to see that 
        \begin{align*}
            Y(s^{(1)}, z_1)\cdots Y(s^{(p)}, z_p) Y(a^{(j)}, z_{p+2})Y_1^-(a^{(i)}, z_{p+1})a^{(k)} 
        \end{align*}
        and 
        \begin{align*}
            Y(s^{(1)}, z_1)\cdots Y(s^{(p)}, z_p) Y(a^{(j)}, z_{p+2})Y_1^-(a^{(k)}, -z_{p+1})a^{(i)} 
        \end{align*}
        Removing finitely many terms, we may replace the $Y(a^{(j)}, z_{p+2})$ by its regular part $Y^+(a^{(j)}, z_{p+2})$. Then with an argument similar to that in the proof of Theorem \ref{Cobdry-1-Var-Thm} (\ref{Cobdry-1-Var-Thm-Part-3}), we conclude that the part in (\ref{Convergence-1-1}) contributed by (\ref{Convergence-1-1-5}) also converges. Therefore, (\ref{Convergence-1-1}) converges.         
        Once we established the convergence for $j \leq k$, the case $j>k$ follows easily from an analytic continuation argument. 
        From what is shown above, 
        $$ E\bigg(Y(s^{(1)}, z_1) \cdots Y(s^{(p)}, z_p) Y_1(a^{(i)}, z_{p+1}) Y(a^{(k)}, z_{p+2})a^{(j)}\bigg) $$
        exists. By skew-symmetry and $D$-conjugation formula,  
        $$ E\bigg(e^{z_{p+2}D}Y(s^{(1)}, z_1-z_{p+2}) \cdots Y(s^{(p)}, z_p-z_{p+2}) Y_1(a^{(i)}, z_{p+1}-z_{p+2}) Y(a^{(j)}, -z_{p+2})a^{(k)}\bigg) $$
        exists. Since $e^{z_{p+2}D}$ does not interfere with the convergence, with a change of variable, we see that (\ref{Convergence-Target}) exists.  
    \end{proof}

    \begin{rema}
        As pointed in Section 3.3.2, this convergence guarantees that for every $i_1, i_2, i_3, j\in \{1, ..., r\}$ such that $a^{(i_1)}, a^{(i_2)}, a^{(i_3)}$ and $a^{(j)}$ are of minimal weight, the $\overline{V}$-valued rational function 
        \begin{align*}
            E\bigg(Y_1(a^{(i_1)}, z_1) Y(a^{(i_2)}, z_2) Y(a^{(i_3)}, z_3) a^{(j)}\bigg)
        \end{align*}
        is well-defined. This is the starting point of the cocycle equation with one additional variable in Section 3.3.3.  
    \end{rema}
    
\subsection{Iterates}
    In order to check the condition of Lemma \ref{cyclic-identity} in the inductive step, we also need the following result. 
    \begin{lemma}\label{iterate}
        Assume that for $i, j = 1, ..., r$, $v, w\in V$,
        \begin{align}
            & E\bigg(Y_1(a^{(i)}, z_1)Y(v, z_2)w\bigg) + E\bigg(Y(a^{(i)}, z_1)Y_1(v, z_2)w\bigg)\nonumber\\
            = \ & E\bigg(Y_1(v, z_2)Y(a^{(i)}, z_1)w\bigg) + E\bigg(Y(v, z_2)Y_1(a^{(i)}, z_1)w\bigg), \label{iterate-assumption-1}
        \end{align}
        \begin{align}
            & E\bigg(Y_1(a^{(j)}, z_1)Y(v, z_2)w\bigg) + E\bigg(Y(a^{(j)}, z_1)Y_1(v, z_2)w\bigg)\nonumber\\
            = \ & E\bigg(Y_1(Y(a^{(j)}, z_1-z_2)v, z_2)w\bigg) + E\bigg(Y(Y_1(a^{(j)}, z_1-z_2)v, z_2)w\bigg) \label{iterate-assumption-2}
        \end{align}
        and 
        \begin{align}
            & E\bigg(Y_1(a^{(i)}, z_1)Y(a^{(j)}, z_2)Y(v, z_3)w\bigg) + E\bigg(Y(a^{(i)}, z_1)Y_1(a^{(j)}, z_2)Y(v, z_3)w\bigg)\nonumber \\
            = \ & E\bigg(Y_1(a^{(j)}, z_2)Y(a^{(i)}, z_1)Y(v, z_3)w\bigg) + E\bigg(Y(a^{(j)}, z_2)Y_1(a^{(i)}, z_1)Y(v, z_3)w\bigg) \label{iterate-assumption-3}
        \end{align}
        then for every $p\in \Z_+$, 
        \begin{align}
            & E\bigg(Y_1(a^{(i)}, z_1)Y((a^{(j)}_{-p} v, z_2)w\bigg) + E\bigg(Y(a^{(i)}, z_1)Y_1(a^{(j)}_{-p}v, z_2)w\bigg) \nonumber\\
            = \ & E\bigg(Y_1(a^{(j)}_{-p}v, z_2) Y(a^{(i)}, z_1)  w\bigg)+ E\bigg(Y(a^{(j)}_{-p}v, z_2) Y_1(a^{(i)}, z_1) w \bigg)\nonumber \\
            = \ & E\bigg(Y_1(Y(a^{(i)}, z_1-z_2)a^{(j)}_{-p} v, z_2)w\bigg) + E\bigg(Y(Y_1(a^{(i)}, z_1-z_2)a^{(j)}_{-p}v, z_2)w\bigg)
            \label{iterate-formula}
        \end{align}
    \end{lemma}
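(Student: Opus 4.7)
The plan is to derive (\ref{iterate-formula}) by introducing an auxiliary variable $z_0$, establishing an identity of $\overline{V}$-valued rational functions in $(z_0,z_1,z_2)$, and extracting the coefficient of $z_0^{p-1}$ at the end. The key observation is that $a^{(j)}_{-p}v$ is the coefficient of $z_0^{p-1}$ in the formal series $Y(a^{(j)},z_0)v$, so by the associativity of $Y$ (a vertex algebra axiom), $E(Y(a^{(j)}_{-p}v,z_2)w)$ is the coefficient of $z_0^{p-1}$ in $E(Y(a^{(j)},z_0+z_2)Y(v,z_2)w)$ expanded around $z_0=0$, and analogously for the $Y_1$ counterparts.

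First I would apply hypothesis (\ref{iterate-assumption-2}) with its first variable specialized to $z_0+z_2$ to obtain
\begin{align*}
    & E\bigl(Y_1(a^{(j)},z_0+z_2)Y(v,z_2)w\bigr) + E\bigl(Y(a^{(j)},z_0+z_2)Y_1(v,z_2)w\bigr) \\
    = \ & E\bigl(Y_1(Y(a^{(j)},z_0)v,z_2)w\bigr) + E\bigl(Y(Y_1(a^{(j)},z_0)v,z_2)w\bigr).
\end{align*}
The $z_0^{p-1}$-coefficient on the right then produces $E(Y_1(a^{(j)}_{-p}v,z_2)w)$ together with an auxiliary term $E(Y((a^{(j)})^{def}_{-p}v,z_2)w)$, while on the left it produces Taylor-coefficients in $z_0$ of the corresponding rational functions. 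Next, I would insert a single additional operator $Y(a^{(i)},z_1)$ or $Y_1(a^{(i)},z_1)$ (keeping exactly one $Y_1$ in each monomial, so the manipulation stays at first order in the deformation), and apply hypothesis (\ref{iterate-assumption-1}) to commute $Y_1(a^{(i)},z_1)$ past $Y(v,z_2)$, together with hypothesis (\ref{iterate-assumption-3}) to commute operators involving $a^{(i)}$ past those involving $a^{(j)}$. This yields a four-operator master identity in $(z_0,z_1,z_2)$.

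Finally, I would extract the coefficient of $z_0^{p-1}$ from the master identity. The conversion $Y(Y(a^{(j)},z_0)v,z_2) \mapsto Y(a^{(j)}_{-p}v,z_2)$ and $Y_1(Y(a^{(j)},z_0)v,z_2) \mapsto Y_1(a^{(j)}_{-p}v,z_2)$ produces the terms appearing in (\ref{iterate-formula}); the first equality of (\ref{iterate-formula}) will arise from the rearrangement using (\ref{iterate-assumption-3}) alone, and the second from the combined use of (\ref{iterate-assumption-1}) and (\ref{iterate-assumption-2}). The auxiliary terms involving $Y((a^{(j)})^{def}_{-p}v,z_2)w$ appear symmetrically on both sides of the extracted identity and therefore cancel. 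The main obstacle will be the careful bookkeeping of these residual terms, compounded by the fact that $Y_1(a^{(j)},z_0+z_2)$ is singular on $\{z_0+z_2=0\}$ rather than on $\{z_0=0\}$; one must specify the expansion domain used to extract the $z_0^{p-1}$-coefficient and verify that the same choice works simultaneously for all rational functions appearing in the master identity, which amounts to an analytic-continuation argument in the spirit of the generating-function approach of Section \ref{Sec-GenFunc}.
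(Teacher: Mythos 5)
Your proposal is correct and follows essentially the same route as the paper: your extraction of the $z_0^{p-1}$-coefficient after substituting $z_1\mapsto z_0+z_2$ in (\ref{iterate-assumption-2}) is exactly the paper's operation $\Res_{x_1=z_2}(x_1-z_2)^{-p}$ applied to that hypothesis, and the subsequent steps (cancelling the two $Y(Y_1(a^{(j)},\cdot)v,z_2)$-terms by commutativity of $Y$, converting iterates to products by associativity, then invoking (\ref{iterate-assumption-3}) and (\ref{iterate-assumption-1})) match the paper's argument. The expansion-domain/analytic-continuation caveat you flag is precisely the point the paper handles implicitly via its $E$-notation.
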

    \begin{proof}  
        We first proceed to understand the $Y_1(a^{(j)}_{-p} v, z_2)$ in the first two lines of (\ref{iterate-formula}). Substitute $z_1\mapsto x_1$ in (\ref{iterate-assumption-2}), take $\Res_{x_1=z_2}(x_1-z_2)^{-p}$ and rearrange the terms, we obtain 
        \begin{align*}
            & Y_1(a^{(j)}_{-p} v, z_2)w \nonumber\\
            = \ & \Res_{x_1=z_2}(x_1-z_2)^{-p} E\bigg(Y_1(a^{(j)}, x_1)Y(v, z_2)w + Y(a^{(j)}, x_1)Y_1(v, z_2)w - Y(Y_1(a^{(j)}, x_1-z_2)v, z_2)w\bigg)
        \end{align*}
        Together with a similar expression of $Y(a^{(j)}_{-p} v, z_2)$, we see that a sufficient condition for (\ref{iterate-formula}) is 
        \begin{align*}
            & E\bigg(Y_1(a^{(i)}, z_1) Y(Y(a^{(j)}, x_1-z_2)v, z_2) w\bigg) + E\bigg(Y(a^{(i)}, z_1) Y_1(a^{(j)}, x_1) Y(v, z_2)w\bigg) \\
            & + E\bigg(Y(a^{(i)}, z_1)Y(a^{(j)}, x_1)Y_1(v, z_2)w\bigg) - E\bigg(Y(a^{(i)}, z_1)Y(Y_1(a^{(j)}, x_1-z_2)v, z_2)w\bigg)\\
            = \ & E\bigg(Y_1(a^{(j)}, x_1)Y(v, z_2)Y(a^{(i)}, z_1)w\bigg) + E\bigg(Y(a^{(j)}, x_1)Y_1(v, z_2)Y(a^{(i)}, z_1)w\bigg) \\
            & - E\bigg(Y(Y_1(a^{(j)}, x_1-z_2)v, z_2)Y(a^{(i)}, z_1)w\bigg) + E\bigg(Y(Y(a^{(j)}, x_1-z_2)v, z_2)Y_1(a^{(i)}, z_1)w\bigg)
        \end{align*}
        From commutatitivity of the vertex operator $Y$, the terms with negative signs, namely, 
        $$E\bigg(Y(a^{(i)}, z_1)Y(Y_1(a^{(j)}, x_1-z_2)v, z_2)w\bigg) \text{ and }E\bigg(Y(Y_1(a^{(j)}, x_1-z_2)v, z_2)Y(a^{(i)}, z_1)w\bigg),$$ 
        are equal and thus can be both removed. We then use associativity to rewrite  the iterate $Y(Y(a^{(j)}, x_1-z_2) v, z_2)$ as a product $Y(a^{(j)}, x_1) Y(v, z_2)$, use commutativity of $Y$-operator to rearrange the ordering, finally moving terms around, to rewrite the identity as 
        \begin{align*}
            & E\bigg(Y_1(a^{(i)}, z_1) Y(a^{(j)}, x_1)Y(v, z_2) w\bigg) -  E\bigg(Y_1(a^{(j)}, x_1)Y(a^{(i)}, z_1)Y(v, z_2)w\bigg)\\
            & + E\bigg(Y(a^{(i)}, z_1) Y_1(a^{(j)}, x_1) Y(v, z_2)w\bigg)  \\
            = \ & E\bigg(Y(a^{(j)}, x_1)Y_1(v, z_2)Y(a^{(i)}, z_1)w\bigg) + E\bigg(Y(a^{(j)}, x_1)Y(v, z_2)Y_1(a^{(i)}, z_1)w\bigg) \\
            & - E\bigg(Y(a^{(j)}, x_1)Y(a^{(i)}, z_1)Y_1(v, z_2)w\bigg)
        \end{align*}
        From (\ref{iterate-assumption-3}), the left-hand-side is precisely 
        $$E\bigg(Y(a^{(j)}, x_1)Y_1(a^{(i)}, z_1) Y(v, z_2)w \bigg)$$
        The identity then follows from (\ref{iterate-assumption-1}). So we proved the first equality of (\ref{iterate-formula}). The second equality follows from a similar argument as in Corollary \ref{comm-asso-minimal} and shall not be repeated here.   
    \end{proof}   

    \begin{lemma}\label{comm-asso-Jacobi}
        With the notations and assumptions in Lemma \ref{iterate}, for every $m, n \in \Z$,  
        \begin{align}
            & [(a^{(i)})^{def}_m, (a^{(j)}_{-p}v)_n]w + [a^{(i)}_m, (a^{(j)}_{-p}v)^{def}_n]w \nonumber \\
            = \ & \sum_{\alpha=0}^\infty \binom{m}{\alpha} \left(((a^{(i)})^{def}_\alpha a^{(j)}_{-p} v)_{m+n-\alpha} + (a^{(i)}_\alpha a^{(j)}_{-p} v)^{def}_{m+n-\alpha} \right)w\label{commutator-iterate}
        \end{align}
    \end{lemma}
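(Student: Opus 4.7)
The plan is to extract the mode-level identity (\ref{commutator-iterate}) from the three $\overline{V}$-valued rational-function equalities in (\ref{iterate-formula}) by passing through a formal distribution Jacobi identity, essentially reversing the derivation of Proposition \ref{Commutator-Y_1-Y-Minimal}. First, I read (\ref{iterate-formula}) as the statement that the three formal series
\begin{align*}
A(x_1, x_2) & = Y_1(a^{(i)}, x_1)Y(a^{(j)}_{-p}v, x_2)w + Y(a^{(i)}, x_1)Y_1(a^{(j)}_{-p}v, x_2)w, \\
B(x_1, x_2) & = Y_1(a^{(j)}_{-p}v, x_2)Y(a^{(i)}, x_1)w + Y(a^{(j)}_{-p}v, x_2)Y_1(a^{(i)}, x_1)w, \\
C(x_0, x_2) & = Y_1(Y(a^{(i)}, x_0)a^{(j)}_{-p}v, x_2)w + Y(Y_1(a^{(i)}, x_0)a^{(j)}_{-p}v, x_2)w
\end{align*}
converge, in the respective regions $|x_1|>|x_2|>0$, $|x_2|>|x_1|>0$, and $|x_2|>|x_0|>0$, to a common $\overline{V}$-valued rational function. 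Crucially, only the convergence of $A$ and $B$ \emph{as a whole} (not of their individual summands) is asserted, which is exactly what Lemma \ref{iterate} provides.

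Next, from this three-way agreement I invoke the standard formal-distribution calculus (identical to the argument that passes from commutativity plus associativity to the Jacobi identity in the classical theory) to produce
\begin{align*}
& [Y_1(a^{(i)}, x_1), Y(a^{(j)}_{-p}v, x_2)]w + [Y(a^{(i)}, x_1), Y_1(a^{(j)}_{-p}v, x_2)]w \\
= \ & \sum_{\alpha \geq 0} \frac{(-1)^\alpha}{\alpha!}\Bigl(Y((a^{(i)})^{def}_\alpha a^{(j)}_{-p}v, x_2) + Y_1(a^{(i)}_\alpha a^{(j)}_{-p}v, x_2)\Bigr)w\cdot \Bigl(\frac{\partial}{\partial x_1}\Bigr)^\alpha x_1^{-1}\delta\Bigl(\frac{x_2}{x_1}\Bigr),
\end{align*}
which is structurally identical to the formal-distribution identity displayed in Proposition \ref{Commutator-Y_1-Y-Minimal}, with $a^{(j)}_{-p}v$ playing the role of $a^{(j)}$ and $w$ playing the role of $a^{(k)}$. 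Since the Proposition \ref{Commutator-Y_1-Y-Minimal} derivation runs from modes to distributions, here I simply carry the $\delta$-function step in the reverse direction, which is routine.

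Finally, I extract the coefficient of $x_1^{-m-1}x_2^{-n-1}$ from both sides. The left side directly yields the mode commutator $[(a^{(i)})^{def}_m, (a^{(j)}_{-p}v)_n]w + [a^{(i)}_m, (a^{(j)}_{-p}v)^{def}_n]w$. For the right side, using the standard expansion
$$\frac{(-1)^\alpha}{\alpha!}\Bigl(\frac{\partial}{\partial x_1}\Bigr)^\alpha x_1^{-1}\delta\Bigl(\frac{x_2}{x_1}\Bigr) = \sum_{k \in \Z}\binom{k+\alpha}{\alpha} x_1^{-k-\alpha-1} x_2^{k}$$
together with the index-matching $k = m-\alpha$ (so that $\binom{k+\alpha}{\alpha}=\binom{m}{\alpha}$), I obtain $\sum_{\alpha\geq 0}\binom{m}{\alpha}\bigl(((a^{(i)})^{def}_\alpha a^{(j)}_{-p}v)_{m+n-\alpha} + (a^{(i)}_\alpha a^{(j)}_{-p}v)^{def}_{m+n-\alpha}\bigr)w$, exactly the right side of (\ref{commutator-iterate}).

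The main subtlety is the second step: the formal Jacobi identity must be extracted from a three-way agreement of sums of the form $Y_1\cdot Y + Y\cdot Y_1$ (together with the analogous iterate), in which the individual summands need not converge. Because the $\delta$-function calculus is linear in the formal series being expanded, and because the hypothesis guarantees convergence of the relevant sums to a common rational function in each region, the standard derivation applies verbatim; no further convergence issue arises.
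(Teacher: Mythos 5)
Your proposal is correct and follows essentially the same route as the paper: the paper likewise reads Lemma \ref{iterate} as a three-way agreement of the sums with a common rational function, invokes the standard argument of Proposition 2.3.26 in \cite{LL} to obtain a Jacobi-like identity for the combined series $Y_1\cdot Y + Y\cdot Y_1$, and then extracts (\ref{commutator-iterate}) by taking $\Res_{x_0}\Res_{x_1}x_1^{m}\Res_{x_2}x_2^{n}$. Your only cosmetic deviation is passing through the commutator-formula form (the $\Res_{x_0}$ of the Jacobi identity, as in Proposition \ref{Commutator-Y_1-Y-Minimal}) before reading off modes, which is the same computation.
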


    \begin{proof}
        Essentially this is an argument by Jacobi identity. More precisely, for every fixed $v'\in V'$, from (\ref{iterate-formula}), 
        \begin{align*}
            & \langle v', Y_1(a^{(i)}, z_1)Y((a^{(j)}_{-p} v, z_2)w + Y(a^{(i)}, z_1)Y_1(a^{(j)}_{-p}v, z_2)w\rangle  \\
            & \langle v', Y_1(a^{(j)}_{-p}v, z_2) Y(a^{(i)}, z_1)  w+ Y(a^{(j)}_{-p}v, z_2) Y_1(a^{(i)}, z_1) w \rangle \\
            & \langle v', Y_1(Y(a^{(i)}, z_1-z_2)a^{(j)}_{-p} v, z_2)w + Y(Y_1(a^{(i)}, z_1-z_2)a^{(j)}_{-p}v, z_2)w\rangle 
        \end{align*}
        are the expansion of a common rational function (with poles at $z_1=0$, $z_2=0$ and $z_1=z_2$) respectively in the regions $|z_1|>|z_2|>0, |z_2|>|z_1|>0, |z_2|>|z_1-z_2|>0$. Using the same arguments as in Proposition 2.3.26 in \cite{LL}, we obtain a Jacobi-like identity
        \begin{align*}
            & x_0^{-1}\delta\left(\frac{x_1-x_2}{x_0}\right) \left(Y_1(a^{(i)}, x_1)Y(a^{(j)}_{-p} v, x_2)w + Y(a^{(i)}, x_1)Y_1(a^{(j)}_{-p}v, x_2)w\right)\\
            & + x_0^{-1}\delta\left(\frac{-x_2+x_1}{x_0}\right) \left(Y_1(a^{(j)}_{-p}v, x_2) Y(a^{(i)}, x_1)  w+ Y(a^{(j)}_{-p}v, x_2) Y_1(a^{(i)}, x_1) w\right)\\
            = \ & x_2^{-1}\delta\left(\frac{x_1-x_0}{x_2}\right)\left(Y_1(Y(a^{(i)}, x_0)a^{(j)}_{-p} v, x_2)w + Y(Y_1(a^{(i)}, x_0)a^{(j)}_{-p}v, x_2)w\right)
        \end{align*}
        Formula (\ref{commutator-iterate}) directly follows from applying $\Res_{x_0} \Res_{x_1}x_1^{m} \Res_{x_1}x_2^{n}$. 
    \end{proof}

    \begin{rema}
        As an immediate application of Lemma \ref{iterate}, we may conclude that for $i,j,k\in \{1, ..., r\}$ of minimal weight and every $m\in \Z_+$
        \begin{align*}
            & [(a^{(i)})^{def}_m, (a^{(j)}_{-p}a^{(k)})_n]\one + [a^{(i)}_m, ((a^{(j)}_{-p})^{def}a^{(k)})_n]\one \nonumber \\
            = \ & \sum_{\alpha=0}^\infty \binom{m}{\alpha} \left(((a^{(i)})^{def}_\alpha a^{(j)}_{-p} a^{(k)})_{m+n-\alpha} + (a^{(i)}_\alpha a^{(j)}_{-p} a^{(k)})^{def}_{m+n-\alpha} \right)\one
        \end{align*}
        In case $a^{(i)}, a^{(j)}, a^{(k)}$ are chosen such that for $\alpha \geq 0$ $a^{(j)}_\alpha a^{(k)}$ and $(a^{(j)})^{def}_\alpha a^{(k)}$ involves elements of the form $a^{(p_1)}_{-q_1} a^{(p_2)}_{-q_2}\one$, we will need this commutator formula to prove Proposition \ref{Commutator-Y_1-Y-Minimal} and Corollary \ref{comm-asso-minimal}. 
    \end{rema}

\section{Particular solution: the inductive step}
In this section, with the results proved in Section 4, we formulate the main theorem and provide the proof of the inductive step. 
\subsection{Main theorem and its induction hypothesis}
\begin{thm}\label{main-thm}
    For every $q,n\in \N$, $s^{(1)}, ..., s^{(q)}\in S$, $v\in V$, $i_1, ..., i_{n+1}\in \{1, ...., r\}$, the $\overline{V}$-valued rational function
    \begin{align}
        E\bigg(Y(s^{(1)}, z_1)\cdots Y(s^{(q)}, z_q) Y_1(a^{(i_1)}, z_{q+1}) \cdots Y(a^{(i_n)}, z_{q+n})a^{(i_{n+1})} \bigg) \label{Convergence-General-0}
    \end{align}
    exists, and the following variant of the multivariable cocycle equation holds
    \begin{align}
        & E\bigg(Y_1(a^{(i_1)}, z_{1}) Y(a^{(i_2)}, z_{2}) Y(a^{(i_3)}, z_3) \cdots Y(a^{(i_n)}, z_{n})a^{(i_{n+1})} \bigg)\label{cocycle-eqn-v5-1}\\
        & + E\bigg(Y(a^{(i_1)}, z_{1}) Y_1(a^{(i_2)}, z_{2}) Y(a^{(i_3)}, z_3) \cdots Y(a^{(i_n)}, z_{n})a^{(i_{n+1})}\bigg)\label{cocycle-eqn-v5-2}\\
        = \ & E\bigg(Y_1(a^{(i_2)}, z_{2}) Y(a^{(i_1)}, z_{1}) Y(a^{(i_3)}, z_3) \cdots Y(a^{(i_n)}, z_{n})a^{(i_{n+1})} \bigg) \label{cocycle-eqn-v5-3}\\
        & + E\bigg(Y(a^{(i_2)}, z_{2}) Y_1(a^{(i_1)}, z_{1}) Y(a^{(i_3)}, z_3) \cdots Y(a^{(i_n)}, z_{n})a^{(i_{n+1})}\bigg)\label{cocycle-eqn-v5-4}
    \end{align}
    holds. In other words, the $Y_1$-operator constructed in Section \ref{Y_1-def} is a particular solution for the cocycle equation in Section 3. 
\end{thm}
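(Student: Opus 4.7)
The plan is to prove both the convergence of (\ref{Convergence-General-0}) and the cocycle equation (\ref{cocycle-eqn-v5-1})+(\ref{cocycle-eqn-v5-2})=(\ref{cocycle-eqn-v5-3})+(\ref{cocycle-eqn-v5-4}) simultaneously, by induction on the total weight $N = \wt a^{(i_1)} + \cdots + \wt a^{(i_{n+1})}$ and, at each weight level, on the chain length $n$. The base case, where all the $a^{(i_k)}$ are of minimal weight and $n = 2$, is precisely the content of Proposition \ref{Commutator-Y_1-Y-Minimal}, Corollary \ref{comm-asso-minimal}, and Proposition \ref{Convergence-Minimal-Prop}. The task of the inductive step is to lift these three results (commutator formula, $Y_1$--$Y$ commutativity/associativity, and convergence) from weight $N-1$ to weight $N$, and from triple products to arbitrarily long chains.

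The first and most delicate substep is to upgrade the commutator condition (\ref{Commutator-Assumption}) from triples of minimal-weight generators to an arbitrary triple $(a^{(i)}, a^{(j)}, a^{(k)})$ at weight $N$. By the recursive definition of $(a^{(i)})^{def}_m$ in Section \ref{Y_1-def}, this ultimately requires the commutator formula for the pairs $(a^{(i)}, a^{(j)}_\alpha a^{(k)})$, $(a^{(j)}, a^{(k)}_\alpha a^{(i)})$, and $(a^{(k)}, a^{(i)}_\alpha a^{(j)})$. Each iterate $a^{(j)}_\alpha a^{(k)}$ lies in the strictly lower filtration $E^S_{\wt a^{(j)} + \wt a^{(k)} - 1}$ by Remark \ref{Filtration}, so the induction hypothesis places the three hypotheses of Lemma \ref{iterate} at our disposal. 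Lemma \ref{iterate} together with Lemma \ref{comm-asso-Jacobi} then extends the commutator formula across a single factor $a^{(j)}_{-p}$; iterating this peels off the PBW factors and gives the pairwise commutator formula for each rotation of the triple. Once these are in place, Lemma \ref{cyclic-identity} shows that the cyclic sum (\ref{cyclic-identity-l1})+(\ref{cyclic-identity-l2})+(\ref{cyclic-identity-l3}) vanishes, and combining this with the recursive formula for $(a^{(i)})^{def}_m$ yields the commutator condition (\ref{Commutator-Assumption}) at weight $N$.

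With the commutator condition established at weight $N$, the $\delta$-function argument of Proposition \ref{Commutator-Y_1-Y-Minimal} and the skew-symmetry manipulation of Corollary \ref{comm-asso-minimal} go through verbatim, producing the $n=2$ case of the cocycle equation at weight $N$. Convergence of (\ref{Convergence-General-0}) for $n = 2$ (with arbitrary $q$ copies of $Y$ in front) follows by copying the argument of Proposition \ref{Convergence-Minimal-Prop}, now using the upgraded commutator formula in place of the minimal-weight input. Passage to arbitrary $n$ is then achieved by substituting the iterate $Y(a^{(i_3)}, z_3 - z_n)\cdots Y(a^{(i_n)}, 0) a^{(i_{n+1})}$ into the $n=2$ cocycle equation and invoking associativity of $Y$ as in Section \ref{cocycle-eqn-subsubsec}; convergence of the full $n$-fold product is extracted from the coefficient-of-$t$ identity in Section 3.3.2 by subtracting the pieces already known to converge by the outer induction. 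The principal obstacle is the circular dependence among the inductive claims: commutator formulas for iterates require commutativity for generators, commutativity for generators requires the commutator condition, and convergence is needed to make sense of both. The prescribed order — induct on weight, then within each weight extend to iterates via Lemma \ref{iterate} and Lemma \ref{comm-asso-Jacobi}, then close the triple via Lemma \ref{cyclic-identity}, and only then deduce the $\delta$-function commutator and convergence — is what breaks this circularity. A secondary annoyance is keeping track of the combinatorial identities in Section \ref{Y_1-def} to confirm that $(a^{(i)})^{def}_m$ extends consistently regardless of the PBW ordering in which generators are stripped from a basis vector.
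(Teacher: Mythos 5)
Your overall architecture coincides with the paper's: the same double induction on total weight and chain length, the same base case supplied by Proposition \ref{Commutator-Y_1-Y-Minimal}, Corollary \ref{comm-asso-minimal} and Proposition \ref{Convergence-Minimal-Prop}, the same use of Lemma \ref{iterate}, Lemma \ref{comm-asso-Jacobi} and Lemma \ref{cyclic-identity} to lift the commutator condition to weight $N$ (this is Proposition \ref{commutator-formula-general}; note its statement is the commutator of $(a^{(i)})^{def}_m$ and $a^{(j_1)}_n$ acting on a full PBW monomial $a^{(j_2)}_{-q_2}\cdots a^{(j_p)}_{-q_p}\one$, not merely on a third generator --- that stronger form is what makes the passage to chains of arbitrary length possible), and the same $\delta$-function and skew-symmetry manipulations to convert the commutator identity into the $n=2$ cocycle equation at weight $N$.

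The genuine gap is in your final step, where you propose to extract convergence of the full $n$-fold product from ``the coefficient-of-$t$ identity in Section 3.3.2.'' That identity rests on the convergence of products of the deformed operators $Y^t=Y+tY_1$, which requires $(V^t, Y^t, \one)$ to be a grading-restricted vertex algebra over $\C[t]/(t^2)$, i.e.\ it requires $Y_1$ to satisfy the associativity condition (5) of Section \ref{first-order-deform-subsec} --- precisely the property Theorem \ref{main-thm} is establishing for the constructed $Y_1$. In Section 3 the trick is legitimate because $Y_1$ is there \emph{assumed} to satisfy (1)--(6); applied to the particular solution of Section \ref{Y_1-def} inside the induction it is circular, and the partial cocycle equation available at a given stage of the induction (only for chains of generators of bounded weight) is not enough to invoke the FHL convergence theorem for $V^t$. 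The paper instead proves the convergence of (\ref{Convergence-General-0}) for arbitrary chain length directly in Section 5.5: it splits into $Y^{\pm}$ and $Y_1^{\pm}$ parts, moves $Y_1^-(a^{(i)}, z_{q+1})$ past $Y^+(a^{(j_1)}, z_{q+2})$ using the newly established commutator formula (\ref{Commutator-totally-ordered}), and reduces every resulting piece either to the induction hypothesis or to finite sums --- in effect repeating the proof of Proposition \ref{Convergence-Minimal-Prop} at the general level, which is what your argument must also do for all $n$, not only $n=2$. A secondary omission: reducing the general-$n$ cocycle equation to ``substitute the iterate into the $n=2$ equation'' only commutes $Y_1(a^{(i_1)})$ with the operator immediately adjacent to it, whereas Proposition \ref{commutator-general-prop} must also commute $Y_1$ past an operator at an arbitrary position $\alpha$ in the chain (the PBW ordering forces $a^{(i_2)}$ into the interior of the monomial when $i_2$ is not minimal); this needs a separate, though routine, commutativity argument.
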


\begin{rema}
    With an argument using skew-symmetry, $D$-conjugation formula and a change of variable, we see that the equation $(\ref{cocycle-eqn-v5-1})+(\ref{cocycle-eqn-v5-2}) = (\ref{cocycle-eqn-v5-3})+(\ref{cocycle-eqn-v5-4})$ is equivalent to the cocycle equation $(\ref{n-var-cocycle-LHS1}) + (\ref{n-var-cocycle-LHS2}) = (\ref{n-var-cocycle-RHS1}) + (\ref{n-var-cocycle-RHS2})$ in Section 3 (with $s^{(j)} \mapsto a^{(i_j)}$). Details are very similar to those in Corollary \ref{comm-asso-minimal} and are thus omitted here. 
\end{rema}

We will argue by induction on the sum of weights of $a^{(i_1)}, ... a^{(i_{n+1})}$ and the number of variables $n$. More precisely, we fix $p, N\in \N$, and assume that the conclusions of Theorem \ref{main-thm} hold for every $n\in \N$ and every $i_1, ..., i_{n+1}$ satisfying 
$$\wt a^{(i_1)} + \cdots + \wt a^{(i_{n+1})} < N. $$
Assume also that the conclusions hold for every $n<p$ and $i_1, ..., i_{n+1}$ satisfying
$$\wt a^{(i_1)} + \cdots + \wt a^{(i_{n+1})} = N$$
We will proceed to show that the same holds $n=p$. 

\subsection{Consequences of the induction hypothesis} The induction hypothesis implies the following lemmas. 
\begin{lemma}\label{ind-hyp-lemma-1}
    With $n, i_1, ..., i_{n+1}$ as in the induction hypothesis, we have 
    \begin{align*}
        & [(a^{(i_1)})^{def}_{q_1}, a^{(i_2)}_{q_2}]a^{(i_3)}_{-q_3} \cdots a^{(i_p)}_{-q_p}\one  + [a^{(i_1)}_{q_1}, (a^{(i_2)})^{def}_{q_2}]a^{(i_3)}_{-q_3} \cdots a^{(i_{p+1})}_{-q_{p+1}}\one  \\
        = \ &  \sum_{\alpha=0}^\infty \binom{q_1}{\alpha} \left(((a^{(i_1)})^{def}_\alpha a^{(i_2)})_{q_1+q_2-\alpha} + (a^{(i_1)}_\alpha a^{(i_2)})^{def}_{q_1+q_2-\alpha} \right)a^{(i_3)}_{-q_3}\cdots a^{(i_{p+1})}_{-q_{p+1}}\one
    \end{align*}
    for every $q_1, q_2\in \Z, q_3, ..., q_{p+1}\in \Z_+$. 
\end{lemma}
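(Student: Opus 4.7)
The plan is to derive this commutator identity by taking residues in a Jacobi-like identity for formal series, which follows from combining the commutativity and associativity forms of the cocycle equation under the induction hypothesis.

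First, I would invoke the induction hypothesis for the indices $i_1, \ldots, i_{n+1}$ to obtain the $\overline{V}$-valued rational function
$$E\bigg(Y_1(a^{(i_1)}, z_1) Y(a^{(i_2)}, z_2) Y(a^{(i_3)}, z_3)\cdots Y(a^{(i_{n+1})}, z_{n+1})\one\bigg)$$
together with its three companions (with $Y_1$ in the second slot and with the roles of $z_1$ and $z_2$ swapped), as well as the commutativity form of the cocycle equation $(\ref{cocycle-eqn-v5-1})+(\ref{cocycle-eqn-v5-2})=(\ref{cocycle-eqn-v5-3})+(\ref{cocycle-eqn-v5-4})$. By the remark following Theorem \ref{main-thm}, this commutativity is equivalent to the associativity identity in which the product $Y_1(a^{(i_1)},z_1)Y(a^{(i_2)},z_2) + Y(a^{(i_1)},z_1)Y_1(a^{(i_2)},z_2)$ is replaced by the corresponding iterate $Y_1(Y(a^{(i_1)},z_1-z_2)a^{(i_2)},z_2) + Y(Y_1(a^{(i_1)},z_1-z_2)a^{(i_2)},z_2)$.

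Writing $w = Y(a^{(i_3)}, z_3) \cdots Y(a^{(i_{n+1})}, z_{n+1})\one$ and running the standard three-region argument of Proposition 2.3.26 in \cite{LL} on the common rational function produced by these identities (in the regions $|z_1|>|z_2|>0$, $|z_2|>|z_1|>0$, and $|z_2|>|z_1-z_2|>0$), I obtain the Jacobi-like identity
\begin{align*}
& x_0^{-1}\delta\!\left(\frac{x_1-x_2}{x_0}\right)\bigg(Y_1(a^{(i_1)},x_1)Y(a^{(i_2)},x_2) + Y(a^{(i_1)},x_1)Y_1(a^{(i_2)},x_2)\bigg)w \\
& \quad -\ x_0^{-1}\delta\!\left(\frac{x_2-x_1}{-x_0}\right)\bigg(Y_1(a^{(i_2)},x_2)Y(a^{(i_1)},x_1) + Y(a^{(i_2)},x_2)Y_1(a^{(i_1)},x_1)\bigg)w \\
= \ & x_2^{-1}\delta\!\left(\frac{x_1-x_0}{x_2}\right)\bigg(Y_1(Y(a^{(i_1)},x_0)a^{(i_2)},x_2) + Y(Y_1(a^{(i_1)},x_0)a^{(i_2)},x_2)\bigg)w.
\end{align*}
Applying $\Res_{x_0}\Res_{x_1}x_1^{q_1}\Res_{x_2}x_2^{q_2}$, the left-hand side collapses to $\big([(a^{(i_1)})^{def}_{q_1}, a^{(i_2)}_{q_2}] + [a^{(i_1)}_{q_1}, (a^{(i_2)})^{def}_{q_2}]\big)w$ by the standard delta-function identities, while the right-hand side produces the desired binomial sum $\sum_{\alpha\geq 0}\binom{q_1}{\alpha}\big(((a^{(i_1)})^{def}_\alpha a^{(i_2)})_{q_1+q_2-\alpha} + (a^{(i_1)}_\alpha a^{(i_2)})^{def}_{q_1+q_2-\alpha}\big)w$. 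Finally, specializing $w$ to the basis element $a^{(i_3)}_{-q_3}\cdots a^{(i_{n+1})}_{-q_{n+1}}\one$ via residues $\Res_{z_k} z_k^{-q_k}$ for $k=3, \ldots, n+1$ (or taking $w$ to be this element directly by the free-generation of $V$) yields the claim.

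The main obstacle is that the classical LL three-region argument presupposes convergence of each individual product and each individual iterate, whereas in our setting only the sums $Y_1\!\cdot\! Y + Y\!\cdot\! Y_1$ and the corresponding combined iterate are guaranteed to represent $\overline{V}$-valued rational functions; the individual terms need not. I therefore need to carry out the delta-function expansion argument uniformly for the combined quantities, treating $Y_1\!\cdot\! Y + Y\!\cdot\! Y_1$ as a single formal-series object throughout, and likewise for the iterate side, using the cocycle equation itself (not a naive splitting) as the precise rationality input that matches the three regions to a common rational function. Once this bookkeeping is set up correctly, the residue extraction is mechanical.
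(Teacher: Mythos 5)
Your proposal is correct and follows essentially the same route as the paper: the paper likewise starts from the commutativity form of the cocycle equation supplied by the induction hypothesis, specializes the trailing vertex operators to modes by taking residues (after replacing $a^{(i_{p+1})}$ with $Y(a^{(i_{p+1})},z_{p+1})\one$), converts commutativity to the iterate form via skew-symmetry and $D$-conjugation as in Corollary \ref{comm-asso-minimal}, and then extracts the commutator identity through the Jacobi-like identity and residue argument of Lemma \ref{comm-asso-Jacobi}. Your explicit attention to treating $Y_1\cdot Y + Y\cdot Y_1$ as a single rational-function-valued object (since the individual summands need not converge) is exactly the point the paper handles implicitly in Lemma \ref{comm-asso-Jacobi}.
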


\begin{proof}
    Using the $D$-conjugation property and a change of variables, we see that we may replace the $a^{(i_{p+1})}$ at the end of (\ref{cocycle-eqn-v5-1}) -- (\ref{cocycle-eqn-v5-4}) by $Y(a^{(i_{p+1})}, z_{p+1})\one$. We then apply $\Res_{z_3=0} z_3^{-q_3} \cdots \Res_{z_{p+1}=0}z_{p+1}^{-q_{p+1}}$ to obtain 
    \begin{align*}
        & E\bigg(Y_1(a^{(i_1)}, z_{1}) Y(a^{(i_2)}, z_{2}) a^{(i_3)}_{-q_3}\cdots a^{(i_{p+1})}_{-q_{p+1}}\one \bigg) + E\bigg(Y(a^{(i_1)}, z_{1}) Y_1(a^{(i_2)}, z_{2}) a^{(i_3)}_{-q_3}\cdots a^{(i_{p+1})}_{-q_{p+1}}\one \bigg)\\ 
        = \ & E\bigg(Y_1(a^{(i_2)}, z_{2}) Y(a^{(i_1)}, z_{1}) a^{(i_3)}_{-q_3}\cdots a^{(i_{p+1})}_{-q_{p+1}}\one \bigg) + E\bigg(Y(a^{(i_2)}, z_{2}) Y_1(a^{(i_1)}, z_{1}) a^{(i_3)}_{-q_3}\cdots a^{(i_{p+1})}_{-q_{p+1}}\one \bigg)
    \end{align*}
    The conclusion follows from an argument similar to those in Corollary \ref{comm-asso-minimal} and Lemma \ref{comm-asso-Jacobi}.  
\end{proof}

\begin{lemma}\label{commutator-iterate-lemma}
    With $n, i_1, ..., i_{n+1}$ as in the induction hypothesis, we have 
    \begin{align*}
        & \left([(a^{(i_1)})^{def}_m, (a^{(i_2)}_{-q_2}\cdots a^{(i_l)}_{-q_l}\one)_n] + [(a^{(i_1)})_m, (a^{(i_2)}_{-q_2}\cdots a^{(i_l)}_{-q_l}\one)^{def}_n]\right)a^{(i_{l+1})}_{-q_{l+1}}\cdots a^{(i_{n+1})}_{-q_{n+1}}\one \nonumber \\
        = \ & \sum_{\alpha=0}^\infty \binom{m}{\alpha} \left(((a^{(i_1)})^{def}_\alpha a^{(i_2)}_{-q_2}\cdots a^{(i_l)}_{-q_l}\one)_{m+n-\alpha} + (a^{(i_1)}_\alpha a^{(i_2)}_{-q_2}\cdots a^{(i_l)}_{-q_l}\one)^{def}_{m+n-\alpha} \right)a^{(i_{l+1})}_{-q_{l+1}}\cdots a^{(i_{n+1})}_{-q_{n+1}}\one
    \end{align*}
    holds for every $m, n \in \Z, l = 2, ..., n+1, q_2, ..., q_{n+1}\in \Z_+$. 
\end{lemma}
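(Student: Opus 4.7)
The plan is to prove Lemma \ref{commutator-iterate-lemma} in one shot by applying Lemma \ref{comm-asso-Jacobi} with $u = a^{(i_1)}$, $a^{(j)} = a^{(i_2)}$, $p = q_2$, $v = a^{(i_3)}_{-q_3}\cdots a^{(i_l)}_{-q_l}\one$ (interpreted as $\one$ when $l=2$), and $w = a^{(i_{l+1})}_{-q_{l+1}}\cdots a^{(i_{n+1})}_{-q_{n+1}}\one$. Its conclusion matches the right-hand side of the lemma line-by-line (with the identification $a^{(i_2)}_{-q_2}\cdots a^{(i_l)}_{-q_l}\one = a^{(i_2)}_{-q_2}v$), so the entire task reduces to verifying the three hypotheses (\ref{iterate-assumption-1})--(\ref{iterate-assumption-3}) of Lemma \ref{iterate} for this specific choice of $v$ and $w$.

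To verify the hypotheses, I would use the $D$-conjugation formula to express $v$ and $w$ as iterated residues of products of generator vertex operators,
\[
v = \operatorname*{Res}_{z_3,\ldots,z_l} z_3^{-q_3}\cdots z_l^{-q_l}\,Y(a^{(i_3)}, z_3)\cdots Y(a^{(i_l)}, z_l)\one,
\]
and analogously for $w$ in variables $z_{l+1},\ldots,z_{n+1}$. Since each hypothesis is an equality of $\overline V$-valued rational functions obtained by analytic continuation from formal power series, I can pull the residues outside the $E$-operator, with convergence underwritten by (\ref{Convergence-General-0}) in the induction hypothesis of Theorem \ref{main-thm}. After this reduction, each of (\ref{iterate-assumption-1})--(\ref{iterate-assumption-3}) becomes an $(n+1)$-point commutativity or associativity identity for $Y_1$ and $Y$ evaluated at the generator list $a^{(i_1)}, a^{(i_2)}, a^{(i_3)}, \ldots, a^{(i_{n+1})}$ -- precisely the setup of (\ref{cocycle-eqn-v5-1})--(\ref{cocycle-eqn-v5-4}) that the induction hypothesis grants us. Rearrangements among the $Y$'s are handled by the commutativity inherent in the $E$-notation, and the swap between $Y_1(a^{(i_1)},\cdot)$ and $Y_1(a^{(i_2)},\cdot)$ is exactly the content of the cocycle equation after one applies it at an appropriate pair of arguments.

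The base case $l = 2$ is in fact even cleaner: with $v = \one$, hypotheses (\ref{iterate-assumption-1}) and (\ref{iterate-assumption-2}) are immediate from the identity property $Y(\one, z) = 1_V$ and the vanishing $Y_1(\one, z) = 0$ (Condition (2) of Section \ref{first-order-deform-subsec}), leaving only (\ref{iterate-assumption-3}) to be verified by the residue-expansion argument above. The main obstacle I anticipate is the bookkeeping of the residue expansion: ensuring that, after expansion, every application of the induction hypothesis of Theorem \ref{main-thm} genuinely stays within the permitted range (same $(n,i_1,\ldots,i_{n+1})$ or strictly lower total weight), and that the specific rearrangement of $Y$'s and $Y_1$'s demanded by each of (\ref{iterate-assumption-1})--(\ref{iterate-assumption-3}) can be assembled from (\ref{cocycle-eqn-v5-1})--(\ref{cocycle-eqn-v5-4}) together with commutativity of products of $Y$-operators. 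Once this technical verification is in place, Lemma \ref{comm-asso-Jacobi} produces the desired commutator formula directly; no new analytic input beyond the induction hypothesis and the auxiliary lemmas of Section 4 is required.
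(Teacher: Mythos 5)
Your proposal is correct and follows essentially the same route as the paper: the paper likewise reduces the lemma to Lemma \ref{comm-asso-Jacobi} with exactly your choice of $a^{(i)}=a^{(i_1)}$, $a^{(j)}=a^{(i_2)}$, $p=q_2$, $v=a^{(i_3)}_{-q_3}\cdots a^{(i_l)}_{-q_l}\one$, $w=a^{(i_{l+1})}_{-q_{l+1}}\cdots a^{(i_{n+1})}_{-q_{n+1}}\one$, and verifies the hypotheses of Lemma \ref{iterate} by inserting a $Y(\one,\zeta)$, taking residues, and invoking associativity together with the $(n+1)$-point identities $(\ref{cocycle-eqn-v5-1})+(\ref{cocycle-eqn-v5-2})=(\ref{cocycle-eqn-v5-3})+(\ref{cocycle-eqn-v5-4})$ from the induction hypothesis. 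Your residue-extraction bookkeeping is the same mechanism the paper uses, so no substantive difference remains.
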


\begin{proof}
    From the induction hypothesis, 
    \begin{align*}
        & E\bigg(Y_1(a^{(i_1)}, z_1) Y(a^{(i_2)}, z_2)Y(a^{(i_3)}, z_3) \cdots Y(a^{(i_l)}, z_{l}) Y(a^{(i_{l+1})}, z_{l+1}) \cdots Y(a^{(i_{n+1})}, z_{n+1})\one\bigg) \\
        & + E\bigg(Y(a^{(i_1)}, z_1) Y_1(a^{(i_2)}, z_2) Y(a^{(i_3)}, z_3)\cdots Y(a^{(i_l)}, z_{l}) Y(a^{(i_{l+1})}, z_{l+1}) \cdots Y(a^{(i_{n+1})}, z_{n+1})\one\bigg) \\
        = \ & E\bigg(Y_1(a^{(i_2)}, z_2) Y(a^{(i_1)}, z_1) Y(a^{(i_3)}, z_3) \cdots Y(a^{(i_l)}, z_{l}) Y(a^{(i_{l+1})}, z_{l+1}) \cdots Y(a^{(i_{n+1})}, z_{n+1})\one\bigg) \\
        & + E\bigg(Y(a^{(i_2)}, z_2) Y_1(a^{(i_1)}, z_1) Y(a^{(i_3)}, z_3) \cdots Y(a^{(i_l)}, z_{l}) Y(a^{(i_{l+1})}, z_{l+1}) \cdots Y(a^{(i_{n+1})}, z_{n+1})\one\bigg)
    \end{align*}
    holds. Supplement a $Y(\one, \zeta)$ between $Y(a^{(i_l)}, z_{l})$ and $Y(a^{(i_{l+1})}, z_{l+1})$, apply \\
    $\Res_{z_{l+1}=0}z_{l+1}^{-q_{l+1}}\cdots \Res_{z_{n+1}=0}z_{n+1}^{-q_{n+1}}$, and use associativity of the $Y$-operator, we see that
    \begin{align*}
        & E\bigg(Y_1(a^{(i_1)}, z_1) Y(a^{(i_2)}, z_2) Y\left(Y(a^{(i_3)}, z_3-\zeta) \cdots Y(a^{(i_l)}, z_l - \zeta)\one, \zeta\right)a^{(i_{l+1})}_{-q_{l+1}}\cdots a^{(i_{n+1})}_{-q_{n+1}}\one\bigg) \\
        & + E\bigg(Y(a^{(i_1)}, z_1) Y_1(a^{(i_2)}, z_2) Y\left(Y(a^{(i_3)}, z_3-\zeta) \cdots Y(a^{(i_l)}, z_l - \zeta)\one, \zeta\right) a^{(i_{l+1})}_{-q_{l+1}}\cdots a^{(i_{n+1})}_{-q_{n+1}}\one\bigg) \\
        = \ & E\bigg(Y_1(a^{(i_2)}, z_2) Y(a^{(i_1)}, z_1) Y\left(Y(a^{(i_3)}, z_3-\zeta) \cdots Y(a^{(i_l)}, z_l - \zeta)\one, \zeta\right)a^{(i_{l+1})}_{-q_{l+1}}\cdots a^{(i_{n+1})}_{-q_{n+1}}\one\bigg) \\
        & + E\bigg(Y(a^{(i_2)}, z_2) Y_1(a^{(i_1)}, z_1) Y\left(Y(a^{(i_3)}, z_3-\zeta) \cdots Y(a^{(i_l)}, z_l - \zeta)\one, \zeta\right)a^{(i_{l+1})}_{-q_{l+1}}\cdots a^{(i_{n+1})}_{-q_{n+1}}\one\bigg) 
    \end{align*}    
    Apply $\Res_{z_3=\zeta} (z_3-\zeta)^{-q_3}\cdots \Res_{z_l=\zeta} (z_l-\zeta)^{-q_l}$, we see that 
    \begin{align*}
        & E\bigg(Y_1(a^{(i_1)}, z_1) Y(a^{(i_2)}, z_2) Y\left(a^{(i_3)}_{-q_3} \cdots a^{(i_l)}_{-q_l}\one, \zeta\right)a^{(i_{l+1})}_{-q_{l+1}}\cdots a^{(i_{n+1})}_{-q_{n+1}}\one\bigg)  \\
        & + E\bigg(Y(a^{(i_1)}, z_1) Y_1(a^{(i_2)}, z_2) Y\left(a^{(i_3)}_{-q_3} \cdots a^{(i_l)}_{-q_l}\one, \zeta\right) a^{(i_{l+1})}_{-q_{l+1}}\cdots a^{(i_{n+1})}_{-q_{n+1}}\one\bigg)  \\
        = \ & E\bigg(Y_1(a^{(i_2)}, z_2) Y(a^{(i_1)}, z_1) Y\left(a^{(i_3)}_{-q_3} \cdots a^{(i_l)}_{-q_l}\one, \zeta\right) a^{(i_{l+1})}_{-q_{l+1}}\cdots a^{(i_{n+1})}_{-q_{n+1}}\one\bigg)  \\
        & + E\bigg(Y(a^{(i_2)}, z_2) Y_1(a^{(i_1)}, z_1) Y\left(a^{(i_3)}_{-q_3} \cdots a^{(i_l)}_{-q_l}\one, \zeta\right) a^{(i_{l+1})}_{-q_{l+1}}\cdots a^{(i_{n+1})}_{-q_{n+1}}\one\bigg) 
    \end{align*}
    So the third condition in Lemma \ref{iterate} holds. The other two can be checked with a similar process. Then Lemma \ref{comm-asso-Jacobi} applies, giving the identity 
    \begin{align*}
        & \left([(a^{(i_1)})^{def}_m, (a^{(i_2)}_{-q_2}\cdots a^{(i_l)}_{-q_l}\one)_n] + [a^{(i_1)}_m, (a^{(i_2)}_{-q_2}\cdots a^{(i_l)}_{-q_l}\one)^{def}_n]\right)a^{(i_{l+1})}_{-q_{l+1}}\cdots a^{(i_{n+1})}_{-q_{n+1}}\one \nonumber \\
        = \ & \sum_{\alpha=0}^\infty \binom{m}{\alpha} \left(((a^{(i_1)})^{def}_\alpha a^{(i_2)}_{-q_2}\cdots a^{(i_l)}_{-q_l}\one)_{m+n-\alpha} + (a^{(i_1)}_\alpha a^{(i_2)}_{-q_2}\cdots a^{(i_l)}_{-q_l}\one)^{def}_{m+n-\alpha} \right)a^{(i_{l+1})}_{-q_{l+1}}\cdots a^{(i_{n+1})}_{-q_{n+1}}\one 
    \end{align*}
\end{proof}

\begin{lemma}\label{Conv-Lemma-G}
     With $n, i_1, ..., i_{n+1}\in \Z_+$ as in the induction hypothesis, for each $q\in \N, s^{(1)}, ..., s^{(q)}\in S$, $r=1, ..., n, m_1, ..., m_r\in \Z_+$, the $\overline{V}$-valued rational function 
    \begin{align}
        & E\bigg(Y(s^{(1)}, z_1)\cdots Y(s^{(q)}, z_q)\nonumber\\
        & \qquad \cdot Y_1(Y(a^{(i_1)}, z_{q+1}-\zeta)\cdots Y(a^{(i_r)}, z_{q+r}-\zeta)\one, \zeta) Y(a^{(i_{r+1})}, z_{q+r+1})  \cdots Y(a^{(i_n)}, z_{q+n})a^{(i_{n+1})} \bigg) \label{Conv-Lemma-G-0}
    \end{align}
    is well-defined and independent of the choice of $\zeta$, and the following equation of $\overline{V}$-valued rational functions holds:
    \begin{align}
        & E\bigg(Y(s^{(1)}, z_1)\cdots Y(s^{(q)}, z_q)Y_1(Y(a^{(i_1)}, z_{q+1}-\zeta)\cdots Y(a^{(i_r)}, z_{q+r}-\zeta)\one, \zeta)\nonumber\\
        & \qquad \cdot  Y(a^{(i_{r+1})}, z_{q+r+1})  \cdots Y(a^{(i_n)}, z_{q+n})a^{(i_{n+1})} \bigg)\label{Conv-Lemma-G-1}\\
        & + E\bigg(Y(s^{(1)}, z_1)\cdots Y(s^{(q)}, z_q)Y(Y_1(a^{(i_1)}, z_{q+1}-\zeta)\cdots Y(a^{(i_r)}, z_{q+r}-\zeta)\one, \zeta)\nonumber\\
        & \qquad \quad  \cdot  Y(a^{(i_{r+1})}, z_{q+r+1})  \cdots Y(a^{(i_n)}, z_{q+n})a^{(i_{n+1})} \bigg)\label{Conv-Lemma-G-2}\\
        = \ & E\bigg(Y(s^{(1)}, z_1)\cdots Y(s^{(q)}, z_q)Y_1(a^{(i_1)}, z_{q+1})Y(Y(a^{(i_2)}, z_{q+2} -\zeta)\cdots Y(a^{(i_r)}, z_{q+r}-\zeta)\one, \zeta)\nonumber\\
        & \qquad \cdot  Y(a^{(i_{r+1})}, z_{q+r+1})  \cdots Y(a^{(i_n)}, z_{q+n})a^{(i_{n+1})} \bigg)\label{Conv-Lemma-G-3}\\
        & + E\bigg(Y(s^{(1)}, z_1)\cdots Y(s^{(q)}, z_q)Y(a^{(i_1)}, z_{q+1})Y_1(Y(a^{(i_2)}, z_{q+2}-\zeta)\cdots Y(a^{(i_r)}, z_{q+r}-\zeta)\one, \zeta)\nonumber\\
        & \qquad \quad  \cdot  Y(a^{(i_{r+1})}, z_{q+r+1})  \cdots Y(a^{(i_n)}, z_{q+n})a^{(i_{n+1})} \bigg)\label{Conv-Lemma-G-4}
    \end{align}
\end{lemma}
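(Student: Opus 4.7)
The plan is to prove the lemma by induction on $r$, the number of generators packed into the iterate, with the base case $r=1$ handled by the $D$-derivative properties and the inductive step reducing to the cocycle equation of the main induction hypothesis.

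For the base case $r=1$: since $Y(a^{(i_1)}, z_{q+1}-\zeta)\one = e^{(z_{q+1}-\zeta)D}a^{(i_1)}$, the $D$-derivative properties of $Y$ and $Y_1$ give $Y(e^{(z_{q+1}-\zeta)D}a^{(i_1)}, \zeta) = Y(a^{(i_1)}, z_{q+1})$ and $Y_1(e^{(z_{q+1}-\zeta)D}a^{(i_1)}, \zeta) = Y_1(a^{(i_1)}, z_{q+1})$. Moreover the empty iterates in (\ref{Conv-Lemma-G-3}) and (\ref{Conv-Lemma-G-4}) reduce to $Y(\one, \zeta) = 1$ and $Y_1(\one, \zeta) = 0$ respectively. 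Thus (\ref{Conv-Lemma-G-1}) equals (\ref{Conv-Lemma-G-3}) and (\ref{Conv-Lemma-G-2}) equals (\ref{Conv-Lemma-G-4}) (both zero), and existence and $\zeta$-freeness of (\ref{Conv-Lemma-G-0}) come from the main induction hypothesis applied to (\ref{Convergence-General-0}).

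For the inductive step $r\geq 2$, we first address the right-hand side. By ordinary associativity of $Y$-operators (not requiring $Y_1$), the inner iterate in (\ref{Conv-Lemma-G-3}) collapses: $Y(Y(a^{(i_2)}, z_{q+2}-\zeta)\cdots Y(a^{(i_r)}, z_{q+r}-\zeta)\one, \zeta)$ coincides with $Y(a^{(i_2)}, z_{q+2})\cdots Y(a^{(i_r)}, z_{q+r})$ as a $\overline{V}$-valued rational function, so (\ref{Conv-Lemma-G-3}) equals $E(Y(s^{(1)},z_1)\cdots Y(s^{(q)},z_q) Y_1(a^{(i_1)},z_{q+1})Y(a^{(i_2)},z_{q+2})\cdots Y(a^{(i_n)},z_{q+n})a^{(i_{n+1})})$, which exists by the main induction hypothesis. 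For (\ref{Conv-Lemma-G-4}), the inductive hypothesis of the present lemma at level $r-1$ gives existence and $\zeta$-independence of the inner factor $Y_1(Y(a^{(i_2)}, z_{q+2}-\zeta)\cdots Y(a^{(i_r)}, z_{q+r}-\zeta)\one, \zeta)$; pre-composing with $Y(a^{(i_1)}, z_{q+1})$ and the earlier $Y(s^{(j)}, z_j)$'s yields (\ref{Conv-Lemma-G-4}). To close the argument, apply the multivariable cocycle equation of the main induction hypothesis with $u_1 = a^{(i_1)}$ and $u_2$ being (mode-by-mode) the $(r-1)$-fold iterate packaged at $\zeta$: Lemma \ref{commutator-iterate-lemma} provides the commutator formula that lets us move $Y_1(a^{(i_1)}, z_{q+1})$ across this iterate, and Lemma \ref{iterate} repackages the resulting associativity contribution into the single $r$-fold iterate appearing in (\ref{Conv-Lemma-G-1})+(\ref{Conv-Lemma-G-2}). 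Combining these steps yields both the desired equation and existence of (\ref{Conv-Lemma-G-0}), from which $\zeta$-independence follows since the right-hand side is already $\zeta$-free by inspection.

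The main obstacle is the combinatorial bookkeeping in the inductive step: when $Y_1$ crosses past each mode of the inner iterate, both a commutator piece (contributing to (\ref{Conv-Lemma-G-1})) and an associativity piece (contributing to (\ref{Conv-Lemma-G-2})) appear, and they must reassemble exactly into the left-hand side of the claimed equation. Lemma \ref{commutator-iterate-lemma} and Lemma \ref{iterate} are the primary tools for this reassembly, but care is required to verify that every invocation of the main induction hypothesis occurs at strictly smaller weight total or fewer variables. Since the $r$-fold iterate at $\zeta$ occupies a single slot in the cocycle equation while the original statement ranges over $n+1$ separate generator slots, the inductive counter remains valid as long as $r\leq n$, which is precisely the range assumed in the statement of the lemma.
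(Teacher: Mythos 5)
Your overall architecture (induction on $r$, reduction to the cocycle equation supplied by the main induction hypothesis, and passage between iterates and products by associativity and analytic continuation) matches the paper's, and your treatment of (\ref{Conv-Lemma-G-3}) and (\ref{Conv-Lemma-G-4}) is essentially the paper's argument. But there is a genuine gap: you never establish the convergence of (\ref{Conv-Lemma-G-2}) on its own. Everything you prove about the right-hand side, combined with the cocycle equation, only yields that the \emph{sum} $(\ref{Conv-Lemma-G-1})+(\ref{Conv-Lemma-G-2})$ is a well-defined $\overline{V}$-valued rational function — this is exactly the $1/2$-composability phenomenon the paper is built around, where individual summands need not converge. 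To conclude that (\ref{Conv-Lemma-G-0}), i.e.\ (\ref{Conv-Lemma-G-1}) alone, is well-defined, you must show separately that (\ref{Conv-Lemma-G-2}) converges. The paper does this by rewriting (\ref{Conv-Lemma-G-2}) via skew-symmetry and $D$-conjugation as
\begin{align*}
E\bigg(Y(s^{(1)}, z_1)\cdots Y(s^{(q)}, z_q)\, e^{\zeta D}\, Y\big(Y(a^{(i_{r+1})}, z_{q+r+1})\cdots a^{(i_{n+1})}, -\zeta\big)\, Y_1(a^{(i_1)}, z_{q+1}-\zeta)\cdots Y(a^{(i_r)}, z_{q+r}-\zeta)\one\bigg),
\end{align*}
whose existence follows from the product-form convergence (\ref{Convergence-General-0}) in the induction hypothesis together with associativity and analytic continuation. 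Without this step your "Combining these steps yields \dots existence of (\ref{Conv-Lemma-G-0})" does not follow.

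Two smaller points. First, your route to the identity through Lemma \ref{commutator-iterate-lemma} and Lemma \ref{iterate} is heavier than necessary and incurs extra bookkeeping: those are mode-level statements without the prefix $Y(s^{(1)}, z_1)\cdots Y(s^{(q)}, z_q)$, whereas the lemma you are proving is a generating-function statement with that prefix; the paper instead applies the product-form cocycle equation $(\ref{cocycle-eqn-v5-1})+(\ref{cocycle-eqn-v5-2})=(\ref{cocycle-eqn-v5-3})+(\ref{cocycle-eqn-v5-4})$ directly and converts products to iterates by analytic continuation, which is what your invocations of those lemmas would ultimately unwind to anyway. Second, your closing remark that "the right-hand side is already $\zeta$-free by inspection" is not accurate: (\ref{Conv-Lemma-G-4}) still contains $\zeta$ in its inner iterate, and its $\zeta$-independence is itself part of the inductive claim at level $r-1$, not something visible by inspection.
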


\begin{proof}
    When $r=2$, the convergence of (\ref{Conv-Lemma-G-3}) and (\ref{Conv-Lemma-G-4}) is automatic. The convergence of (\ref{Conv-Lemma-G-2}) follows from the convergence of 
    \begin{align*}
        & E\bigg(Y(s^{(1)}, z_1)\cdots Y(s^{(q)}, z_q)e^{\zeta D}Y(Y(a^{(i_3)}, z_{q+3})\cdots Y(a^{(i_n)}, z_{q+n})a^{(i_{n+1})}, -\zeta)\\
        & \qquad \cdot Y_1(a^{(i_1)}, z_{q+1}-\zeta)Y(a^{(i_2)}, z_{q+2}-\zeta)\one\bigg)
    \end{align*}
    which follows from associativity, skew-symmetry, $D$-conjugation property and an argument of \textcolor{black}{analytic continuation}. Thus from the cocycle equation $(\ref{cocycle-eqn-v5-1})+(\ref{cocycle-eqn-v5-2})=(\ref{cocycle-eqn-v5-3})+(\ref{cocycle-eqn-v5-4})$ together with an argument of analytic continuation, we see that (\ref{Conv-Lemma-G-1}) converges, and $(\ref{Conv-Lemma-G-1})+(\ref{Conv-Lemma-G-2}) = (\ref{Conv-Lemma-G-3}) + (\ref{Conv-Lemma-G-4})$. Assume the conclusion holds for every smaller $r$, then the convergence of (\ref{Conv-Lemma-G-3}) and (\ref{Conv-Lemma-G-4}) follows from the induction hypothesis. The convergence of (\ref{Conv-Lemma-G-2}) follows from the convergence of 
    \begin{align*}
        & E\bigg(Y(s^{(1)}, z_1)\cdots Y(s^{(q)}, z_q)e^{\zeta D}Y(Y(a^{(i_{r+1})}, z_{q+r+1})\cdots Y(a^{(i_n)}, z_{q+n})a^{(i_{n+1})}, -\zeta)\\
        & \qquad \cdot Y_1(a^{(i_1)}, z_{q+1}-\zeta)Y(a^{(i_2)}, z_{q+2}-\zeta)\cdots Y(a^{(i_r)}, z_{q+r}-\zeta)\one\bigg)
    \end{align*}
    which follows from associativity, skew-symmetry, $D$-conjugation property and an argument of \textcolor{black}{analytic continuation}. Thus from the cocycle equation $(\ref{cocycle-eqn-v5-1})+(\ref{cocycle-eqn-v5-2})=(\ref{cocycle-eqn-v5-3})+(\ref{cocycle-eqn-v5-4})$ together with an argument of analytic continuation, we see that (\ref{Conv-Lemma-G-1}) converges, and $(\ref{Conv-Lemma-G-1})+(\ref{Conv-Lemma-G-2}) = (\ref{Conv-Lemma-G-3}) + (\ref{Conv-Lemma-G-4})$. 
\end{proof}

\begin{lemma}\label{conv-lemma}
    With $n, i_1, ..., i_{n+1}\in \Z_+$ as in the induction hypothesis, for each $q\in \N, s^{(1)}, ..., s^{(q)}\in S$, $r=1, ..., n, m_1, ..., m_r\in \Z_+$, the $\overline{V}$-valued rational function
    \begin{align*}
        E\bigg(Y(s^{(1)}, z_1)\cdots Y(s^{(q)}, z_q) Y_1(a^{(i_1)}_{-m_1}\cdots a^{(i_r)}_{-m_r}\one, z_{q+r}) Y(a^{(i_{r+1})}, z_{q+r+1})  \cdots Y(a^{(i_n)}, z_{q+n})a^{(i_{n+1})} \bigg) 
    \end{align*}
    is well-defined. 
\end{lemma}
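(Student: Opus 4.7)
The plan is to deduce this convergence from the generating-function version established in Lemma \ref{Conv-Lemma-G} by extracting the mode $a^{(i_1)}_{-m_1}\cdots a^{(i_r)}_{-m_r}\one$ through iterated residues. By Lemma \ref{Conv-Lemma-G}, the $\overline{V}$-valued rational function $F(\zeta, z_1, \ldots, z_{q+n})$ defined by (\ref{Conv-Lemma-G-0}) exists and is independent of $\zeta$. This is the starting data we will manipulate.

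Substituting $z_{q+k} = \zeta + w_k$ for $k = 1, \ldots, r$, I view $F$ as a rational function of $\zeta, w_1, \ldots, w_r$ and the remaining variables $z_1, \ldots, z_q, z_{q+r+1}, \ldots, z_{q+n}$. I then apply the iterated residue operator $\Res_{w_1=0}w_1^{-m_1}\cdots \Res_{w_r=0}w_r^{-m_r}$, which preserves the $\overline{V}$-valued rational function structure since the coefficients of any Laurent expansion of a rational function are themselves rational. Since $a^{(i_1)}_{-m_1}\cdots a^{(i_r)}_{-m_r}\one$ is precisely the coefficient of $w_1^{m_1-1}\cdots w_r^{m_r-1}$ in the formal expansion of $Y(a^{(i_1)}, w_1)\cdots Y(a^{(i_r)}, w_r)\one$ in the region $|w_1| > \cdots > |w_r| > 0$, and since $Y_1$ is linear in its first argument, this residue operation yields
\begin{align*}
    E\bigg(Y(s^{(1)}, z_1)\cdots Y(s^{(q)}, z_q)\, Y_1(a^{(i_1)}_{-m_1}\cdots a^{(i_r)}_{-m_r}\one, \zeta)\, Y(a^{(i_{r+1})}, z_{q+r+1})\cdots a^{(i_{n+1})}\bigg).
\end{align*}
Renaming $\zeta$ back to $z_{q+r}$ then produces the $\overline{V}$-valued rational function demanded by the lemma.

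The main obstacle will be to verify rigorously that the analytic residue extraction from $F$ corresponds to the formal extraction of the mode $a^{(i_1)}_{-m_1}\cdots a^{(i_r)}_{-m_r}\one$ through $Y_1$. The $\zeta$-independence of $F$ is crucial here: on its own the iterate $Y(a^{(i_1)}, w_1)\cdots Y(a^{(i_r)}, w_r)\one$ carries singularities along $w_k = 0$ (for $k<r$) coming from the negative modes of $a^{(i_k)}$, but these must cancel within the composition once $Y_1$ and the surrounding $Y$-operators are inserted, for otherwise $F$ would genuinely depend on $\zeta$. Consequently, the Laurent expansion of $F$ in the region $|w_1|>\cdots>|w_r|>0$ agrees term-by-term with the formal expansion used to define $Y_1$ on iterates in Section \ref{Y_1-def}, and the coefficient at $w_1^{m_1-1}\cdots w_r^{m_r-1}$ is unambiguously the claimed expression.
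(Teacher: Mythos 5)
Your proposal is correct and follows essentially the same route as the paper: both deduce the lemma from Lemma \ref{Conv-Lemma-G} by extracting the mode $a^{(i_1)}_{-m_1}\cdots a^{(i_r)}_{-m_r}\one$ through iterated residues $\Res_{z_{q+k}=\zeta}(z_{q+k}-\zeta)^{-m_k}$ applied to the generating function (\ref{Conv-Lemma-G-0}) and then identifying $\zeta$ with $z_{q+r}$. Your added remarks on the $\zeta$-independence and on matching the analytic residues with the formal mode expansion are consistent with (and slightly more explicit than) the paper's one-line argument.
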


\begin{proof}
    The conclusion follows from evaluating $\zeta=z_{q+r}$ in (\ref{Conv-Lemma-G-0}), then applying the residue operation $\Res_{z_{q+r}=0}z_{q+r}^{-m_r} \Res_{z_{q+r-1}=z_{q+r}}(z_{q+r-1}-z_{q+r})^{-m_{r-1}}\cdots\Res_{z_{q+1}=z_{q+r}}(z_{q+1}-z_{q+r})^{-m_{1}}$. 
\end{proof}



\subsection{Commutator formula: general case} 
We now carry out the proof of the induction step. For the convenience of notations, we will replace $i_1, ..., i_{n+1}$ in (\ref{Convergence-General-0}) --  (\ref{cocycle-eqn-v5-4}) by $i, j_1, ..., j_p$. 

\begin{prop}\label{commutator-formula-general}
    Fix $i\in \{1, ..., r\}, 1\leq j_1 \leq \cdots \leq j_p \leq r$ with
    $$\wt a^{(i)} + \wt a^{(j_1)} + \cdots + \wt a^{(j_p)} = N. $$
    Then  
    \begin{align}
        & [(a^{(i)})_m^{def}, a^{(j_1)}_{n}] a^{(j_2)}_{-q_2} \cdots a^{(j_p)}_{-q_p}\one + [a^{(i)}_m, (a^{(j)})^{def}_{n}] a^{(j_2)}_{-q_2} \cdots a^{(j_p)}_{-q_p}\one \nonumber\\
        = \ & \sum_{\alpha=0}^\infty \binom{m}{\alpha} \left(\left( (a^{(i)})^{def}_\alpha a^{(j_1)}\right)_{m+n-\alpha} + \left( a^{(i)}_\alpha a^{(j_1)}\right)^{def}_{m+n-\alpha}\right)a^{(j_2)}_{-q_2} \cdots a^{(j_p)}_{-q_p}\one\label{Commutator-totally-ordered}
        \end{align}
        holds for every $m,n \in \Z$. 
\end{prop}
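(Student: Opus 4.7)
The plan is to induct on $p$. The base case $p = 1$ (where the element $w := a^{(j_2)}_{-q_2}\cdots a^{(j_p)}_{-q_p}\one$ degenerates to $\one$) is precisely Proposition \ref{Commutator-Assumption-Vacuum}. For the inductive step with $p \geq 2$, let $w_1 := a^{(j_3)}_{-q_3}\cdots a^{(j_p)}_{-q_p}\one$ and split into cases on the signs of $m$ and $n$.

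For the mixed-sign cases, consider first $m \in \N$ and $n = -q_1 < 0$. The ordering $j_1 \leq j_2 \leq \cdots \leq j_p$ means $a^{(j_1)}_{-q_1} w$ is an admissible input for Definition (5) of Section \ref{Y_1-def}; reading that definition of $(a^{(i)})^{def}_m(a^{(j_1)}_{-q_1} w)$ with the position index $k = 1$ and rearranging produces the target formula (\ref{Commutator-totally-ordered}) directly. The opposite mixed-sign case $m \in \Z_-$, $n \in \N$ then follows from the one just established by applying Lemma \ref{symm-lemma}, which exchanges the right-hand side under $(a^{(i)}, m) \leftrightarrow (a^{(j_1)}, n)$ with a sign consistent with the anti-symmetry of the bracket on the left.

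For the same-sign cases ($m, n \in \N$ or $m, n \in \Z_-$), apply Lemma \ref{cyclic-identity} to the triple $(a^{(i)}, a^{(j_1)}, a^{(j_2)})$ with modes $(m, n, -q_2)$ acting on $w_1$. The preconditions---commutator formulas for $(a^{(i)}, a^{(j_1)}_\alpha a^{(j_2)})$, $(a^{(j_1)}, a^{(j_2)}_\alpha a^{(i)})$, and $(a^{(j_2)}, a^{(i)}_\alpha a^{(j_1)})$ acting on $w_1$---follow from Lemma \ref{commutator-iterate-lemma} after expanding each of $a^{(j_1)}_\alpha a^{(j_2)}$, $a^{(j_2)}_\alpha a^{(i)}$, $a^{(i)}_\alpha a^{(j_1)}$ into PBW monomials, because for every $\alpha \geq 0$ the total weight $\wt a^{(i)} + \wt a^{(j_1)} + \wt a^{(j_2)} - \alpha - 1 + \wt w_1 = N - \alpha - 1$ falls strictly below $N$ and hence within the Theorem \ref{main-thm} induction hypothesis. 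Expanding the inner brackets in the cyclic identity by the $Y$-commutator formula, reorganizing via the binomial identity $\binom{\beta}{\gamma}\binom{m}{\beta} = \binom{m}{\gamma}\binom{m-\gamma}{\beta-\gamma}$ already used in the proof of Lemma \ref{cyclic-identity}, and invoking the mixed-sign cases together with Lemma \ref{commutator-iterate-lemma} to process the residual commutators acting on $w_1$, the cyclic sum collapses onto the target identity.

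The main obstacle will be the double-negative case. There Definition (4) of Section \ref{Y_1-def} writes $(a^{(i)})^{def}_{-q}(a^{(j_1)}_{-q_1}w)$ as a sum over positions $k = 1, \ldots, p$ of local contributions, and one must show that all positions $k \geq 2$ cancel against the corresponding expansion of $a^{(j_1)}_{-q_1}(a^{(i)})^{def}_{-q} w$ while the $k = 1$ term matches the target. The symmetric manipulation of $(a^{(j_1)})^{def}_{-q_1}$ acting on $a^{(i)}_{-q}w$ is further complicated when $i > j_2$, for then $a^{(i)}_{-q}w$ is not PBW-ordered and Definition (4) cannot be applied until one first invokes the $Y$-commutator formula to reorder, producing lower-weight corrections controlled by the Theorem \ref{main-thm} induction hypothesis. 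Reconciling the two calculations with the cyclic-identity output, via repeated appeal to Lemma \ref{symm-lemma} and careful bookkeeping of binomial coefficients, is the longest but purely formal step.
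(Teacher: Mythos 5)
Your proposal follows essentially the same route as the paper: the mixed-sign cases are read off from Definitions (4)--(5) of Section \ref{Y_1-def} together with Lemma \ref{symm-lemma}, and the substantive case is reduced, via the lower-weight commutator identities (Lemma \ref{ind-hyp-lemma-1} and Lemma \ref{commutator-iterate-lemma} applied to the PBW expansions of $a^{(i)}_\alpha a^{(j_1)}$, etc.), to the cyclic identity of Lemma \ref{cyclic-identity} for the triple $(a^{(i)}, a^{(j_1)}, a^{(j_2)})$ acting on $a^{(j_3)}_{-q_3}\cdots a^{(j_p)}_{-q_p}\one$. The only difference is one of emphasis: the paper dispatches the double-negative case as a routine repetition of Proposition \ref{Commutation-Assumption-neg-neg-prop} and concentrates its detailed expansion on $m,n\geq 0$, whereas you flag the double-negative bookkeeping as the main obstacle; the mathematics is the same.
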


\begin{proof}
    In case $m, n < 0$, or $m\geq 0, n < 0$, or $m < 0, n \geq 0$, (\ref{Commutator-totally-ordered}) follows from a similar argument as Proposition \ref{Commutation-Assumption-neg-neg-prop}. We focus on the case $m\geq 0, n \geq 0$. First, 
    \begin{align*}
        & (a^{(i)})^{def}_m a^{(j_1)}_{n} a^{(j_2)}_{-q_2} \cdots a^{(j_p)}_{-q_p}\one \\
        = \ & (a^{(i)})^{def}_m [a^{(j_1)}_{n}, a^{(j_2)}_{-q_2}] a^{(j_3)}_{-q_3}\cdots a^{(j_p)}_{-q_p}\one+ (a^{(i)})^{def}_m a^{(j_2)}_{-q_2} a^{(j_1)}_{n} a^{(j_3)}_{-q_3} \cdots a^{(j_p)}_{-q_p}\one. 
    \end{align*}
    It is clear that $a^{(j_1)}_{n} a^{(j_3)}_{-q_3} \cdots a^{(j_p)}_{-q_p}\one$ is a linear combination of elements of whose filtration is strictly less than $\wt a^{(j_1)} + \wt a^{(j_3)} \cdots + \wt a^{(j_p)}$. From Lemma \ref{ind-hyp-lemma-1}, we may rewrite $(a^{(i)})^{def}_m a^{(j_1)}_{n} a^{(j_2)}_{-q_2} \cdots a^{(j_p)}_{-q_p}\one$ as
    \begin{align*}
        & (a^{(i)})^{def}_m [a^{(j_1)}_{n}, a^{(j_2)}_{-q_2}] a^{(j_3)}_{-q_3}\cdots a^{(j_p)}_{-q_p}\one \\
        & + \left(a^{(j_2)}_{-q_2} (a^{(i)})^{def}_m + (a^{(j_2)})^{def}_{-q_2} a^{(i)}_m - a^{(i)}_m (a^{(j_2)})^{def}_{-q_2} \right)a^{(j_1)}_{n} a^{(j_3)}_{-q_3} \cdots a^{(j_p)}_{-q_p}\one \\ 
        & + \sum_{\alpha=0}^\infty \binom{m}{\alpha}\left( ((a^{(i)})^{def}_\alpha a^{(j_2)})_{m-q_2-\alpha} + (a^{(i)}_\alpha a^{(j_2)})^{def}_{m-q_2-\alpha} \right)a^{(j_1)}_{n} a^{(j_3)}_{-q_3} \cdots a^{(j_p)}_{-q_p}\one.
    \end{align*}
    We also use Lemma \ref{ind-hyp-lemma-1} to rewrite $a^{(j_1)}_{n} (a^{(i)})^{def}_m a^{(j_2)}_{-q_2} \cdots a^{(j_p)}_{-q_p}\one $ as 
    \begin{align*}
        & a^{(j_1)}_{n} \left(a^{(j_2)}_{-q_2} (a^{(i)})^{def}_m + (a^{(j_2)})^{def}_{-q_2} a^{(i)}_m - a^{(i)}_m (a^{(j_2)})^{def}_{-q_2} \right)a^{(j_3)}_{-q_3} \cdots a^{(j_p)}_{-q_p}\one \\ 
        & + a^{(j_1)}_{n} \sum_{\alpha=0}^\infty \binom{m}{\alpha}\left( ((a^{(i)})^{def}_\alpha a^{(j_2)})_{m-q_2-\alpha} + (a^{(i)}_\alpha a^{(j_2)})^{def}_{m-q_2-\alpha} \right) a^{(j_3)}_{-q_3} \cdots a^{(j_p)}_{-q_p}\one.
    \end{align*}
    Thus, the first commutator $[(a^{(i)})_m^{def}, a^{(j_1)}_{n}] a^{(j_2)}_{-q_2} \cdots a^{(j_p)}_{-q_p}\one$ may be expressed as 
    \begin{align*}
        &  (a^{(i)})^{def}_m [a^{(j_1)}_{n}, a^{(j_2)}_{-q_2}] a^{(j_3)}_{-q_3}\cdots a^{(j_p)}_{-q_p}\one \nonumber\\
        & + \left[a^{(j_2)}_{-q_2} (a^{(i)})^{def}_m + (a^{(j_2)})^{def}_{-q_2} a^{(i)}_m - a^{(i)}_m (a^{(j_2)})^{def}_{-q_2}, a^{(j_1)}_{n}\right] a^{(j_3)}_{-q_3} \cdots a^{(j_p)}_{-q_p}\one \nonumber\\
        & + \sum_{\alpha=0}^\infty \binom{m}{\alpha}\left[ ((a^{(i)})^{def}_\alpha a^{(j_2)})_{m-q_2-\alpha} + (a^{(i)}_\alpha a^{(j_2)})^{def}_{m-q_2-\alpha}, a^{(j_1)}_{n}\right] a^{(j_3)}_{-q_3} \cdots a^{(j_p)}_{-q_p}\one 
    \end{align*}
    Unraveling the commutators and recombine, we see that the first commutator $[(a^{(i)})_m^{def}, a^{(j_1)}_{n}] a^{(j_2)}_{-q_2} \cdots a^{(j_p)}_{-q_p}\one$ is equal to
    \begin{align}
        &  \left(\left[(a^{(i)})^{def}_m, [a^{(j_1)}_{n}, a^{(j_2)}_{-q_2}]\right] + a^{(j_2)}_{-q_2}\left[(a^{(i)})^{def}_m, a^{(j_1)}_n \right] \right)a^{(j_3)}_{-q_3}\cdots a^{(j_p)}_{-q_p}\one \nonumber\\
        & + \left((a^{(j_2)})^{def}_{-q_2} a^{(i)}_m a^{(j_1)}_{n} - a^{(i)}_m (a^{(j_2)})^{def}_{-q_2} a^{(j_1)}_{n}  + a^{(j_1)}_n a^{(i)}_m (a^{(j_2)})^{def}_{q_2} - a^{(j_1)}_n (a^{(j_2)})^{def}_{q_2} a^{(i)}_m  \right)a^{(j_3)}_{-q_3}\cdots a^{(j_p)}_{-q_p}\one\nonumber \\
        & + \sum_{\alpha=0}^\infty \binom{m}{\alpha}\left[ ((a^{(i)})^{def}_\alpha a^{(j_2)})_{m-q_2-\alpha} + (a^{(i)}_\alpha a^{(j_2)})^{def}_{m-q_2-\alpha}, a^{(j_1)}_{n}\right] a^{(j_3)}_{-q_3} \cdots a^{(j_p)}_{-q_p}\one \label{Commutator-totally-ordered-1}
    \end{align}
    Now we compute the second commutator. By definition, we may express $a^{(i)}_m (a^{(j_1)})^{def}_n a^{(j_2)}_{-q_2}a^{(j_3)}_{-q_3}\cdots a^{(j_p)}_{-q_p}\one$ as 
    \begin{align*}
        & \left(a^{(i)}_m a^{(j_2)}_{-q_2} (a^{(j_1)})^{def}_n + a^{(i)}_m (a^{(j_2)})^{def}_{-q_2} a^{(j_1)}_n - a^{(i)}_m a^{(j_1)}_n (a^{(j_2)})^{def}_{-q_2} \right)a^{(j_3)}_{-q_3}\cdots a^{(j_p)}_{-q_p}\one \\
        & + \sum_{\alpha=0}^\infty \binom{n}{\alpha} a^{(i)}_m \left(((a^{(j_1)})^{def}_\alpha a^{(j_2)})_{n-q_2-\alpha} + (a^{(j_1)}_\alpha a^{(j_2)})^{def}_{n-q_2-\alpha}\right)a^{(j_3)}_{-q_3}\cdots a^{(j_p)}_{-q_p}\one
    \end{align*}
    Also by definition, we may express $(a^{(j_1)})^{def}_n a^{(i)}_m a^{(j_2)}_{-q_2}a^{(j_3)}_{-q_3}\cdots a^{(j_p)}_{-q_p}\one$ as
    \begin{align*}
        (a^{(j_1)})^{def}_n [a^{(i)}_m ,a^{(j_2)}_{-q_2}]a^{(j_3)}_{-q_3}\cdots a^{(j_p)}_{-q_p}\one + (a^{(j_1)})^{def}_n a^{(j_2)}_{-q_2} a^{(i)}_m a^{(j_3)}_{-q_3}\cdots a^{(j_p)}_{-q_p}\one
    \end{align*}
    Clearly, $a^{(i)}_m a^{(j_3)}_{-q_3}\cdots a^{(j_p)}_{-q_p}\one$ is a linear combination of elements whose filtration is strictly less than $\wt a^{(i)} + \wt a^{(j_3)} + \cdots + \wt a^{(j_p)}$. We use Lemma \ref{ind-hyp-lemma-1} to rewrite it as 
    \begin{align*}
        & (a^{(j_1)})^{def}_n [a^{(i)}_m ,a^{(j_2)}_{-q_2}]a^{(j_3)}_{-q_3}\cdots a^{(j_p)}_{-q_p}\one\\ 
        & + \left(a^{(j_2)}_{-q_2}(a^{(j_1)})^{def}_n + (a^{(j_2)})^{def}_{-q_2}(a^{(j_1)})_n - (a^{(j_1)})_n(a^{(j_2)})^{def}_{-q_2}\right) a^{(i)}_ma^{(j_3)}_{-q_3}\cdots a^{(j_p)}_{-q_p}\one\\
        & + \sum_{\alpha=0}^\infty \binom{n}{\alpha} \left(((a^{(j_1)})^{def}_\alpha a^{(j_2)})_{n-q_2-\alpha} + (a^{(j_1)}_\alpha a^{(j_2)})^{def}_{n-q_2-\alpha}\right) a^{(i)}_m a^{(j_3)}_{-q_3}\cdots a^{(j_p)}_{-q_p}\one
    \end{align*}
    Thus, the second commutator $[a^{(i)}_m, (a^{(j_1)}_{n})^{def}] a^{(j_2)}_{-q_2} \cdots a^{(j_p)}_{-q_p}\one$ may be expressed as 
    \begin{align*}
        & -(a^{(j_1)})^{def}_n [a^{(i)}_m ,a^{(j_2)}_{-q_2}]a^{(j_3)}_{-q_3}\cdots a^{(j_p)}_{-q_p}\one\\ 
        & +\left[a^{(i)}_m, a^{(j_2)}_{-q_2} (a^{(j_1)})^{def}_n + (a^{(j_2)})^{def}_{-q_2} a^{(j_1)}_n - a^{(j_1)}_n (a^{(j_2)})^{def}_{-q_2} \right]a^{(j_3)}_{-q_3}\cdots a^{(j_p)}_{-q_p}\one \\
        & + \sum_{\alpha=0}^\infty \binom{n}{\alpha}  \left[a^{(i)}_m, ((a^{(j_1)})^{def}_\alpha a^{(j_2)})_{n-q_2-\alpha} + (a^{(j_1)}_\alpha a^{(j_2)})^{def}_{n-q_2-\alpha}\right]a^{(j_3)}_{-q_3}\cdots a^{(j_p)}_{-q_p}\one
    \end{align*}
    Unraveling the commutators and recombine, we see that the second commutator $[a^{(i)}_m, (a^{(j_1)}_{n})^{def}] a^{(j_2)}_{-q_2} \cdots a^{(j_p)}_{-q_p}\one$ is equal to  
    \begin{align}
        & \left(-\left[(a^{(j_1)})_{n}^{def}, [a^{(i)}_m, a^{(j_2)}_{-q_2}]\right] + a^{(j_2)}_{-q_2} [(a^{(i)})^{def}_m, a^{(j_1)}_{n}] \right)a^{(j_3)}_{-q_3}\cdots a^{(j_p)}_{-q_p}\one \nonumber \\
        & + \left(a^{(j_2)}_{-q_2} a^{(i)}_m a^{(j_1)}_n - a^{(i)}_m a^{(j_2)}_{-q_2} a^{(j_1)}_n - a^{(j_1)}_n (a^{(j_2)})^{def}_{-q_2} a^{(i)}_m + a^{(j_1)}_n a^{(i)}_m (a^{(j_2)})^{def}_{-q_2}\right)a^{(j_3)}_{-q_3}\cdots a^{(j_p)}_{-q_p}\one \nonumber\\
        & + \sum_{\alpha=0}^\infty \binom{n}{\alpha}  \left[a^{(i)}_m, ((a^{(j_1)})^{def}_\alpha a^{(j_2)})_{n-q_2-\alpha} + (a^{(j_1)}_\alpha a^{(j_2)})^{def}_{n-q_2-\alpha}\right]a^{(j_3)}_{-q_3}\cdots a^{(j_p)}_{-q_p}\one \label{Commutator-totally-ordered-2}
    \end{align}
    Combining (\ref{Commutator-totally-ordered-1}), (\ref{Commutator-totally-ordered-2}) and reorganize, we see that the left-hand-side of (\ref{Commutator-totally-ordered}) is 
    \begin{align}
        & \left( \left[(a^{(i)})^{def}_m, [a^{(j_1)}_{n}, a^{(j_2)}_{-q_2}]\right] + \left[(a^{(j_1)})_{n}^{def}, [a^{(j_2)}_{-q_2}, a^{(i)}_m]\right] + 
        \left[(a^{(j_2)})_{-n}^{def}, [a^{(i)}_m, a^{(j_1)}_{n}]\right]\right)a^{(j_3)}_{-q_3}\cdots a^{(j_p)}_{-q_p}\one \nonumber\\
        & + \left( a^{(j_2)}_{-q_2}[(a^{(i)})^{def}_m, a^{(j_1)}_{n}] + a^{(j_2)}_{-q_2}[a^{(i)}_m, (a^{(j_1)})^{def}_n]\right)a^{(j_3)}_{-q_3}\cdots a^{(j_p)}_{-q_p}\one\label{Commutator-totally-ordered-3}\\
        & + \sum_{\alpha=0}^\infty \binom{m}{\alpha}\left[ ((a^{(i)})^{def}_\alpha a^{(j_2)})_{m-q_2-\alpha} + (a^{(i)}_\alpha a^{(j_2)})^{def}_{m-q_2-\alpha}, a^{(j_1)}_{n}\right] a^{(j_3)}_{-q_3} \cdots a^{(j_p)}_{-q_p}\one \nonumber\\
        & + \sum_{\alpha=0}^\infty \binom{n}{\alpha}  \left[a^{(i)}_m, ((a^{(j_1)})^{def}_\alpha a^{(j_2)})_{n-q_2-\alpha} + (a^{(j_1)}_\alpha a^{(j_2)})^{def}_{n-q_2-\alpha}\right]a^{(j_3)}_{-q_3}\cdots a^{(j_p)}_{-q_p}\one\nonumber
    \end{align}
    If we apply Lemma \ref{ind-hyp-lemma-1} on (\ref{Commutator-totally-ordered-3}), and combine it with the right-hand-side of (\ref{Commutator-totally-ordered}), we see that the identity (\ref{Commutator-totally-ordered}) is equivalent to 
    \begin{align*}
        & \left( \left[(a^{(i)})^{def}_m, [a^{(j_1)}_{n}, a^{(j_2)}_{-q_2}]\right] + \left[(a^{(j_1)})_{n}^{def}, [a^{(j_2)}_{-q_2}, a^{(i)}_m]\right] + 
        \left[(a^{(j_2)})_{-n}^{def}, [a^{(i)}_m, a^{(j_1)}_{n}]\right]\right)a^{(j_3)}_{-q_3}\cdots a^{(j_p)}_{-q_p}\one \nonumber\\
        & +  \sum_{\alpha=0}^\infty \binom{m}{\alpha} \left[a^{(j_2)}_{-q_2}, [(a^{(i)})^{def}_m, a^{(j_1)}_{n}] + a^{(j_2)}_{-q_2}[a^{(i)}_m, (a^{(j_1)})^{def}_n]\right]a^{(j_3)}_{-q_3}\cdots a^{(j_p)}_{-q_p}\one\\
        & + \sum_{\alpha=0}^\infty \binom{m}{\alpha}\left[ ((a^{(i)})^{def}_\alpha a^{(j_2)})_{m-q_2-\alpha} + (a^{(i)}_\alpha a^{(j_2)})^{def}_{m-q_2-\alpha}, a^{(j_1)}_{n}\right] a^{(j_3)}_{-q_3} \cdots a^{(j_p)}_{-q_p}\one \\
        & + \sum_{\alpha=0}^\infty \binom{n}{\alpha}  \left[a^{(i)}_m, ((a^{(j_1)})^{def}_\alpha a^{(j_2)})_{n-q_2-\alpha} + (a^{(j_1)}_\alpha a^{(j_2)})^{def}_{n-q_2-\alpha}\right]a^{(j_3)}_{-q_3}\cdots a^{(j_p)}_{-q_p}\one
    \end{align*}
    The conclusion then follows from Lemma \ref{symm-lemma} and Lemma \ref{cyclic-identity}, provided that the three conditions of Lemma \ref{cyclic-identity} holds. We check the condition of Lemma \ref{cyclic-identity} with $u= a^{(i)}$ and $v=a^{(j_1)}_\alpha a^{(j_2)}$. Since $\alpha \geq 0$, we see that $a^{(j_1)}_\alpha a^{(j_2)}$ is a linear combination of $a^{(k_1)}_{-t_1} \cdots a^{(k_l)}_{-t_l}\one$, with 
    $$\wt a^{(k_1)} + \cdots + \wt a^{(k_l)} < \wt a^{(j_1)} + \wt a^{(j_2)}. $$
    Then we apply Lemma \ref{commutator-iterate-lemma} to conclude that
    \begin{align*}
        & \left([(a^{(i)})^{def}_m, (a^{(k_1)}_{-t_1}\cdots a^{(k_l)}_{-t_l}\one)_n]+ [a^{(i)}_m, (a^{(k_1)}_{-t_1}\cdots a^{(k_l)}_{-t_l}\one)^{def}_n]\right)a^{(j_3)}_{-q_3} \cdots a^{(j_p)}_{-q_p}\one \nonumber \\
        = \ & \sum_{\alpha=0}^\infty \binom{m}{\alpha} \left(((a^{(i)})^{def}_\alpha a^{(k_1)}_{-t_1}\cdots a^{(k_l)}_{-t_l}\one)_{m+n-\alpha} + (a^{(i)}_\alpha a^{(k_1)}_{-t_1}\cdots a^{(k_l)}_{-t_l}\one)^{def}_{m+n-\alpha} \right)a^{(j_3)}_{-q_3} \cdots a^{(j_p)}_{-q_p}\one 
    \end{align*}
    So we checked the condition of Lemma \ref{cyclic-identity} for each component of $a^{(j_1)}_\alpha a^{(j_2)}$. Combining these component together, we see that the condition of Lemma \ref{cyclic-identity} holds for $(u, v)=(a^{(i)}, a^{(j_1)}_\alpha a^{(j_2)})$. With a similar process, we check the other two conditions of Lemma \ref{cyclic-identity} with $(u, v) = (a^{(j_1)}, a^{(j_2)}_\alpha a^{(i)})$ and $(u, v) = (a^{(j_2)}, a^{(i)}_\alpha a^{(j_1)})$, to conclude the proof.   
\end{proof}

\subsection{Proof of the cocycle equation} 

\begin{prop}\label{commutator-general-prop}
    Fix $i\in \{1, ..., r\}, 1\leq j_1 \leq \cdots \leq j_p \leq r$ with
    $$\wt a^{(i)} + \wt a^{(j_1)} + \cdots + \wt a^{(j_p)} = N. $$
    Then for every $\alpha=1, ..., p$, 
    \begin{align}
        & E\bigg(Y_1(a^{(i)}, z_1) Y(a^{(j_\alpha)}, z_{\alpha+1}) Y(a^{(j_1)}, z_2) \cdots Y(a^{(j_{\alpha-1})}, z_{\alpha})Y(a^{(j_{\alpha+1})}, z_{\alpha+2})\cdots Y(a^{(j_p)}, z_{p+1})\one \bigg)\label{Commutativity-general-1}\\
        & + E\bigg(Y(a^{(i)}, z_1) Y_1(a^{(j_\alpha)}, z_{\alpha+1}) Y(a^{(j_1)}, z_2) \cdots Y(a^{(j_{\alpha-1})}, z_{\alpha})Y(a^{(j_{\alpha+1})}, z_{\alpha+2})\cdots Y(a^{(j_p)}, z_{p+1})\one \bigg)\label{Commutativity-general-2}\\
        = \ & E\bigg(Y_1(a^{(j_\alpha)}, z_{\alpha+1}) Y(a^{(i)}, z_1) Y(a^{(j_1)}, z_2) \cdots Y(a^{(j_{\alpha-1})}, z_{\alpha})Y(a^{(j_{\alpha+1})}, z_{\alpha+2})\cdots Y(a^{(j_p)}, z_{p+1})\one \bigg)\label{Commutativity-general-3}\\
        & + E\bigg(Y(a^{(j_\alpha)}, z_{\alpha+1}) Y_1(a^{(i)}, z_1) Y(a^{(j_1)}, z_2) \cdots Y(a^{(j_{\alpha-1})}, z_{\alpha})Y(a^{(j_{\alpha+1})}, z_{\alpha+2})\cdots Y(a^{(j_p)}, z_{p+1})\one \bigg)\label{Commutativity-general-4}
    \end{align}
\end{prop}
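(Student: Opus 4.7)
The plan is to convert the commutator formula established in Proposition \ref{commutator-formula-general} into a $\delta$-function identity, in direct analogy with Proposition \ref{Commutator-Y_1-Y-Minimal}, and then use the fact that $\delta$-functions are expansions of zero as rational functions to deduce the commutativity of the corresponding $\overline{V}$-valued rational functions. At the outset I would note that it suffices to treat $\alpha = 1$: the factors $Y(a^{(j_k)}, z_{k+1})$ with $k \neq \alpha$ appearing in (\ref{Commutativity-general-1})--(\ref{Commutativity-general-4}) can be freely permuted as $\overline{V}$-valued rational functions (by commutativity of products of vertex operators, see \cite{FHL} Section 3.5), and after reshuffling and relabeling indices the general case reduces to $\alpha = 1$.

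For $\alpha = 1$, I would multiply the identity (\ref{Commutator-totally-ordered}) by $x_1^{-m-1}x_2^{-n-1}$ and sum over $m, n \in \Z$. A computation identical to the one in the proof of Proposition \ref{Commutator-Y_1-Y-Minimal} converts the componentwise identity into the $\delta$-function identity
\begin{align*}
    & [Y_1(a^{(i)}, x_1), Y(a^{(j_1)}, x_2)]w + [Y(a^{(i)}, x_1), Y_1(a^{(j_1)}, x_2)]w \\
    = \ & \sum_{\beta=0}^\infty \frac{(-1)^\beta}{\beta!}\bigg(Y\left((a^{(i)})^{def}_\beta a^{(j_1)}, x_2\right) + Y_1\left(a^{(i)}_\beta a^{(j_1)}, x_2\right)\bigg)w \left(\frac{\partial}{\partial x_1}\right)^\beta x_1^{-1}\delta\left(\frac{x_2}{x_1}\right)
\end{align*}
acting on $w = a^{(j_2)}_{-q_2}\cdots a^{(j_p)}_{-q_p}\one$ for arbitrary $q_2, \ldots, q_p \in \Z_+$. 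Because the $\delta$-function is an expansion of zero as a rational function, the argument of Corollary \ref{comm-asso-minimal} then produces the $\overline{V}$-valued rational function identity
\begin{align*}
    & E\bigg(Y_1(a^{(i)}, z_1) Y(a^{(j_1)}, z_2) w\bigg) + E\bigg(Y(a^{(i)}, z_1) Y_1(a^{(j_1)}, z_2) w\bigg) \\
    = \ & E\bigg(Y_1(a^{(j_1)}, z_2) Y(a^{(i)}, z_1) w\bigg) + E\bigg(Y(a^{(j_1)}, z_2) Y_1(a^{(i)}, z_1) w\bigg).
\end{align*}

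The final step is to lift $w$ from a PBW basis element to the full iterate $Y(a^{(j_2)}, z_3)\cdots Y(a^{(j_p)}, z_{p+1})\one$ appearing in the statement. Both sides of the proposition are well-defined $\overline{V}$-valued rational functions in $z_1, \ldots, z_{p+1}$ by Lemma \ref{conv-lemma} (combined with the creation property and $D$-conjugation, which convert the trailing $a^{(j_p)}$ into $Y(a^{(j_p)}, z_{p+1})\one$). Expanding matrix coefficients against any $v' \in V'$ as formal series in the appropriate convergence domain, one sees that the desired identity reduces to the previous step applied to coefficients of the form $a^{(j_2)}_{n_3}\cdots a^{(j_p)}_{n_{p+1}}\one$. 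Standard vertex-algebra reordering expresses each such element as a linear combination of PBW basis elements $a^{(j_2)}_{-q_2}\cdots a^{(j_p)}_{-q_p}\one$ plus strictly-lower-filtration corrections; the identity on the former is supplied by the previous step, and on the latter it is supplied by the induction hypothesis of Theorem \ref{main-thm}. Resumming against the appropriate monomials and invoking analytic continuation completes the argument.

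The main obstacle will be this final lifting step: tracking the filtration of the expansion coefficients of $Y(a^{(j_2)}, z_3)\cdots Y(a^{(j_p)}, z_{p+1})\one$ carefully enough to ensure that the induction hypothesis covers precisely the lower-filtration corrections not directly addressed by Proposition \ref{commutator-formula-general}. Once this bookkeeping is in place, the convergence provided by Lemma \ref{conv-lemma} and the standard analytic continuation arguments developed throughout Sections 4 and 5 promote the formal series identity to the required identity of $\overline{V}$-valued rational functions.
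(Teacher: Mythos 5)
Your treatment of the case $\alpha=1$ is essentially the paper's: convert the mode identity of Proposition \ref{commutator-formula-general} into a $\delta$-function identity as in Proposition \ref{Commutator-Y_1-Y-Minimal}, deduce the four-term commutativity identity with a PBW monomial $w$ on the right, and then pass to the full product $Y(a^{(j_2)}, z_3)\cdots Y(a^{(j_p)}, z_{p+1})\one$ using the convergence results and the induction hypothesis to absorb lower-filtration corrections. That part is sound.

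The gap is in your opening reduction of general $\alpha$ to $\alpha=1$. Permuting the factors $Y(a^{(j_k)}, z_{k+1})$ with $k\neq\alpha$ is legitimate (in each of (\ref{Commutativity-general-1})--(\ref{Commutativity-general-4}) they all sit to the right of the unique $Y_1$), but this permutation does not change which generator is being commuted past $Y_1(a^{(i)}, z_1)$: it is still $a^{(j_\alpha)}$, and after relabeling, the trailing sequence $(j_1,\dots,j_{\alpha-1},j_{\alpha+1},\dots,j_p)$ contains indices $j_1\le\cdots\le j_{\alpha-1}$ that are $\le j_\alpha$. The resulting configuration is therefore \emph{not} an instance of the $\alpha=1$ case, whose proof rests on Proposition \ref{commutator-formula-general} and hence is only available when the generator adjacent to $Y_1(a^{(i)}, z_1)$ has minimal index among $j_1,\dots,j_p$. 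Because the modes $(a^{(i)})^{def}_m$ of Section \ref{Y_1-def} are defined recursively only on PBW-ordered monomials, the identity for a non-minimal $j_\alpha$ is genuinely new content rather than a relabeling; your reduction is in effect circular, since ``the identity is insensitive to which $j_k$ sits next to $Y_1(a^{(i)},z_1)$'' is precisely what the general-$\alpha$ statement asserts. The paper closes this gap with a separate multi-step argument for $\alpha\ge 2$ (the nontrivial case being $i>j_1$): starting from (\ref{Commutativity-general-3}), one commutes $Y(a^{(i)}, z_1)$ behind $Y(a^{(j_1)}, z_2)$, applies the already-established $\alpha=1$ identity to the pair $(a^{(j_\alpha)}, a^{(j_1)})$, and then repeatedly invokes the weight-induction hypothesis and the $\alpha=1$ case to reassemble $(\ref{Commutativity-general-1})+(\ref{Commutativity-general-2})$. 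You would need to supply an argument of this kind for $\alpha\ge 2$.
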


\begin{proof}
    When $\alpha = 1$, from Proposition \ref{commutator-formula-general}, using a similar argument as Propostion \ref{Commutator-Y_1-Y-Minimal} and Corollary \ref{comm-asso-minimal}, we see that 
    \begin{align*}
        & E\bigg(Y_1(a^{(i)}, z_1) Y(a^{(j_1)}, z_2) Y^+(a^{(j_2)}, z_3)\cdots Y^+(a^{(j_p)}, z_{p+1})\one \bigg)\\
        & + E\bigg(Y(a^{(i)}, z_1) Y_1(a^{(j_1)}, z_2) Y^+(a^{(j_2)}, z_3)\cdots Y^+(a^{(j_p)}, z_{p+1})\one \bigg)\\
        = \  & E\bigg(Y_1(a^{(j_1)}, z_2) Y(a^{(i)}, z_1) Y^+(a^{(j_2)}, z_3)\cdots Y^+(a^{(j_p)}, z_{p+1})\one \bigg)\\
        & + E\bigg(Y(a^{(j_1)}, z_2) Y_1(a^{(i)}, z_1) Y^+(a^{(j_2)}, z_3)\cdots Y^+(a^{(j_p)}, z_{p+1})\one \bigg)
    \end{align*}
    Using the induction hypothesis, we may remove the positive sign on top and prove the identity. 

    With $\alpha=1$ case clear, we may use it to see that for $\alpha\geq 2$ and $i\leq j_1$, the identity directly holds. In what follows, we assume that $i > j_1$. We focuse on (\ref{Commutativity-general-3}). By commutativity of the $Y$-operator, we may arrange $Y(a^{(i)}, z_1)$ in an appropriate location and use the conclusion from $\alpha=1$. So it is equal to
    \begin{align*}
       & E\bigg(Y_1(a^{(j_\alpha)}, z_{\alpha+1}) Y(a^{(j_1)}, z_2) Y(a^{(i)}, z_1) \cdots Y(a^{(j_{\alpha-1})}, z_{\alpha})Y(a^{(j_{\alpha+1})}, z_{\alpha+2})\cdots Y(a^{(j_p)}, z_{p+1})\one \bigg) \\
       = \ & E\bigg(Y(a^{(j_1)}, z_2) Y_1(a^{(j_\alpha)}, z_{\alpha+1})  Y(a^{(i)}, z_1) \cdots Y(a^{(j_{\alpha-1})}, z_{\alpha})Y(a^{(j_{\alpha+1})}, z_{\alpha+2})\cdots Y(a^{(j_p)}, z_{p+1})\one \bigg)\\
       & + E\bigg(Y_1(a^{(j_1)}, z_2) Y(a^{(j_\alpha)}, z_{\alpha+1}) Y(a^{(i)}, z_1) \cdots Y(a^{(j_{\alpha-1})}, z_{\alpha})Y(a^{(j_{\alpha+1})}, z_{\alpha+2})\cdots Y(a^{(j_p)}, z_{p+1})\one \bigg)\\
       & - E\bigg(Y(a^{(j_\alpha)}, z_{\alpha+1}) Y_1(a^{(j_1)}, z_2) Y(a^{(i)}, z_1) \cdots Y(a^{(j_{\alpha-1})}, z_{\alpha})Y(a^{(j_{\alpha+1})}, z_{\alpha+2})\cdots Y(a^{(j_p)}, z_{p+1})\one \bigg)
    \end{align*}
    Combine the last line with (\ref{Commutativity-general-4}) and use induction hypothesis, we see that $(\ref{Commutativity-general-3}) + (\ref{Commutativity-general-4})$ is equal to
    \begin{align*}
        & E\bigg(Y(a^{(j_1)}, z_2) Y_1(a^{(j_\alpha)}, z_{\alpha+1})  Y(a^{(i)}, z_1) \cdots Y(a^{(j_{\alpha-1})}, z_{\alpha})Y(a^{(j_{\alpha+1})}, z_{\alpha+2})\cdots Y(a^{(j_p)}, z_{p+1})\one \bigg)\\
       & + E\bigg(Y_1(a^{(j_1)}, z_2) Y(a^{(j_\alpha)}, z_{\alpha+1}) Y(a^{(i)}, z_1) \cdots Y(a^{(j_{\alpha-1})}, z_{\alpha})Y(a^{(j_{\alpha+1})}, z_{\alpha+2})\cdots Y(a^{(j_p)}, z_{p+1})\one \bigg)\\
       & + E\bigg(Y(a^{(j_\alpha)}, z_{\alpha+1}) Y(a^{(j_1)}, z_2) Y_1(a^{(i)}, z_1) \cdots Y(a^{(j_{\alpha-1})}, z_{\alpha})Y(a^{(j_{\alpha+1})}, z_{\alpha+2})\cdots Y(a^{(j_p)}, z_{p+1})\one \bigg)\\
       & - E\bigg(Y(a^{(j_\alpha)}, z_{\alpha+1}) Y(a^{(i)}, z_1) Y_1(a^{(j_1)}, z_2) \cdots Y(a^{(j_{\alpha-1})}, z_{\alpha})Y(a^{(j_{\alpha+1})}, z_{\alpha+2})\cdots Y(a^{(j_p)}, z_{p+1})\one \bigg)
    \end{align*}
    Apply commutativity on Line 3, and use the induction hypothesis on the sum of Line 1 and Line 3, we see that $(\ref{Commutativity-general-3}) + (\ref{Commutativity-general-4})$ is equal to
    \begin{align*}
        & E\bigg(Y(a^{(j_1)}, z_2) Y(a^{(i)}, z_1)Y_1(a^{(j_\alpha)}, z_{\alpha+1})   \cdots Y(a^{(j_{\alpha-1})}, z_{\alpha})Y(a^{(j_{\alpha+1})}, z_{\alpha+2})\cdots Y(a^{(j_p)}, z_{p+1})\one \bigg)\\
        & + E\bigg(Y(a^{(j_1)}, z_2) Y_1(a^{(i)}, z_1)Y(a^{(j_\alpha)}, z_{\alpha+1}) \cdots Y(a^{(j_{\alpha-1})}, z_{\alpha})Y(a^{(j_{\alpha+1})}, z_{\alpha+2})\cdots Y(a^{(j_p)}, z_{p+1})\one \bigg)\\
        & + E\bigg(Y_1(a^{(j_1)}, z_2) Y(a^{(j_\alpha)}, z_{\alpha+1}) Y(a^{(i)}, z_1) \cdots Y(a^{(j_{\alpha-1})}, z_{\alpha})Y(a^{(j_{\alpha+1})}, z_{\alpha+2})\cdots Y(a^{(j_p)}, z_{p+1})\one \bigg)\\
        & - E\bigg(Y(a^{(j_\alpha)}, z_{\alpha+1}) Y(a^{(i)}, z_1) Y_1(a^{(j_1)}, z_2) \cdots Y(a^{(j_{\alpha-1})}, z_{\alpha})Y(a^{(j_{\alpha+1})}, z_{\alpha+2})\cdots Y(a^{(j_p)}, z_{p+1})\one \bigg)
    \end{align*}
    Apply commutativity on Line 3, then the $\alpha=1$ case identity on the sum of Line 2 and Line 3, we see that $(\ref{Commutativity-general-3}) + (\ref{Commutativity-general-4})$ is equal to 
    \begin{align*}
        & E\bigg(Y_1(a^{(i)}, z_1)Y(a^{(j_1)}, z_2) Y(a^{(j_\alpha)}, z_{\alpha+1}) \cdots Y(a^{(j_{\alpha-1})}, z_{\alpha})Y(a^{(j_{\alpha+1})}, z_{\alpha+2})\cdots Y(a^{(j_p)}, z_{p+1})\one \bigg)\\
        & + E\bigg(Y(a^{(i)}, z_1) Y_1(a^{(j_1)}, z_2) Y(a^{(j_\alpha)}, z_{\alpha+1}) \cdots Y(a^{(j_{\alpha-1})}, z_{\alpha})Y(a^{(j_{\alpha+1})}, z_{\alpha+2})\cdots Y(a^{(j_p)}, z_{p+1})\one \bigg)\\
        & + E\bigg(Y(a^{(j_1)}, z_2) Y(a^{(i)}, z_1)Y_1(a^{(j_\alpha)}, z_{\alpha+1})   \cdots Y(a^{(j_{\alpha-1})}, z_{\alpha})Y(a^{(j_{\alpha+1})}, z_{\alpha+2})\cdots Y(a^{(j_p)}, z_{p+1})\one \bigg)\\
        & - E\bigg(Y(a^{(j_\alpha)}, z_{\alpha+1}) Y(a^{(i)}, z_1) Y_1(a^{(j_1)}, z_2) \cdots Y(a^{(j_{\alpha-1})}, z_{\alpha})Y(a^{(j_{\alpha+1})}, z_{\alpha+2})\cdots Y(a^{(j_p)}, z_{p+1})\one \bigg)
    \end{align*}
    From commutativity, Line 1 coincides with (\ref{Commutativity-general-1}). For Line 2, 3 and 4, we use commutativity to move $Y(a^{(i)}, z_1)$ to the front and apply induction hypothesis, to see that they are equal to (\ref{Commutativity-general-2}). So we proved that $(\ref{Commutativity-general-3}) + (\ref{Commutativity-general-4}) = (\ref{Commutativity-general-1}) + (\ref{Commutativity-general-2})$. 
\end{proof}

\begin{rema}
    Proposition \ref{commutator-general-prop} essentially proves the cocycle equation $(\ref{cocycle-eqn-v5-1}) + (\ref{cocycle-eqn-v5-2}) = (\ref{cocycle-eqn-v5-3}) + (\ref{cocycle-eqn-v5-4})$ in Theorem \ref{main-thm}. Indeed, from commutativity of the $Y$-operator, we may assume that $i_3 \leq \cdots \leq i_{n+1}$. Proposition \ref{commutator-general-prop} basically states that the equation holds no matter how $i_1$ and $i_2$ are chosen. 
\end{rema}

\begin{rema}\label{extension-remark}
    Together with Theorem \ref{Y1-ext-uniqueness} and a similar residue argument, it is straightforward to show that the extended $Y_1: V\otimes V \to V((x))$ given in the proof Theorem \ref{Y1-ext-uniqueness} satisfies the cocycle equation (\ref{Cocycle-Eqn}). We shall omit the details here. 
\end{rema}

\begin{rema}
    The reader might be tempted to use Dong-Li Lemma (Proposition 5.5.15 in \cite{LL}, see also \cite{L2}) to bypass the technicalities. However, our attempt showed that this approach has various conceptual and technical difficulties. The application of Dong-Li Lemma to first-order deformations requires substantial modifications that might not be a lot easier than the above. 
\end{rema}


\subsection{Proof of the convergence}

\begin{prop}
    Fix $i\in \{1, ..., r\}, 1\leq j_1 \leq \cdots \leq j_p \leq r$ with
    $$\wt a^{(i)} + \wt a^{(j_1)} + \cdots + \wt a^{(j_p)} = N. $$
    Then for every $q\in \N$, $s^{(1)}, ..., s^{(q)}\in S$, the $\overline{V}$-valued rational function
    \begin{align}\label{Convergence-General-Target}
        E\bigg(Y(s^{(1)}, z_1)\cdots Y(s^{(q)}, z_q) Y_1(a^{(i)}, z_{q+1}) Y(a^{(j_1)}, z_{q+2}) \cdots Y(a^{(j_{p-1})}, z_{q+p})a^{(j_p)} \bigg) 
    \end{align}
\end{prop}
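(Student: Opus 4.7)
The plan is to mirror the proof of Proposition \ref{Convergence-Minimal-Prop} from the minimal-weight base case, now invoking the commutator formula of Proposition \ref{commutator-formula-general} together with Lemma \ref{conv-lemma} from the induction hypothesis. Without loss of generality I assume $i \leq j_1$; the case $i > j_1$ will follow from skew-symmetry and the $D$-conjugation property, as in the closing of Proposition \ref{Convergence-Minimal-Prop}. The main step is to split $Y(a^{(j_1)}, z_{q+2}) = Y^-(a^{(j_1)}, z_{q+2}) + Y^+(a^{(j_1)}, z_{q+2})$ and treat each piece separately.

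The singular piece $Y_1(a^{(i)}, z_{q+1}) Y^-(a^{(j_1)}, z_{q+2}) Y(a^{(j_2)}, z_{q+3}) \cdots a^{(j_p)}$ is a finite sum over $m \geq 0$ of $Y_1(a^{(i)}, z_{q+1}) (a^{(j_1)})_m Y(a^{(j_2)}, z_{q+3}) \cdots a^{(j_p)} \cdot z_{q+2}^{-m-1}$. Since $m \geq 0$ the mode $(a^{(j_1)})_m$ strictly lowers the standard filtration (Remark \ref{Filtration}), so after expansion the state on which $Y_1(a^{(i)}, z_{q+1})$ acts lies in $E_{N-1}^S$. After prepending $Y(s^{(1)}, z_1) \cdots Y(s^{(q)}, z_q)$, the convergence of this piece follows from Lemma \ref{conv-lemma} under the induction hypothesis.

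For the regular piece I would invoke rule (4) of Section \ref{Y_1-def}, which rewrites the coefficient $(a^{(i)})^{def}_{-m}(a^{(j_1)})_{-n}\cdots$ as a normal-ordered term $a^{(j_1)}_{-n}(a^{(i)})^{def}_{-m}\cdots$ plus commutator corrections involving $((a^{(i)})^{def}_\alpha a^{(j_1)})_{-m-n-\alpha}$ and $(a^{(i)}_\alpha a^{(j_1)})^{def}_{-m-n-\alpha}$. Summing over $m, n \geq 1$ and reorganizing the double series (together with the delta-function-type combinations that collapse into rational kernels in $(z_{q+1}, z_{q+2})$ exactly as in the passage (\ref{Convergence-1-1-1})--(\ref{Convergence-1-1-5})), I can express the regular contribution as a finite combination of partial-derivative expressions acting on $Y((a^{(i)})^{def}_\alpha a^{(j_1)}, z_{q+2})Y(a^{(j_2)}, z_{q+3}) \cdots a^{(j_p)}$ and $Y_1(a^{(i)}_\alpha a^{(j_1)}, z_{q+2})Y(a^{(j_2)}, z_{q+3}) \cdots a^{(j_p)}$, times rational kernels, plus a remainder of the form $Y^+(a^{(j_1)}, z_{q+2}) Y_1^+(a^{(i)}, z_{q+1}) Y(a^{(j_2)}, z_{q+3}) \cdots a^{(j_p)}$ and a skew-symmetric counterpart handled via $e^{z_{q+1}D}Y_1^-(a^{(j_1)}, -z_{q+1})a^{(i)}$-type rewrites. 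Each piece converges: the correction terms have $Y_1$ acting on an element of $E_{N-1}^S$ and are controlled by Lemma \ref{conv-lemma}, while the remainder absorbs $Y^+(a^{(j_1)}, z_{q+2})$ into the prefix chain, reducing to a configuration with $n = p-1$ variables of total weight $\leq N$ already covered by the induction hypothesis.

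The main obstacle will be the bookkeeping of this regular-piece computation: the double series produced by rule (4) of Section \ref{Y_1-def} must be reorganized carefully into a finite combination of $(\partial/\partial z_{q+1})^\alpha$ applied to $Y(u, z_{q+2})$ or $Y_1(u, z_{q+2})$ for $u \in E_{N-1}^S$, multiplied by rational kernels such as $(z_{q+1}-z_{q+2})^{-1}$ or $z_{q+1}^\alpha z_{q+2}^{-\alpha}(z_{q+1}-z_{q+2})^{-1}$. This reorganization is essentially identical to that in the minimal-weight case, the only substantive change being that the convergence of each resulting summand is now deduced from Lemma \ref{conv-lemma} and the induction hypothesis rather than from Theorem \ref{Cobdry-1-Var-Thm} Part (\ref{Cobdry-1-Var-Thm-Part-3}) alone. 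Assembling the pieces establishes that (\ref{Convergence-General-Target}) is well-defined as a $\overline{V}$-valued rational function, completing the inductive step of Theorem \ref{main-thm}.
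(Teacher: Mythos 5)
Your proposal is correct and follows essentially the same route as the paper: induction on the total weight, splitting the operators into singular and regular parts, using the commutator formula of Proposition \ref{commutator-formula-general} and rule (4) of Section \ref{Y_1-def} to reorganize the regular contribution into rational kernels times series controlled by Lemma \ref{conv-lemma} and the induction hypothesis, exactly as in the passage (\ref{Convergence-1-1-1})--(\ref{Convergence-1-1-5}). The only cosmetic difference is that the paper does not need the reduction to $i\leq j_1$ (Proposition \ref{commutator-formula-general} already covers arbitrary $i$ against ordered $j_1\leq\cdots\leq j_p$), and it separates the $Y_1^-$ and $Y_1^+$ contributions of $Y_1(a^{(i)},z_{q+1})$ explicitly before applying the two tools you name.
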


\begin{proof}
    The base case has been proved in Proposition \ref{Convergence-Minimal-Prop}. Its proof shows that we may also assume the convergence of 
    \begin{align}
        Y(s^{(1)}, z_1)\cdots Y(s^{(q)}, z_q) Y_1^+(a^{(i)}, z_{q+1}) Y(a^{(j_1)}, z_{q+2}) \cdots Y(a^{(j_{p-1})}, z_{q+p})a^{(j_p)} \label{Convergence-General-Ind-Hyp-1}
    \end{align}
    and 
    \begin{align}
        Y(s^{(1)}, z_1)\cdots Y(s^{(q)}, z_q) Y_1^-(a^{(i)}, z_{q+1}) Y(a^{(j_1)}, z_{q+2}) \cdots Y(a^{(j_{p-1})}, z_{q+p})a^{(j_p)} \label{Convergence-General-Ind-Hyp-2}
    \end{align}
    in the induction hypothesis. Now we proceed with the inductive step. 
    
    We first note that from the induction hypothesis, the series 
    \begin{align}
        Y(s^{(1)}, z_1)\cdots Y(s^{(q)}, z_q) Y_1(a^{(i)},z_{q+1})Y^-(a^{(j_1)}, z_{q+2})Y(a^{(j_2)}, z_{q+3})\cdots Y(a^{(j_{p-1})}, z_{q+p})a^{(j_p)}\label{Convergence-General-1}
    \end{align}
    converges to a $\overline{V}$-valued rational function. 
    exists. Indeed, we may use the commutator formula of the $Y$-operator to see that the series is the sum of 
    \begin{align*}
        & \sum_{\alpha\geq 0}\frac{(-1)^\alpha}{\alpha !} \left(\frac{\partial}{\partial z_{q+1}}\right)^\alpha Y(s^{(1)}, z_1)\cdots Y(s^{(q)}, z_q) Y_1(a^{(i)},z_{q+1})\\
        & \qquad \cdot Y(a^{(j_2)}, z_{q+3})\cdots Y(a^{(j_1)}_\alpha a^{(j_k)}, z_{q+k})\cdots  Y(a^{(j_{p-1})}, z_{q+p})a^{(j_p)}(z_{q+1}-z_{q+k})^{-1}
    \end{align*}
    Note that the sum over $\alpha$ is indeed finite, and for each $\alpha \geq 0$, $a^{(j_1)}_\alpha a^{(j_k)}$ is a linear combination of $a^{(l_1)}_{-m_1} \cdots a^{(l_n)}_{-m_n}\one$, with 
    $\wt a^{(l_1)} + \cdots + \wt a^{(l_n)} < \wt a^{(j_1)} + \wt a^{(j_k)}$. Therefore, from associativity and the induction hypothesis, we know that the series
    \begin{align*}
        & Y(s^{(1)}, z_1)\cdots Y(s^{(q)}, z_q) Y_1(a^{(i)},z_{q+1})\\
        & \qquad \cdot Y(a^{(j_2)}, z_{q+3})\cdots Y(Y(a^{(l_1)}, \zeta_1-z_{q+k}) \cdots Y(a^{(l_n)}, \zeta_n - z_{q+k})\one, z_{q+k})\\
        & \qquad \cdots  Y(a^{(j_{p-1})}, z_{q+p})a^{(j_p)}(z_{q+1}-z_{q+k})^{-1}
    \end{align*}
    converges. Taking an appropriate residue, sum over the finite linear combination, finally sum over the finitely many $\alpha$'s, we see that (\ref{Convergence-General-1}) converges. 

    Thus, in order to prove the existence of (\ref{Convergence-General-Target}), it suffices to show the convergence of 
    \begin{align}
        & Y(s^{(1)}, z_1)\cdots Y(s^{(q)}, z_q) Y_1(a^{(i)},z_{q+1})Y^+(a^{(j_1)}, z_{q+2})Y(a^{(j_2)}, z_{q+3})\cdots Y(a^{(j_{p-1})}, z_{q+p})a^{(j_p)}\nonumber\\
        = \ & Y(s^{(1)}, z_1)\cdots Y(s^{(q)}, z_q) Y_1(a^{(i)},z_{q+1})Y^+(a^{(j_1)}, z_{q+2})Y^+(a^{(j_2)}, z_{q+3})\cdots Y(a^{(j_{p-1})}, z_{q+p})a^{(j_p)}\label{Convergence-General-2}\\
        & + Y(s^{(1)}, z_1)\cdots Y(s^{(q)}, z_q) Y_1(a^{(i)},z_{q+1})Y^+(a^{(j_1)}, z_{q+2})Y^-(a^{(j_2)}, z_{q+3})\cdots Y(a^{(j_{p-1})}, z_{q+p})a^{(j_p)} \label{Convergence-General-3}
    \end{align}
    We note that (\ref{Convergence-General-3}) differs from the convergent series 
    \begin{align*}
        Y(s^{(1)}, z_1)\cdots Y(s^{(q)}, z_q) Y_1(a^{(i)},z_{q+1})Y(a^{(j_1)}, z_{q+2})Y^-(a^{(j_2)}, z_{q+3})\cdots Y(a^{(j_{p-1})}, z_{q+p})a^{(j_p)} 
    \end{align*}
    by 
    \begin{align*}
        Y(s^{(1)}, z_1)\cdots Y(s^{(q)}, z_q) Y_1(a^{(i)},z_{q+1})Y^-(a^{(j_1)}, z_{q+2})Y^-(a^{(j_2)}, z_{q+3})\cdots Y(a^{(j_{p-1})}, z_{q+p})a^{(j_p)}, 
    \end{align*}
    which is also convergent by a similar argument with (\ref{Convergence-General-1}). For (\ref{Convergence-General-2}), we repeat the splitting process similarly and reduce the discussion of convergence to the series 
    \begin{align}
        & Y(s^{(1)}, z_1)\cdots Y(s^{(q)}, z_q) Y_1(a^{(i)},z_{q+1})Y^+(a^{(j_1)}, z_{q+2})Y^+(a^{(j_2)}, z_{q+3})\cdots Y^+(a^{(j_{p-1})}, z_{q+p})a^{(j_p)}. \nonumber \\
        = \ & Y(s^{(1)}, z_1)\cdots Y(s^{(q)}, z_q) Y_1^-(a^{(i)},z_{q+1})Y^+(a^{(j_1)}, z_{q+2})Y^+(a^{(j_2)}, z_{q+3})\cdots Y^+(a^{(j_{p-1})}, z_{q+p})a^{(j_p)}. 
        \label{Convergence-General-4}\\
        & + Y(s^{(1)}, z_1)\cdots Y(s^{(q)}, z_q) Y_1^+(a^{(i)},z_{q+1})Y^+(a^{(j_1)}, z_{q+2})Y^+(a^{(j_2)}, z_{q+3})\cdots Y^+(a^{(j_{p-1})}, z_{q+p})a^{(j_p)}. \label{Convergence-General-4-1}
    \end{align}
    \noindent $\blacktriangleright$ We first study the convergence of (\ref{Convergence-General-4}) with the assumption that  
    $$1\leq j_1 \leq \cdots \leq j_p \leq r. $$
    We use the commutator formula (\ref{Commutator-totally-ordered}) proved in Proposition \ref{commutator-formula-general} to rewrite (\ref{Convergence-General-4}) as 
    \begin{align}
        & Y(s^{(1)}, z_1)\cdots Y(s^{(q)}, z_q) Y^+(a^{(j_1)}, z_{q+2}) Y_1^-(a^{(i)},z_{q+1}) Y^+(a^{(j_2)}, z_{q+3})\cdots Y^+(a^{(j_{p-1})}, z_{q+p})a^{(j_p)}\label{Convergence-General-5}\\
        & + Y(s^{(1)}, z_1)\cdots Y(s^{(q)}, z_q) Y_1^+(a^{(j_1)}, z_{q+2}) Y^-(a^{(i)},z_{q+1}) Y^+(a^{(j_2)}, z_{q+3})\cdots Y^+(a^{(j_{p-1})}, z_{q+p})a^{(j_p)}\label{Convergence-General-6}\\
        & - Y(s^{(1)}, z_1)\cdots Y(s^{(q)}, z_q) Y^-(a^{(i)},z_{q+1}) Y_1^+(a^{(j_1)}, z_{q+2}) Y^+(a^{(j_2)}, z_{q+3})\cdots Y^+(a^{(j_{p-1})}, z_{q+p})a^{(j_p)}\label{Convergence-General-7}\\
        & + \sum_{\alpha\geq 0}\frac{(-1)^{\alpha}}{\alpha!} \left(\frac{\partial}{\partial z_{q+1}}\right)^\alpha Y(s^{(1)}, z_1)\cdots Y(s^{(q)}, z_q) Y_1(a^{(i)}_\alpha a^{(j_1)}, z_{q+2}) Y^+(a^{(j_2)}, z_{q+3})\cdots Y^+(a^{(j_{p-1})}, z_{q+p})a^{(j_p)}\label{Convergence-General-8}. \\
        & + \sum_{\alpha\geq 0}\frac{(-1)^{\alpha}}{\alpha!} \left(\frac{\partial}{\partial z_{q+1}}\right)^\alpha Y(s^{(1)}, z_1)\cdots Y(s^{(q)}, z_q) Y((a^{(i)})^{def}_\alpha a^{(j_1)}, z_{q+2}) Y^+(a^{(j_2)}, z_{q+3})\cdots Y^+(a^{(j_{p-1})}, z_{q+p})a^{(j_p)}\label{Convergence-General-8-1}. 
    \end{align}
    We handle the summands as follows (from easy to hard)
    \begin{itemize}[leftmargin=*]
        \item The convergence of the series (\ref{Convergence-General-6}) follows from the induction hypothesis (\ref{Convergence-General-Ind-Hyp-2}), together with a reduction process similar to those from (\ref{Convergence-General-1}) to (\ref{Convergence-General-4}).  
        \item For the series (\ref{Convergence-General-5}), we use commutator formula of the $Y$-operator to express it as 
        \begin{align}
            & Y^+(a^{(j_1)}, z_{q+2}) Y(s^{(1)}, z_1)\cdots Y(s^{(q)}, z_q) Y_1^-(a^{(i)},z_{q+1}) Y^+(a^{(j_2)}, z_{q+3})\cdots Y^+(a^{(j_{p-1})}, z_{q+p})a^{(j_p)}\label{Convergence-General-9}
        \end{align}
        together with the sum of 
        \begin{align}
            - \ & Y(s^{(1)}, z_1)\cdots [Y(s^{(k)}, z_k, Y^+(a^{(j_1)}, z_{q+2})] \cdots Y(s^{(q)}, z_q) \nonumber \\
            & \cdot Y_1^-(a^{(i)},z_{q+1}) Y^+(a^{(j_2)}, z_{q+3})\cdots Y^+(a^{(j_{p-1})}, z_{q+p})a^{(j_p)}.\label{Convergence-General-10}
        \end{align}
        For (\ref{Convergence-General-9}), note that for each fixed $v'\in V'$, only finitely many components in $Y^+(a^{(j_1)}, z_{q+2})$ contributes to the series. For each component, we apply the induction hypothesis (\ref{Convergence-General-Ind-Hyp-1}) together with a  with a reduction process similar to those from (\ref{Convergence-General-1}) to (\ref{Convergence-General-4}) to conclude its convergence. So (\ref{Convergence-General-9}) converges. For (\ref{Convergence-General-10}), we rewrite the commutator of the $Y$- and $Y^+$-operators as an iterate, then use the induction hypothesis where  $q$ and $s^{(1)}, ...,. s^{(q)}$ are arbitrarily chosen, together with a similar argument as those for (\ref{Convergence-General-1}) to show the convergence. 
        \item The series (\ref{Convergence-General-7}) differs to the series
        \begin{align}
            - Y(s^{(1)}, z_1)\cdots Y(s^{(q)}, z_q) Y(a^{(i)},z_{q+1}) Y_1^+(a^{(j_1)}, z_{q+2}) Y^+(a^{(j_2)}, z_{q+3})\cdots Y^+(a^{(j_{p-1})}, z_{q+p})a^{(j_p)}, \label{Convergence-General-7-1}
        \end{align}
        by 
        \begin{align}
            - Y(s^{(1)}, z_1)\cdots Y(s^{(q)}, z_q) Y^+(a^{(i)},z_{q+1}) Y_1^+(a^{(j_1)}, z_{q+2}) Y^+(a^{(j_2)}, z_{q+3})\cdots Y^+(a^{(j_{p-1})}, z_{q+p})a^{(j_p)}, \label{Convergence-General-7-2}
        \end{align}
        The convergence of (\ref{Convergence-General-7-1}) is known from the induction hypothesis (\ref{Convergence-General-Ind-Hyp-1}) together with a  with a reduction process similar to those from (\ref{Convergence-General-1}) to (\ref{Convergence-General-4}). The convergence of (\ref{Convergence-General-7-2}) may be shown similarly as that of  (\ref{Convergence-General-5}). 
        \item For the series (\ref{Convergence-General-8}), we first note that the summation over $\alpha$ is finite. For each fixed $\alpha$, the summand converges following from the convergence of products of $Y$-operators, together with a reduction process similar to those from (\ref{Convergence-General-1}) to (\ref{Convergence-General-4}). 
        \item For the series (\ref{Convergence-General-8}), note that for each fixed $\alpha \geq 0$, $a^{(i)}_\alpha a^{(j)}$ is a finite linear combination of $a^{(k_1)}_{-t_1}\cdots a^{(k_l)}_{-t_l}\one$ with 
        $$\wt a^{(k_1)} + \cdots + \wt a^{(k_l)} < \wt a^{(i)} + \wt a^{(j)}.$$
        From the induction hypothesis, associativity, and a residue calculation, we see that the series
        \begin{align*}
            Y(s^{(1)}, z_1)\cdots Y(s^{(q)}, z_q) Y_1^+(a^{(j_2)}, z_{q+3})Y(a^{(k_1)}_{-t_1} \cdots a^{(k_l)}_{-t_l}\one, z_{q+2}) Y^+(a^{(j_2)}, z_{q+3})\cdots Y^+(a^{(j_{p-1})}, z_{q+p})a^{(j_p)}
        \end{align*}
        converges. Then we use commutator formula in Lemma \ref{commutator-iterate-lemma} to conclude that (\ref{Convergence-General-8-1}) converges.  
        
    \end{itemize}
    \noindent $\blacktriangleright$ We now study the convergence of (\ref{Convergence-General-4-1}) with the assumption 
    $$1\leq j_1 \leq \cdots \leq j_p \leq r.$$
    With a similar calculation as (\ref{Convergence-1-1}), we have
    \begin{align}
        & Y_1^+(a^{(i)}, z_{q+1})Y^+(a^{(j_1)}, z_{q+2}) \cdots Y^+(a^{(j_{p-1}}, z_{q+p})a^{(j_p)} \nonumber\\
        = \ & \frac 1 2  \sum_{\alpha \geq 0} \frac{(-1)^\alpha}{\alpha !}\left(\frac{\partial}{\partial z_{q+1}}\right)^{\alpha} \left(\sum_{m\geq \alpha+2} \left((a^{(i)})^{def}_\alpha a^{(j_1)}\right)_{-m} z_{q+1}^{m-1} \right)  (z_{q+1}-z_{q+2})^{-1}\nonumber\\
        & \qquad \quad \cdot Y^+(a^{(j_2)}, z_{q+3})\cdots Y^+(a^{(j_{p-1})}, z_{q+p})a^{(j_p)}\label{Convergence-General-4-1-1}\\
        & + \frac 1 2  \sum_{\alpha \geq 0} \frac{(-1)^\alpha}{\alpha !}\left(\frac{\partial}{\partial z_{q+1}}\right)^{\alpha} z_{q+1}^\alpha z_{q+2}^{-\alpha}\left(\sum_{m\geq \alpha+2} \left((a^{(i)})^{def}_\alpha a^{(j_1)}\right)_{-m}  z_{q+2}^{m-1} \right)\cdot  (z_{q+1}-z_{q+2})^{-1}\nonumber\\
        & \qquad \quad \cdot Y^+(a^{(j_2)}, z_{q+3})\cdots Y^+(a^{(j_{p-1})}, z_{q+p})a^{(j_p)}\label{Convergence-General-4-1-2}\\
        & + \frac 1 2  \sum_{\alpha \geq 0} \frac{(-1)^\alpha}{\alpha !}\left(\frac{\partial}{\partial z_{q+1}}\right)^{\alpha} \left(\sum_{m\geq \alpha+2} \left(a^{(i)}_\alpha a^{(j_1)}\right)^{def}_{-m} z_{q+1}^{m-1} \right) (z_{q+1}-z_{q+2})^{-1}\nonumber\\
        & \qquad \quad \cdot Y^+(a^{(j_2)}, z_{q+3})\cdots Y^+(a^{(j_{p-1})}, z_{q+p})a^{(j_p)}\label{Convergence-General-4-1-3}\\
        & + \frac 1 2  \sum_{\alpha \geq 0} \frac{(-1)^\alpha}{\alpha !}\left(\frac{\partial}{\partial z_{q+1}}\right)^{\alpha} z_{q+1}^\alpha z_{q+2}^{-\alpha}\left(\sum_{m\geq \alpha+2} \left(a^{(i)}_\alpha a^{(j_1)}\right)^{def}_{-m}  z_{q+2}^{m-1} \right)\cdot  (z_{q+1}-z_{q+2})^{-1}\nonumber\\
        & \qquad \quad \cdot Y^+(a^{(j_2)}, z_{q+3})\cdots Y^+(a^{(j_{p-1})}, z_{q+p})a^{(j_p)}\label{Convergence-General-4-1-4}\\
        & + Y^+(a^{(j_1)}, z_{q+2}) Y_1^+(a^{(i)}, z_{q+1})Y^+(a^{(j_2)}, z_{q+3})\cdots Y^+(a^{(j_{p-1})}, z_{q+p})a^{(j_p)}\label{Convergence-General-4-1-5}
    \end{align}
    Note that the sum over $\alpha$ in (\ref{Convergence-General-4-1-1}) -- (\ref{Convergence-General-4-1-4}) are all finite. 
    Likewise, we analyze the contribution of each part in (\ref{Convergence-General-4-1}) 
    \begin{itemize}[leftmargin=*]
        \item For the contribution of (\ref{Convergence-General-4-1-1}) in (\ref{Convergence-General-4-1}), we start with the series 
        \begin{align}
            \frac 1 2 Y(s^{(1)}, z_1)\cdots Y(s^{(q)}, z_q) Y((a^{(i)})^{def}_\alpha a^{(j_1)}, z_{q+2}) Y^+(a^{(j_2)}, z_{q+3}) \cdots Y^+(a^{(j_{p-1})}, z_{q+p})a^{(j_p)} (z_{q+1}-z_{q+2})^{-1} \label{Convergence-General-4-1-1-1}
        \end{align}
        for each $\alpha$, which converges because of the convergence of products of $Y$-operators, together with a reduction process similar to those from (\ref{Convergence-General-1}) to (\ref{Convergence-General-4}). Separate 
        \begin{align*}
            Y((a^{(i)})^{def}_\alpha a^{(j_1)},z_{q+1}) = \ &  Y^-((a^{(i)})^{def}_\alpha a^{(j_1)},z_{q+1}) + \sum_{0\leq m< \alpha+2} \left((a^{(i)})^{def}_\alpha a^{(j_1)}\right)_{-m} z_{q+1}^{m-1} \\ 
            & +  \sum_{m\geq \alpha+2} \left((a^{(i)})^{def}_\alpha a^{(j_1)}\right)_{-m} z_{q+1}^{m-1}.
        \end{align*}
        The $Y^-$-part contribution in (\ref{Convergence-General-4-1-1-1}) converges by the commutator formula of $Y$ and a reduction process. For each $m$, the contribution of $\left((a^{(i)})^{def}_\alpha a^{(j_1)}\right)_{-m} z_{q+1}^{m-1}$ in (\ref{Convergence-General-4-1-1-1}) may be expressed as a residue of the $\overline{V}$-valued rational function given by (\ref{Convergence-General-4-1-1-1}), thus also converges. So 
        \begin{align*}
            & \frac 1 2 Y(s^{(1)}, z_1)\cdots Y(s^{(q)}, z_q)  \sum_{m\geq \alpha+2} \left((a^{(i)})^{def}_\alpha a^{(j_1)}\right)_{-m} (z_{q+1}-z_{q+2})^{-1}\\
            & \cdot Y^+(a^{(j_2)}, z_{q+3}) \cdots Y^+(a^{(j_{p-1})}, z_{q+p})a^{(j_p)} , 
        \end{align*}
        also converges. Apply appropriate coefficients and partial derivatives, then sum over the finitely many $\alpha$'s, we see that the contribution of (\ref{Convergence-General-4-1-1}) in (\ref{Convergence-General-4-1}) converges.     
        \item For the contribution of (\ref{Convergence-General-4-1-2}) in (\ref{Convergence-General-4-1}), its convergence is similarly shown as that of (\ref{Convergence-General-4-1-1}). We shall not repeat the details here. 
        \item For the contribution of (\ref{Convergence-General-4-1-3}) in (\ref{Convergence-General-4-1}), we start with the series 
        \begin{align}
            \frac 1 2 Y(s^{(1)}, z_1)\cdots Y(s^{(q)}, z_q) Y_1(a^{(i)}_\alpha a^{(j_1)}, z_{q+2}) Y^+(a^{(j_2)}, z_{q+3}) \cdots Y^+(a^{(j_{p-1})}, z_{q+p})a^{(j_p)} (z_{q+1}-z_{q+2})^{-1} \label{Convergence-General-4-1-3-1}
        \end{align}
        for each $\alpha \geq 0$. Note that $a^{(i)}_\alpha a^{(j_1)}$ has its filtration index strictly less than $\wt a^{(i)} + \wt a^{(j_1)} - 1$. Thus from Lemma \ref{conv-lemma} together with a reduction process similar to those from (\ref{Convergence-General-1}) to (\ref{Convergence-General-4}), we see that (\ref{Convergence-General-4-1-3-1}) converges. Separate 
        \begin{align*}
            Y_1(a^{(i)}_\alpha a^{(j_1)},z_{q+1}) = \ &  Y_1^-(a^{(i)}_\alpha a^{(j_1)},z_{q+1}) + \sum_{0\leq m< \alpha+2} \left(a^{(i)}_\alpha a^{(j_1)}\right)^{def}_{-m} z_{q+1}^{m-1} \\ 
            & +  \sum_{m\geq \alpha+2} \left(a^{(i)}_\alpha a^{(j_1)}\right)^{def}_{-m} z_{q+1}^{m-1}.
        \end{align*}
        The $Y_1^-$-part contribution in (\ref{Convergence-General-4-1-3-1}) converges by the induction hypothesis and a reduction process. For each $m$, the contribution of $\left((a^{(i)})^{def}_\alpha a^{(j_1)}\right)_{-m} z_{q+1}^{m-1}$ in (\ref{Convergence-General-4-1-3-1}) may be expressed as a residue of the $\overline{V}$-valued rational function given by (\ref{Convergence-General-4-1-3-1}), thus also converges. So 
        \begin{align*}
            & \frac 1 2 Y(s^{(1)}, z_1)\cdots Y(s^{(q)}, z_q)  \sum_{m\geq \alpha+2} \left(a^{(i)}_\alpha a^{(j_1)}\right)^{def}_{-m} (z_{q+1}-z_{q+2})^{-1}\\
            & \cdot Y^+(a^{(j_2)}, z_{q+3}) \cdots Y^+(a^{(j_{p-1})}, z_{q+p})a^{(j_p)} , 
        \end{align*}
        also converges. Apply appropriate coefficients and partial derivatives, then sum over the finitely many $\alpha$'s, we see that the contribution of (\ref{Convergence-General-4-1-3}) in (\ref{Convergence-General-4-1}) converges.     
        \item For the contribution of (\ref{Convergence-General-4-1-4}) in (\ref{Convergence-General-4-1}), its convergence is similarly shown as that of (\ref{Convergence-General-4-1-3}). We shall not repeat the details here. 
        \item For the contribution of (\ref{Convergence-General-4-1-5}) in (\ref{Convergence-General-4-1}), its convergence is similarly shown as that of (\ref{Convergence-General-5}), by moving the operator $Y^+(a^{(j_1)}, z_{q+2})$ to the front via commutator formula of the $Y$-operator, then apply the induction hypothesis. We shall not repeat the details here. 
    \end{itemize}
\end{proof}

\subsection{Summary and remarks} Collecting the results in Section 3, 4 and 5, we have proved the following theorem:
\begin{thm}\label{main-thm-paper}
    The cohomology classes in $H^2_{1/2}(V, V)$ are represented by the union of the finite set $$\{M_m(a^{(i)}, a^{(j)}): 0\leq m < \wt a^{(i)} + \wt a^{(j)} - 1, a^{(i)}, a^{(j)}\in S, 1\leq i \leq j \leq r\}.$$
    of elements in $V$, and the finite set 
    $$\{B(a^{(i)}, a^{(j)}): a^{(i)}, a^{(j)}\in S, 1\leq i \leq j \leq r\}$$
    of complex numbers, such that with the notation 
    $$(a^{(i)})^{def}_m a^{(j)} = \left\{\begin{aligned}
    & M_m(a^{(i)}, a^{(j)}) & \text{ if } i\leq j, 0\leq m < \wt a^{(i)} + \wt a^{(j)}-1\\
    & \sum_{\alpha\geq 0}(-1)^{m+\alpha+1}\frac{D^{\alpha}}{\alpha!}M_{m+\alpha}(a^{(j)}, a^{(i)}) & \text{ if }i>j, 0\leq m < \wt a^{(i)} + \wt a^{(j)} - 1\\
    & B(a^{(i)}, a^{(j)})\one & \text{ if }i \leq j, m = \wt a^{(i)} + \wt a^{(j)}-1\\
    & (-1)^{\text{wt } a^{(i)} + \text{wt } a^{(i)}}  B(a^{(j)}, a^{(i)})\one  & \text{ if }i > j, m = \wt a^{(i)} + \wt a^{(j)}-1\\
    & 0 & \text{ if }m > \wt a^{(i)} + \wt a^{(j)}-1
    \end{aligned}\right.$$
    the commutator condition 
    \begin{align*}
        & [(a^{(i)})_m^{def}, a^{(j)}_n] a^{(k)} + [a^{(i)}_m, (a^{(j)})^{def}_n] a^{(k)} \nonumber\\
        = \ & \sum_{\alpha=0}^\infty \binom{m}{\alpha} \left(\left( (a^{(i)})^{def}_\alpha a^{(j)}\right)_{m+n-\alpha} a^{(k)} + \left( a^{(i)}_\alpha a^{(j)}\right)^{def}_{m+n-\alpha} a^{(k)}\right),
    \end{align*}
    holds for every $1\leq i, j, k \leq r, m, n\in \N$. 
\end{thm}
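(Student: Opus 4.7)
The theorem is the synthesis of the results of Sections 3, 4 and 5, and the plan is to explain how they interlock rather than to introduce new machinery.

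First, given any cohomology class in $H^2_{1/2}(V,V)$, Theorem \ref{Cobdry-1-Var-Thm} lets me pick a representative $Y_1$ whose value on pairs of generators $a^{(i)}, a^{(j)}$ with $i \le j$ is entirely determined by the singular part $\sum_{m \ge 0}(a^{(i)})^{def}_m a^{(j)}\,x^{-m-1}$. The $\d_V$-commutator formula, combined with positivity of the generator weights, forces $(a^{(i)})^{def}_m a^{(j)} = 0$ for $m \ge \wt a^{(i)} + \wt a^{(j)}$ and places the top nonzero coefficient in $V_{(0)} = \C\,\one$ (free generation by positive-weight elements gives $V_{(0)} = \C\,\one$). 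The lower coefficients are precisely the $M_m(a^{(i)}, a^{(j)})$ and the top one is $B(a^{(i)}, a^{(j)})\,\one$. The formulas for $i > j$ are then forced by skew-symmetry, exactly as in Proposition \ref{ai--n-aj-Prop}.

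Second, I would verify necessity of the commutator condition by specialising the cocycle equation (\ref{Cocycle-Eqn}) to $u_1 = a^{(i)}$, $u_2 = a^{(j)}$, $v = a^{(k)}$ and reading off the coefficient of $z_1^{-m-1} z_2^{-n-1}$; this is the calculation in Proposition \ref{Commutator-Y_1-Y-Minimal} run in reverse, with the required convergence supplied by Theorem \ref{Cobdry-1-Var-Thm}(3)--(4). For sufficiency, given any data $(M_m, B)$ satisfying the commutator condition, the recursive construction of Section \ref{Y_1-def} produces a map $Y_1 : V \otimes V \to V((x))$ on which Conditions (1)--(4) and (6) of Section \ref{first-order-deform-subsec} are built in by design. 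Theorem \ref{main-thm} then certifies that the multivariable cocycle equation holds for this $Y_1$, which is Condition (5); hence $Y_1$ represents a cohomology class in $H^2_{1/2}(V,V)$.

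Third, the implicit uniqueness statement---that the cohomology class determines the data $(M_m, B)$ up to the ambiguity introduced by coboundaries that act on the singular parts---follows from Theorem \ref{complem-cobdry-thm}. Indeed, if two cocycles share the same singular-part data on generators, their difference is a solution of the homogeneous cocycle equation that vanishes on all lower-weight generator products, and Theorem \ref{complem-cobdry-thm} shows every such solution is a coboundary. Combined with Theorem \ref{Y1-ext-uniqueness}, which shows that the singular parts on pairs of generators uniquely determine $Y_1$ on all of $V \otimes V$ via the cocycle equation, this pins down the bijection.

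The main obstacle was already absorbed into Theorems \ref{Cobdry-1-Var-Thm}, \ref{main-thm} and \ref{complem-cobdry-thm}. What remains here is only bookkeeping: checking that the two defining formulas for the top-weight coefficient agree (they do because $V_{(0)} = \C\,\one$ is one-dimensional), and that Lemma \ref{symm-lemma} makes the commutator condition symmetric in $(i, j)$ so that the skew-symmetric extension of $M_m$ and $B$ to the case $i > j$ is compatible with it. No genuinely new technical difficulty arises at this final packaging stage.
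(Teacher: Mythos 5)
Your proposal is correct and assembles the same supporting results in essentially the same way as the paper's own summary proof: Theorem \ref{Cobdry-1-Var-Thm} and Proposition \ref{ai--n-aj-Prop} to reduce to singular-part data on generators, the weight/grading argument to identify $M_m$ and $B$, the cocycle equation on generators for necessity of the commutator condition, the construction of Section \ref{Y_1-def} together with Theorem \ref{main-thm} for sufficiency, and Theorems \ref{complem-cobdry-thm} and \ref{Y1-ext-uniqueness} for uniqueness of the extension. No substantive difference from the paper's route.
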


To summarize the long process in proving Theorem \ref{main-thm-paper}, we started from a cocycle $\Phi$ and set 
$$Y_1(a^{(i)}, z)a^{(j)} = \Phi(a^{(i)}\otimes a^{(j)}; z, 0) = \sum_{m\in \Z} (a^{(i)})^{def}_m a^{(j)} z^{-m-1} $$ 
for each $1\leq i, j \leq r$. Theorem \ref{Cobdry-1-Var-Thm} shows that we may pick $Y_1$ such that its regular part is completely determined by its singular part. Proposition \ref{ai--n-aj-Prop} gives an explicit formula
\begin{align*}
    M_{-m}(a^{(i)}, a^{(j)}) = \ & \frac 1 2 \sum_{\alpha \geq 0} \frac{D^{m+\alpha}}{(m+\alpha)!} \binom{-m}{\alpha} M_\alpha(a^{(i)}, a^{(j)}).
\end{align*}
To study $Y_1(u, x)v$, we took an approach via generating functions and studied the $\overline{V}$-valued rational function
\begin{align}
    E\bigg(Y_1(s^{(1)}, z_1)Y(s^{(2)}, z_2) \cdots Y(s^{(n)}, z_n)s^{(n+1)}\bigg).\label{summary-gen-fun}
\end{align}
Propostion \ref{Convergence-Minimal-Prop} and the subsequent Theorem \ref{main-thm} showed that the (\ref{summary-gen-fun}) is well-defined. Section \ref{cocycle-eqn-subsubsec} formulated the cocycle equation (\ref{summary-gen-fun}) should satisfy. Theorem \ref{complem-cobdry-thm} showed that the complementary solutions of the cocycle equation are all coboundaries. We then construct (\ref{summary-gen-fun}) via the modes of $Y_1$ in Section \ref{Y_1-def}. Theorem \ref{main-thm} proves that the so-constructed (\ref{summary-gen-fun}) satisfy the cocycle equation. Theorem \ref{Y1-ext-uniqueness} shows (\ref{summary-gen-fun}) determines $Y_1(u, x)v$ for generic $u, v\in V$. Remark \ref{extension-remark} explains how to show the cocycle equation holds for generic $u, v\in V$.

\begin{rema}
The construction in Section \ref{Y_1-def} and the conclusion of Theorem \ref{main-thm} do not essentially depend on the freely generated assumption on $V$. For a general vertex algebra $V$, starting from $Y_1: S \otimes S \to 
V((x))$, we may follow the same process to extend it to $Y_1: V\otimes V \to V((x))$ and make sure that $Y_1$ sends the singular vectors to zero. The only problem is that the so-constructed $Y_1$ is only a particular solution of the cocycle equation. There might be other cocycles from the complementary solution that is nontrivial in $H_{1/2}^2(V, V)$. Besides, we have to make sure that after carrying out the extension process, $Y_1$ should send singular vectors to zero. This implies additional constraints on the choice of $Y_1^-(a^{(i)}, x)a^{(j)}$. 
\end{rema}

Recall that in \cite{H-Coh}, based on the different convergence requirements, there are different versions of cohomology theories. In \cite{H-1st-2nd-Coh}, it is shown that the first cohomology does not essentially depend on these convergence conditions. We now show that the same holds for the second cohomology of freely generated vertex algebras. 
\begin{thm}
    If $V$ is a freely generated vertex algebra, then $H^{2}_{1/2}(V, V)\simeq H^2_{\infty}(V, V)$. 
\end{thm}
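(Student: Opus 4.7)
The plan is to exhibit the natural map $\iota: H^2_\infty(V,V) \to H^2_{1/2}(V,V)$ induced by inclusion of cochain complexes (the $\infty$-composable condition is stronger than the $1/2$-composable one, so every $\infty$-composable cochain is $1/2$-composable) and prove it is a bijection. The entire machinery developed in Sections 3--5 has essentially been set up to make this possible in the freely generated case.

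For surjectivity, I would take a class $[\Psi] \in H^2_{1/2}(V,V)$ and apply Theorem \ref{Cobdry-1-Var-Thm} to replace $\Psi$ by a cohomologous representative whose associated $Y_1$-operator on generators is determined by its singular part. Specializing the cocycle equation to triples of generators yields the commutator condition of Section \ref{Commutator-Assumption-Section}. Then the recursive construction of Section \ref{Y_1-def} produces an extension $Y_1: V\otimes V \to V((x))$, and Theorem \ref{main-thm} (proved by induction through Sections 4 and 5) establishes that
$$E\bigg(Y(s^{(1)}, z_1)\cdots Y(s^{(q)}, z_q)\, Y_1(a^{(i_1)}, z_{q+1}) \cdots Y(a^{(i_n)}, z_{q+n})\, a^{(i_{n+1})} \bigg)$$
converges to a $\overline{V}$-valued rational function for arbitrary $q, n \geq 0$ and generators $s^{(k)}, a^{(i_j)} \in S$. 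The same is true for any variant with $Y_1$ in an arbitrary position, by Proposition \ref{commutator-general-prop} and an analytic continuation argument. By Theorem \ref{Y1-ext-uniqueness}, the resulting extended cocycle represents the same class in $H^2_{1/2}(V,V)$.

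For injectivity, I would use the fact that every 1-cochain in either cohomology theory arises from a grading-preserving linear map $\phi: V\to V$ commuting with $D$ via $\Phi(v;z) = e^{zD}\phi(v)$. Composability of such a 1-cochain with arbitrarily many vertex operators is automatic, because
$$E\bigg(Y(v_1, z_1)\cdots Y(v_n, z_n)\, e^{\zeta D}\phi(v_{n+1})\bigg)$$
reduces via the $D$-conjugation property and a change of variables to a product of vertex operators acting on $\phi(v_{n+1})$, which converges by the rationality axiom. Hence every 1-cochain is automatically $\infty$-composable, so a $1/2$-coboundary of an $\infty$-composable cocycle is automatically an $\infty$-coboundary. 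This shows $\iota$ is injective.

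The main obstacle I anticipate is bridging the gap between the convergence established in Theorem \ref{main-thm}, which is stated only when all inputs lie in the generating set $S$, and the full $\infty$-composable condition which requires convergence for arbitrary $v_1, \ldots, v_{n+2} \in V$. To handle this, I would write each $v_k$ as a finite linear combination of PBW basis vectors $a^{(j_1)}_{-m_1}\cdots a^{(j_p)}_{-m_p}\one$ and express the corresponding matrix coefficients as iterated residues of the already-convergent expressions involving only generators. The preservation of rationality under residue operations, together with associativity, skew-symmetry, and the analytic continuation arguments already used repeatedly in Sections 4 and 5 (e.g., in the derivation of Lemma \ref{conv-lemma} from Lemma \ref{Conv-Lemma-G}), will yield the full $\infty$-composable condition. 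Since this bookkeeping is essentially a repetition of arguments already present in the paper, the proof will be relatively brief once the key machinery is invoked.
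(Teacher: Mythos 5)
Your proposal follows the paper's argument in all essentials: the paper likewise shows that every $1/2$-cocycle is, modulo a $1/2$-coboundary, composable with infinitely many vertex operators (via Theorem \ref{main-thm} and Lemma \ref{Conv-Lemma-G} followed by residue operations to pass from generators to arbitrary elements of $V$), and then identifies $H^2_{1/2}(V,V)$ with $H^2_\infty(V,V)$ using $K_{1/2}=K_\infty+B_{1/2}$ and $K_\infty\cap B_{1/2}=B_\infty$; the latter equality is exactly your observation that $1$-cochains of the form $e^{zD}\phi$ are automatically $\infty$-composable, so your injectivity argument is the same as the paper's appeal to the second isomorphism theorem.

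One step in your surjectivity argument is misjustified, though the needed result is in the paper. Theorem \ref{Y1-ext-uniqueness} says that $Y_1$ is determined by the full multivariable generating functions (\ref{multivar}), not by the data $Y_1(u,x)v$ for $u,v\in S$ alone; two $1/2$-cocycles that agree on pairs of generators can still differ by a complementary solution of the cocycle equation, so uniqueness of the extension does not by itself show that the particular solution constructed in Sections 4 and 5 represents the same class as the original cocycle. The statement you actually need is Theorem \ref{complem-cobdry-thm} (complementary solutions are coboundaries), which should be cited at that point in place of Theorem \ref{Y1-ext-uniqueness}. With that substitution the argument is complete and coincides with the paper's.
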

\begin{proof}
    From Theorem \ref{main-thm} and Lemma \ref{Conv-Lemma-G}, by taking an appropriate residue, we see that for every $u_1, ..., u_l, v_1, ..., v_m, w_1, ... ,w_n\in V$, the $\overline{V}$-valued rational function 
    \begin{align*}
        E\bigg(Y(u_1, z_1) \cdots Y(u_l, z_l) Y_1 (Y(v_1, z_{l+1}-\zeta) \cdots Y(v_{l+m}, z_{l+m}-\zeta)\one, \zeta) Y(w_1, z_{l+m+1})\cdots Y(w_n, z_{l+m+n})\one\bigg)
    \end{align*}
    is well-defined. This in turn shows that every representative of $H_{1/2}^2(V, V)$ is indeed composable with infinitely many vertex operators (see \cite{H-Coh}, \cite{H-1st-2nd-Coh} and \cite{Q-Coh} for formal definitions). Let $K_\infty$ and $B_\infty$ respectively be the spaces of cocycles and coboundaries that are composable with infinitely many vertex operators. Let $K_{1/2}$ and $B_{1/2}$  respectively be the spaces of cocycles and coboundaries satisfying the 1/2-composable condition in \cite{H-1st-2nd-Coh}. What we have proved is that 
    $$K_{1/2}/ B_{1/2} = (K_\infty + B_{1/2})/B_{1/2} = (K_\infty + B_{1/2})/B_{\infty} \bigg/ B_{1/2}/B_{\infty}. $$
    From the second isomorphism theorem, we conclude that 
    $$K_{1/2}/ B_{1/2}\simeq K_{\infty}/B_{\infty} \bigg/ (K_\infty \cap B_{1/2}) / B_\infty \simeq K_{\infty}/B_{\infty}$$
    since $K_\infty \cap B_{1/2} = B_\infty$. The conclusion then follows. 
\end{proof}

\section{Examples}

Summarizing the results from Section 3 -- 5, we see that to determine $H^2_{1/2}(V, V)$, it suffices to classify $Y_1(a^{(i)}, z) a^{(j)}$ containing only the singular parts, and the components satisfy the commutator condition (\ref{Commutator-Assumption}) in Section \ref{Commutator-Assumption-Section}. Once we classify such $Y_1$, it remains to check if some of them are given by a coboundary. We shall carry out this procedure for the following class of examples. 

\subsection{Universal Virasoro VOA}

Recall that the universal Virasoro VOA $Vir_c$ is generated by the conformal element $\omega$. We set  
$$Y_1(\omega, x)\omega = a_{(0)} \one x^{-4} + a_{(1)}\omega x^{-2}+ a_{(2)} D\omega x^{-1} + Y_1^+(\omega, x)\omega.$$
Skew-symmetry specifies that 
$$a_{(1)} = 2a_{(2)}.$$
The commutator condition does not impose further relations. If we choose 
$$\phi(\one) = 0, \phi(\omega) = a_{(2)} \omega,$$
and for all other basis element $v$, 
$$\phi(v) = 0.$$
Then 
$$Y_1 (\omega, x) \omega - Y_\phi(\omega, x) \omega = a_{(0)} \one x^{-4} + regular$$
Clearly, the map $\omega \otimes \omega \mapsto a_{(0)}\one x^{-4}$ satisfies the commutator condition and thus extends to a cocycle. Thus 
\begin{align}
    Y_1 (\omega, x) \omega - Y_\phi(\omega, x) \omega - a_{(0)} \one x^{-4}\label{Virasoro-regular}
\end{align}
is a regular cocycle. From Proposition \ref{Cobdry-1-Var-Prop-1}, we see that (\ref{Virasoro-regular}) is represented by a coboundary. Thus we conclude that
$$\dim H^2_{1/2}(Vir_c, Vir_c) = 1.$$ 

\subsection{Universal affine VOA} 
Let $\g$ be a simple Lie algebra. Recall that the universal affine VOA $V^l(\g)$ is generated by $\mathfrak{g}$ that is identified with the weight-1 subspace. Let $e_1, ..., e_r$ be a basis of $\mathfrak{g}$. Then for every $1\leq i \leq j \leq r$, 
$$Y_1(e_i, x) e_j = B(e_i, e_j) \one x^{-2} + M(e_i, e_j) x^{-1}$$
Generally, for $a, b\in \g$
$$Y_1(a,x)b = B(a, b) \one x^{-2} + M(a, b) x^{-1} + \text{regular part}. $$
The skew-symmetry of $Y_1$ requires that 
\begin{align}
    B(a, b) = B(b, a), M(a, b) = -M(b, a). \label{g-skew-symm}
\end{align}
The commutator condition (\ref{Commutator-Assumption}) requires that for every $a, b, c\in \mathfrak{g}$, 
\begin{align}
    B([a,b], c) + l\langle M(a, b), c\rangle = B(a, [b,c]) + l\langle a,M(b,c)\rangle, \label{g-invariance}
\end{align}
and 
\begin{align}
    M([a, b], c) + M([b, c],a) + M([c, a], b) + [M(a, b), c] + [M(b, c), a] + M([c, a], b) = 0. \label{g-cocycle}
\end{align}
(\ref{g-skew-symm}) and (\ref{g-cocycle}) implies that $M$ is a cocycle in the Chevalley-Eilenberg cochain complex. Since $H^2(\g) = 0$, there exists $f: \g \to \g$ such that 
\begin{align}
    M(a, b) = [a, f(b)] - f([a, b]) + [f(a), b]. \label{g-coboundary}
\end{align}
Plug (\ref{g-coboundary}) into (\ref{g-invariance}), and set $a = x_\alpha, b = h, c = x_\beta$, where $\alpha, \beta$ are roots for $\g$, $h$ is an element in the Cartan subalgebra of $\mathfrak{g}$, we obtain that
$$-\alpha(h)\left(B(x_\alpha, x_\beta) - l \langle f(x_\alpha), x_\beta\rangle - l \langle x_\alpha, f(x_\beta)\right) = \beta(h)\left(B(x_\alpha, x_\beta) - l \langle f(x_\alpha), x_\beta\rangle - l \langle x_\alpha, f(x_\beta)\right)$$
Therefore, if $\alpha+\beta \neq 0$, then
$$B(x_\alpha, x_\beta) - l \langle f(x_\alpha), x_\beta\rangle - l \langle x_\alpha, f(x_\beta)\rangle = 0 $$
Plug (\ref{g-coboundary}) into (\ref{g-invariance}), and set $a = h, b = x_\alpha, c = x_{-\alpha}$, we see that 
$$\alpha(h)\left(B(x_\alpha, x_{-\alpha}) - l \langle x_\alpha, f(x_{-\alpha})\rangle - l \langle x_{-\alpha}, f(x_\alpha)\rangle \right) = B(h, h_\alpha) - l \langle h, f(h_\alpha)\rangle - l \langle f(h), h_\alpha\rangle$$ 
holds for every $h$ in the Cartan subalgebra. If $\alpha$ is simple, and $h = h_\beta$ is the root vector for another simple root $\beta$, then since $\alpha(h_\beta) = \langle h_\alpha, h_\beta \rangle \neq 0$, we see that 
\begin{align*}
    B(x_\alpha, x_{-\alpha}) - l \langle x_\alpha, f(x_{-\alpha})\rangle - l \langle x_{-\alpha}, f(x_\alpha)\rangle  = \ & \frac{1}{\langle h_\alpha, h_\beta\rangle}\left(B(h_\beta, h_\alpha) - l \langle h_\beta, f(h_\alpha)\rangle - l \langle f(h_\beta), h_\alpha\rangle\right)\\
    = \ &  B(x_\beta, x_{-\beta}) - l \langle x_\beta, f(x_{-\beta})\rangle - l \langle x_{-\beta}, f(x_\beta)\rangle 
\end{align*}
If we set
\begin{align*}
    B_\mathfrak{g} = B(x_\alpha, x_{-\alpha}) - l \langle x_\alpha, f(x_{-\alpha})\rangle - l \langle x_{-\alpha}, f(x_\alpha)\rangle, 
\end{align*} 
then $B_\g$ is a constant for every simple root $\alpha$. Now for a generic root $\beta$, there exists a simple root $\alpha$ with $\langle h_\alpha, h_\beta\rangle \neq 0$. Applying the same argument, we see that $B_\g$ is a constant for every root $\alpha$. So the notation is justified. It is straightforward to check that for every $a, b\in \g$, 
$$B(a, b) - l \langle f(a), b\rangle - l \langle a, f(b) \rangle = \langle a, b\rangle B_\g.$$
We construct $\phi: V \to V$ by 
\begin{align*}
    \phi(a(-m)\one) = (f(a))(-m)\one,
\end{align*}
and for every other basis element of $V$, 
$$\phi(v) = 0. $$
Then 
\begin{align*}
    Y_\phi(a, x) b = \ & Y(f(a), x)b - \phi(Y^-(a,x)b) + Y(a, x)f(b)\\
    = \ & l\langle f(a), b\rangle \one x^{-2} + [f(a), b]x^{-1} + f([a,b])x^{-1} \\
    & + l \langle a, f(b)\rangle \one x^{-2} + [a, f(b)]x^{-1} + Y^+(f(a), b) + Y^+(a, x)f(b)\\
    = \ & (l\langle f(a), b\rangle + l \langle a, f(b)\rangle )\one x^{-2} + ([f(a), b]  - f([a,b]) + [a, f(b)])x^{-1} \\
    & + Y^+(f(a), b) + Y^+(a, x)f(b)\\
    = \ & \left(B(a, b) + \langle a, b\rangle B_\g \right)\one x^{-2} + M(a, b) x^{-1} + Y^+(f(a),x ) b + Y^+(a, x)f(b)
\end{align*}
In other words, 
\begin{align}
    Y_1(a, x)b - Y_\phi(a,x)b - \langle a, b\rangle B_\g \one x^{-2} \label{affine-regular}
\end{align}
contains only the regular part. Clearly, the map $a\otimes b\mapsto  \langle a, b\rangle B_\g \one$ satisfies the commutator condition and thus extends to a cocycle. So (\ref{affine-regular}) gives rise to a regular cocycle. From Proposition \ref{Cobdry-1-Var-Prop-1}, we see that (\ref{affine-regular}) is represented by a coboundary. Thus $Y_1(a,x)b-\langle a, b\rangle B_\g \one x^{-2}$ is a coboundary. So we conclude that for every $l\in \C$,
$$\dim H^2_{1/2}(V^l(\g), V^l(\g)) = 1. $$

\begin{rema}
    The results in Section 6.1 and 6.2 basically implies that aside from the deformation perturbing the central charge $c$ of $Vir_c$ (the level $l$ of $V^l(\g)$, respectively), no other deformation can possibly exist. 
\end{rema}

\subsection{Heisenberg VOA}
Let $\h$ be a finite-dimensional abelian Lie algebra with a nondegenerate bilinear form $\langle \cdot, \cdot \rangle$. Recall that the Heisenberg VOA $V^l(\h)$ is generated by $\h$ that is identified with weight-1 subspace. Let $e_1, ..., e_r$ be an orthonormal basis of $\h$. Then for every $1\leq i \leq j \leq r$, 
\begin{align}
    Y_1(e_i, x) e_j = a_{ij} \one x^{-2} + \sum_{k=1}^r M_{ij}^k e_k(-1)\one x^{-1}\label{h-ansatz}
\end{align}
and 
$$Y_1(e_j, x) e_i = a_{ji} \one x^{-2} + \sum_{k=1}^r M_{ji}^k e_k(-1)\one x^{-1} + \sum_{m\geq 2}\sum_{k=1}^r M_{ji}^k e_k(-m)\one x^{m-2}$$
The skew-symmetry requires that  
\begin{align}
    a_{ij} = a_{ji}, M_{ij}^k = -M_{ji}^k. \label{h-skew-symm}
\end{align}
The commutator condition (\ref{Commutator-Assumption}) requires that for every $1\leq i, j, k\leq r$, 
\begin{align}
    l M_{ij}^k  = l M_{jk}^i = l M_{ki}^j \label{h-invariance}
\end{align}
(\ref{h-skew-symm}) and (\ref{h-invariance}) are the only requirement of $a_{ij}$ and $M_{ij}^k$ to be a cocycle. We shall proceed to find the coboundaries.

\noindent $\blacktriangleright$ In case $l \neq 0$, we set for each $ i = 1, ..., r$, 
\begin{align*}
    & \phi(e_i(-m)\one) = \sum_{\alpha=1}^r \frac{a_{i\alpha}}{2l}e_\alpha(-m)\one, m \geq 1;\\
    & \phi(e_i(-m_1)e_i(-m_2)\one) = \sum_{\alpha=1}^r \frac{a_{i\alpha}}{2l} (e_i(-m_1)e_\alpha(-m_2)+e_\alpha(-m_1)e_i(-m_2))\one, m_1 \geq m_2 \geq 1;
\end{align*}
for each $1\leq i < j \leq r,$
\begin{align*}
    &\phi(e_i(-m_1)e_j(-m_2)\one) =\sum_{\alpha=1}^r \frac{a_{i\alpha}}{2l}e_\alpha(-m_1)e_j(-m_2)\one + \sum_{\alpha=1}^r \frac{a_{\alpha j}}{2l} e_i(-m_1)e_\alpha(-m_2), m_1, m_2\geq 1; 
\end{align*}
and for other basis element $v\in V$, 
$$\phi(v) = 0. $$
Then for every $1\leq i \leq j \leq r$
\begin{align*}
    Y_\phi(e_i, x)e_j = \ & Y(\phi(e_i), x)e_j - \phi\left(\sum_{m\geq 1}e_i(-m)e_j(-1)\one \right)x^{m-1} + Y(e_i, x)\phi(e_j) \\
    = \ & \sum_{\alpha=1}^r \frac{a_{i\alpha}}{2l}\left(Y(e_\alpha,x)e_j - Y^+(e_\alpha, x)e_j \right) + \sum_{\alpha=1}^r \frac{a_{\alpha j}}{2l} \left(-Y^+(e_i, x)e_\alpha + Y(e_i, x) e_\alpha\right)\\
    = \ & \sum_{\alpha=1}^r \frac{a_{i\alpha}}{2l} \left(l \cdot \delta_{\alpha j}\one x^{-2} \right) + \sum_{\alpha=1}^r \frac{a_{\alpha j}}{2l} \left(l \cdot \delta_{i\alpha} \one x^{-2} \right) = a_{ij} \one x^{-2}. 
\end{align*}
So 
\begin{align}
    Y_1(e_i, x)e_j - Y_\phi(e_i, x)e_j = \sum_{k=1}^r M_{ij}^k e_k(-1)\one x^{-1}. \label{h-cocycle}
\end{align}
Note also that for any homogeneous map $\psi: V \to V$, the coefficient of $x^{-1}$ of the series 
$$Y_\psi(e_i, x)e_j = Y(\psi(e_i), x)e_j - \psi(Y(e_i, x)e_j) + Y(e_i, x)\psi(e_j)$$
is constantly zero. Therefore, it is impossible to further reduce the right-hand-side of (\ref{h-cocycle}) via coboundaries. So $H^2_{1/2}(V^l(\h), V^l(\h))$ is in one-to-one correspondence with the space of structural constants 
$$\{M_{ij}^k \in \C: M_{ij}^k = M_{jk}^i = M_{ki}^j, M_{ij}^k = -M_{ji}^k, i, j, k = 1, ..., r\}. $$ 
As a result, we conclude that 
$$\dim H^2_{1/2}(V^l(\h), V^l(\h)) = \binom{r}{3} \text{ if } l \neq 0. $$

\noindent $\blacktriangleright$ In case $l = 0$, we see that (\ref{h-invariance}) degenerates to a trivial relation. We also see that for any homogeneous map $\psi: V \to V$, the coefficient of $x^{-2}$ and $x^{-1}$ of the series $Y_\psi(e_i, x)e_j$ is constantly zero. Therefore, other than the constraint (\ref{h-skew-symm}), it is impossible to further reduce the right-hand-side of (\ref{h-ansatz}). So $H_{1/2}^2(V^l(\h), V^l(\h))$ is in one-to-one correspondence with $a_{ij}$ and $M_{ij}^k$ satisfying (\ref{h-skew-symm}). We conclude that 
$$\dim H^2_{1/2}(V^l(\h), V^l(\h)) = \binom{r}{2} + \binom{r}{2} \cdot r = 3\binom{r+1}{3} \text{ if } l = 0. $$

\begin{rema}
    If the map $M$ satisfies the Jacobi identity, then $(\h, M)$ forms a quadratic Lie algebra with respect to the bilinear form on $\h$ (see, for example, \cite{HK}. In that case, $V^l(\h)$ deforms to an affine VOA. This is analogous to the classical fact that the universal enveloping algebra $U(\g)$ is a deformation of the symmetric algebra $S(\g)$. 
\end{rema}

\subsection{Universal Zamolodchikov algebra $W_3^c$} The universal Zamolodchikov algebra $W_3^c$ is a vertex algebra freely generated by two fields $\omega$ and $W$, with 
\begin{align*}
    \omega_3\omega = \frac c 2 \one, \omega_2\omega = 0, \omega_1 \omega = 2 \omega, \omega_0 \omega = D \omega, 
\end{align*}
\begin{align*}
\omega_1 W =3 W, \omega_0 W = DW, 
\end{align*}
and 
\begin{align*}
    W_5 W = \frac c 3 \one, W_4 W = 0, W_3 W = 2\omega, W_2 W = D \omega, 
\end{align*}
\begin{align}
    W_1 W = \frac{32}{22+5c} \omega_{-1}\omega + \frac{3(-2+c)}{2(22+5c)}D^2\omega, W_0 W = \frac{32}{22+5c} \omega_{-2}\omega + \frac{-2+c}{3(22+5c)}D^3\omega \label{W3c-3}
\end{align}
when $c\neq -22/5$. When $c=22/5$, (\ref{W3c-3}) is modified as 
\begin{align}
    W_1 W = 32 \omega_{-1}\omega - \frac{48}{5} D^2 \omega, W_0 W = 32 \omega_{-2}\omega - \frac{32}{15} D^3 \omega. \label{W3c-4}
\end{align}
(see \cite{Z} and \cite{AL}). We now determine $H_{1/2}^2(W_3^c, W_3^c)$. 
\begin{align*}
    Y_1(\omega, x)\omega = \ & a_{(0)}^\omega \one x^{-4} + a_{(2)}^\omega \omega x^{-2} + (a_{(3,1)}^\omega D\omega + a_{(3,2)}^\omega W)x^{-1} + Y_1^+(\omega, x)\omega\\
    Y_1(\omega, x)W = \ & a_{(0)} \one x^{-5} + a_{(2)} \omega x^{-3} + (a_{(3,1)} D\omega + a_{(3,2)} W)x^{-2} \\
    & + (a_{(4,1)} D^2 \omega + a_{(4,2)} \omega_{-1}\omega + a_{(4,3)} DW)x^{-1} \\
    Y_1(W, x)W = \ & a_{(0)}^W \one x^{-6} + a_{(2)}^W \omega x^{-4} + (a_{(3,1)}^W D\omega + a_{(3,2)}^W W) x^{-3} \\
    & + (a^W_{(4,1)} D^2 \omega + a^W_{(4,2)}\omega_{-1}\omega + a^W_{(4, 3)} DW)x^{-2} \\
    & + (a^W_{(5,1)} D^3 \omega + a^W_{(5, 2)} \omega_{-2}\omega + a^W_{(5,3)} D^2 W + a^W_{(5,4)} \omega_{-1}W)x^{-1}
\end{align*}
The commutator condition (\ref{Commutator-Assumption}) with $u=v=w=\omega$ implies that $$a^\omega_{(3)}=\frac {a^\omega_{(2)}}{2}, a_{(3,2)}^\omega = 0. $$
\noindent $\blacktriangleright$ If $c\neq -22/5$, then the commutator condition (\ref{Commutator-Assumption}) with $u=v=\omega, w=W$ implies that 
\begin{align*}
    a_{(0)} = \frac{c}{2}a_{(2)}, a_{(3,1)}=a_{(4,1)}=a_{(4,2)}=0, a_{(4,3)}=\frac {a_{(2)}^\omega}{2}. 
\end{align*}
The commutator condition (\ref{Commutator-Assumption}) with  $u=\omega, v=w=W$ and with $u=v=w=W$ implies that
\begin{align*}
    & a^\omega_{(0)} = \frac{2 + c}{6} a_{(3,2)} + \frac{22 + 5 c}{3} a^W_{(4,1)}, a^\omega_{(2)} = \frac{2}{3} a_{(3,2)}, \\
    & a^W_{(0)} =  \frac{44 + 42 c}{9 (22 + 5 c)}a_{(3,2)}  + \frac{4 (11+5c)}{9} a^W_{(4,1)}, a^W_{(2)} = \frac{64 }{3 (22 + 5 c)}a_{(3,2)} + \frac{20 }{3}a^W_{(4,1)},\\
    & a^W_{(3,1)} = \frac{32}{3 (22 + 5 c)} a_{(3,2)} + \frac{10}{3} a^W_{(4,1)}, a^W_{(3,2)} = \frac{114 + 7 c}{48}a^W_{(5,4)}, a^W_{(4,2)} = 0, a^W_{(4,3)} =  a_{(2)} + \frac{114 + 7 c}{96}  a^W_{(5,4)}, \\    
    & a^W_{(5,1)} = -\frac{8}{9 (22 + 5 c)} a_{(3,2)} + \frac{2}{9} a^W_{(4,1)}, a^W_{(5,2)} = 0, a^W_{(5,3)} = \frac{1}{2}a_{(2)} + \frac{6 + c}{48} a^W_{(5,4)}
\end{align*}
The only free variables are $a_{(2)}, a_{(3,2)}$, and $a^W_{(4,1)}$. 
\begin{align*}
    Y_1(\omega, x)\omega = \ & a_{(3,2)} \left(\frac{2 + c}{6}\one  x^{-4} + \frac 2 3 \omega x^{-2} + \frac 1 3 D\omega x^{-1}\right) + a^W_{(4,1)}\left( \frac {22 + 5 c} 3 \one x^{-4}\right) + Y_1^+(\omega, x)\omega, \\
    Y_1(\omega, x)W = \ & a_{(2)} \left( \frac c 2  \one x^{-5} + \omega x^{-3}\right) +a_{(3,2)} W x^{-2}+ a_{(3,2)} DW x^{-1} + Y_1^+(\omega, x)W,\\
    Y_1(W, x)W = \ & a_{(2)} \left(DWx^{-2} + \frac 1 2  D^2 W x^{-1}\right) \\
    & + a_{(3,2)}\left(\frac{2(22 + 21 c)}{9(22 + 5 c)} \one x^{-6}+ \frac{64}{3(22 + 5 c)}\omega x^{-4} + \frac{32}{3(22 + 5 c)} D\omega x^{-3} - \frac{8}{9(22+5c)}D^3\omega x^{-1}\right) \\
    & + a^W_{(4,1)}\left(\frac{4}{9}(11 + 5 c) \one x^{-6}+ \frac{20} 3 \omega x^{-4} + \frac{10} 3 D\omega x^{-3}+  D^2 \omega x^{-2} + \frac 2 9 D^3 \omega x^{-1}\right)
\end{align*}
\begin{itemize}[leftmargin=*]
    \item In case $c\neq 2$, we set
\begin{align*}
    \phi(\omega)=\frac 1 3 a_{(3,2)}\omega, \phi(W)=\frac{2(22+5c)}{3(c-2)} a^W_{(4,1)} W + \frac{1}{4}a_{(2)} D\omega, \phi(\omega_{-1}\omega)=\frac 2 3 a_{(3,2)}\omega_{-1}\omega,
\end{align*}
so that $\phi(\omega_{-m}\omega_{-n}\one)$ is uniquely determined for every $m, n \in \Z_+$ by $D$-derivative property. We further set 
$$\phi(v) = 0$$ 
for every other basis element $v$. We see that $Y_1-Y_\phi$ is proportional to the map 
\begin{align*}
    \widetilde{Y_1}(\omega, x)\omega = \ & \frac 1 6\one x^{-4} + \widetilde{Y_1}^+(\omega, x)\omega,\\
    \widetilde{Y_1}(\omega, x)W = \ &  \widetilde{Y_1}^+(\omega, x)W,\\
    \widetilde{Y_1}(W, x)W = \ &\frac{2 (21 c+22) }{9 (c-2) (5 c+22)} \one x^{-6} -\frac{2 (5 c+54) }{3 (c-2) (5 c+22)}\omega x^{-4} -\frac{(5 c+54)}{3 (c-2) (5 c+22)} D\omega x^{-3} \\
    & -\left(\frac{64 }{3 (c-2) (5 c+22)}\omega_{-1}\omega+\frac{1}{2 (5 c+22)}D^2\omega\right)x^{-2} \\
    & - \left(\frac{64}{3 (c-2) (5 c+22)}\omega_{-2}\omega+\frac{1}{9 (5 c+22)}D^3\omega\right)x^{-1} + \widetilde{Y_1}^+(W, x)W.
\end{align*}
We check that $\bar{Y_1}$ satisfy the commutator condition. We conclude that $\dim H^2_{1/2}(W_3^c, W_3^c) = 1$ when $c\neq 2$ and $c\neq -22/5$. 

\item In case $c=2$, we set
\begin{align*}
    \phi(\omega)=\frac 1 3 a_{(3,2)}\omega, \phi(W)=\frac{1}{3} a_{(3,2)} W + \frac{1}{4}a_{(2)} D\omega, \phi(\omega_{-1}\omega)=\frac 2 3 a_{(3,2)}\omega_{-1}\omega,
\end{align*}
so that $\phi(\omega_{-m}\omega_{-n}\one)$ is uniquely determined for every $m, n \in \Z_+$ by $D$-derivative property. We further set 
$$\phi(v) = 0$$ 
for every other basis element $v$. We see that $Y_1-Y_\phi$ is proportional to the map 
\begin{align*}
    \widetilde{Y_1}(\omega, x)\omega = \ & \frac {32} 3\one x^{-4} + \widetilde{Y_1}^+(\omega, x)\omega,\\
    \widetilde{Y_1}(\omega, x)W = \ &  \widetilde{Y_1}^+(\omega, x)W,\\
    \widetilde{Y_1}(W, x)W = \ &\frac{28 }{3} \one x^{-6} +\frac{20}{3}\omega x^{-4} +\frac{10}{3} D\omega x^{-3} + D^2\omega x^{-2} + \frac 2 9 D^3\omega x^{-1} + \widetilde{Y_1}^+(W, x)W.
\end{align*}
We check that $\bar{Y_1}$ satisfy the commutator condition. Thus we conclude that $\dim H^2_{1/2}(W_3^c, W_3^c) = 1$ when $c=2$. 
\end{itemize}

\noindent$\blacktriangleright$ If $c = -22/5$, then with a similar procedure, we find that $Y_1$ is represented by the following map
\begin{align*} 
    \widetilde{Y_1}(\omega, x)\omega = \ & 3 \cdot \one x^{-4} + \widetilde{Y_1}^+(\omega, x)\omega,\\
    \widetilde{Y_1}(\omega, x) W = \ & \widetilde{Y_1}^+(\omega, x) W,\\
    \widetilde{Y_1}(W, x)W = \ & -44\cdot \one x^{-6} + 60 \omega x^{-4} + 30D\omega x^{-3} + 9D^2 \omega x^{-2} + 2D^3 \omega x^{-1} + \widetilde{Y_1}^+(W, x)W. 
\end{align*}
We conclude that $\dim H^2_{1/2}(W_3^c, W_3^c) = 1$ when $c=-22/5$.

\noindent {\small \sc Department of Mathematics, University of Denver, Denver, CO  80210, USA}

\noindent {\em E-mail address}: vladimir.kovalchuk@du.edu

\noindent {\small \sc Department of Mathematics, University of Denver, Denver, CO  80210, USA}

\noindent {\em E-mail address}: fei.qi@du.edu | fei.qi.math.phys@gmail.com


\begin{thebibliography}{KWAK2}

\bibitem[AL]{AL} Masoumah Al-Ali and Andrew Linshaw, the $\Z_2$-orbifold of the $W_3$-algebra, \textit{Comm. Math. Phys.} \textbf{353} (2017), 1129--1150.







\bibitem[FHL]{FHL} Igor Frenkel, Yi-Zhi Huang and James Lepowsky, On axiomatic
approaches to vertex operator algebras and modules, {\it Memoirs
American Math. Soc.} {\bf 104}, 1993.

\bibitem[FLM]{FLM} Igor Frenkel, James Lepowsky and Arne Meurman, {\it Vertex operator algebra and the monster},  Pure and Appl. Math., 134, Academic Press, New York, 1988.




\bibitem[H1]{H-Coh}
Yi-Zhi Huang, A cohomology theory of grading-restricted vertex algebras, {\it Comm. Math. Phys.}
{\bf  327} (2014), 279--307.

\bibitem[H2]{H-1st-2nd-Coh}
Yi-Zhi Huang, First and second cohomologies of grading-restricted vertex algebras, {\it Comm. Math. Phys.}
{\bf  327} (2014), 261--278.

\bibitem[H3]{H-1st-2nd-Coh-Add}
Yi-Zhi Huang, Addentum to the proof of Proposition 1.4 in \cite{H-1st-2nd-Coh},\\ \url{https://sites.math.rutgers.edu/~yzhuang/rci/math/papers/2nd-coh-addentum.pdf}
.
\bibitem[HK]{HK} 
Karl Hofmann and Verena Keith, Invariant quadratic forms on finite dimensional
lie algebras, \textit{Bull. Austral. Math. Soc.} \textbf{33} 21--36, (1986).



\bibitem[HQ]{HQ-Red} Yi-Zhi Huang and Fei Qi, The first cohomology, derivations and the reductivity of a meromorphic open-string vertex algebra, Transactions of American Mathematical Society, 373 (2020), 7817--7868.


\bibitem[L1]{L} Haisheng Li, Vertex algebras and vertex poisson algebras, \textit{Commun. Contemp. Math.} \textbf{6} (2004), no.1, 61--110.

\bibitem[L2]{L2} Haisheng Li,  Local systems of vertex operators, vertex superalgebras and modules, 
\textit{J. Pure Applied Algebra} \textbf{109} (1996), 143--195.




\bibitem[LL]{LL}
James Lepowsky, Haisheng Li, \textit{ Introduction to Vertex Operator Algebras and Their
Representations}, Progress in Math., Vol. 227, Birkh\"auser, Boston, 2003.








\bibitem[Q1]{Q-Mod} Fei Qi, On modules for meromorphic open-string vertex algebras, \textit{J Math. Phys.} \textbf{60}, 031701 (2019)

\bibitem[Q2]{Q-Coh} Fei Qi, On the cohomology of meromorphic open-string vertex algebras, \textit{New York J Math} \textbf{25} (2019), 467--517.

\bibitem[Q3]{Q-Ext-1} Fei Qi, On the extensions of the left modules for a meromorphic open-string vertex algebra I, \textit{Comm. Math. Phys}, \textbf{405} (2024), Article 65, 1--51.



\bibitem[Z]{Z} Alexander  Zamolodchikov, Infinite extra symmetries in two-dimensional conformal quantum field
theory (Russian). \textit{Teoret. Mat. Fiz.} \textbf{65} (1985), 347--359  [English translation, \textit{Theoret. Math.
Phys.} \textbf{65} (1985) 1205--1213].


\end{thebibliography}
\end{document}